\newcommand{\filename}{LipDomCoSi2017-10-07m.tex}
\newcommand{\comment}[1]{}
\newtheorem{lemma}{LEMMA}[section]
\newtheorem{theorem}[lemma]{THEOREM}
\newtheorem{definition}[lemma]{DEFINITION}
\newtheorem{corollary}[lemma]{COROLLARY}
\newtheorem{remark}[lemma]{REMARK}
\def\supp{\mathop{\mbox{\rm supp}}\nolimits}
 \newcommand{\nc}{\newcommand}
 \nc{\ha}{\frac{1}{2}}
 \nc{\tha}{\frac{3}{2}}
 \nc{\s}{\widetilde}
 \nc{\dst}{\displaystyle}
 \nc{\gm}{\gamma}
 \nc{\ga}{\Gamma}
 \nc{\ka}{\kappa}
 \nc{\eps}{\varepsilon}
 \nc{\vep}{\varepsilon}
 \nc{\hi}{\varphi}
 \nc{\vfi}{\varphi}
 \nc{\oa}{\Omega}
 \nc{\Om}{\Omega}
 \nc{\om}{\omega}
 \nc{\ov}{\overline}
 \nc{\lon}{\longrightarrow}
 \nc{\scr}{\scriptstyle}
 \nc{\ex}{\exists}
 \nc{\fo}{\forall}
 \nc{\pa}{\partial}
 \nc{\pO}{{\partial\Omega}}
 \nc{\und}{\underline}
 \nc{\ze}{\zeta}
 \nc{\si}{\sigma}
 \nc{\tri}{\triangle}
 \nc{\al}{\alpha}
 \nc{\bt}{\beta}
 \nc{\lf}{\left}
 \nc{\ri}{\right}
 \nc{\lm}{\lambda}
 \nc{\lam}{\lambda}
 \nc{\de}{\delta}
 \nc{\te}{\theta}
 \nc{\tl}{\tilde}
 \nc{\wt}{\widetilde}
 \nc{\p}{\prime}
 \nc{\m}{\mu}
 \nc{\R}{{\mathbb R}}
 \nc{\B}{{\Bbb B}}
 \nc{\N}{{\Bbb N}}
 \nc{\C}{{\Bbb C}}
\newcommand{\E}{{\cal E}}
\newcommand{\F}{{\cal F}}
\renewcommand{\L}{{\cal L}}
\renewcommand{\P}{{\cal P}}
\newcommand{\U}{{\cal U}}
\newcommand{\V}{{\cal V}}
\newcommand{\W}{{\cal W}}
\newcommand{\Wp}{{\W\,^\prime}}
\newcommand{\be}{\begin{equation}}
\newcommand{\ee}{\end{equation}}
\newcommand{\bes}{\begin{equation*}}
\newcommand{\ees}{\end{equation*}}
\newcommand{\bea}{\begin{eqnarray}}
\newcommand{\eea}{\end{eqnarray}}
\newcommand{\beas}{\begin{eqnarray*}}
\newcommand{\eeas}{\end{eqnarray*}}
\def\text#1{\;\;\mbox{#1}\;}
\renewcommand{\@oddhead}{\vbox{\hbox to\textwidth{\scriptsize %
 %\hfill
 \filename\hfill S.E.Mikhailov\hfill %, \timetwentyfour
 \hfill\arabic{page}}
 \vspace{0.25ex}
 
 \hrule
 }}
 \renewcommand{\@evenhead}{\vbox{\hbox to\textwidth{\scriptsize %
 %\hfill
 \filename\hfill %JMAA%, \timetwentyfour
 }
  \vspace{0.25ex}
  
\hrule
}}
 \numberwithin{equation}{section}
\begin{document}
\title%[Segregated Boundary-Domain Integral Equations with General Data] 
{Analysis of Segregated
Boundary-Domain Integral Equations for
 BVPs with Non-smooth Coefficient on Lipschitz Domains\\
% \bl [Generalised to $a\in C^{s}(\bar\Omega)$ .\\
%{\mg Re-done to the solution in $H^s(\Omega)$,  $1/2<s<3/2$.}]
}

%\titlerunning{Segregated Boundary-Domain Integral Equations with General Data}

\author%[S.E. Mikhailov]
{S.E. Mikhailov
\footnote{%
%Corresponding author:
e-mail: {\sf sergey.mikhailov@brunel.ac.uk}}\\ 
%Phone: +44\,189\,267361,
%     Fax: +44\,189\,5269732}\\
Department of Mathematics, Brunel University London, UK
     \\
%     Uxbridge, UB8 3PH, UK
}
 \date{\em \today%, \timetwentyfour
 \hfill \filename }

 \maketitle

%\date{\em Journal version}
\maketitle

%\vspace{1cm}
\begin{abstract}\noindent

Segregated direct
boundary-domain integral equations (BDIEs) based on a  parametrix and
associated with the Dirichlet and Neumann boundary value problems for the linear stationary diffusion
partial differential equation with a variable non-smooth (or limited-smoothness) coefficient on Lipschitz domains are formulated. 
The PDE right hand sides belong to the Sobolev (Bessel-potential) space $H^{s-2}(\Omega)$ or $\widetilde H^{s-2}(\Omega)$, $\ha<s<\tha$, when neither strong classical nor weak canonical co-normal derivatives are well defined.
Equivalence of the BDIEs to the original BVP, BDIE solvability, solution uniqueness/non-uniqueness,
as well as Fredholm property and invertibility of the BDIE operators  are
analysed in appropriate Sobolev spaces.
It is shown that the BDIE operators for the Neumann BVP are not invertible, however
some finite-dimensional perturbations are constructed leading to invertibility of the perturbed (stabilised) operators.\\

\noindent{\bf Mathematics Subject Classification (2010).} 35J25, 31B10, 45K05, 45A05.

\noindent{\bf Keywords.} Partial differential equation, non-smooth
coefficients, Sobolev spaces,   parametrix, integral equations, equivalence, Lipschitz domain, invertibility.
\end{abstract}

\section{Introduction}

Many applications in science and engineering can be modeled by
boundary-value problems (BVPs) for partial differential equations with variable coefficients.
Reduction of the BVPs with arbitrarily variable coefficients to explicit
boundary integral equations is usually not possible, since the
fundamental solution needed for such reduction is generally not
available in an analytical form (except for some special
dependence of the coefficients on coordinates). Using a parametrix
(Levi function) introduced in \cite{Levi1909}, \cite{Hilbert1912}
as a substitute of a fundamental solution, it is possible however
to reduce such a BVP to a system of boundary-domain integral equations, BDIEs, (see
e.g.  \cite[Sect. 18]{Miranda1970}, \cite{Pomp1998b, Pomp1998a}, where the
Dirichlet, Neumann and Robin problems for some PDEs were reduced
to {\em indirect} BDIEs). However, many questions about their
equivalence to the original BVP, solvability, solution uniqueness
and invertibility of corresponding integral operators remained
open for rather long time.

In \cite{CMN-1, CMN-2, MikMMAS2006, CMN-NMPDE-crack, CMN-Ext-AA2013}, the 3D mixed (Dirichlet-Neumann)
boundary value problem (BVP) for the 
stationary diffusion PDE {\em with infinitely smooth variable coefficient on a domain with an infinitely smooth boundary and a square-integrable right-hand side} was
reduced to either segregated
or united direct Boundary-Domain Integral or Integro-Differential
Equations,  some of the which coincide with
those formulated in \cite{MikEABEM2002}. 
Such BVPs appear e.g. in electrostatics,
stationary heat transfer and other diffusion problems for
inhomogeneous media.

For a function from the Sobolev space $H^s(\Omega)$,  $\ha<s<\tha$, a classical co-normal derivative
in the sense of traces may not exist. 
However, when this function satisfies a second order partial differential equation with a right-hand side from $H^{s-2}(\Omega)$,  the generalised co-normal derivative can be defined in the weak sense, associated with the first Green identity and with an extension of the PDE right hand side to $\widetilde H^{s-2}(\Omega)$ (see \cite[Lemma 4.3]{McLean2000}, \cite[Definition 3.1]{MikJMAA2011}).
Since the extension is non-unique, the co-normal derivative operator appears to be also non-unique and non-linear in $u$ unless a linear relation between
$u$ and the PDE right hand side extension is enforced. This
creates some difficulties in formulating the
boundary-domain integral equations. 

These difficulties are addressed in this paper
presenting formulation and analysis of direct
segregated BDIE systems equivalent to the Dirichlet and Neumann boundary value
problems, on Lipschitz domains, for the divergent-type PDE with a non-smooth H\"older-Lipschitz variable scalar coefficient  and a general
right hand side from ${H}^{s-2}(\Omega)$, extended when necessary to  $\widetilde{H}^{s-2}(\Omega)$. 
This needed a non-trivial generalisation of the third Green identity and its co-normal derivative for such functions,
which essentially extends the approach implemented in \cite{CMN-1, CMN-2, MikMMAS2006, CMN-NMPDE-crack, CMN-Ext-AA2013} for the right hand side from $L_2(\Omega)$, with smooth coefficient and domain boundary. 
Equivalence of the BDIEs to the original BVP, BDIE solvability, solution uniqueness/non-uniqueness,
as well as Fredholm properties and invertibility of the BDIE operators  are
analysed in the Sobolev (Bessel potential) spaces.
It is shown that the BDIE operators for the Neumann BVP are not invertible, and
appropriate finite-dimensional perturbations are constructed leading to invertibility of the perturbed (stabilised) operators.
Some preliminary results in this direction, for the infinitely smooth coefficient and domains, were presented in \cite{MikArxiv2015}.

Note that our analysis is mainly aimed not at the boundary-value problems, which properties are well-known nowadays, but rather at the BDIE systems per se. The analysis is interesting not only in its own rights but is also to be used further on  for analysis of convergence and stability of BDIE-based numerical methods for PDEs,
 see e.g. \cite{GMR2013, MikEABEM2002, MikNakJEM, MikMoh2012, SSA2000, SSZ2005, Taigbenu1999, ZZA1998, ZZA1999}.

%%%%%%%%%%%%%%%%%  SECTION   2
\section{Spaces, co-normal derivatives and boundary value problems}\label{S2}
Let $\Omega=\Omega_+$ be a bounded open $n$--dimensional region of
$\R^n$, $n\ge 3$, and $\Omega_-=\R^n\setminus\overline\Omega_+$ is the corresponding exterior domain. 
For simplicity, we assume that their common boundary $\pa \Omega$
is a simply connected, closed, Lipschitz surface. Let $\Omega_0$  denote  $\Omega_+$, $\Omega_-$ or $\R^n$. 

In what follows $\mathcal D(\Omega_0):=C^\infty_{comp}(\Omega_0)$, 
$\mathcal D(\overline{\Omega_0}):=\{r_{_{\Omega_0}}g:\;g\in\mathcal D(\R^n)\}$.
Here and further on, $r_{_{\Omega_0}}$ denotes the restriction operator on
$\Omega_0$; we will also use the equivalent notation
$g|_{_{\Omega_0}}:=r_{_{\Omega_0}}g$. 
Further, $ H^s(\Omega_0)= H^s_2(\Omega_0)$, $ H^s(\pO)=H^s_2(\pO)$ are the
Bessel potential spaces, where $s$ is an arbitrary real
number (see, e.g., \cite{LiMa1}, \cite{McLean2000}). We recall that $H^s$
coincide with the Sobolev--Slobodetski spaces $W^s_2$ for any
non-negative $s$.
By $\widetilde{H}^s (\Omega_0)$  we denote the closure of  $\mathcal D(\Omega_0)$ in $H^s(\R^n)$. It is a subspace of ${H}^s (\R^n)$, and for Lipschitz domains,
$
\widetilde{H}^s (\Omega_0)=\{g:\;g\in H^s  (\R^n),\; \supp \,g
\subset\ov{\Omega_0}\}.
$
By ${H}^s (\Omega_0)$ and 
%$\overbar{\widetilde H}^s (\Omega)$ 
%${\widetilde H}^s|(\Omega)$ 
%${\widetilde H}^s_\shortmid(\Omega)$ 
%${\widetilde H}^s_\shortparallel(\Omega)$
%${\widetilde H}^s_z(\Omega)$  
%${\widetilde H}^s_r(\Omega)$  
%${\widetilde H}^s_\centerdot(\Omega)$
${\widetilde H}^s_\bullet(\Omega_0)$
we denote the spaces of restrictions on
$\Omega_0$ of distributions  from ${H}^s(\R^n)$ and $\widetilde{H}^s(\Omega_0)$, respectively,
\begin{align}
H^s (\Omega_0):=\{r_{_{\Omega_0}}g:\;g\in{H}^s (\R^n)\}, \quad 
\widetilde H^s_\bullet(\Omega_0):=r_{\Omega_0}{\widetilde H}^s(\Omega_0):=\{r_{_{\Omega_0}}g:\;g\in\widetilde{H}^s (\Omega_0)\} \subset H^s (\Omega_0),
\end{align}
endowed by the corresponding infimum norms and the Hilbert structure defined with the help of orthogonal projections, cf. \cite[p. 77]{McLean2000} for $H^s(\Omega_0)$.  Note that the space $\widetilde H^s_\bullet(\Omega_0)$ coincides with the one denoted as $L_{s,z}^p(\Omega_0)$ in \cite[Eq. (5.2)]{Mitrea-Monniaux2008}, \cite[Eq. (2.212)]{MM2013Spr}, for $p=2$.

We denote by ${H}^s_{\pO}$ the following subspace of ${H}^s
(\R^n)$ (and $\widetilde{H}^s (\Omega_0)$),
\be\label{H_dO}
{H}^s_{\pO}:=\{g:\;g\in H^s(\R^n),\; \supp \,g \subset{\pO}\},
\ee
and by $\mathring H^s(\Omega_0)$ we denote the closure of  $\mathcal D(\Omega_0)$ in $H^s(\Omega_0)$.

%\part{title}Let $\mathfrak F$ denote the Fourier transform operator in $\R^n$. Let us consider the function $\rho (\xi):=\left(1+|\xi|^2\right)^{\frac{1}{2}}$, $\xi\in {\mathbb R}^n$. The Bessel potential operator $\mathcal J^s:H^t(\R^n)\to H^{t-s}(\R^n)$ defined as $\mathcal J^s g:=\mathfrak F^{-1}(\rho^s\mathfrak F g)$ is a continuous operator, cf. e.g. \cite[p. 75]{McLean2000}. 

\begin{definition}\label{E0}Let us denote  by $\mathring E_{\Omega_0}$ the operator of extension of functions $g\in H^s(\Omega_0)$, $s\ge 0$, to the whole $\R^n$  by zero outside $\Omega_0$. 
By, e.g., \cite[Lemma 3.32 and Theorem 3.33]{McLean2000}, see also \cite[Theorem 2.7]{MikJMAA2011}, the operator $\mathring E_{\Omega_0}: H^s(\Omega_0)\to\widetilde H^s(\Omega_0)$ is continuous if $0\le s<\ha$ and we will extend it also to the range $-\ha< s<\ha$ defining it for $-\ha<s<0$ as, cf. the proof of \cite[Theorem 2.16]{MikJMAA2011},
\begin{align}\label{E0s}
\langle \mathring E_{\Omega_0} g,v\rangle_{\Omega_0}:=\langle  g,\mathring E_{\Omega_0} v\rangle_{\Omega_0},\quad
\forall g\in H^s(\Omega),\ \forall v\in H^{-s}(\Omega).
\end{align}
\end{definition}
\begin{remark}\label{R2.1}
Note the following known or easily deduced facts.
\begin{enumerate}
\item\label{0} There hold the continuous embeddings 
$\widetilde H^s_\bullet(\Omega_0)\hookrightarrow \mathring H^s(\Omega_0)\hookrightarrow H^s(\Omega_0)$, cf. \cite[Eq. (2.123)]{Mitrea-Wright2012}. 

\item\label{i} $\widetilde H^s_\bullet(\Omega_0)= \mathring H^s(\Omega_0)$ for any $s>1/2$ such that $s-\frac{1}{2}$ is non-integer, by e.g. \cite[Theorem 3.3]{McLean2000}.  

\item\label{ii} $\mathring H^s(\Omega_0)=H^s(\Omega_0)$ for any $s\le 1/2$, by \cite[Theorem 2.12]{MikJMAA2011}.  

\item\label{iii} $\widetilde H^s_\bullet(\Omega_0)=\mathring H^s(\Omega_0)=H^s(\Omega_0)$ for any $s< 1/2$ such that $s-\frac{1}{2}$ is non-integer, by e.g. \cite[Lemma 2.15]{MikJMAA2011}. 

\item\label{iv} for any $s\in\R$, there evidently exist an extension from  $\widetilde H^s_\bullet(\Omega_0)$ to  $\widetilde H^s(\Omega_0)$, and for any $s\ge-1/2$ this extension is {\bf unique}, by e.g. \cite[Lemma 3.39]{McLean2000}, \cite[Theorem 2.10(i)]{MikJMAA2011}. 

\item\label{v} By \cite[Theorem 2.16]{MikJMAA2011}, for any $s\in(-1/2,1/2)$ the extension  from
$\widetilde H^s_\bullet(\Omega_0)=\mathring H^s(\Omega_0)=H^s(\Omega_0)$ to $\widetilde H^s(\Omega_0)$ and is given by the operator $\mathring E_{\Omega_0}$.
%$\widetilde E^s_{\Omega_0}:\widetilde H^s_\bullet(\Omega_0)=\mathring H^s(\Omega_0)=H^s(\Omega_0)\to \widetilde H^s(\Omega_0)$ 
%is unique. 
%and $\widetilde E^s_{\Omega_0}=\mathring E_{\Omega_0}$.
\end{enumerate}
\end{remark}
\begin{remark}\label{R2.2}
Due to Remark~\ref{R2.1}(\ref{iv}),  for $s\ge -1/2$ the space $\widetilde H^s_\bullet(\Omega_0)$ is isometrically isomorphic to the space $\widetilde H^s (\Omega_0)$ and sometimes these spaces are identified. 
Particularly, if $g_1,g_2\in \widetilde H^s_\bullet(\Omega_0)$, then denoting by $\tilde g_1,\tilde g_2\in \widetilde H^s (\Omega_0)$ the {\bf unique} distributions such that $g_i=r_{\Omega_0} \tilde g_i$ in $\Omega_0$, we have 
$\|g_i\|_{\widetilde H^s_\bullet(\Omega_0)}=\|\tilde g_i\|_{\widetilde H^s(\Omega_0)}$ and
$(g_1,g_2)_{\widetilde H^s_\bullet(\Omega_0)}=(\tilde g_1,\tilde g_2)_{\widetilde H^{s}(\Omega_0)}$.
Moreover, if $s\in(-1/2,1/2)$, then by Remark~\ref{R2.1}(\ref{v}), $\tilde g_i=\widetilde E_{\Omega_0}^s g_i$ hence implying
$\|g_i\|_{\widetilde H^s_\bullet(\Omega_0)}=\|\widetilde E_{\Omega_0}^s g_i\|_{\widetilde H^s(\Omega_0)}$. 
%and
%$(g_1,g_2)_{\widetilde H^s_\bullet(\Omega_0)}=(\widetilde E_{\Omega_0}^s g_1,\widetilde E_{\Omega_0}^s g_2)_{\widetilde H^{s}(\Omega_0)}$.

There is no such isomorphism for $s<-1/2$ since  the extension from $\widetilde H^s_\bullet(\Omega_0)$ to $\widetilde H^s(\Omega_0)$ is not unique then. But due to the definition of the spaces, 
%along with e.g. \cite[Theorem 2.10]{MikJMAA2011}, 
there is still an isometric isomorphism between the space $\widetilde H^s_\bullet(\Omega_0)$ and the quotient space  $\widetilde H^s(\Omega_0)/H^s_{\partial\Omega_0}$. 
\end{remark}

Definition of the space $\widetilde H^s_\bullet(\Omega_0)$, Remark~\ref{R2.1} and Remark~\ref{R2.2} imply the following assertion.
\begin{corollary}\label{C2.3} 
The following restriction operators are isomorphisms,
\begin{align}
\label{r1}
&r_{\Omega_0}:\widetilde H^s(\Omega_0)\to \widetilde H^s_\bullet(\Omega_0),\quad  -\frac{1}{2}\le s,\\
\label{r2}
&r_{\Omega_0}:\widetilde H^s(\Omega_0)\to H^s(\Omega_0)=\widetilde H^s_\bullet(\Omega_0),\quad  -\frac{1}{2}<s<\frac{1}{2},\\
\label{r3}
&r_{\Omega_0}:\widetilde H^s(\Omega_0)/H^s_{\partial\Omega_0}\to \widetilde H^s_\bullet(\Omega_0),\quad s<-\frac{1}{2}.
\end{align}
The inverse to the operator \eqref{r2} is $r_{\Omega_0}^{-1}=\mathring E_{\Omega_0}$, see Definition \ref{E0}.
\end{corollary}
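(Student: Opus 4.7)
The plan is to observe that surjectivity of each restriction operator onto $\widetilde H^s_\bullet(\Omega_0)$ is built into the very definition $\widetilde H^s_\bullet(\Omega_0):=r_{\Omega_0}\widetilde H^s(\Omega_0)$, and that the continuity of $r_{\Omega_0}$ (with operator norm one) is immediate from the infimum-norm structure on $\widetilde H^s_\bullet(\Omega_0)$, since for any $g\in\widetilde H^s(\Omega_0)$ the element $g$ itself is an admissible extension of $r_{\Omega_0}g$. So the real content is injectivity in an appropriate sense, and establishing boundedness of the inverse (equivalently, checking that the quotient identification is isometric).

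For the range $s\ge-1/2$ of \eqref{r1}, I would invoke Remark~\ref{R2.1}(\ref{iv}) to conclude that each $g\in\widetilde H^s_\bullet(\Omega_0)$ has a unique preimage in $\widetilde H^s(\Omega_0)$ under $r_{\Omega_0}$, giving injectivity. Because only one extension is available, the infimum defining $\|g\|_{\widetilde H^s_\bullet(\Omega_0)}$ is attained by that unique $\tilde g$, so $\|r_{\Omega_0}\tilde g\|_{\widetilde H^s_\bullet(\Omega_0)}=\|\tilde g\|_{\widetilde H^s(\Omega_0)}$ and the map is an isometric isomorphism. In the subrange $-1/2<s<1/2$ of \eqref{r2}, Remark~\ref{R2.1}(\ref{iii}) identifies $\widetilde H^s_\bullet(\Omega_0)=H^s(\Omega_0)$ as Banach spaces, and Remark~\ref{R2.1}(\ref{v}) asserts that the unique extension supplied in the previous paragraph is actually the zero-extension $\mathring E_{\Omega_0}$; combining these yields the explicit formula $r_{\Omega_0}^{-1}=\mathring E_{\Omega_0}$.

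For \eqref{r3} with $s<-1/2$, I would identify the kernel of $r_{\Omega_0}:\widetilde H^s(\Omega_0)\to \widetilde H^s_\bullet(\Omega_0)$. Since every element of $\widetilde H^s(\Omega_0)$ is a distribution in $H^s(\R^n)$ supported in $\overline{\Omega_0}$, any such distribution whose restriction to $\Omega_0$ vanishes has support contained in $\overline{\Omega_0}\setminus\Omega_0\subset\partial\Omega_0$, hence lies in $H^s_{\partial\Omega_0}$ by definition \eqref{H_dO}; conversely $H^s_{\partial\Omega_0}\subset\widetilde H^s(\Omega_0)$ and is annihilated by $r_{\Omega_0}$. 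Thus the kernel equals $H^s_{\partial\Omega_0}$, and passing to the quotient gives a bijection, consistent with the last part of Remark~\ref{R2.2}. The induced quotient norm on $\widetilde H^s(\Omega_0)/H^s_{\partial\Omega_0}$ is precisely the infimum of $\|\tilde g\|_{\widetilde H^s(\Omega_0)}$ over all extensions $\tilde g$ of a given $g\in\widetilde H^s_\bullet(\Omega_0)$, so the induced map is again an isometric isomorphism.

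The main subtlety I expect is the kernel identification in the third case: one must be careful that a distribution in $H^s(\R^n)$ whose restriction to the open set $\Omega_0$ is zero indeed has support in $\partial\Omega_0$ (and not merely in $\overline{\Omega_0}$), which uses the openness of $\Omega_0$ together with the basic definition of the support of a distribution. Everything else reduces to a bookkeeping exercise with the infimum norm and the already-stated items of Remarks~\ref{R2.1} and \ref{R2.2}.
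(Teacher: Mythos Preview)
Your proposal is correct and follows essentially the same approach as the paper: the paper simply states that the corollary is implied by the definition of $\widetilde H^s_\bullet(\Omega_0)$ together with Remarks~\ref{R2.1} and~\ref{R2.2}, and your argument is precisely a careful unpacking of those ingredients (surjectivity from the definition, injectivity and the identification of the kernel from the cited items, and the isometry from the infimum-norm structure already recorded in Remark~\ref{R2.2}).
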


\begin{definition}\label{C+mu}
Let for a non-negative integer $m$ and $0<\theta\le 1$, $C^{m,\theta}(\overline\Omega_0)$ denote the H\"older-Lipschitz space in the closed domain $\overline\Omega_0$.
Similar to \cite[Definition 3.1]{MikJMAA2013},  $g\in C^\mu_+(\overline\Omega_0)$ for $\mu\ge 0$ will mean that\\
$g\in L_\infty(\Omega_0)$, when $\mu=0$;\\ 
$g\in C^{\mu-1,1}(\overline\Omega_0)$, when $\mu$ is a positive integer;\\
$g\in C^{m,\theta+\epsilon}(\overline\Omega_0)$ for some $\epsilon>0$, when $\mu=m+\theta$, where $m$ is a non-negative integer and $0<\theta<1$.
\end{definition}

Employing this definition, Theorem~\ref{GrL} 
from Section \ref{Appendix} 
can be reformulated as follows.
\begin{theorem}\label{GrL+}
Let $\Omega_0$ be an open set in $\R^n$, $\sigma\in\R$, $v\in H^\sigma(\Omega_0)$ and $g\in C^{|\sigma|}_+(\overline\Omega_0)$. Then 
$g$ is a multiplier in $H^\sigma(\Omega_0)$, i.e.,   $gv\in H^\sigma(\Omega_0)$
for every $v\in H^\sigma(\Omega_0)$, and the corresponding norm estimate holds.
\end{theorem}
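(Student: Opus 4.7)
The plan is to reduce the domain statement to the full-space multiplier theorem (Theorem~\ref{GrL}) by a lift-and-restrict argument, exploiting the infimum-norm definition $H^\sigma(\Omega_0)=r_{\Omega_0}H^\sigma(\R^n)$.

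Given $v\in H^\sigma(\Omega_0)$ and $\varepsilon>0$, I would first select an extension $\tilde v\in H^\sigma(\R^n)$ with $r_{\Omega_0}\tilde v=v$ and $\|\tilde v\|_{H^\sigma(\R^n)}\le(1+\varepsilon)\|v\|_{H^\sigma(\Omega_0)}$. Independently, I would produce an extension $\tilde g\in C^{|\sigma|}_+(\R^n)$ of $g$ with a norm bound
\[
\|\tilde g\|_{C^{|\sigma|}_+(\R^n)}\le C_0\,\|g\|_{C^{|\sigma|}_+(\overline{\Omega_0})},
\]
using a Stein/Whitney-type extension for the H\"older--Lipschitz scale $C^{m,\theta}$ (and a trivial bounded extension in the case $\mu=0$). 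Theorem~\ref{GrL} applied on $\R^n$ then yields $\tilde g\tilde v\in H^\sigma(\R^n)$ together with a bound
\[
\|\tilde g\tilde v\|_{H^\sigma(\R^n)}\le C_1\,\|\tilde g\|_{C^{|\sigma|}_+(\R^n)}\,\|\tilde v\|_{H^\sigma(\R^n)}.
\]
Setting $gv:=r_{\Omega_0}(\tilde g\tilde v)$ and combining these estimates before letting $\varepsilon\to 0$ gives the desired multiplier bound in $H^\sigma(\Omega_0)$ with constant $C_0 C_1$.

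The step I expect to be the main obstacle is showing that $gv$ as just defined depends only on $(g,v)$ and not on the chosen extensions $(\tilde g,\tilde v)$, so that it is a genuine product. For $\sigma\ge 0$ this is automatic since the product is pointwise a.e. For $\sigma<0$ I would verify it by duality: for any test function $\phi\in\mathcal D(\Omega_0)$,
\[
\langle r_{\Omega_0}(\tilde g\tilde v),\phi\rangle_{\Omega_0}=\langle \tilde v,\tilde g\mathring E_{\Omega_0}\phi\rangle_{\R^n},
\]
and since $\mathring E_{\Omega_0}\phi$ is supported compactly inside $\Omega_0$, the product $\tilde g\mathring E_{\Omega_0}\phi$ is determined by $g$ alone (not by $\tilde g$), and its pairing against $\tilde v$ depends only on $v=r_{\Omega_0}\tilde v$. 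Hence $gv$ coincides with the canonical distributional action of $g$ on $v$, and the above estimate promotes this action to a continuous multiplier $g\colon H^\sigma(\Omega_0)\to H^\sigma(\Omega_0)$, completing the proof.
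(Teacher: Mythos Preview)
Your lift-and-restrict argument is unnecessary here: in the paper, Theorem~\ref{GrL} is already stated on an arbitrary open set $\Omega_0$, not only on $\R^n$. The paper therefore treats Theorem~\ref{GrL+} as a pure \emph{reformulation} of Theorem~\ref{GrL} via Definition~\ref{C+mu} --- one simply checks that $g\in C^{|\sigma|}_+(\overline{\Omega_0})$ unpacks into exactly the hypothesis of the appropriate case (i), (ii) or (iii) of Theorem~\ref{GrL}, and the conclusion follows verbatim. There is no separate proof.

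Your route would also work, but it carries a genuine extra cost: the extension $\tilde g\in C^{|\sigma|}_+(\R^n)$ with a controlled norm is not free on an \emph{arbitrary} open set $\Omega_0$. For $|\sigma|\le 1$ (so $m=0$) a McShane-type extension handles $C^{0,\theta}$, but for $|\sigma|>1$ a bounded $C^{m,\theta}$ extension operator generally requires some geometric regularity of $\partial\Omega_0$ (e.g.\ Lipschitz), which the statement does not assume. So as written, your step ``produce $\tilde g\in C^{|\sigma|}_+(\R^n)$ with norm control'' is a gap unless you add such a hypothesis or supply a Whitney-jet argument valid on arbitrary closed sets. By contrast, the paper sidesteps this entirely by invoking Theorem~\ref{GrL} directly on $\Omega_0$.
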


%Further on, when needed, especially starting from Section~\ref{S3}, we will restrict ourself with Lipschitz-smooth coefficient, $a\in C^{0,1}(\overline{\Omega})$, or even smoother one, $a\in C^{1,\mu}(\overline{\Omega})$, $0<\mu<1$.

Let us denote $\pa_{j}:=\pa_{x_j}:=\pa/\pa{x_j}$ $(j=1,2,...,n)$, $\nabla=(\pa_{1},\pa_{2},..., \pa_{n})$.
Let $1/2<s<3/2$ and  $a\in C_+^{|s-1|}(\overline \Omega_\pm)$, and 
\begin{align}\label{a>0}
0<a_{\min}\le a(x)\le a_{\max}<\infty \mbox{ for almost every } x\in{\Omega_\pm}.
\end{align} 
We consider the scalar elliptic differential equation, which for
sufficiently smooth $u$ has the following strong form,
 %%%%%%%%%%%%   2.1
\begin{equation}
\label{2.1} Au(x):=A(x,\nabla)\,u(x) := \nabla\cdot \left(a(x)\,\nabla u(x)\,\right) = f(x),  \quad x \in \Omega_\pm,
\end{equation}
where
 $u$ is an unknown
function and $f$ is a given function in $\Omega_\pm$.

For $u\in H^s(\Omega_\pm)$, the partial differential operator $A$ is understood in the sense of distributions,
\begin{equation}\label{Ldist}
    \langle Au,v \rangle_{\Omega_\pm}:=-\E_{\Omega_\pm}(u,v)\quad \forall v\in
    \mathcal D(\Omega_\pm),
\end{equation}
where 
$$
\E_{\Omega_\pm}(u,v):=\left\langle a\nabla u , \nabla v\right\rangle_{\Omega_\pm}
:=\sum\limits_{i=1}^{n} \left\langle a\pa_i u , \pa_i v\right\rangle_{\Omega_\pm},
$$
and the duality brackets $\langle \;g,\;\cdot\;\rangle_{\Omega_\pm} $ denote
value of a linear functional (distribution) $g$, extending the
usual $L_2$ dual product. If $s=1$, then 
 $$
\E_{\Omega_\pm}(u,v)=\int_{\Omega_\pm} a(x) \;\nabla u(x) \cdot\nabla v(x)dx.
 $$
Since the set $\mathcal D(\Omega_\pm)$ is dense in $\widetilde{H}^{2-s}(\Omega_\pm)$, \eqref{Ldist} defines, due to Theorem~\ref{GrL+} (cf. e.g. \cite[Theorem 3.4]{MikJMAA2013}), the continuous linear operator 
$A: H^s(\Omega_\pm)\to H^{s-2}(\Omega_\pm)=[\widetilde{H}^{2-s} (\Omega_\pm)]^*$, where
\begin{equation}\label{LH1}
    \langle Au,v \rangle_{\Omega_\pm}:=-\E_{\Omega_\pm}(u,v),\quad \forall\ u\in H^s(\Omega_\pm),\, v\in\widetilde{H}^{2-s}(\Omega_\pm).
\end{equation}

Let us consider also the different operators 
$\check{A}_{\Omega_\pm}:H^s({\Omega_\pm})\to \widetilde{H}^{s-2}({\Omega_\pm})=[{H}^{2-s} ({\Omega_\pm})]^*$, 
see \cite[Eq. (3.5)]{MikJMAA2011}, \cite[Eq. (5.1)]{MikJMAA2013},
\begin{multline}\label{Ltil}
    \langle \check{A}_{\Omega_\pm} u,v \rangle_{{\Omega_\pm}}:=-\check{\E}_{\Omega_\pm}(u,v)
    :=-\langle \mathring E_{\Omega_\pm}(a\nabla u),\nabla v\rangle_{{\Omega_\pm}}
        =-\langle \mathring E_{\Omega_\pm}(a\nabla u),\nabla v_e\rangle_{\R^n}\\
        =\langle \nabla\cdot\mathring E_{\Omega_\pm}(a\nabla u),v_e\rangle_{\R^n}
        =\langle \nabla\cdot\mathring E_{\Omega_\pm}(a\nabla u),v\rangle_{{\Omega_\pm}}, \quad 
        \forall\, u\in H^s({\Omega_\pm}),\  v\in {H}^{2-s}({\Omega_\pm}),
\end{multline}
which is evidently continuous.  
Here $v_e\in H^{2-s}(\R^n)$ is such that $r_{{\Omega_\pm}} v_e=v$. 
Evidently, weak definition \eqref{Ltil} can be also written (in the strong-looking form) as 
\be\label{checkA}
\check{A}_{\Omega_\pm} u=\nabla\cdot\mathring E_{\Omega_\pm} r_{\Omega_\pm}[a\nabla u].
\ee
For any $u\in H^s({\Omega_\pm})$, the
functional $\check{A}_{\Omega_\pm} u$ belongs to $\widetilde{H}^{s-2}({\Omega_\pm})$ and is a specific
extension of the functional ${A}u\in {H}^{s-2}({\Omega_\pm})$; recall that the functional ${A}u\in {H}^{s-2}({\Omega_\pm})$ is defined on 
$\widetilde{H}^{2-s}({\Omega_\pm})$, while the functional $\check{A}_{\Omega_\pm} u$ on ${H}^{2-s}({\Omega_\pm})$.

\begin{remark}\label{RE}
Note also that  Definition \ref{E0} for $\mathring E_{\Omega_\pm}$ and definition \eqref{Ltil} imply that 
$$
\langle \check{A}_{\Omega_\pm} u,v \rangle_{{\Omega_\pm}}=-\check{\E}_{\Omega_\pm}(u,v)=-\check{\E}_{\Omega_\pm}(v,u)=\langle u,\check{A}_{\Omega_\pm} v \rangle_{{\Omega_\pm}},\quad \forall\ u\in H^s({\Omega_\pm}),\ v\in {H}^{2-s}({\Omega_\pm}),\ 1/2<s<3/2.
$$
\end{remark}

From the trace theorem (see e.g. \cite{LiMa1, Costabel1988, DaLi4, McLean2000})
for $u\in H^s(\Omega_\pm)$, $1/2<s<3/2$, it follows that
 $\gamma^\pm\,u \in H^{s-\ha}(\pO)$, where
$\gamma^\pm=\gamma^\pm_{_\pO}$ is the trace operator on $\pO$ from $\Omega_\pm$.
If $\gamma^+u=\gamma^-u$, we will sometimes write $\gamma u$.
Let also $\gamma^{-1}:=\gamma_r^{-1}:H^{s-\ha}(\partial\Omega)\to H^s(\R^n)$ denote a (non-unique) continuous right inverse to the trace operator $\gamma$, i.e., $\gamma\gamma^{-1}w=w$ for any $w\in H^{s-\ha}(\partial\Omega)$. 
Hence also $\gamma^\pm\gamma^{-1}w=w$ for any $w\in H^{s-\ha}(\partial\Omega)$.

For $u\in H^s(\Omega_\pm)$, $s>\tha$, and $a\in C(\overline\Omega_\pm)$,  we can denote by $T^{c\pm}$ the corresponding classical (strong)
co-normal derivative operators on $\pO$ in the sense of
traces,
%%%%%%%%%%%%   2.2
\begin{align}\label{Tcl}
 T^{c\pm}u(x) := 
%\sum\limits_{i=1}^n a(x)\,\nu_i(x)\gamma^+\frac{\pa u(x)}{\pa x_i}
a(x)\nu(x)\cdot\gamma^\pm\nabla u(x)
=a(x)\,\pa_\nu u(x),\quad x\in\partial\Omega,
\end{align}
where $\nu(x)=\nu^+(x)$ is the outward to $\Omega_+$ unit normal vector at
the point $x\in \pO$, and we will sometimes write $T^{c}u(x)$ if $T^{c+}u(x)=T^{c-}u(x)$.  
However the classical co-normal derivative is, generally, not well defined if 
$u\in H^s(\Omega_\pm )$, $1/2<s<3/2$, (cf.  an example in \cite[Appendix A]{MikArxiv2015} of a function from $H^1(\Omega)$, where the classical normal derivative does not exist at any boundary point).

Inspired by the first Green identity for smooth functions, we can define {\em the
generalised co--normal derivative} (cf., for
example, \cite[Lemma 4.3]{McLean2000}), \cite[Definition 3.1]{MikJMAA2011}, \cite[Definition 5.2]{MikJMAA2013}).
\begin{definition}\label{GCDd}
Let $1/2<s<3/2$, $u\in H^{s}(\Omega_\pm)$, $a\in C_+^{|s-1|}(\overline{\Omega}_\pm)$, and $r_{\Omega_\pm}Au=r_{\Omega_\pm}\tilde f_\pm$  for some $\tilde f_\pm\in\widetilde{H}^{s-2}(\Omega_\pm)$. 
Then the {\bf generalised co--normal derivatives} $T^\pm(\tilde f_\pm;u) \in H^{s-\tha}(\pO)$  are defined in the weak form as
%%%%%%%%%%%%%%%%%%%%%%    2.5
\begin{align}
\label{Tgend} 
\pm\left\langle T^\pm(\tilde f_\pm;u)\,,\, w\right\rangle _{\pO}:=
 \langle \tilde f_\pm,\gamma^{-1}w \rangle_{\Omega_\pm} + \check\E_{\Omega_\pm}(u,\gamma^{-1}w)
% \nonumber\\
 =\langle\tilde f_\pm-\check{A}_{\Omega_\pm}u,\gamma^{-1}w\rangle_{\Omega_\pm},
  \quad  %\\  [4mm]
\forall\ w\in H^{\tha-s} (\partial\Omega),
\end{align}
i.e.,
$
 T^\pm(\tilde f_\pm,u):=\pm(\gamma^{-1})^*(\tilde f_\pm-\check{A}_{\Omega_\pm}u).
$

If $a\equiv 1$, then $A=\Delta$ and $T^\pm(\tilde f_\pm;u)$ become {\bf generalised normal derivatives} denoted as  $T^\pm_\Delta(\tilde f_\pm;u)$. 
\end{definition}
\begin{theorem}[Lemma 4.3 in \cite{McLean2000}), Theorem 3.2 in \cite{MikJMAA2011}, and Theorem 5.3 \cite{MikJMAA2013}]\label{T6.6} 
Under the hypotheses of Definition~\ref{GCDd}, the generalised co-normal derivatives $T^\pm u(\tilde f_\pm;u)$ are
independent of (non-unique) choice of the operator $\gamma^{-1}$,  
we have the estimate
\begin{equation}\label{estimate}
\|T^\pm(\tilde{f}_\pm;u)\|_{H^{s-\tha}(\pO)}\le
C_1\|u\|_{H^s({\Omega_\pm})} + C_2\|\tilde{f}_\pm\|_{\widetilde{H}^{s-2}(\Omega_\pm)},
\end{equation}
and the first Green identity holds in the following form for $u\in {H}^{s}(\Omega_\pm)$ such that $r_{\Omega_\pm} Au=r_{\Omega_\pm}\tilde f_\pm$ for some 
$\tilde f_\pm\in\widetilde{H}^{s-2}(\Omega_\pm)$,
\begin{equation}
\label{Tgen} 
\pm\left\langle T^\pm(\tilde f_\pm;u)\,,\, \gamma^\pm v
\right\rangle _{\pO}=\langle \tilde f_\pm,v \rangle_{\Omega_\pm} + \check\E_{\Omega_\pm}(u,v)=\langle
\tilde f_\pm-\check{A}_{\Omega_\pm}u,v \rangle_{\Omega_\pm}
  \quad  %\\  [4mm]
\forall\ v\in H^{2-s} (\Omega_\pm).
\end{equation}
\end{theorem}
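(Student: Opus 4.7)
I would reduce all three assertions to a single observation: the functional $g^\pm:=\tilde f_\pm-\check A_{\Omega_\pm}u\in\widetilde H^{s-2}(\Omega_\pm)$ is supported on $\pO$. Indeed, formula \eqref{checkA} shows that $\check A_{\Omega_\pm}u$ restricts to $\nabla\cdot(a\nabla u)=Au$ in $\Omega_\pm$, which by hypothesis coincides with $r_{\Omega_\pm}\tilde f_\pm$; hence $r_{\Omega_\pm}g^\pm=0$ and $\supp g^\pm\subset\ov{\Omega_\pm}\setminus\Omega_\pm=\pO$. In this notation Definition~\ref{GCDd} reads $\pm\langle T^\pm(\tilde f_\pm;u),w\rangle_{\pO}=\langle g^\pm,\gamma^{-1}w\rangle_{\Omega_\pm}$.

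The key technical step I would establish as an auxiliary lemma is: if $v_0\in H^{2-s}(\Omega_\pm)$ satisfies $\gamma^\pm v_0=0$, then $\langle g^\pm,v_0\rangle_{\Omega_\pm}=0$. Because $2-s\in(\ha,\tha)$ with $2-s-\ha$ non-integer, Remark~\ref{R2.1}(\ref{i}) identifies $v_0\in\mathring H^{2-s}(\Omega_\pm)$ with an element of $\widetilde H^{2-s}_\bullet(\Omega_\pm)$, and Corollary~\ref{C2.3}\eqref{r1} supplies a unique lift $\tilde v_0\in\widetilde H^{2-s}(\Omega_\pm)$. By the very definition of $\widetilde H^{2-s}$, the lift $\tilde v_0$ is approximable in $H^{2-s}(\R^n)$ by a sequence $v_0^{(k)}\in\mathcal D(\Omega_\pm)$; since $\supp v_0^{(k)}$ is compact and strictly inside $\Omega_\pm$, it is disjoint from $\supp g^\pm\subset\pO$, so the distributional pairing $\langle g^\pm,v_0^{(k)}\rangle_{\R^n}$ vanishes. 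Passing to the limit yields the claimed $\langle g^\pm,v_0\rangle_{\Omega_\pm}=0$.

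Granted this lemma, the three assertions follow quickly. For independence of $\gamma^{-1}$, I would apply the lemma to $r_{\Omega_\pm}(\gamma_1^{-1}w-\gamma_2^{-1}w)$, which is the restriction of an $H^{2-s}(\R^n)$-function with trace zero from both sides. For the estimate \eqref{estimate}, I would combine the duality $\widetilde H^{s-2}(\Omega_\pm)\times H^{2-s}(\Omega_\pm)$ with the continuity of $\check A_{\Omega_\pm}:H^s(\Omega_\pm)\to\widetilde H^{s-2}(\Omega_\pm)$ (built into \eqref{Ltil} via Theorem~\ref{GrL+} under the hypothesis $a\in C_+^{|s-1|}$) and the continuity of a right inverse $\gamma^{-1}:H^{\tha-s}(\pO)\to H^{2-s}(\R^n)$ to bound the right-hand side of \eqref{Tgend} by a multiple of $(\|u\|_{H^s(\Omega_\pm)}+\|\tilde f_\pm\|_{\widetilde H^{s-2}(\Omega_\pm)})\|w\|_{H^{\tha-s}(\pO)}$, then take the supremum over $\|w\|_{H^{\tha-s}(\pO)}=1$. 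For the Green identity \eqref{Tgen}, I would set $w:=\gamma^\pm v$ and decompose $v=r_{\Omega_\pm}\gamma^{-1}w+v_0$ with $v_0$ of zero trace; the lemma kills the $v_0$ contribution and the identity follows from Definition~\ref{GCDd} applied to the first summand.

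The hard part will be the auxiliary lemma, because it simultaneously requires that the zero-trace subspace of $H^{2-s}(\Omega_\pm)$ be exactly $\widetilde H^{2-s}_\bullet(\Omega_\pm)$ so that $\mathcal D(\Omega_\pm)$-approximation is available, and that the approximation take place in the ambient space $H^{2-s}(\R^n)$ in which $g^\pm$ naturally pairs with test functions. Both features rely on the restriction $s\in(\ha,\tha)$ keeping $2-s$ away from the half-integer exponents at which the identifications in Remark~\ref{R2.1}(\ref{i}) and Corollary~\ref{C2.3}\eqref{r1} would break down.
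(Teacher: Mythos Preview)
The paper does not supply its own proof of this statement; it is quoted as a known result from the cited references (McLean, Lemma~4.3; Mikhailov 2011, Theorem~3.2; Mikhailov 2013, Theorem~5.3). Your argument is correct and is essentially the standard proof found in those references: the observation that $\tilde f_\pm-\check A_{\Omega_\pm}u\in\widetilde H^{s-2}(\Omega_\pm)$ is supported on $\pO$, combined with the density of $\mathcal D(\Omega_\pm)$ in the zero-trace subspace of $H^{2-s}(\Omega_\pm)$, is precisely the engine of McLean's Lemma~4.3.

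One small point to tighten: the implication $\gamma^\pm v_0=0\Rightarrow v_0\in\mathring H^{2-s}(\Omega_\pm)$ that you invoke before applying Remark~\ref{R2.1}(\ref{i}) is itself a nontrivial fact (the identification of the kernel of the trace with the closure of $\mathcal D(\Omega_\pm)$; see e.g.\ McLean, Theorem~3.40) and should be cited explicitly. Remark~\ref{R2.1}(\ref{i}) only gives the equality $\mathring H^{2-s}(\Omega_\pm)=\widetilde H^{2-s}_\bullet(\Omega_\pm)$ once membership in $\mathring H^{2-s}(\Omega_\pm)$ is already established.
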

As follows from Definition~\ref{GCDd}, the generalized co-normal derivative is nonlinear with respect to $u$ for  fixed $\tilde{f}_\pm$, but still linear with respect to the couple $(\tilde f_\pm, u)$, i.e., for any complex numbers $\alpha_1$ and $\alpha_2$,
\begin{align}\label{GCDL}
\alpha_1 T^\pm(\tilde{f}_{1\pm};u_1) +\alpha_2 T^+(\tilde{f}_{2\pm};u_2)=
T^\pm(\alpha_1\tilde{f}_{1\pm};\alpha_1u_1) + T^\pm(\alpha_2\tilde{f}_{2\pm};\alpha_2u_2)=
T^\pm(\alpha_1\tilde{f}_{1\pm}+\alpha_2\tilde{f}_{2\pm};\alpha_1u_1+\alpha_2u_2).
\end{align}

Let us also define some subspaces of $H^s({\Omega_\pm})$, cf. \cite{Grisvard1985, Costabel1988, MikJMAA2011, MikJMAA2013}.
\begin{definition}\label{Hst}
Let $s\in\mathbb{R}$ and $A_*:H^s({\Omega_\pm})\to {\cal D}^*({\Omega_\pm})$
be a linear operator. For $t\in \R$, we introduce the space
 $$
 H^{s,t} ({\Omega_\pm};A_*):=\{g:\;g\in H^s ({\Omega_\pm}),\
 A_*g\in
\widetilde H^t_\bullet({\Omega_\pm})\}
 $$
endowed with the norm
 $ 
 \|g\|_{ H^{s,t} ({\Omega_\pm};A_*)}:= \left(\|g\|^2_{ H^s({\Omega_\pm})}+\|A_*g\|^2_{\widetilde H^t_\bullet({\Omega_\pm})}\right)^{1/2}
 $
 and the corresponding inner product. 
% \be\label{HstIP}
% (g_1,g_2)_{H^{s,t}({\Omega_\pm},A_*)}:=(g_1,g_2)_{H^{s}({\Omega_\pm})} + (A_*g_1,A_*g_2)_{\widetilde H^t_\bullet({\Omega_\pm})}.
% \ee
 \end{definition}
 
 \begin{definition} Let $\Omega_0$ be either $\Omega_+$ or $\Omega_-$. 
By Remark~\ref{R2.2}, if $g\in H^{s,t} ({\Omega_0};A_*)$ for some $s\in\mathbb{R}$ and $t\ge -\frac{1}{2}$, then
there exists a {\bf unique} distribution $\tilde f\in \widetilde{H}^{t}({\Omega_0})$ such that $r_{\Omega_0}\tilde f=A_*g$, and hence, $\tilde f=\tilde A_{*{\Omega_0}}g$, where $\tilde A_{*{\Omega_0}}:= r_{\Omega_0}^{-1}A_*$. The operator 
$\tilde A_{*{\Omega_0}}:H^{s,t} ({\Omega_0};A_*)\to \widetilde H^t({\Omega_0})$ is continuous by Corollary~\ref{C2.3}, is the {\bf canonical} extension of the operator $A_*:H^{s,t} ({\Omega_0};A_*)\to \widetilde H^t_\bullet({\Omega_0})$ and
moreover, if $-\ha<t<\ha$, then $\tilde A_{*{\Omega_0}}=\mathring E_{\Omega_0} A_*$.
\end{definition}
%The distribution  $\tilde{f}_g\in \widetilde{H}^{t}(\Omega)$, $t\ge-\frac{1}{2}$, in the above definition is an extension of the distribution $A_*g|_{\Omega}\in {H}^{t}(\Omega)$, and the extension  is unique (if it does exist) since any distribution from the space $H^t(\mathbb{R}^n)$ with a support in $\partial \Omega$ is identical zero if $t\geq -1/2$ (see e.g. \cite[Lemma 3.39]{McLean2000}, \cite[Theorem 2.10]{MikJMAA2011}). 
%We denote this extension as the operator $\tilde A_{*\Omega}$, i.e., $\tilde A_{*\Omega}g=\tilde f_g$.
%The uniqueness implies that the norm $\|g\|_{ H^{s,t} (\Omega;A_*)}$ is well defined. 
We will mostly use the operators $A$ or $\Delta$ as $A_*$ in the above definition. 
Note that since
$
Au =a\Delta u +\nabla a\cdot\nabla u 
%\sum\limits_{i=1}^n \frac{\pa a}{\pa x_i}\,\frac{\pa u}{\pa x_i}
$,
then for $1/2<s<3/2$,   we have
% $H^{s,\pm|s-1|}({\Omega_0};A)= H^{s,\pm|s-1|} ({\Omega_0};\Delta)$ if $a\in C_+^{|s-1|+1}(\overline{{\Omega_0}})$, and particularly 
 $H^{s,-\ha}({\Omega_0};A)= H^{s,-\ha} ({\Omega_0};\Delta)$ if $a\in C_+^{\tha}(\overline{{\Omega_0}})$, with equivalent norms.
 
   Let us now define the {\em canonical} conormal derivative, cf. \cite[Definition 6.5]{MikJMAA2013}.
\begin{definition}\label{Dccd}
For $u\in H^{s,-\frac{1}{2}}(\Omega_\pm;A)$ and $a\in C_+^{|s-1|}(\overline\Omega_\pm)$,  $1/2<s<3/2$,  we define the
{\bf canonical  co-normal derivatives} $T^\pm u \in
H^{s-\frac{3}{2}}(\partial\Omega)$ as
\begin{multline}\label{Tcandef}
 \pm\left\langle  T^\pm u\,,\, w \right\rangle _{\partial\Omega}
 :=\langle \tilde{A}_{\Omega_\pm} u,\gamma^{-1}w \rangle_{\Omega_\pm} +
\check\E_{\Omega_\pm}(u,\gamma^{-1}w)
=\langle \tilde{A}_{\Omega_\pm} u-\check{A}_{\Omega_\pm}u,\gamma^{-1}w \rangle_{\Omega_\pm}\\
=\langle(\gamma^{-1})^*(\tilde A_{\Omega_\pm}u-\check{A}_{\Omega_\pm}u),w \rangle_{\partial\Omega}
  \quad
\forall\ w\in H^{\frac{3}{2}-s} (\partial\Omega),
\end{multline}
i.e,
$
 T^\pm u:=\pm (\gamma^{-1})^*(\tilde A_{\Omega_\pm}u-\check{A}_{\Omega_\pm}u).
$

If $a\equiv 1$, $T^\pm u$ becomes {\bf canonical normal derivative} denoted as  $T^\pm_\Delta u$. 
\end{definition}

\begin{theorem}[Theorem 3.9 in \cite{MikJMAA2011} and Theorem 6.6 in \cite{MikJMAA2013}]\label{T6.6c} 
Under the hypotheses of Definition~\ref{Dccd}, the canonical co-normal derivatives $T^\pm u$ are
independent of (non-unique) choice of the operator $\gamma^{-1}$, the operators $T^\pm :
H^{s,-\frac{1}{2}}({\Omega_\pm};A)\to H^{s-\frac{3}{2}}(\partial\Omega)$ are continuous, and
the first Green identity holds in the following form,
\begin{eqnarray} \label{Tcan}
 \pm\left\langle T^\pm u\,,\gamma^\pm  v\right\rangle _{\partial\Omega}
 =\langle \tilde{A}_{\Omega_\pm}  u,v \rangle_{{\Omega_\pm}}\ +\ \check\E_{\Omega_\pm} (u,v)
  =\langle \tilde{A}_{\Omega_\pm}  u-\check{A}_{\Omega_\pm}u,v \rangle_{{\Omega_\pm}}
  \quad
\forall\ v\in H^{2-s} ({\Omega_\pm}).
\end{eqnarray}
\end{theorem}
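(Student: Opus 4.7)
The plan is to derive this result directly from the already established Theorem~\ref{T6.6} by observing that the canonical co-normal derivative of Definition~\ref{Dccd} is precisely the generalised co-normal derivative of Definition~\ref{GCDd} corresponding to the special choice $\tilde f_\pm := \tilde A_{\Omega_\pm} u$.

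First I would justify this identification. For $u\in H^{s,-\frac{1}{2}}(\Omega_\pm;A)$, the definition preceding Definition~\ref{Dccd} (built on Remark~\ref{R2.2} and Corollary~\ref{C2.3}) provides a unique $\tilde A_{\Omega_\pm} u\in\widetilde H^{-1/2}(\Omega_\pm)$ with $r_{\Omega_\pm}\tilde A_{\Omega_\pm} u=Au$. Because $1/2<s<3/2$ gives $s-2<-1/2$, there is a continuous embedding $\widetilde H^{-1/2}(\Omega_\pm)\hookrightarrow\widetilde H^{s-2}(\Omega_\pm)$, so $\tilde A_{\Omega_\pm} u$ is an admissible choice of $\tilde f_\pm$ in Definition~\ref{GCDd}. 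Comparing the defining formulas \eqref{Tgend} and \eqref{Tcandef} term by term then yields the identity $T^\pm u=T^\pm(\tilde A_{\Omega_\pm} u;u)$.

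With this identification in hand, independence of $T^\pm u$ from the choice of right inverse $\gamma^{-1}$ is inherited verbatim from Theorem~\ref{T6.6}. The first Green identity \eqref{Tcan} follows by substituting $\tilde f_\pm=\tilde A_{\Omega_\pm} u$ in \eqref{Tgen}; the equivalent form with $\tilde A_{\Omega_\pm} u-\check A_{\Omega_\pm} u$ is then obtained by applying definition \eqref{Ltil} to rewrite $\check\E_{\Omega_\pm}(u,v)=-\langle\check A_{\Omega_\pm} u,v\rangle_{\Omega_\pm}$. For continuity of $T^\pm:H^{s,-\frac{1}{2}}(\Omega_\pm;A)\to H^{s-\frac{3}{2}}(\partial\Omega)$, I would combine estimate \eqref{estimate} with the continuity of the canonical extension $\tilde A_{\Omega_\pm}:H^{s,-\frac{1}{2}}(\Omega_\pm;A)\to\widetilde H^{-1/2}(\Omega_\pm)$ (from Corollary~\ref{C2.3} together with the definition of the graph norm on $H^{s,-\frac{1}{2}}(\Omega_\pm;A)$) and the embedding $\widetilde H^{-1/2}(\Omega_\pm)\hookrightarrow\widetilde H^{s-2}(\Omega_\pm)$, obtaining
\[
\|T^\pm u\|_{H^{s-\frac{3}{2}}(\pO)}\le C_1\|u\|_{H^s(\Omega_\pm)}+C_2'\|\tilde A_{\Omega_\pm} u\|_{\widetilde H^{-1/2}(\Omega_\pm)}\le C\,\|u\|_{H^{s,-\frac{1}{2}}(\Omega_\pm;A)}.
\]

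There is no real obstacle here: the statement is a corollary of Theorem~\ref{T6.6}, and the only genuinely new input is the well-definedness of the unique canonical extension $\tilde A_{\Omega_\pm} u\in\widetilde H^{-1/2}(\Omega_\pm)$, which hinges on the fact that $-\frac{1}{2}$ lies in the range $t\ge -\frac{1}{2}$ where Remark~\ref{R2.1}(\ref{iv}) guarantees uniqueness of the extension. Since $u\mapsto\tilde A_{\Omega_\pm} u$ is then a well-defined linear map, linearity of $T^\pm$ in $u$ is also automatic, making the continuity estimate above a genuine operator-norm bound.
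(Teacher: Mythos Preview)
Your proposal is correct. The paper itself does not supply a proof of this theorem but simply cites it from the external references \cite{MikJMAA2011,MikJMAA2013}; your argument, deriving Theorem~\ref{T6.6c} as a direct specialisation of Theorem~\ref{T6.6} with $\tilde f_\pm=\tilde A_{\Omega_\pm}u$, is exactly the natural route and matches how the canonical co-normal derivative is treated in those sources.
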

%The operator $T^\pm: H^{1,t}({\Omega_\pm};A)\to H^{-\frac{1}{2}}(\partial\Omega)$ in Definition~\ref{Dccd} is continuous for any $t\ge-\frac{1}{2}$.
The canonical co-normal derivatives in Definition~\ref{Dccd} are completely defined by the function $u$
and operator $A$ only and do not depend explicitly on the right hand sides
$\tilde f_\pm$, unlike the generalised
co-normal derivatives defined in \eqref{Tgen}, while the operators $T^\pm$ are linear in $u$.
Note that the canonical co-normal derivatives coincides with the
classical co-normal derivatives $T^\pm u=T^{c\pm} u$ if the latter do exist, sf. \cite[Corollaries 6.11 and 6.14]{MikJMAA2013}, which is generally not the case for the generalised conormal derivatives even for smooth functions $u$, unless $\tilde f_\pm=\tilde{A}_{\Omega_\pm}  u$ is chosen.

Let $1/2<s<3/2$ and $a\in C_+^{|s-1|}(\overline\Omega_\pm)$. If  $u\in H^{s,-\ha}({\Omega_\pm};A)$, then Definitions \ref{GCDd} and \ref{Dccd} imply
that the generalised co-normal derivative for arbitrary extensions
$\tilde f_\pm\in \widetilde{H}^{s-2}({\Omega_\pm})$ of the distributions $r_{\Omega_\pm}Au$ can be
expressed as
%\begin{equation}
%\label{Tgentil} 
%\pm \left\langle T^\pm(\tilde f_\pm;u)\,,\, w\right\rangle _{\pO}
%=\pm \left\langle T^\pm u\,,\, w\right\rangle _{\pO}
%+\langle\tilde f_\pm-\tilde A_{\Omega_\pm} u,\gamma^{-1}w \rangle_{\Omega_\pm}
%  \quad  %\\  [4mm]
%\forall\ w\in H^{s-\frac{3}{2}} ({\Omega_\pm}).
%\end{equation}
\begin{equation}
\label{Tgentil} 
T^\pm(\tilde f_\pm;u)
=T^\pm u\, \pm(\gamma^{-1})^*(\tilde f_\pm-\tilde A_{\Omega_\pm} u).
\end{equation}
If $u\in H^s({\Omega_\pm})$ and $v\in H^{2-s,-\ha}({\Omega_\pm};A)$, then 
swapping over the roles of $u$ and $v$ in \eqref{Tcan},
we obtain the first Green identity for $v$,
\begin{equation} \label{Greentilde}
 \pm\left\langle T^\pm v\,,\, \gamma^\pm u\right\rangle_{\pO}= \check\E_{\Omega_\pm} (v,u) +\langle \tilde{A}_{\Omega_\pm}  v,u \rangle_{{\Omega_\pm}} .
\end{equation} 
If, in addition, $r_{\Omega_\pm}Au=r_{\Omega_\pm}\tilde f_\pm$, where $\tilde f_\pm\in
\widetilde{H}^{s-2}({\Omega_\pm})$, then subtracting \eqref{Greentilde} from \eqref{Tgen} and taking into account that $\check{\E}_{\Omega_\pm}(u,v)=\check{\E}_{\Omega_\pm}(v,u)$ by Remark~\ref{RE}, we obtain the following second Green identity,
\begin{equation} \label{2.5s}
\langle\tilde f_\pm,v \rangle_{\Omega_\pm} -\langle \tilde{A}_{\Omega_\pm}  v,u \rangle_{{\Omega_\pm}}=
 \pm\left\langle T^\pm(\tilde f_\pm;u)\,,\, \gamma^\pm  v\right\rangle _{\pO}
 \mp\left\langle T^\pm v\,,\, \gamma^\pm u\right\rangle _{\pO}
 .
\end{equation}
If, finally, $u\in H^{s,-\ha}({\Omega_\pm};A)$ and $v\in H^{2-s,-\ha}({\Omega_\pm};A)$, then we arrive at
the familiar form of the second Green identity for the canonical
extension and canonical co-normal derivatives
\begin{equation} \label{GreenCan}
\langle \tilde{A}_{\Omega_\pm}  u,v \rangle_{{\Omega_\pm}}- \langle \tilde{A}_{\Omega_\pm}  v,u \rangle_{{\Omega_\pm}}=
 \pm\left\langle T^\pm u\,,\, \gamma^\pm  v\right\rangle _{\pO}
 \mp\left\langle T^\pm v\,,\, \gamma^\pm u\right\rangle _{\pO}
 .
\end{equation}

\section{Parametrix and potential type operators on Lipschitz domains}\label{S3}
Unless stated otherwise, we will henceforth assume that $\Omega=\Omega_+$.

%From now on, we will need a higher smoothness of the coefficient  $a$, typically $a\in C^{0,1}(\overline{\Omega})$ or smoother; in the assertions involving the exterior domain, $\Omega^-=\R^n\setminus\overline{\Omega}$, we will assume, respectively, $a\in C_+^{0,1}(\R^n)$ or smoother.

We will say, a function $P(x,y)$ of two variables $x,y\in \R^n$
is a parametrix (the Levi function)  for the operator $A(x,\pa_x)$
in $\R^n$ if (see, e.g., \cite{Levi1909, Hilbert1912, Miranda1970, Hellwig1977,
Pomp1998a, Pomp1998b, MikEABEM2002})
 %%%%%%%%%%%%   3.1
\begin{align}
\label{3.1} \dst A(x,\nabla_x)\,P(x,y)=\delta (x-y) +R(x,y),
\end{align}
where $\delta(\cdot)$ is the Dirac distribution and $R(x,y)$
possesses a weak (integrable) singularity at $x=y$, i.e.,
 %%%%%%%%%%%%   3.2
\begin{equation}
\label{3.2} \dst R(x,y)={\cal
O}\,(|x-y|^{-\varkappa})\;\;\;\mbox{\rm with}\;\;\;\; \varkappa<n.
\end{equation}

Let $\omega_n=\frac{2\,\pi^{n/2}}{\Gamma(n/2)}$ denote the area of the unit sphere in $\R^n$. 
It is well known that function 
\begin{equation}
\label{3.3D} P_\Delta(x,y)=\frac{-1}{(n-2)\omega_n\,|x-y|^{n-2}}\,,\;\;\;x,y\in \R^n
\end{equation}
is the fundamental solution of the Laplace equation, i.e., $\Delta_x P_\Delta(x,y)=\Delta_y P_\Delta(x,y)=\delta(x-y)$.

It  is easy to see that for the operator $A(x,\pa_x)$ given by the
left-hand side in \eqref{2.1}, the function
 %%%%%%%%%%%%   3.3
\begin{equation}
\label{3.3} \dst P(x,y)=\frac{1}{a(y)}P_\Delta(x,y)=\frac{-1}{(n-2)\omega_n\,a(y)\,|x-y|^{n-2}}\,,\;\;\;x,y\in \R^n,
\end{equation}
is a parametrix, while the corresponding remainder function is
 %%%%%%%%%%%%   3.4
\begin{equation}
\label{3.4} R(x,y)
=\nabla a(x)\cdot\nabla_x P(x,y)
=-\frac{1}{a(y)}\nabla a(x)\cdot\nabla_yP_\Delta(x,y)
=\frac{(x-y)\cdot\nabla a(x)}{\omega_n\,a(y)\,|x-y|^n} \,,\;\;\;x,y\in \R^n,
\end{equation}
and, if $a\in C_+^1(\R^n)$, satisfies estimate \eqref{3.2}  a.e., with $\varkappa=n-1$. 
Note also that 
\begin{align}
\label{3.1y} \dst A(y,\nabla_y)\,P(x,y)=\delta (x-y) +R_*(x,y),
\end{align}
where
\begin{equation}
\label{3.4*} R_*(x,y)
=-\nabla_y\cdot\left(P(x,y)\nabla a(y)\right)
=\frac{\Delta(\ln a(y))}{(n-2)\omega_n\,|x-y|^{n-2}}
-\frac{(x-y)\cdot\nabla a(y)}{\omega_n\,a(y)\,|x-y|^n} \,,\;\;\;x,y\in \R^n.
\end{equation}
Evidently, the parametrix $P(x,y)$ given by \eqref{3.3} is related with the
fundamental solution to the operator 
$A(y,\nabla_x):=a(y)\Delta_x$ 
with "frozen" coefficient $a(y)$, and
 %%%%%%%%%%%%   3.5
 $
  A(y,\nabla_x)\,P(x,y)=\delta (x-y).
 $

Note that parametrix \eqref{3.3} and remainders \eqref{3.4}, \eqref{3.4*} are not smooth enough for the corresponding potential operators to be directly treated as in \cite{McLean2000}, which thus need some additional consideration.

For $g\in \mathcal D(\R^n)$, 
the parametrix-based  volume potential operator and the remainder
potential operator, corresponding to parametrix \eqref{3.3} and to
remainders \eqref{3.4}, \eqref{3.4*}  for $y\in\R^n$ are
%%%%%%%%%%%%%%%%%%%%%  3.6--3.7
\begin{eqnarray}
{\bf P}g(y):=\langle P(\cdot,y),g\rangle_{\R^n}= \int\limits_{\R^n} P(x,y)\,g(x)\,dx,
&&\label{4.9bP}\\
{\bf R}g(y):=\langle R(\cdot,y),g\rangle_{\R^n}= \int\limits_{\R^n} R(x,y)\,g(x)\,dx,
&&{\bf R}_*g(y):=\langle R_*(\cdot,y),g\rangle_{\R^n}= \int\limits_{\R^n} R_*(x,y)\,g(x)\,dx,\label{4.9bR}
\end{eqnarray}
and from \eqref{3.1}-\eqref{4.9bR} we obtain, for sufficiently smooth coefficient $a$,
\begin{align}
{\bf P}Ag=g+{\bf R}g, \quad A{\bf P}g=g+{\bf R}_*g\quad \mbox{in } \R^n.
\end{align}

For the function $g$ defined on a domain $\Omega_+\subset\R^n$, e.g., $g\in \mathcal D(\overline\Omega_+)$, 
the corresponding potentials for $y\in \Omega_+$ are
%%%%%%%%%%%%%%%%%%%%%  3.6--3.7
\begin{eqnarray}
{\mathcal P}g(y):=\langle P(\cdot,y),g\rangle_{\Omega_+}= \int\limits_{\Omega_+} P(x,y)\,g(x)\,dx,
&&\label{4.9P}\\
{\mathcal R}g(y):=\langle R(\cdot,y),g\rangle_{\Omega_+}= \int\limits_{\Omega_+} R(x,y)\,g(x)\,dx,
&&{\mathcal R}_*g(y):=\langle R_*(\cdot,y),g\rangle_{\Omega_+}= \int\limits_{\Omega_+} R_*(x,y)\,g(x)\,dx.\label{4.9R}
\end{eqnarray}
From definitions \eqref{3.3}, \eqref{3.4}, \eqref{3.4*}, one can obtain
representations of the parametrix-based potential operators in terms of their counterparts for $a=1$ (i.e.
associated with the  Laplace operator $\Delta$, cf. e.g. \cite{Hsiao-Wendland2008}), which we equip with the subscript $\Delta$, cf. \cite{CMN-1},
\begin{align}
\label{3.d1Rn} 
{\mathbf P}\,g\,=\frac{1}{a}\;{\mathbf P}_\Delta\,g,\quad   
{\mathbf R}\,g=-\,\frac{1}{a}\; \nabla\cdot{\mathbf P}_\Delta\,(g\,\nabla a),\quad
&{\mathbf R}_*\,g=-\nabla\cdot\left(\frac{\nabla a}{a}{\mathbf P}_\Delta g\right),\\
\label{3.d1}  
{\mathcal P}\,g\,=\frac{1}{a}\;{\mathcal P}_\Delta\,g, \quad 
{\mathcal R}\,g=-\,\frac{1}{a}\; \nabla\cdot{\mathcal P}_\Delta\,(g\,\nabla a),\quad
&{\mathcal R}_*\,g=-\nabla\cdot\left(\frac{\nabla a}{a}{\mathcal P}_\Delta g\right).
\end{align}
Hence
\begin{equation}\label{DeltaPg}
    \Delta(a\mathbf P g)=g\ \text{in}\ \R^n,\quad    \Delta(a\P g)=g\ \text{in}\ \Omega. 
\end{equation}
Employing relations \eqref{3.d1} and the well known properties of the operator ${\bf P}_\Delta$ as the pseudo-differential operator of order $-2$ together with Theorem~\ref{GrL+},
%(cf. e.g. \cite{Costabel1988}), 
definitions \eqref{4.9bP}-\eqref{4.9bR} can be extended to $g\in H^s(\R^n)$, $g\in\widetilde H^s(\Omega)$ and lower-smoothness coefficient $a$.  
For $g\in \widetilde H^s(\Omega)$ and $g\in H^s(\Omega)$, the potentials ${\mathcal P}$, ${\mathcal R}$, ${\mathcal R}_*$ defined on functions (or distributions) having support on $\overline\Omega$ are understood as 
\begin{align}\label{P1til}
&{\mathcal P}g:={\bf P}g,\quad\
{\mathcal R}g:={\bf R}g,\quad\
{\mathcal R_*}g:={\bf R}_*g,
\qquad\ g\in \widetilde H^s(\Omega),\ s\in \R,\\
%\label{P1}
%&{\mathcal P}g:={\bf P}\widetilde E_\Omega^sg,\ 
%{\mathcal R}g:={\bf R}\widetilde E_\Omega^sg,\ 
%{\mathcal R_*}g:={\bf R}_*\widetilde E_\Omega^sg,
%\quad g\in H^s(\Omega),\ -\ha<s<\ha,\\
\label{P2}
&{\mathcal P}g:=r_{\Omega}{\bf P}\mathring E_{\Omega} g,\ \
{\mathcal R}g:=r_{\Omega}{\bf R}\mathring E_{\Omega} g,\ \ 
{\mathcal R_*}g:=r_{\Omega}{\bf R}_*\mathring E_{\Omega} g, \ \ \quad g\in H^s({\Omega}),\ s> -\ha.
\end{align}
%where \eqref{P1} and \eqref{P2} evidently coincide for $0\le s<1/2$.
%${\mathcal P}g:={\bf P}\mathring E_{\Omega} g$,
%while for $g\in H^s_\bullet({\Omega})$ and $s\ge -1/2$ as ${\mathcal P}g:={\bf P}r_{\Omega}^{-1}g$, and, particularly, for $g\in H^s({\Omega})$ and $-1/2\le s\le 1/2$ as ${\mathcal P}g:={\bf P}\widetilde E_{\Omega}^sg$ (cf. Corollary~\ref{C2.3}). 
%The operators ${\mathcal R}$ and ${\mathcal R}_*$ are defined in a similar way for such densities.
%We will mainly need the restrictions of these potentials  to ${\Omega}$, i.e., $r_\Omega{\cal P}$, $r_\Omega{\cal R}$, $r_\Omega{\cal R}_*$, but will often omit the restriction operator $r_\Omega$ if it is clear from the context.  

To prove mapping properties of the parametrix-based volume potential operators in Sobolev spaces, we first provide some well-known results for the classical Newtonian volume potential associated with the Laplace operator.
\begin{lemma}\label{T3.1PD}
Let  $\Omega$ be a bounded Lipschitz domain in $\R^n$. The
following operators are continuous
% \marginpar{\color{red}Corrected for $W$!}
%
\begin{eqnarray} 
\mu{\bf P}_\Delta  &:& {H}^{s}(\R^n) \to H^{s+2}({\R^n}), \quad  s\in \R,\quad \forall\ \mu\in\mathcal D(\R^n);\label{T3.1P0D}\\ 
{\P}_\Delta  &:& \widetilde{H}^{s}(\Omega) \to H^{s+2}(\Omega), \quad  s\in \R;\label{T3.1P1D}\\
{\cal P}_\Delta &:&  {H}^{s}(\Omega) \to H^{s+2}( {\Omega}), \quad-\ha<s<\ha; \label{T3.1P2D}\\
%{\cal P}_\Delta &:&  {H}^{s}(\Omega) \to H^{\frac{5}{2}-\epsilon}( {\Omega}), \quad
%\ha\le s,\ \forall\epsilon\in[5/2,0); \label{T3.1P2eD}\\
{\cal P}_\Delta&:& \widetilde{H}^{s}(\Omega)\to H^{s+2,-\ha}(\Omega;\Delta),\quad s\ge -\ha;\label{T3.1P1haD}\\
\gamma^+{\P}_\Delta  &:&\  \widetilde{H}^{s}(\Omega) \to H^{s+\frac{3}{2}}(\pO), \quad -\frac{3}{2}<s<-\ha; \label{T3.1P1+D}\\
\gamma^+{\P}_\Delta &:&  {H}^{s}(\Omega) \to H^{1}( {\pO}), \quad
 -\ha<s; \label{T3.1P2+D}\\
T^+_\Delta{\P}_\Delta  &:&\  \widetilde{H}^{s}(\Omega) \to
L_2( {\pO}), \quad -\frac{1}{2}<s;\label{T3.1P1T+D}\\
T^+_\Delta{\P}_\Delta &:&  {H}^{s}(\Omega) \to L_2( {\pO}), \quad
-\ha<s. \label{T3.1P2T+D}
\end{eqnarray}
If $-\tha<s<-\ha$, $\tilde f\in \widetilde H^s(\Omega)$, and $\tilde f_0\in \widetilde H^s(\Omega)$ is such that $r_{\Omega} \tilde f_0=r_{\Omega} \tilde f$, then there exist constants $C_0,C_1>0$ such that
\begin{equation}\label{TPf0D}
\|T_\Delta^+(\tilde{f}_0;\P_\Delta\tilde f)\|_{H^{s+\ha}(\pO)}\le
C_1\|\tilde f\|_{\widetilde H^s(\Omega)} + C_2\|\tilde{f}_0\|_{\widetilde{H}^{s}(\Omega)}.
\end{equation}
\end{lemma}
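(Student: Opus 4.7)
The plan is to cascade every continuity statement from the anchor \eqref{T3.1P0D}. Since $P_\Delta(x,y)$ is the fundamental solution of the Laplacian on $\R^n$, the convolution operator $\mathbf P_\Delta$ is a classical pseudo-differential operator of order $-2$ away from infinity, and for any $\mu\in\mathcal D(\R^n)$ the localised operator $\mu\mathbf P_\Delta$ is continuous from $H^s(\R^n)$ into $H^{s+2}(\R^n)$ for every $s\in\R$; this is a standard fact (see e.g.\ \cite[Ch.~6]{McLean2000}). I would start by quoting it and use it as the base of all subsequent estimates.

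To prove \eqref{T3.1P1D}, fix $\mu\in\mathcal D(\R^n)$ with $\mu\equiv 1$ on $\overline\Om$; for $\tilde f\in\widetilde H^s(\Om)$, $\P_\Delta\tilde f=r_\Om\mathbf P_\Delta\tilde f=r_\Om(\mu\mathbf P_\Delta\tilde f)\in H^{s+2}(\Om)$ by \eqref{T3.1P0D}. Composing with the continuous extension $\mathring E_\Om:H^s(\Om)\to\widetilde H^s(\Om)$ of Definition~\ref{E0} (continuous in the range $-\ha<s<\ha$ by Remark~\ref{R2.1}(\ref{v})) yields \eqref{T3.1P2D}. For \eqref{T3.1P1haD}, in addition to the $H^{s+2}(\Om)$-regularity I need $\Delta\P_\Delta\tilde f\in\widetilde H^{-\ha}_\bullet(\Om)$: since $\Delta_xP_\Delta(x,y)=\delta(x-y)$, we get $\Delta\mathbf P_\Delta\tilde f=\tilde f$ in $\R^n$, so $\Delta\P_\Delta\tilde f=r_\Om\tilde f\in\widetilde H^s_\bullet(\Om)\hookrightarrow\widetilde H^{-\ha}_\bullet(\Om)$ for $s\ge-\ha$, by Corollary~\ref{C2.3} and Sobolev embedding.

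The boundary-mapping assertions \eqref{T3.1P1+D}--\eqref{T3.1P2T+D} then follow from the trace theorem applied to the regularity just established. For \eqref{T3.1P1+D}, with $-\tha<s<-\ha$, $\P_\Delta\tilde f\in H^{s+2}(\Om)$ with $s+2\in(\ha,\tha)$, so $\gamma^+\P_\Delta\tilde f\in H^{s+\tha}(\pO)$. In the range $s>-\ha$ of \eqref{T3.1P2+D}--\eqref{T3.1P2T+D}, I split into $s\in(-\ha,\ha)$, where \eqref{T3.1P2D} gives $\P_\Delta f\in H^{s+2}(\Om)$ with $s+2>\tha$, and $s\ge\ha$, where $\mathring E_\Om f\in L_2(\R^n)$ and \eqref{T3.1P0D} yield $\P_\Delta f\in H^2(\Om)$; in both subcases $\P_\Delta g\in H^r(\Om)$ for some $r>\tha$, so its trace lies in $H^{r-\ha}(\pO)\hookrightarrow H^1(\pO)$ and its gradient trace in $H^{r-\tha}(\pO)\hookrightarrow L_2(\pO)$.

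Finally, for \eqref{TPf0D}, with $-\tha<s<-\ha$ I set $u:=\P_\Delta\tilde f\in H^{s+2}(\Om)$ and apply Theorem~\ref{T6.6} with $A$ replaced by $\Delta$ at smoothness parameter $s+2\in(\ha,\tha)$; the hypothesis $r_\Om\Delta u=r_\Om\tilde f_0$ holds because $\Delta\mathbf P_\Delta\tilde f=\tilde f$ in $\R^n$ and $r_\Om\tilde f=r_\Om\tilde f_0$ by assumption. Estimate \eqref{estimate} then yields
\begin{equation*}
\|T_\Delta^+(\tilde f_0;\P_\Delta\tilde f)\|_{H^{s+\ha}(\pO)}\le C_1'\|\P_\Delta\tilde f\|_{H^{s+2}(\Om)}+C_2\|\tilde f_0\|_{\widetilde H^s(\Om)},
\end{equation*}
and \eqref{T3.1P1D} bounds the first term by $C\|\tilde f\|_{\widetilde H^s(\Om)}$, giving \eqref{TPf0D}. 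The main obstacle I anticipate is the range-bookkeeping for \eqref{T3.1P2+D}--\eqref{T3.1P2T+D} where the treatment of $f\in H^s(\Om)$ for $s\ge\ha$ sits outside the framework of Definition~\ref{E0}; there the standard $L_2$-extension-by-zero must be separately reconciled with the pseudo-differential bound \eqref{T3.1P0D}.
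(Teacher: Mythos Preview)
Your approach is essentially the paper's: cascade everything from the pseudo-differential bound \eqref{T3.1P0D}, then apply the trace theorem and Theorem~\ref{T6.6}. The one slip is in \eqref{T3.1P2+D}, where you invoke $\gamma^+:H^r(\Om)\to H^{r-\ha}(\pO)$ with $r>\tha$; on a merely Lipschitz boundary this form of the trace theorem is only available for $\ha<r<\tha$ (and $H^\sigma(\pO)$ for $\sigma>1$ is itself delicate), so the paper instead cites \cite{Zolesio1977}, \cite[Lemma~2.5]{MikJMAA2011} for the endpoint statement $\gamma^+:H^r(\Om)\to H^1(\pO)$ when $r>\tha$, and for $s\ge\ha$ simply embeds $H^s(\Om)\hookrightarrow H^{s'}(\Om)$, $-\ha<s'<\ha$, to reduce to the range already handled --- which also sidesteps your anticipated difficulty with $\mathring E_\Om$. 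Your gradient-trace argument for \eqref{T3.1P1T+D}--\eqref{T3.1P2T+D} is fine, since there $r-1\in(\ha,\tha]$ stays within the admissible Lipschitz trace range.
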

\begin{proof}  
Operator \eqref{T3.1P0D} and hence \eqref{T3.1P1D} is continuous since ${\bf P}_\Delta$ is a pseudo-differential operator of order $-2$. 
Continuity of operator \eqref{T3.1P2D} follows from the first relation in \eqref{P2} for ${\cal P}_\Delta$ and ${\bf P}_\Delta$, and \eqref{T3.1P1D}.
%To prove continuity of operator \eqref{T3.1P2} for $s\ge 1/2$, we first consider its analogue for $\P_\Delta$.
%If $1/2<s<3/2$ then, evidently the operator ${\cal P}_\Delta:  {H}^{s}(\Omega)\subset {H}^{s-1}(\Omega)\to H^{s+1}(\Omega)$ is continuous. 
%Let us prove that the operator $\nabla{\cal P}_\Delta:  {H}^{s}(\Omega)\to H^{s+1}(\Omega)$ is continuous as well.
%Indeed, for any $g\in$ and $\tilde v\in$, we have,
%$$
%|\langle \nabla\mathcal P_\Delta g,\tilde v\rangle_{\Omega}|=|\langle r_{\Omega}\mathbf P_\Delta \mathring E_\Omega g,\nabla \tilde v\rangle_{\Omega}|
%=|\langle \mathbf P_\Delta \mathring E_\Omega g,\nabla \tilde v\rangle_{\R^n}|=|\langle  \mathring E_\Omega g,\mathbf P_\Delta \nabla \tilde v\rangle_{\R^n}|
%$$
%Since the operator $\mathring E_\Omega :H^s(\Omega)\to\widetilde H^{1/2-\epsilon}(\Omega)$ is continuous for any $s\ge 1/2$ and $\epsilon>0$, the first relation in \eqref{P2} for ${\cal P}_\Delta$ and ${\bf P}_\Delta$ and continuity of \eqref{T3.1P1D} imply continuity of \eqref{T3.1P2eD}.
Since $\Delta \mathcal P_\Delta g=g$ in $\Omega$, continuity of operator \eqref{T3.1P1D} implies continuity of operator \eqref{T3.1P1haD}.

Continuity of operator \eqref{T3.1P1+D} is implied by continuity of operator \eqref{T3.1P1D} and the trace theorem for Lipschitz domains, cf. e.g. \cite[Lemma 3.6]{Costabel1988}, \cite[Theorem 3.38]{McLean2000}.
Continuity of operator \eqref{T3.1P2+D} follows from continuity of operator \eqref{T3.1P2D} and e.g. \cite{Zolesio1977}, \cite[Lemma 2.5]{MikJMAA2011} for $-\ha<s<\ha$, and then by the embedding argument for $s\ge\ha$.

Continuity of operators \eqref{T3.1P1T+D} and \eqref{T3.1P2T+D} is implied by continuity of \eqref{T3.1P1D} and \eqref{T3.1P2D}, respectively, and by \cite[Corollary 3.14]{MikJMAA2011}  since in the both cases $s+2>\tha$. Estimate \eqref{TPf0D} follows from continuity of operator \eqref{T3.1P1D}, relation $\Delta \mathcal P_\Delta \tilde f=\tilde f$, and estimate \eqref{estimate}.
\end{proof} 

Now the following  mapping properties of the parametrix-based operators can be obtained.
\begin{theorem}\label{T3.1P}
Let  $\Omega$ be a bounded Lipschitz domain in $\R^n$. The
following operators are continuous
% \marginpar{\color{red}Corrected for $W$!}
%
\begin{eqnarray} 
\mu{\bf P}  &:& {H}^{s}(\R^n) \to H^{s+2}({\R^n}), \quad  s\in \R, \quad  
a\in C_+^{|s+2|}(\R^n),\quad \forall\ \mu\in\mathcal D(\R^n);\label{T3.1P0}\\ 
{\P}  &:& \widetilde{H}^{s}(\Omega) \to H^{s+2}( {\Omega}), \quad  s\in \R, \quad  
a\in C_+^{|s+2|}(\overline\Omega);\label{T3.1P1}\\
{\cal P} &:&  {H}^{s}(\Omega) \to H^{s+2}( {\Omega}), \quad
-\ha<s<\ha, \quad  a\in C_+^{s+2}(\overline\Omega); \label{T3.1P2}\\
%{\cal P} &:&  {H}^{s}(\Omega) \to H^{\frac{5}{2}-\epsilon}( {\Omega}), \quad
%\ha\le s, \quad  a\in C_+^{\frac{5}{2}-\epsilon}(\overline\Omega),\ \forall\epsilon\in[5/2,0); \label{T3.1P2e}\\
  {\cal P}&:& \widetilde{H}^{s}(\Omega)\to
H^{s+2,-\ha}(\Omega;A),\quad 
-\ha\le s,\quad a\in C_+^{s+2}(\overline\Omega);\label{T3.1P1ha}\\
%{\cal P} &:&\bl 
%{H}^{s}(\Omega)\to H^{s+2,-\ha}(\Omega;L),\quad
%s>-\ha;\label{T3.1P2ha}
%\\
\mu\mathbf R &:& H^{s-1}(\R^n) \to
H^{s}(\R^n),\quad s\in\R, \quad  a\in C_+^{|s-1|+1}(\R^n),\quad \forall\ \mu\in\mathcal D(\R^n);  \label{T3.1P3Rn}\\
{\cal R}  &:&  H^{s-1}(\Omega) \to
H^{s}( {\Omega}),\quad \ha<s<\tha, \quad  a\in C_+^{|s-1|+1}(\overline\Omega);  \label{T3.1P3}\\
{\cal R}&:& H^{s}(\Omega) \to H^{s}(\Omega), \quad \ha<s<\tha, 
\quad  a\in C_+^{s}(\overline\Omega);
\label{T3.1P3has}\\
{\cal R}&:& H^{s}(\Omega) \to H^{s,-\ha}(\Omega;A), \quad \ha<s<\tha, 
\quad  a\in C_+^{\tha}(\overline\Omega);
\label{T3.1P3ha}\\
\mu\mathbf R_*&:& H^{s}(\R^n) \to H^{s+1}(\R^n), \quad s\in\R, 
\quad  a\in C_+^{|s+2|+1}(\R^n),\quad \forall\ \mu\in\mathcal D(\R^n);
\label{T3.1P3ha*Rn}\\
{\cal R}_*&:& \widetilde H^{s}(\Omega) \to H^{s+1}(\Omega), \quad s\in\R, 
\quad  a\in C_+^{|s+2|+1}(\overline\Omega);
\label{T3.1P3ha*}\\
{\cal R}_*&:& \widetilde H^{s}(\Omega) \to H^{\sigma}(\Omega), \quad -\frac{3}{2}<s, 
\quad  a\in C_+^{\tha}(\overline\Omega), \mbox{ for some } \sigma>-\ha;
\label{T3.1P3ha**}\\
  \gamma^+{\P}  &:&\  \widetilde{H}^{s}(\Omega) \to H^{s+\frac{3}{2}}(\pO), \quad -\frac{3}{2}<s<-\ha, \quad  
a\in C_+^{s+2}(\overline\Omega);\label{T3.1P1+}\\
 \gamma^+{\P} &:&  {H}^{s}(\Omega) \to H^{1}( {\pO}), \quad
 -\ha<s, \quad  a\in C_+^{\tha}(\overline\Omega); \label{T3.1P2+}\\
\gamma^+{\cal R}  &:&  {H}^{s}(\Omega) \to
H^{s-\ha}( {\pO}), \quad \frac{1}{2}<s<\tha, \quad  a\in C_+^{s}(\overline\Omega) ;\label{T3.1P3+}\\
%&:&\bl  {H}^{s}(\Omega) \to H^{s+\ha}( {\pO}), \quad
%s>-\ha;\label{T3.1P4+}\\
T^+{\P}  &:&  \widetilde{H}^{s}(\Omega) \to
L_2({\pO}), \quad  -\ha<s, \quad  a\in C_+^{\tha}(\overline\Omega); \label{T3.1P1T+}\\
T^+{\P}  &:&  {H}^{s}(\Omega) \to L_2({\pO}), \quad
 -\ha<s, \quad  a\in C_+^{\tha}(\overline\Omega);  \label{T3.1P2T+}\\
T^+{\cal R}  
%&:&\widetilde{H}^{s}(\Omega) \to H^{s-\ha}( {\pO}), \quad s>\ha,\label{T3.1P3T+}\\
&:&{H}^{s}(\Omega) \to H^{s-\ha}( {\pO}), \quad
 \ha<s,  
\quad  a\in C_+^{\tha}(\overline\Omega).\label{T3.1P4T+}
\end{eqnarray}
Moreover, operators \eqref{T3.1P3has}, \eqref{T3.1P3ha}, \eqref{T3.1P3+}, 
%\eqref{T3.1P4+}, 
%\eqref{T3.1P3T+}, 
\eqref{T3.1P4T+} are compact.

If $-\tha<s<-\ha$,  $a\in C_+^{s+2}(\overline\Omega)$, $\tilde f\in \widetilde H^s(\Omega)$,  and $\tilde f_0\in \widetilde H^s(\Omega)$ is such that $r_{\Omega} \tilde f_0=r_{\Omega} A\mathcal P\tilde f$, then there exist constants $C_0,C_1>0$ such that
\begin{equation}\label{TPf0}
\|T^+(\tilde{f}_0;\P\tilde f)\|_{H^{s+\ha}(\pO)}\le
C_1\|\tilde f\|_{\widetilde H^s(\Omega)} + C_2\|\tilde{f}_0\|_{\widetilde{H}^{s}(\Omega)}.
\end{equation}
\end{theorem}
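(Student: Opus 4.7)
The plan is to reduce every assertion to the corresponding mapping property of the Laplace potentials supplied by Lemma~\ref{T3.1PD}, by exploiting the algebraic factorisations \eqref{3.d1Rn}--\eqref{3.d1}. The decisive analytic tool is Theorem~\ref{GrL+}: once $a\in C^\mu_+(\overline\Omega)$ with $\mu\ge|\sigma|$, multiplication by $a$ preserves $H^\sigma(\Omega)$, and, because $a$ is bounded away from zero, the same holds for $1/a$, $\nabla a$ and $\nabla a/a$ (with the obvious shift in $\mu$ when one derivative is taken). These three multipliers are precisely what appears in \eqref{3.d1Rn}--\eqref{3.d1}.

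For the volume-potential operators \eqref{T3.1P0}--\eqref{T3.1P2} it suffices to compose the Laplace counterparts \eqref{T3.1P0D}--\eqref{T3.1P2D} with multiplication by $1/a$. For the refinement \eqref{T3.1P1ha} one needs, in addition, that $A\mathcal P\tilde f\in \widetilde H^{-\ha}_\bullet(\Omega)$; this is obtained by expanding $A(a^{-1}\mathcal P_\Delta\tilde f)$ using \eqref{DeltaPg}, so the principal term is essentially $\tilde f$ itself and the lower-order terms are handled by the Laplace estimate \eqref{T3.1P1haD} together with Remark~\ref{R2.1}. The remainder operators \eqref{T3.1P3Rn}--\eqref{T3.1P3ha*} are then attacked by writing, e.g., $\mathcal R g=-a^{-1}\nabla\cdot\mathcal P_\Delta(g\,\nabla a)$ and chaining the four elementary maps: multiplication by $\nabla a$, application of $\mathcal P_\Delta$ (gain of two derivatives), one derivative, and multiplication by $1/a$; the net gain of one derivative accounts for the $H^{s-1}\to H^s$ statement. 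The $H^s\to H^s$ version \eqref{T3.1P3has} and its upgrade \eqref{T3.1P3ha} into $H^{s,-\ha}(\Omega;A)$ follow, once more using \eqref{DeltaPg} for the latter. The twin operator $\mathcal R_*$ is treated symmetrically via $\mathcal R_* g=-\nabla\cdot\bigl((\nabla a/a)\mathcal P_\Delta g\bigr)$.

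Continuity of the boundary operators \eqref{T3.1P1+}--\eqref{T3.1P4T+} is then obtained by composing the volume-potential results just established with the Lipschitz-domain trace theorem and, for the co-normal derivatives, with Theorem~\ref{T6.6c}, exactly mimicking the derivation of \eqref{T3.1P1+D}--\eqref{T3.1P2T+D} in Lemma~\ref{T3.1PD}. For the compactness claims on \eqref{T3.1P3has}, \eqref{T3.1P3ha}, \eqref{T3.1P3+} and \eqref{T3.1P4T+}, the idea is that Definition~\ref{C+mu} always encodes an $\epsilon$-worth of extra H\"older regularity for $a$ beyond the threshold needed for mere boundedness; carrying this $\epsilon$ through the multiplier theorem produces an $\epsilon$-gain in the Sobolev regularity of the image, and Rellich compactness of the embedding $H^{s+\epsilon}(\Omega)\hookrightarrow H^s(\Omega)$ on the bounded Lipschitz domain $\Omega$ then yields compactness. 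Finally, \eqref{TPf0} is immediate from applying the generalised co-normal derivative estimate \eqref{estimate} to $u=\mathcal P\tilde f$ and majorising $\|\mathcal P\tilde f\|_{H^{s+2}(\Omega)}$ by $\|\tilde f\|_{\widetilde H^s(\Omega)}$ via \eqref{T3.1P1}. The hardest step will be precisely this compactness argument: one has to track the $\epsilon$-gain rigorously through a composition of several multiplier actions, at what are borderline indices for Theorem~\ref{GrL+}, and in particular verify that the gain survives into the mixed-smoothness space $H^{s,-\ha}(\Omega;A)$ appearing in \eqref{T3.1P3ha}.
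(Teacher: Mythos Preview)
Your overall strategy matches the paper's: reduce everything to the Laplace-potential estimates of Lemma~\ref{T3.1PD} via the factorisations \eqref{3.d1Rn}--\eqref{3.d1} and the multiplier Theorem~\ref{GrL+}. Most steps go through exactly as you describe.

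There is, however, a genuine gap in your treatment of \eqref{T3.1P3has} for $\tfrac12<s<1$. You write that the $H^s\to H^s$ boundedness ``follows'' from the chain $g\mapsto g\,\nabla a\mapsto \mathcal P_\Delta(g\,\nabla a)\mapsto \nabla\cdot\mathcal P_\Delta(g\,\nabla a)\mapsto a^{-1}\nabla\cdot\mathcal P_\Delta(g\,\nabla a)$. But for the first step (multiplication by $\nabla a$ acting on $H^{s-1}$) Theorem~\ref{GrL+} requires $\nabla a\in C_+^{|s-1|}=C_+^{1-s}$, i.e.\ $a\in C_+^{2-s}$. The hypothesis in \eqref{T3.1P3has} is only $a\in C_+^{s}$, and for $\tfrac12<s<1$ one has $s<1<2-s$, so Theorem~\ref{GrL+} alone does not even give boundedness, let alone the $\epsilon$-gain you need for compactness. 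Your compactness heuristic (``Definition~\ref{C+mu} encodes an $\epsilon$ of extra H\"older regularity, carry it through the multiplier theorem'') therefore breaks down precisely here: the multiplier $\nabla a$ has \emph{negative} smoothness $t-1<0$ (as an element of $F^{t-1}_{\infty,\infty}$), outside the scope of Theorem~\ref{GrL+}.

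The paper closes this gap by invoking a paraproduct-type estimate from Runst--Sickel \cite[Section~4.4.3, Theorem~1]{Runst-Sickel1996}, which controls $\|g\,\nabla a\|_{F^{t-1}_{2,\infty}}$ by $\|\nabla a\|_{F^{t-1}_{\infty,\infty}}\|g\|_{H^\sigma}$ for any $\sigma\in(1-t,s)$; combined with the embedding $F^{t-1}_{2,\infty}\hookrightarrow H^{s-1}$ (since $t>s$) this shows $\mathcal R:H^\sigma\to H^s$ is bounded, and then Rellich gives compactness of $\mathcal R:H^s\to H^s$. The same refinement is needed in the proof of Lemma~\ref{AringMap}(ii), which underlies the compactness of \eqref{T3.1P3ha}. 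You should flag this as the point where Theorem~\ref{GrL+} is insufficient and an external multiplication result for low-regularity symbols is required.
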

\begin{proof}  
Continuity of operators  \eqref{T3.1P0} -\eqref{T3.1P2} is implied by the first relations in \eqref{3.d1Rn}, \eqref{3.d1} and continuity of operators  \eqref{T3.1P0D} -\eqref{T3.1P2D} together with Theorem~\ref{GrL+}.

Continuity of operators \eqref{T3.1P1}, \eqref{T3.1P2}
and Remark~\ref{R2.1}(\ref{iii}) imply continuity of operator \eqref{T3.1P1ha} for
$s>-\ha$. 
Let us now prove \eqref{T3.1P1ha} for $s=-\ha$. For $g\in
\widetilde{H}^{-\ha}(\Omega)$, we
have, ${\cal P}\,g\in H^\tha(\Omega)$ due to \eqref{T3.1P1}, while
\begin{align}
%\label{3.d1B}  
A {\cal P}\,g\, =\nabla\cdot\left(a\nabla\left[\frac{1}{a }\;{\cal P}_\Delta\,g\,\right]\right)= %&&\nonumber
%\\
g\, -\nabla\cdot\left[(\nabla\ln a){\cal P}_\Delta\,g\right]
%&&
\ \mbox{in}\ \Omega,\label{DPgB}
\end{align}
where we have taken into
account that $\Delta{\cal P}_\Delta\,g=g$. 
The first term in the right hand side of \eqref{DPgB}
belongs to $\widetilde{H}^{-\ha}_\bullet(\Omega)$, while, since $a\in
C_+^\tha(\overline\Omega)$, $a>0$, the  second term belongs to ${H}^{\ha}(\Omega)$
and can be extended by zero to $\widetilde{H}^0(\Omega)\subset
\widetilde{H}^{-\ha}(\Omega)$, which completes the proof of continuity for
operator \eqref{T3.1P1ha}.

Continuity of the operator \eqref{T3.1P3Rn}, follows from the second relation \eqref{3.d1Rn} together with Theorem~\ref{GrL+} and continuity of operator \eqref{T3.1P0D}. Indeed, let us take arbitrary $\mu\in\mathcal D(\R^n)$, let $B_\mu$ be a ball such that $\supp \mu\subset B_\mu$ and let $\mu_1\in\mathcal D(\R^n)$ be such that $\mu_1=1$ in $B_\mu$.
Then for any $g\in  H^{s-1}(\R^n)$, we have,
\begin{multline}\label{Rloc}
\|\mu{\mathbf R}\,g\|_{H^{s}(\R^n)}
=\left\|\frac{\mu}{a}\; \nabla\cdot\left(\mu_1{\mathbf P}_\Delta\,(g\,\nabla a)\right)\right\|_{H^{s}(\R^n)}
\le c_1\|\nabla\cdot\left(\mu_1{\mathbf P}_\Delta\,(g\,\nabla a)\right)\|_{H^{s}(\R^n)}
\le c_2\|\mu_1{\mathbf P}_\Delta\,(g\,\nabla a)\|_{H^{s+1}(\R^n)}\\
\le c_3\|g\,\nabla a\|_{H^{s-1}(\R^n)}
\le c_4\|g\|_{H^{s-1}(\R^n)},
\end{multline}
where $c_i$ are some positive constants (depending on $\mu$, $\mu_1$ and $a$), and we took into account that 
$C_+^{|s-1|+1}(\R^n)\subset  C_+^{|s|}(\R^n)$ since $|s|\le|s-1|+1$. 
This implies continuity of \eqref{T3.1P3Rn}.

To prove continuity of the operator \eqref{T3.1P3}, we similarly employ the second relation in \eqref{3.d1} together with Theorem~\ref{GrL+} and continuity of operator \eqref{T3.1P2D}. 
Then we obtain for any $g\in  H^{s-1}(\Omega)$, $1/2<s<3/2$, and some positive constants $c_i$,
\begin{multline}\label{Rs-1tos}
\|{\mathcal R}\,g\|_{H^{s}(\Omega)}
=\left\|\frac{1}{a}\; \nabla\cdot{\mathcal P}_\Delta\,(g\,\nabla a)\right\|_{H^{s}(\Omega)}
\le c_1\|\nabla\cdot{\mathcal P}_\Delta\,(g\,\nabla a)\|_{H^{s}(\Omega)}
\le c_2\|{\mathcal P}_\Delta\,(g\,\nabla a)\|_{H^{s+1}(\Omega)}\\
\le c_3\|g\,\nabla a\|_{H^{s-1}(\Omega)}
\le c_4\|g\|_{H^{s-1}(\Omega)}.
\end{multline}

Let us prove continuity and compactness of operator \eqref{T3.1P3has}. 
For $1\le s<\tha$, we have $s=|s-1|+1$ and then continuity of operator \eqref{T3.1P3} implies continuity and compactness of \eqref{T3.1P3has}.
For $\ha< s<1$, we need a sharper estimate of the norm 
$\|g\,\nabla a\|_{H^{s-1}(\Omega)}$.
First, by Definition~\ref{C+mu} the inclusion $ a\in C_+^{s}(\overline\Omega)$ implies that there exists %$s_1\in(s, 1)$ such that for any 
$t\in (s, 1)$ such that,
$a\in C^{0,t}(\overline\Omega)=B^t_{\infty,\infty}(\Omega)=F^t_{\infty,\infty}(\Omega)$, see e.g. Proposition in  \cite[Section 2.1.2]{Runst-Sickel1996}, and hence 
$\nabla a\in  
%B^{t-1}_{\infty,\infty}(\Omega)=
F^{t-1}_{\infty,\infty}(\Omega)$. 
Then, by Theorems 1  from \cite[Section 4.4.3]{Runst-Sickel1996},
\begin{align}\label{3.48a}
\|g\,\nabla a\|_{F^{t-1}_{2,\infty}(\Omega)}
\le C\|\nabla a\|_{F^{t-1}_{\infty,\infty}(\Omega)}\, \|g\|_{H^{\sigma}(\Omega)}
\le C\|a\|_{C^{0,t}(\overline\Omega)}\, \|g\|_{H^{\sigma}(\Omega)},\quad 
\forall\ 
%p\in [2, 2+\frac{4\sigma}{\max\{n-2\sigma,0\}}),\ 
\sigma\in(1-t, s).
\end{align}
On the other hand, by \eqref{Rs-1tos}, item (ii) of Proposition from \cite[Section 2.2.1]{Runst-Sickel1996}, and \eqref{3.48a}, we obtain
$$
\|{\mathcal R}\,g\|_{H^{s}(\Omega)}\le
c_3\|g\,\nabla a\|_{H^{s-1}(\Omega)}=c_3\|g\,\nabla a\|_{F^{s-1}_{2,2}(\Omega)}
\le C_1\|g\,\nabla a\|_{F^{t-1}_{2,\infty}(\Omega)}
\le C_1 C\|a\|_{C^{0,t}(\overline\Omega)}\, \|g\|_{H^{\sigma}(\Omega)}.
$$
Thus the operator ${\mathcal R}: H^{\sigma}(\Omega)\to {H^{s}(\Omega)}$ is continuous, which implies continuity and, by the Rellich compact embedding theorem, also compactness of operator  \eqref{T3.1P3has} for $\ha< s<1$.

Let us prove continuity of operator
%\eqref{T3.1P2ha} and 
\eqref{T3.1P3ha}.
Since $a\in C_+^{\tha}(\overline\Omega)$, then by Definition~\ref{C+mu} there exists $\epsilon>0$ such that 
$a\in C^{1,\ha+\epsilon}(\overline\Omega)$, 
and let us chose any  $\sigma\in(\ha,\min\{s,\ha+\epsilon\})$. 
By continuity of \eqref{T3.1P3}, the operator  $\mathcal R:H^{\sigma}(\Omega) \to H^{s}(\Omega)$ is continuous. 
Now let us prove that the operator $A\mathcal R:H^{\sigma}(\Omega) \to \widetilde H^{-\ha}_\bullet(\Omega)$ is continuous as well. 
Indeed, for some positive constants $c_i$, we have,
\begin{multline*}
\|A{\mathcal R}\,g\|_{\widetilde H_\bullet^{-\ha}(\Omega)}
\le\|A{\mathcal R}\,g\|_{\widetilde H_\bullet^{\sigma-1}(\Omega)}
\le c_0\|A{\mathcal R}\,g\|_{H^{\sigma-1}(\Omega)}
=c_0\left\|\nabla\cdot\left[a\nabla\left(\frac{1}{a}\; \nabla\cdot{\mathcal P}_\Delta\,(g\,\nabla a)\right)\right]
\right\|_{H^{\sigma-1}(\Omega)}\\
=c_0\left\|-\nabla\cdot[(\nabla\ln a) \nabla\cdot{\mathcal P}_\Delta\,(g\,\nabla a)]
+\Delta(\nabla\cdot{\mathcal P}_\Delta\,(g\,\nabla a))\right\|_{H^{\sigma-1}(\Omega)} %\\
\le c_1\|-(\nabla\ln a) \nabla\cdot{\mathcal P}_\Delta\,(g\,\nabla a) %\|_{H^{\sigma}(\Omega)}
+ %c_0\|\nabla\cdot
(g\,\nabla a)\|_{H^{\sigma}(\Omega)}\\
\le c_2\|a\|_{C^{1,\ha+\epsilon}}\|{\mathcal P}_\Delta\,(g\,\nabla a)\|_{H^{\sigma+1}(\Omega)}
+c_1\|g\,\nabla a\|_{H^{\sigma}(\Omega)}
\le c_3\|g\,\nabla a\|_{H^{\sigma-1}(\Omega)}+c_1\|g\,\nabla a\|_{H^{\sigma}(\Omega)}\\
\le c_4\|g\,\nabla a\|_{H^{\sigma}(\Omega)}
\le c_5\|a\|_{C^{1,\ha+\epsilon}}\|g\|_{H^\sigma(\Omega)}.
\end{multline*}
Hence we proved continuity of the operator $H^\sigma(\Omega) \to H^{s,-\ha}(\Omega;A)$, which implies continuity  of operator  \eqref{T3.1P3ha}
and by the Rellich compact embedding theorem also its compactness.

Continuity of operator \eqref{T3.1P3ha*Rn} is implied by the last relation in \eqref{3.d1Rn}, continuity of operator \eqref{T3.1P0D} and Theorem~\ref{GrL+} in the chain of inequalities analogous to \eqref{Rloc}.
Similarly, continuity of operator \eqref{T3.1P3ha*} is implied by the last relation in \eqref{3.d1}, continuity of operator \eqref{T3.1P1D} and Theorem~\ref{GrL+}.
Continuity of operator \eqref{T3.1P3ha**} is implied by continuity of \eqref{T3.1P3ha*} since $a\in C_+^{\tha}(\overline\Omega)$ implies that there exists $\epsilon>0$ such that 
$a\in C^{1,1/2+\epsilon}(\overline\Omega)$, and we can  take $\sigma\in(\tha,\min\{s+1,\tha+\epsilon\})$.

Continuity of operator \eqref{T3.1P1+} is implied by continuity of operator \eqref{T3.1P1} and the trace theorem for Lipschitz domains, cf. e.g. \cite[Lemma 3.6]{Costabel1988}, \cite[Theorem 3.38]{McLean2000}.
Continuity of operator \eqref{T3.1P2+} follows from continuity of operator \eqref{T3.1P2} for $-\ha<s<-\ha+\epsilon$ with any sufficiently small $\epsilon>0$  together with e.g. \cite{Zolesio1977}, \cite[Lemma 2.5]{MikJMAA2011}, and then by the embedding argument for all $s>-\ha$.
Similarly, continuity of operators \eqref{T3.1P1T+} and \eqref{T3.1P2T+} is implied by continuity of \eqref{T3.1P1} and \eqref{T3.1P2}, respectively, and by \cite[Corollary 3.14]{MikJMAA2011} since in the both cases $s+2>\tha$. 

Continuity and compactness of operators \eqref{T3.1P3+} and \eqref{T3.1P4T+} are implied by continuity and compactness of operators \eqref{T3.1P3has} and \eqref{T3.1P3ha}, the trace theorem for Lipschitz domains and Theorem~\ref{T6.6}.

Estimate \eqref{TPf0} follows from continuity of operator \eqref{T3.1P1} and estimate \eqref{estimate}.
\end{proof} 

%For $g\in {H}^s(\R^n)$ or $g\in \tilde{H}^s(\Omega)$, $s<0$,
%the integrals in \eqref{4.9bP}-\eqref{4.9} are understood as bilinear forms.

The parametrix-based single and the double layer surface potential operators are defined as
%%%%%%%%%%%%%%%%%%%%%  3.6--3.7
\begin{eqnarray}
\label{3.6} && 
Vg(y):=-\int\limits_{\pO} P(x,y)\, \psi(x)\,dS_x,   \;\;\;\; y\not\in \pO,  \\%[4mm]
\label{3.7} && Wg(y):=-\int\limits_{\pO}
\big[\,T^c(x,n(x),\pa_x)\,P(x,y)\,\big]\, \, \varphi(x)\,dS_x,   \;\;\;\;
y\not\in \pO,\quad
\end{eqnarray}
where the integrals are understood as duality forms if
$\psi$ and $\varphi$ are not integrable. Particularly, for 
$\psi\in H^{\ha-s}(\partial\Omega)$, $\varphi\in H^{\ha-s}(\partial\Omega)$, $\ha<s<\tha$, we have
\begin{eqnarray}
\label{3.6d} && 
V\psi(y):=-\langle \gamma P(\cdot,y), \psi\rangle_{\partial\Omega}=-\langle P(\cdot,y), \gamma^*\psi\rangle_{\R^n}
=-\mathbf P\gamma^*\psi\,(y)=-\frac{1}{a(y)}\mathbf P_\Delta\gamma^*\psi(y),   %\;\;\;\; y\not\in \pO,  
\\%[4mm]
\label{3.7d} && W\varphi(y):=-\langle T^c P(\cdot,y), \varphi\rangle_{\partial\Omega}
=-\langle P(\cdot,y), T^{c*}\varphi\rangle_{\R^n}
=-\mathbf P T^{c*}\varphi\,(y)=-\frac{1}{a(y)}\mathbf P_\Delta T^{c*}\varphi\,(y),   
%\;\;\;\;y\not\in \pO,\quad
\end{eqnarray}
where $\gamma^*\psi$ and $T^{c*}\varphi$ are well defined for any $\psi\in\mathcal D^*(\partial\Omega)$  and for any $\varphi\in L_1(\partial\Omega)$, $a\in L_\infty(\partial\Omega)$, in the sense of distribution as
\begin{align}
&\langle\gamma^*\psi,\phi\rangle_{\R^n}:=\langle\psi,\gamma\phi\rangle_{\partial\Omega},\quad 
%\forall\phi\in \mathcal D(\R^n),\\
\langle T^{c*}\varphi,\phi\rangle_{\R^n}:=\langle\varphi,T^c\phi\rangle_{\partial\Omega}
=\langle \varphi,aT^c_\Delta\phi\rangle_{\partial\Omega},\quad \forall\phi\in \mathcal D(\R^n),
\end{align}
which evidently implies that 
$\supp \gamma^*\psi\subset \partial\Omega$ and $\supp T^{c*}\varphi\subset \partial\Omega$.  
Moreover,
\begin{align}\label{gT*}
\gamma^*: H^{\ha-s}(\partial\Omega)\to H^{-s}_{\partial\Omega},\quad 
T^{c*}: H^{\ha-s}(\partial\Omega)\to H^{-s-1}_{\partial\Omega}, \quad\ha<s<\tha,
\end{align}
 are  the continuous operators adjoint, respectively, to the continuous trace operator 
$\gamma:H^s_{\rm loc}(\R^n)\to H^{s-\ha}(\partial\Omega)$ and to the continuous classical conormal derivative operator 
$T^c:H^{s+1}_{\rm loc}(\R^n)\to H^{s-\ha}(\partial\Omega)$; for the continuity of $T^c$ and $T^{c*}$, it is also assumed that $a\in C_+^{s-\ha}(\partial\Omega)$.

When $a=1$, formulas \eqref{3.6}, \eqref{3.7} define the corresponding harmonic potentials that we denote as $V_\Delta$ and $W_\Delta$, respectively.
From definitions \eqref{3.6}, \eqref{3.7}, similar to \eqref{3.d1}, we have, cf. \cite{CMN-1},
\begin{eqnarray}
\label{VWa}
Vg=\frac{1}{a}V_{_\Delta} g,&&\quad Wg=\frac{1}{a}W_{_\Delta}(ag).
\end{eqnarray}
Hence
 \begin{equation}\label{DV,DW=0}
\Delta(aVg)=0, \quad \Delta(aWg)=0\ \text{in}\ \Omega_\pm. 
 \end{equation}

We will mainly need the restrictions of the layer potentials  to $\Omega$, i.e., $r_{\Omega}{V}$, $r_{\Omega}{W}$, but will often omit the restriction operator $r_{\Omega}$ if this is clear from the context. 

The mapping properties as well as jump relations for the single and double layer potentials are well known for the case
$a=const$. Employing \eqref{3.d1}-\eqref{VWab4}, they were extended to the case of infinitely smooth boundary and variable coefficient $a(x)$ in \cite{CMN-1,CMN-2}. 
Before proving the corresponding properties for the parametrix-based potentials on Lipschitz domains, we collect below the following well-know mapping and jump properties for the harmonic potentials on Lipschitz domains.
\begin{theorem}\label{T3.1s0D}
Let  $\Omega$ be a bounded Lipschitz domain in $\R^n$.

(i)  If $\ha\le s\le \tha$, then the following operators are continuous for any $\mu\in\mathcal D(\R^n)$,
 \begin{align}
\mu V_\Delta  &: H^{s-\frac{3}{2}}(\pO) \to H^{s}(\R^n),
%\label{VHs1D}
\quad
r_{\Omega}W_\Delta  :  H^{s-\ha}(\pO)\to H^{s}(\Omega),
\quad
\mu\, r_{\Omega_-}W_\Delta  :  H^{s-\ha}(\pO)\to H^{s}(\overline\Omega_-).\label{WHs1D}
\end{align}

(ii) If  $\ha< s< \tha$,  then the following operators are continuous, 
 \begin{eqnarray}
\gamma^\pm V_\Delta : H^{s-\frac{3}{2}}(\pO) \to H^{s-\ha}(\partial\Omega), %\label{VHs1g}\\
&&
\gamma^\pm W_\Delta :  H^{s-\ha}(\pO)\to H^{s-\ha}(\partial\Omega),\label{WHs1gD}\\
T_\Delta^\pm V_\Delta : H^{s-\frac{3}{2}}(\pO) \to H^{s-\tha}(\partial\Omega), %\label{VHs1GaT}\\
&&
T_\Delta^\pm W_\Delta :  H^{s-\ha}(\pO)\to  H^{s-\tha}(\partial\Omega),\label{WHs1GaTD}
\end{eqnarray}

(iii) If  $\ha< s< \tha$, then for any $\varphi\in H^{s-\ha}(\partial\Omega)$ and $\psi\in H^{s-\tha}(\partial\Omega)$ the following jump properties hold,
 \begin{eqnarray}
\gamma^+ V_\Delta\psi -\gamma^- V_\Delta\psi=0, %\label{VHs1g}\\
&&
\gamma^+ W_\Delta\varphi -\gamma^- W_\Delta\varphi=-\varphi,\label{WHs1gjD}\\
T_\Delta^+ V_\Delta\psi -T^- V_\Delta\psi=\psi, %\label{VHs1g}\\
&&
T_\Delta^+ W_\Delta\varphi -T^- W_\Delta\varphi=0.\label{WHs1GaTjD}
\end{eqnarray}
\end{theorem}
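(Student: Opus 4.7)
The plan is to appeal to the well-developed theory of harmonic layer potentials on Lipschitz domains, since all three assertions are classical. The key analytic input is the $L_2(\partial\Omega)$-boundedness of the harmonic double layer and of the normal derivative of the single layer, due to Verchota (following Coifman--McIntosh--Meyer on the Cauchy integral on Lipschitz graphs), together with its extension to the Sobolev scale by Costabel and by Mitrea and coauthors. Accordingly the ``proof'' will consist of identifying the right assembled reference and sketching how each piece is deduced.

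For part (i), I would first establish continuity of $\mu V_\Delta$ and $r_\Omega W_\Delta$ at the two endpoints $s=\ha$ and $s=\tha$ and then interpolate in between. At $s=1$ the mapping $r_\Omega W_\Delta:H^{\ha}(\partial\Omega)\to H^1(\Omega)$ is Verchota's theorem, obtained via the non-tangential maximal function estimate for the harmonic double layer; the analogous estimate for $V_\Delta:H^{-\ha}(\partial\Omega)\to H^1(\R^n)$ follows by duality from the adjoint normal derivative result. The endpoints $s=\ha$ and $s=\tha$ come from Costabel's variational identity relating the single layer to the Dirichlet energy, together with Green's first identity; the full range $\ha\le s\le\tha$ then follows by interpolation. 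The exterior cut-off $\mu$ is needed because $W_\Delta\varphi$ is harmonic at infinity but only in $H^s_{\mathrm{loc}}(\overline\Omega_-)$.

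For part (ii), with (i) in hand the trace bounds $\gamma^\pm V_\Delta$ and $\gamma^\pm W_\Delta$ follow from the trace theorem on Lipschitz domains for $\ha<s<\tha$ (see, e.g.~\cite[Lemma 3.6]{Costabel1988}, \cite[Theorem 3.38]{McLean2000}), applied to $r_{\Omega_\pm}V_\Delta\psi$ and $r_{\Omega_\pm}W_\Delta\varphi$. For the normal derivatives $T_\Delta^\pm V_\Delta$ and $T_\Delta^\pm W_\Delta$ I would use that these potentials are harmonic in $\Omega_\pm$, so they belong to $H^{s,-\ha}(\Omega_\pm;\Delta)$; the canonical normal derivative from Definition~\ref{Dccd} then applies, and continuity of the operators $T_\Delta^\pm:H^{s,-\ha}(\Omega_\pm;\Delta)\to H^{s-\tha}(\partial\Omega)$ given by Theorem~\ref{T6.6c} produces the stated mapping properties.

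Finally, for part (iii) the jump relations are first proved for smooth densities $\psi,\varphi\in C^\infty(\partial\Omega)$ by a direct computation using the divergence theorem on $\Omega$ and on bounded subdomains of $\Omega_-$ together with the explicit kernel $P_\Delta$ and the distributional identity $\Delta_x P_\Delta(x,y)=\delta(x-y)$; one extracts the jump by integrating over a small tubular neighbourhood of $\partial\Omega$ and passing to the limit. The extension to $\psi\in H^{s-\tha}(\partial\Omega)$ and $\varphi\in H^{s-\ha}(\partial\Omega)$ follows by density of $C^\infty(\partial\Omega)$ in these Sobolev spaces combined with the continuity results in (ii). The main obstacle is purely bibliographic rather than technical: assembling the precise Lipschitz-domain references covering the full range $\ha<s<\tha$ at one stroke; I would cite \cite{Costabel1988}, \cite{McLean2000}, \cite{Mitrea-Wright2012} together with \cite{MikJMAA2011} for the $H^{s,-\ha}(\Omega_\pm;\Delta)$ framework.
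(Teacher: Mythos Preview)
Your proposal is correct and takes essentially the same approach as the paper: the paper's proof is itself almost entirely bibliographic, citing Costabel \cite[Theorem~1 and Remark]{Costabel1988}, Verchota, Jerison--Kenig, and McLean for (i)--(ii), noting (as you do) that $\Delta V_\Delta=\Delta W_\Delta=0$ in $\Omega_\pm$ makes the canonical normal derivatives well defined, and for (iii) invoking \cite[Lemma~4.1]{Costabel1988} at $s=1$ followed by a density argument. The only organizational difference is that for (iii) the paper anchors the density argument at $s=1$ (so that $1\le s<\tha$ follows by inclusion and $\ha<s<1$ by approximation), rather than starting from $C^\infty$ densities as you outline; either route works.
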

\begin{proof}
Items (i) and (ii) follow e.g. from \cite[Theorem 1(i,ii) and Remark]{Costabel1988}, \cite{Verchota1984, JK1981BAMS, JK1981AM,JK1982}, cf. also \cite[Theorem 6.12]{McLean2000}, if take into account that the canonical conormal derivative operators in \eqref{WHs1GaTD} are well defined  since $\Delta V=0$ and $\Delta W=0$ in $\Omega_\pm$.
The jump properties of item (iii) for $s=1$ are implied e.g. by \cite[Lemma 4.1]{Costabel1988}, cf. also \cite[Theorem 6.11]{McLean2000}. 
Hence they evidently hold if $1\le s<\tha$, and by the density argument also if $\ha< s< 1$.
\end{proof}

Employing relations \eqref{VWa}, Theorem~\ref{T3.1s0D}, and Theorem~\ref{GrL+}, we obtain the following mapping  properties for the parametrix-based potentials on Lipschitz domains. 
\begin{theorem}\label{T3.1s0}
Let  $\Omega$ be a bounded Lipschitz domain.

(i) The following operators are continuous if  $\ha\le s\le \tha$,
 \begin{eqnarray}
\mu V  &:& H^{s-\frac{3}{2}}(\pO) \to H^{s}(\R^n),\quad a\in C_+^{s}(\R^n),\quad \forall\ \mu\in\mathcal D(\R^n);\label{VHs1}\\
r_{\Omega}W  &:&  H^{s-\ha}(\pO)\to H^{s}(\Omega),\quad a\in C_+^{s}(\overline\Omega);\label{WHs1}\\
\mu\,r_{\Omega_-}W  &:&  H^{s-\ha}(\pO)\to H^{s}(\overline\Omega_-),\quad a\in C_+^{s}(\overline\Omega_-),\quad \forall\ \mu\in\mathcal D(\R^n)\label{WHs1-}.
\end{eqnarray}
(ii) The following operators are continuous if  $\ha< s\le \tha$ and $a\in C_+^{\tha}(\overline\Omega)$,
 \begin{eqnarray}
r_{\Omega}V  &:& H^{s-\frac{3}{2}}(\pO) \to H^{s,-\ha}( {\Omega;A}); \label{VHs1Ga}\\
\mu\,r_{\Omega_-}V  &:& H^{s-\frac{3}{2}}(\pO) \to H^{s,-\ha}_{\rm loc}({\Omega_-;A}),\quad \forall\ \mu\in\mathcal D(\R^n);\label{VHs1Ga-}\\
r_{\Omega}W  &:&  H^{s-\ha}(\pO)\to  H^{s,-\ha}( {\Omega;A});\label{WHs1Ga}\\
\mu\,r_{\Omega_-}W  &:&  H^{s-\ha}(\pO)\to  H^{s,-\ha}( {\Omega_-;A}),\quad \forall\ \mu\in\mathcal D(\R^n).\label{WHs1Ga-}
\end{eqnarray}
(iii) The following operators are continuous if  $\ha< s< \tha$,
 \begin{eqnarray}
\gamma^\pm V  &:& H^{s-\frac{3}{2}}(\pO) \to H^{s-\ha}(\partial\Omega),\quad a\in C_+^{s}(\overline\Omega_\pm);\label{VHs1g}\\
\gamma^\pm W  &:&  H^{s-\ha}(\pO)\to H^{s-\ha}(\partial\Omega),\quad a\in C_+^{s}(\overline\Omega_\pm);\label{WHs1g}\\
T^\pm V  &:& H^{s-\frac{3}{2}}(\pO) \to H^{s-\tha}(\partial\Omega),\quad 
a\in C_+^{\tha}(\overline\Omega_\pm);\label{VHs1GaT}\\
T^\pm W  &:&  H^{s-\ha}(\pO)\to  H^{s-\tha}(\partial\Omega),\quad a\in C_+^{\tha}(\overline\Omega_\pm).\label{WHs1GaT}
\end{eqnarray}
\end{theorem}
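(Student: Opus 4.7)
My plan is to reduce every claim to its harmonic counterpart in Theorem~\ref{T3.1s0D} through the algebraic decompositions \eqref{VWa}, $Vg=(1/a)V_\Delta g$ and $Wg=(1/a)W_\Delta(ag)$, combined with the multiplier result Theorem~\ref{GrL+} and the observation that the bound $a\ge a_{\min}>0$ preserves $C_+^\mu$-regularity under inversion, so $1/a\in C_+^\mu$ whenever $a\in C_+^\mu$.

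For item (i), continuity of $\mu V$ follows from $V_\Delta g\in H^s(\R^n)$ via \eqref{WHs1D} and multiplication by $\mu/a$, which is a multiplier in $H^s$ on the compact support of $\mu$ by Theorem~\ref{GrL+}. For $r_\Omega W$ and $\mu r_{\Omega_-} W$ I proceed in three steps: the boundary map $g\mapsto ag$ is continuous on $H^{s-\ha}(\pO)$ (boundary multiplier using the trace regularity of $a\in C_+^s$); then $W_\Delta(ag)$ lies in $H^s(\Omega)$ or $H^s_{\rm loc}(\overline\Omega_-)$ by \eqref{WHs1D}; and multiplication by $1/a$ (or $\mu/a$) preserves the space by Theorem~\ref{GrL+}.

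For item (ii), it suffices to show $AVg,\,AWg\in\widetilde H^{-\ha}_\bullet(\Omega_\pm)$. Since by \eqref{DV,DW=0} the functions $V_\Delta g$ and $W_\Delta(ag)$ are harmonic in $\Omega_\pm$, a computation analogous to \eqref{DPgB}, expanding $a\nabla[(1/a)u]=\nabla u-(\nabla\ln a)u$ and taking divergence, yields
\begin{equation*}
AVg=-\nabla\cdot[(\nabla\ln a)V_\Delta g],\qquad AWg=-\nabla\cdot[(\nabla\ln a)W_\Delta(ag)]\quad\mbox{in}\ \Omega_\pm.
\end{equation*}
Since $a\in C_+^\tha$ implies $a\in C^{1,\ha+\epsilon}$ for some $\epsilon>0$, I pick $\sigma\in(\ha,\min\{s,\ha+\epsilon/2\}]$; Theorem~\ref{GrL+} then gives $(\nabla\ln a)V_\Delta g\in H^\sigma(\Omega_\pm)$, hence its divergence lies in $H^{\sigma-1}(\Omega_\pm)$. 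Because $\sigma-1\in(-\ha,0)$, Remark~\ref{R2.1}(\ref{iii}) and Corollary~\ref{C2.3} identify $H^{\sigma-1}(\Omega_\pm)=\widetilde H^{\sigma-1}_\bullet(\Omega_\pm)\hookrightarrow\widetilde H^{-\ha}_\bullet(\Omega_\pm)$, delivering the desired $\widetilde H^{-\ha}_\bullet$-membership; the $W$-case is identical, and the exterior case is handled with the cutoff $\mu$.

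Item (iii) is then a packaging step: the trace bounds \eqref{VHs1g}, \eqref{WHs1g} follow directly from (i) by the trace theorem for Lipschitz domains (\cite[Lemma 3.6]{Costabel1988}, \cite[Theorem 3.38]{McLean2000}), and the conormal bounds \eqref{VHs1GaT}, \eqref{WHs1GaT} follow from (ii) by applying Theorem~\ref{T6.6c}, which furnishes the continuous canonical conormal-derivative operator $T^\pm\colon H^{s,-\ha}(\Omega_\pm;A)\to H^{s-\tha}(\pO)$. The main technical obstacle is the critical endpoint $s=\tha$ in (ii): there $V_\Delta g\in H^\tha$ is too rough for $\nabla\ln a$ to act as a direct multiplier, and the saving feature is precisely the $+\epsilon$ slack built into Definition~\ref{C+mu} of $C_+^\tha$, which allows dropping first to intermediate regularity $\sigma\in(\ha,\ha+\epsilon/2]$, multiplying there, and then landing after divergence in $\widetilde H^{\sigma-1}_\bullet(\Omega_\pm)$ with $\sigma-1$ strictly above $-\ha$; without this buffer one would sit exactly at $-\ha$, where the identification $H^{-\ha}(\Omega_\pm)=\widetilde H^{-\ha}_\bullet(\Omega_\pm)$ fails.
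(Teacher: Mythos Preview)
Your proposal is correct and follows essentially the same route as the paper: reduce to the harmonic potentials via \eqref{VWa}, use Theorem~\ref{GrL+} as the multiplier tool, handle item~(ii) by computing $AVg=-\nabla\cdot[(\nabla\ln a)V_\Delta g]$ and exploiting the $\epsilon$-slack in $a\in C_+^{\tha}$ to land in $H^{\sigma-1}$ with $\sigma-1\in(-\ha,0)$, and then derive item~(iii) from the trace theorem and the canonical conormal derivative continuity. The paper's proof is terser but structurally identical; your explicit remark on why the $\epsilon$-buffer rescues the endpoint $s=\tha$ is a welcome clarification.
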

\begin{proof} 
Relations \eqref{VWa}, Theorem~\ref{T3.1s0D}(i), and Theorem~\ref{GrL+} immediately imply continuity of operators \eqref{VHs1} and \eqref{WHs1}. 
Further, if $a\in C_+^{\tha}(\overline\Omega)$ then there exists $\epsilon>0$ such that 
$a\in C^{1,\ha+\epsilon}(\overline\Omega)$. For  $\ha< s\le \tha$, $g\in{H}^{s-\tha}(\Omega)$ and any 
$\sigma\in(\ha, \min\{s,\ha+\epsilon\})$, we
have, 
\begin{align*}
%\label{3.d1B}  
\|A Vg\|_{H^{\sigma-1}(\Omega)} 
&=\left\|\nabla\cdot\left(a\nabla\left[\frac{1}{a }\;V_\Delta\,g\,\right]\right)\right\|_{H^{\sigma-1}(\Omega)}
=\|\nabla\cdot\left[(\nabla\ln a)V_\Delta\,g\right]\|_{H^{\sigma-1}(\Omega)}\\
&\le\|\left[(\nabla\ln a)V_\Delta\,g\right]\|_{H^\sigma}
\le C\|a\|_{ C^{1,\ha+\epsilon}(\overline\Omega)}\|V_\Delta\,g\|_{H^{\sigma}(\Omega)}
\le C\|a\|_{ C^{1,\ha+\epsilon}(\overline\Omega)}\|V_\Delta\,g\|_{H^{s}(\Omega)},
%\le C'\|g\|_{H^{\sigma-\tha}(\Omega)}
%\le C''\|g\|_{H^{s-\tha}(\Omega)},\label{AV}
\end{align*}
where we have taken into
account that $\Delta V_\Delta\,g=0$ in $\Omega$. 
Hence along  with continuity of the first operator in \eqref{WHs1D} this implies $A Vg\in {H}^{\sigma-1}(\Omega)$.
Therefore $r_{\Omega}A Vg$ can be extended by zero to $\widetilde{H}^{\sigma-1}(\Omega)\subset
\widetilde{H}^{-\ha}(\Omega)$ with the corresponding norm estimate, from which  continuity of operator \eqref{VHs1Ga} follows.
Continuity of operator \eqref{WHs1Ga} is proved in a similar fashion.

Continuity of operators \eqref{WHs1Ga}, \eqref{VHs1Ga-}, \eqref{WHs1-},  and \eqref{WHs1Ga-} immediately follows from their counterparts for the interior domain.

Continuity of operators \eqref{VHs1g}, \eqref{WHs1g}, for the potential traces, is implied by continuity of operators \eqref{VHs1}, \eqref{WHs1},  \eqref{WHs1-} and the trace theorem, while continuity of operators \eqref{VHs1GaT}, \eqref{WHs1GaT}, for the potential conormal derivatives, is implied by continuity of operators \eqref{VHs1Ga},  \eqref{VHs1Ga-} \eqref{WHs1Ga},  \eqref{WHs1Ga-} and Theorem~\ref{T6.6}.
\end{proof}

Now we can prove the jump properties for the parametrix-based potentials on Lipschitz domains. 
\begin{theorem}\label{T3.1sJ}
Let  $\partial\Omega$ be a compact Lipschitz boundary,  $\ha< s< \tha$, $\varphi\in H^{s-\ha}(\partial\Omega)$ and $\psi\in H^{s-\tha}(\partial\Omega)$. Then
 \begin{eqnarray}
\gamma^+ V\psi -\gamma^- V\psi=0, %\label{VHs1g}\\
&&
\gamma^+ W\varphi -\gamma^- W\varphi=-\varphi,\quad 
\mbox{if } a\in C_+^{s}(\R^n);\label{WHs1gj}\\
T^+ V\psi -T^- V\psi=\psi, %\label{VHs1g}\\
&&
T^+ W\varphi -T^- W\varphi=(\partial_\nu a)\varphi,\quad 
\mbox{if } a\in C_+^{\tha}(\R^n).\label{WHs1GaTj}
\end{eqnarray}
\end{theorem}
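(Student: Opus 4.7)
The plan is to reduce both jump formulas to the corresponding harmonic jumps of Theorem~\ref{T3.1s0D}(iii) by exploiting the algebraic identities $aV\psi=V_\Delta\psi$ and $aW\varphi=W_\Delta(a\varphi)$ from \eqref{VWa}, together with the fact that the continuous multiplier $a$ introduces no new jumps across $\pO$. The trace jumps \eqref{WHs1gj} are immediate: since $a\in C_+^s(\R^n)$ has a single-valued trace on $\pO$, I have $\gamma^\pm V\psi=(1/a)\,\gamma^\pm V_\Delta\psi$ and $\gamma^\pm W\varphi=(1/a)\,\gamma^\pm W_\Delta(a\varphi)$, and \eqref{WHs1gj} follows at once from \eqref{WHs1gjD} applied to the densities $\psi$ and $a\varphi$ respectively.

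For the conormal-derivative jumps \eqref{WHs1GaTj} the main step is to establish a Leibniz-type identity for the canonical co-normal derivative:
\begin{equation*}
T^\pm u=T^\pm_\Delta(au)-(\gamma^\pm u)\,\pa_\nu a\quad\text{on }\pO,\qquad \forall\,u\in H^{s,-\ha}(\Omega_\pm;A),\ a\in C_+^\tha(\R^n).
\end{equation*}
To prove this directly from \eqref{Tcandef}, I would use the pointwise identity $Au=\Delta(au)-\nabla\cdot(u\nabla a)$ and the definition \eqref{checkA} to compute $\check A_{\Omega_\pm}u-\check\Delta_{\Omega_\pm}(au)=-\nabla\cdot\mathring E_{\Omega_\pm}(u\nabla a)$ and $\tilde A_{\Omega_\pm}u-\tilde\Delta_{\Omega_\pm}(au)=-\mathring E_{\Omega_\pm}(\nabla\cdot(u\nabla a))$; the latter uses that $u\nabla a\in H^\sigma(\Omega_\pm)$ for some $\sigma>\ha$, so $\nabla\cdot(u\nabla a)\in\widetilde H^{\sigma-1}_\bullet(\Omega_\pm)$ and $r_{\Omega_\pm}^{-1}=\mathring E_{\Omega_\pm}$ there by Remark~\ref{R2.1}. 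Subtracting these two identities and inserting the distributional divergence-theorem formula
\begin{equation*}
\nabla\cdot\mathring E_{\Omega_\pm}(u\nabla a)=\mathring E_{\Omega_\pm}\bigl(\nabla\cdot(u\nabla a)\bigr)\mp(\gamma^\pm u)(\pa_\nu a)\,\delta_{\pO}
\end{equation*}
yields $\tilde A_{\Omega_\pm}u-\check A_{\Omega_\pm}u=\tilde\Delta_{\Omega_\pm}(au)-\check\Delta_{\Omega_\pm}(au)\mp(\gamma^\pm u)(\pa_\nu a)\delta_{\pO}$, and applying $\pm(\gamma^{-1})^*$ together with $\gamma\gamma^{-1}w=w$ produces the Leibniz identity.

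With the Leibniz identity in hand, the jump relations follow by substitution. Taking $u=V\psi$, so $au=V_\Delta\psi$, and using $\gamma^+V\psi=\gamma^-V\psi$ just proved, I get $T^+V\psi-T^-V\psi=T^+_\Delta V_\Delta\psi-T^-_\Delta V_\Delta\psi=\psi$ by \eqref{WHs1GaTjD}. Taking $u=W\varphi$, so $au=W_\Delta(a\varphi)$, and combining $T^+_\Delta W_\Delta(a\varphi)-T^-_\Delta W_\Delta(a\varphi)=0$ with the trace jump $\gamma^+W\varphi-\gamma^-W\varphi=-\varphi$, I obtain $T^+W\varphi-T^-W\varphi=-(-\varphi)\pa_\nu a=(\pa_\nu a)\varphi$. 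The hardest part will be verifying the distributional surface-delta identity and confirming that $au\in H^{s,-\ha}(\Omega_\pm;\Delta)$ (so that $T^\pm_\Delta(au)$ is defined) under only the limited smoothness $a\in C_+^\tha(\R^n)$: the multiplier part follows from Theorem~\ref{GrL+}, and the $\widetilde H^{-\ha}_\bullet$ regularity of $\Delta(au)=Au+\nabla\cdot(u\nabla a)$ follows from the improved smoothness of $u\nabla a$ inherited from $\nabla a\in C^{0,\ha+\epsilon}(\overline{\Omega_\pm})$ by a sharp multiplier estimate in the spirit of the proof of continuity of \eqref{T3.1P3ha}.
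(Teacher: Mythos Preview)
Your approach is correct and takes a genuinely different route from the paper. For the trace jumps \eqref{WHs1gj} you and the paper agree. For the conormal jumps \eqref{WHs1GaTj}, the paper instead generalises Costabel's Lemma~4.1: it computes $AV\psi=-\gamma^*\psi-\nabla\cdot((\nabla a)V\psi)$ (and similarly $AW\varphi$) distributionally on $\R^n$, pairs against a test function $\phi\in\mathcal D(\R^n)$, applies the second Green identity \eqref{GreenCan} separately on $\Omega_+$ and $\Omega_-$, sums, and compares the two expressions to read off the jump. Your route is the Leibniz identity $T^\pm u=T^\pm_\Delta(au)-(\gamma^\pm u)\pa_\nu a$, obtained directly from Definition~\ref{Dccd} and the surface-delta formula, then specialised to $u=V\psi,\,W\varphi$. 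Both are valid; yours is shorter and more algebraic, while the paper's avoids invoking the distributional divergence-theorem identity explicitly (it is hidden inside the Green-identity bookkeeping). It is worth noting that the paper later proves your Leibniz identity in the specific form \eqref{VWab4} via a density argument with $v_k\in\mathcal D(\overline\Omega)$, so your insight is used elsewhere---just not for Theorem~\ref{T3.1sJ} itself.

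One simplification you should take: your worry that ``confirming $au\in H^{s,-\ha}(\Omega_\pm;\Delta)$'' is the hardest part is overstated for the actual application. For $u=V\psi$ and $u=W\varphi$ you have $au=V_\Delta\psi$ and $au=W_\Delta(a\varphi)$, which are harmonic in $\Omega_\pm$ by \eqref{DV,DW=0}, so $au\in H^{s,0}(\Omega_\pm;\Delta)$ trivially. The only genuine regularity check is that $u\nabla a\in H^\sigma(\Omega_\pm)$ for some $\sigma>\ha$ (needed for the surface-delta identity and for $\mathring E_{\Omega_\pm}$ to be the canonical extension of $\nabla\cdot(u\nabla a)$); this follows exactly as in the proof of continuity of \eqref{VHs1Ga} since $a\in C_+^{\tha}(\R^n)$ gives $\nabla a\in C^{0,\ha+\epsilon}$.
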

\begin{proof}
Relations \eqref{VWa} and \eqref{WHs1gjD} along with Theorem~\ref{GrL+} immediately imply jump relations \eqref{WHs1gj}. 
To prove the first jump relation in \eqref{WHs1GaTj}, we generalise to the parametrix-based potentials the arguments from the proof of Lemma 4.1 in \cite{Costabel1988}. 
Let $\psi\in H^{s-\tha}(\partial\Omega)$. From \eqref{3.6d}, we obtain, in the sense of distributions,
\begin{align}
%\label{3.d1B}  
A V\psi=-A\left(\frac{1}{a(y)}\mathbf P_\Delta\gamma^*\psi\right)
&=-\gamma^*\psi+\nabla\cdot\left(\frac{\nabla a}{a }\mathbf P_\Delta\,\gamma^*\psi\,\right)
=-\gamma^*\psi-\nabla\cdot\left((\nabla a) V\psi\right)\quad\mbox{in }\R^n,
\end{align}
where we have taken into account that $\Delta{\mathbf P}_\Delta\,\gamma^*\psi=\gamma^*\psi$. 
Note that  $\gamma^*\psi\in H^{s-2}(\R^n)$ by \eqref{gT*}, and hence 
$\mathbf R_*\gamma^*\psi\in  H^{s-1}_{\rm loc}(\R^n)$ by \eqref{T3.1P3ha*Rn}.
Then, since the operator $A$ is formally self-adjoint, for any test function $\phi\in\mathcal D(\R^n)$ we obtain,
\begin{align}\label{3.71-}
\int_{\R^n}V\psi(y)A\phi(y) dy=\langle A V\psi,\phi\rangle_{\R^n}
=-\langle \gamma^*\psi+\mathbf R_*\gamma^*\psi,\phi\rangle_{\R^n}
=-\langle \psi,\gamma\phi\rangle_{\partial\Omega}- \langle\nabla\cdot\left((\nabla a) V\psi\right),\phi\rangle_{\R^n}.
\end{align}
Note that for $a\in C_+^{\tha}(\R^n)$ and $\psi\in H^{s-\tha}(\partial\Omega)$,  $\ha< s< \tha$, continuity of operator \eqref{VHs1} and Theorem~\ref{GrL+} imply that  $V\psi\in H^{s}_{\rm loc}(\R^n)$ but 
$(\nabla a)V\psi\in H^{\ha+\epsilon}_{\rm loc}(\R^n)$ for some $\epsilon\in(0,1)$. 
Hence, from the second Green identity \eqref{GreenCan} for $v=V\psi$ and $u=\phi$, we have,
\begin{multline}
\int_{\Omega_\pm}V\psi(y)A\phi(y) dy-\langle \tilde A_{\Omega_\pm} V\psi,\phi\rangle_{\Omega_\pm}
=\int_{\Omega_\pm}V\psi(y)A\phi(y) dy
+\langle\widetilde E^{-\ha+\epsilon}_{\Omega_\pm}r_{\Omega_\pm}\nabla\cdot\left((\nabla a)V\psi\right),\phi\rangle_{\Omega_\pm}\\
=\pm\left\langle T^{+}\phi, \gamma^+ V\psi\right\rangle _{\pO}
 \mp\left\langle T^{+}V\psi, \gamma^+\phi\right\rangle _{\pO},\label{3.71}
\end{multline}
Here we employed that $r_{\Omega_\pm}\gamma^*\psi=0$ since $\supp\gamma^*\psi\subset \partial\Omega$.
Let us take into account that $\gamma^+\phi=\gamma^-\phi=\gamma\phi$, $T^+\phi=T^-\phi=T^c\phi$ due to smoothness of $\phi$, while  $\gamma^+ V\psi=\gamma^- V\psi=\gamma V\psi$ by the first relation in \eqref{WHs1gj}.
Moreover,  we also have 
$$
\langle\widetilde E^{-\ha+\epsilon}_{\Omega_\pm}r_{\Omega_\pm}\nabla\cdot\left((\nabla a)V\psi\right),\phi\rangle_{\Omega_\pm}
=\langle r_{\Omega_\pm}\nabla\cdot\left((\nabla a)V\psi\right),\mathring E_{\Omega_\pm}\phi\rangle_{\Omega_\pm}\\
=\pm\langle (\partial_\nu a)\gamma^\pm V\psi,\gamma\phi\rangle_{\partial\Omega}
-\langle(\nabla a)V\psi,\nabla\phi\rangle_{\Omega_\pm}.
$$  
%and 
%$\mathring E_{\Omega_-}\phi+\mathring E_{\Omega}\phi=\phi$. 
Then summing up \eqref{3.71} for $\Omega$ and $\Omega_-$, we obtain
\begin{align} \label{3.73}
\int_{\R^n}V\psi(y)A\phi(y)dy
&=-\left\langle T^+V\psi-T^-V\psi, \gamma\phi\right\rangle _{\pO}-\langle(\nabla a)V\psi,\nabla\phi\rangle_{\R^n}.
\end{align}
Comparing \eqref{3.73} and \eqref{3.71-}, we obtain 
$\left\langle T^+V\psi-T^-V\psi, \gamma\phi\right\rangle _{\pO}=\langle \psi,\gamma\phi\rangle_{\partial\Omega}$ for arbitrary  $\phi\in\mathcal D(\R^n)$, which implies the first jump relation in \eqref{WHs1GaTj}.

Let us similarly prove the second jump relation in \eqref{WHs1GaTj}.
Let $\varphi\in H^{s-\ha}(\partial\Omega)$. From \eqref{3.7d}, we obtain, in the sense of distributions,
\begin{align}
%\label{3.d1B}  
A W\varphi=-A\left(\frac{1}{a(y)}\mathbf P_\Delta T^{c*}\varphi\right)
&=- T^{c*}\varphi+\nabla\cdot\left(\frac{\nabla a}{a }\mathbf P_\Delta\, T^{c*}\varphi\,\right)
=- T^{c*}\varphi-\nabla\cdot\left((\nabla a) W\varphi\right)
%=- T^{c*}\varphi-\mathbf R_* T^{c*}\varphi
\quad\mbox{in }\R^n,
\end{align}
where we have taken into account that $\Delta{\mathbf P}_\Delta\, T^{c*}\varphi= T^{c*}\varphi$. 
Then for any test function $\phi\in\mathcal D(\R^n)$ we obtain,
\begin{multline}\label{3.71-W}
\int_{\R^n}W\varphi(y)A\phi(y) dy=\langle A W\varphi,\phi\rangle_{\R^n}
=-\langle  T^{c*}\varphi+\nabla\cdot\left((\nabla a) W\varphi\right),\phi\rangle_{\R^n}\\
=-\langle \varphi,T^c\phi\rangle_{\partial\Omega} + \langle(\nabla a) W\varphi,\nabla\phi\rangle_{\R^n}.
\end{multline}
Note that for $a\in C_+^{\tha}(\R^n)$ and $\varphi\in H^{s-\ha}(\partial\Omega)$,  $\ha< s< \tha$, continuity of operators \eqref{WHs1}, \eqref{WHs1-} and Theorem~\ref{GrL+} imply that 
$r_{\Omega}W\varphi\in H^{s}(\Omega)$, $r_{\Omega_-}W\varphi\in H^{s}_{\rm loc}(\overline\Omega_-)$ but $(\nabla a)r_{\Omega}W\varphi\in H^{\ha+\epsilon}(\Omega)$, 
$(\nabla a)r_{\Omega_-}W\varphi\in H^{\ha+\epsilon}_{\rm loc}(\overline\Omega_-)$ for some $\epsilon\in(0,1)$. 
Hence from the second Green identity \eqref{GreenCan} for $v=W\varphi$ and $u=\phi$, we have,
\begin{align}
\int_{\Omega_\pm}W\varphi(y)\, A\phi(y) dy-\langle \tilde A_{\Omega_\pm} W\varphi,\phi\rangle_{\Omega_\pm}
&=\int_{\Omega_\pm}W\varphi(y)\, A\phi(y) dy
+\langle\widetilde E^{-\ha+\epsilon}_{\Omega_\pm}r_{\Omega_\pm}\nabla\cdot\left((\nabla a)W\varphi\right),\phi\rangle_{\Omega_\pm} \nonumber\\
&=\pm\left\langle T^{+}\phi, \gamma^+ W\varphi\right\rangle _{\pO}
 \mp\left\langle T^{+}W\varphi, \gamma^+\phi\right\rangle _{\pO}.\label{3.71W}
%\\
%\int_{\Omega_-}W\varphi(y)\, A\phi(y) dy-\langle \tilde A_{\Omega_-} W\varphi,\phi\rangle_{\Omega_-}
%&=\int_{\Omega_-}W\varphi(y)\, A\phi(y) dy
%+\langle\widetilde E^{-\ha+\epsilon}_{\Omega_-}r_{\Omega_-}\nabla\cdot\left((\nabla a)W\varphi\right),\phi\rangle_{\Omega_-} \nonumber\\
%=&-\left\langle T^-\phi, \gamma^- W\varphi\right\rangle _{\pO}
% +\left\langle T^-W\varphi, \gamma^-\phi\right\rangle _{\pO}.\label{3.72W}
\end{align}
Here we employed that $r_{\Omega_\pm}T^{c*}\varphi=0$ since $\supp T^{c*}\varphi\subset \partial\Omega$.
Let us also take into account that $\gamma^+\phi=\gamma^-\phi=\gamma\phi$, $T^+\phi=T^-\phi=T^c\phi$ due to smoothness of $\phi$, while  $\gamma^+ W\varphi-\gamma^- W\varphi=-\varphi$ by the second relation in \eqref{WHs1gj}.
Moreover,  we also have 
$$
\langle\widetilde E^{-\ha+\epsilon}_{\Omega_\pm}r_{\Omega_\pm}\nabla\cdot\left((\nabla a)W\varphi\right),\phi\rangle_{\Omega_\pm}
=\langle r_{\Omega_\pm}\nabla\cdot\left((\nabla a)W\varphi\right),\mathring E_{\Omega_\pm}\phi\rangle_{\Omega_\pm}\\
=\pm\langle (\partial_\nu a)\gamma^\pm W\varphi,\gamma\phi\rangle_{\partial\Omega}
-\langle(\nabla a)W\varphi,\nabla\phi\rangle_{\Omega_\pm}.
$$ 
Then summing up \eqref{3.71W} for $\Omega$ and $\Omega_-$, we obtain
\begin{multline} \label{3.73W}
\int_{\R^n}W\varphi(y)A\phi(y)dy-\langle (\partial_\nu a)\varphi,\gamma\phi\rangle_{\partial\Omega}
-\langle(\nabla a)W\varphi,\nabla\phi\rangle_{\R^n}\\
=-\left\langle T^c\phi, \varphi\right\rangle _{\pO}
-\left\langle T^+W\varphi-T^-W\varphi, \gamma\phi\right\rangle _{\pO}.
\end{multline}
Comparing \eqref{3.73W} and \eqref{3.71-W}, we obtain 
$\left\langle T^+W\varphi-T^-W\varphi, \gamma\phi\right\rangle _{\pO}
=\langle (\partial_\nu a)\varphi,\gamma\phi\rangle_{\partial\Omega}$ for arbitrary  $\phi\in\mathcal D(\R^n)$, which implies the second jump relation in \eqref{WHs1GaTj}.
\end{proof}

Theorem \ref{T3.1s0}(iii) and the first relation in \eqref{WHs1gj} imply the following assertion.
\begin{corollary}\label{T3.3}
Let  $\partial\Omega$ be a compact Lipschitz boundary,  $\ha< s< \tha$.
The following operators are continuous.
 \begin{align}
&\mathcal V:=\gamma^+ V=\gamma^- V  : H^{s-\frac{3}{2}}(\pO) \to H^{s-\ha}(\partial\Omega),\quad 
a\in C_+^{s}(\overline\Omega_\pm);\label{3.11}\\
&\mathcal W:=\ha(\gamma^+ W+\gamma^- W)  :  H^{s-\ha}(\pO)\to H^{s-\ha}(\partial\Omega),\quad 
a\in C_+^{s}(\overline\Omega_\pm);\label{3.12}\\
&\mathcal W':=\ha(T^+V +T^-V)  : H^{s-\frac{3}{2}}(\pO) \to H^{s-\tha}(\partial\Omega),\quad 
a\in C_+^{\tha}(\overline\Omega_\pm);\label{3.13}\\
&\mathcal L:=\ha(T^+W +T^-W)   :  H^{s-\ha}(\pO)\to  H^{s-\tha}(\partial\Omega),\quad a\in C_+^{\tha}(\overline\Omega_\pm).\label{3.14}
 \end{align}
\end{corollary}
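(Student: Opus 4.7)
The plan is to combine Theorem~\ref{T3.1s0}(iii) with the first jump relation from Theorem~\ref{T3.1sJ}, reading the four definitions in Corollary~\ref{T3.3} as nothing more than assemblies (a single-sided trace, in the case of $\mathcal V$, and arithmetic means in the other three cases) of operators whose continuity has already been established.

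First I would observe that Theorem~\ref{T3.1s0}(iii) directly gives, under the stated H\"older--Lipschitz regularity of $a$ on $\overline\Omega_\pm$, the continuity of each of the one-sided boundary operators
\begin{align*}
\gamma^\pm V &: H^{s-\tha}(\pO)\to H^{s-\ha}(\pO),
&\gamma^\pm W &: H^{s-\ha}(\pO)\to H^{s-\ha}(\pO),\\
T^\pm V &: H^{s-\tha}(\pO)\to H^{s-\tha}(\pO),
&T^\pm W &: H^{s-\ha}(\pO)\to H^{s-\tha}(\pO),
\end{align*}
for $\ha<s<\tha$. Continuity of $\mathcal W$, $\mathcal W'$ and $\mathcal L$ between the spaces stated in \eqref{3.12}--\eqref{3.14} is then immediate, since each is a half-sum of two continuous operators acting between the same pair of spaces.

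For $\mathcal V$ the statement is slightly stronger: we assert that the two continuous operators $\gamma^+V$ and $\gamma^-V$ actually coincide, so that $\mathcal V$ is well defined as a single operator $H^{s-\tha}(\pO)\to H^{s-\ha}(\pO)$. This is precisely the content of the first jump relation in \eqref{WHs1gj}, which asserts $\gamma^+V\psi-\gamma^-V\psi=0$ for every $\psi\in H^{s-\ha}(\pO)$ and, by density and continuity of the two operators on $H^{s-\tha}(\pO)$, for every $\psi\in H^{s-\tha}(\pO)$ as well. Combined with the continuity of $\gamma^+V$ (say) already obtained, this yields continuity of $\mathcal V$ with the norm estimate inherited from \eqref{VHs1g}.

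There is no genuine obstacle: the result is an assembly from the two preceding theorems, and the only point requiring care is to observe that the hypothesis $a\in C_+^s(\overline\Omega_\pm)$ in \eqref{3.11} is consistent with the hypothesis $a\in C_+^s(\R^n)$ used to invoke \eqref{WHs1gj}, since the jump relation only depends on $a$ in a neighbourhood of $\pO$ and the H\"older--Lipschitz condition on both sides of $\pO$ delivers such a neighbourhood via a standard extension.
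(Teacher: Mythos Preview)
Your proof is correct and follows essentially the same route as the paper, which simply notes that Theorem~\ref{T3.1s0}(iii) and the first relation in \eqref{WHs1gj} imply the corollary. One small remark: the jump relation \eqref{WHs1gj} is already stated for $\psi\in H^{s-\tha}(\pO)$, so your density step is unnecessary (though harmless).
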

For the case of smooth boundary, the boundary operators defined in Corollary~\ref{T3.3} (cf. \cite[Eq. (7.3)]{McLean2000} for the fundamental solution - based potentials on Lipschitz domains) correspond to the  boundary integral (pseudodifferential) operators
of direct surface values of the single layer potential, the double layer potential $\W$, and the co-normal derivatives of the single layer potential $\Wp$ and of the double layer potential, cf. \cite[Eq. (3.6)-(3.8)]{CMN-1} for the parametrix-based potentials on smooth domains. See also \cite[Theorems 7.3, 7.4]{McLean2000} about integral representations on Lipschitz domains of the boundary operators associated with the layer potentials, based on fundamental solutions.

%%%%%%%%%%%%%%%%%%%%%%%%% 3.11 -- 3.12 -- 3.13 -- 3.14
If $a=1$, we will equip the operators defined in Corollary~\ref{T3.3}  with the subscript $\Delta$. Then under the hypotheses of Corollary~\ref{T3.3} we have (cf. \cite[Eq. (3.10)-(3.13)]{CMN-1} for the potentials on smooth domains),
\begin{align}
\label{VWab1}&\mathcal V g=\frac{1}{a}\mathcal V_{_\Delta} g,\quad \hspace{5.2em}
\mathcal W g=\frac{1}{a}\mathcal W_{_\Delta}
(ag),\\
%\label{VWab3}
& \mathcal W\,^\prime g= {\mathcal W^{\,\prime}_{_\Delta}}g -\frac{\partial_\nu a}{a}\,\mathcal V_{_\Delta}g
,\quad
 {\mathcal L}g ={\mathcal L}_{_\Delta}(ag) -\frac{\partial_\nu a}{a}\, \mathcal W_{_\Delta} (ag).
 \label{VWab4}
\end{align}
Indeed, relations \eqref{VWab1} immediately follow from \eqref{3.11}, \eqref{3.12}, and \eqref{VWa}. 
Further, $T^+ Vg=T^+ (\frac{1}{a}V_\Delta g)$. Let $\{v_k\}\subset\mathcal D(\overline\Omega)$ be a sequence such that $\|v_k-V_\Delta g\|_{H^{s,-\ha}(\Omega;\Delta)}\to 0$ as $k\to\infty$, which implies that 
also $\|\frac{1}{a}v_k-V g\|_{H^{s,-\ha}(\Omega;A)}\to 0$ as $k\to\infty$. 
Then, cf. \cite[Lemma 6.10]{MikJMAA2013},
$$
T^+ Vg=\lim_{k\to\infty}T^c\left(\frac{1}{a}v_k\right)=\lim_{k\to\infty}aT^c_\Delta\left(\frac{1}{a}v_k\right)
=\lim_{k\to\infty}\left(\partial_\nu v_k - \frac{\partial_\nu a}{a}\gamma^+v_k\right)
=T^+_\Delta V_\Delta g -\frac{\partial_\nu a}{a}\gamma^+V_\Delta g.
$$
Similarly, $T^- Vg=T^-_\Delta V_\Delta g - \frac{\partial_\nu a}{a}\gamma^-V_\Delta g$, which together with \eqref{3.13} implies the first relation in \eqref{VWab4}. 
The second relation in \eqref{VWab4} is proved by a similar arguments.

Employing definitions \eqref{3.11}-\eqref{3.14}, the jump properties \eqref{WHs1gj}-\eqref{WHs1GaTj} can be re-written as follows for $\psi\in H^{s-\tha}(\pO)$, and  $\varphi\in H^{s-\ha}(\pO)$,  $\ha< s< \tha$, 
%%%%%%%%%%%%%%%%%%%%%%%%% 3.8 -- 3.9 --3.10
\begin{align}
&\gamma^\pm V\psi= {\cal V}\psi, \quad\hspace{4.5em}
\gamma^\pm W\varphi= \mp \ha\,\varphi  + {\cal W}\varphi,\quad \hspace{2em}
\mbox{if } a\in C_+^{s}(\R^n);\label{3.8}
\\
&T^\pm V\psi= \pm\ha\,\psi +
{\Wp}\psi,\quad
T^\pm W\varphi= \pm \ha\,(\partial_\nu a)\varphi  + {\cal L}\varphi,\quad 
\mbox{if } a\in C_+^{\tha}(\R^n).\label{3.11a}
\end{align}

\section{The third Green identity and integral relations}\label{G3G}
We will apply in this section some limiting procedures 
%(cf. \cite{Miranda1970}, \cite[Part 4, Section 3.8]{Hellwig1977}) 
to obtain the parametrix-based third Green identities. 

\begin{theorem}\label{3rdGA}
Let  $\Omega$ be a bounded Lipschitz domain, $u\in {H}^{s}(\Omega)$, $\ha< s< \tha$, and 
$a\in C_+^{s}(\overline\Omega)$.

(i) The following {\bf generalised third Green identity} holds,
\begin{equation}
u+{\cal R}u +W\gamma^+u=\P\check{A}u
 \quad \mbox{in}\ \Omega,\label{4.G3til}
\end{equation}
where, by \eqref{Ltil}, \eqref{checkA},
\begin{multline}\label{LPtil}
 \P\check{A}u(y):=   \langle \check{A}u,P(\cdot,y) \rangle_{\Omega} =-\check\E_{\Omega}(u,P(\cdot,y))
=-\langle \widetilde E_{\Omega}^{s-1}(a\nabla u),\nabla P(\cdot,y)\rangle_{\Omega}\\
=-\frac{1}{a(y)}\nabla\cdot\P_\Delta \widetilde E_{\Omega}^{s-1}(a\nabla u)(y),\quad\mbox{a.e. }y\in\Omega,
\end{multline}
and particularly, if $s=1$,
\begin{eqnarray}\label{LPtil1}
 \P\check{A}u(y) =-\int_{\Omega} a(x) \;\nabla u(x)\cdot\nabla_x P(x,y)\,dx,\quad\mbox{a.e. }y\in\Omega.\qquad
\end{eqnarray}

(ii) Moreover, if $Au=r_{\Omega}\tilde f$ in $\Omega$, where $\tilde f\in \widetilde{H}^{s-2}(\Omega)$,
then the {\bf generalised third Green identity} takes form,
%%%%%%%%%%%%%%%%%%%%%%    4.2
\begin{eqnarray}
%&&\hskip-5mm
u+{\cal R}u - VT^+(\tilde f;u)  +W\gamma^+u=
 {\cal P}\tilde f  &&\mbox{in }\Omega. \qquad\label{4.2Gen}
 \end{eqnarray}
\end{theorem}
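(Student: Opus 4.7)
The overall strategy is to establish part~(i) first for smooth $u \in \mathcal D(\overline\Omega)$ by a classical parametrix-based Green's-identity calculation, then extend to all $u \in H^s(\Omega)$ by density, and finally deduce part~(ii) from part~(i) by rewriting $\mathcal P \check A u$ in terms of $\mathcal P \tilde f$ and a single-layer contribution.

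For the smooth case in~(i), fix $y \in \Omega$ and apply the classical second Green identity to $u$ and $P(\cdot, y)$ on the perforated domain $\Omega \setminus \overline{B_\epsilon(y)}$, then let $\epsilon \to 0^+$. By \eqref{3.1}, $A_x P(x,y) = \delta(x-y) + R(x,y)$, so the volume terms converge to $\mathcal R u(y) - \mathcal P Au(y)$, while the standard singularity computation on $\partial B_\epsilon(y)$, using $a(x)\nu_{\text{ball}}\!\cdot\!\nabla_x P(x,y) = a(x)/(\omega_n a(y)|x-y|^{n-1})$, contributes $-u(y)$. The remaining integrals over $\partial\Omega$ are recognised as $-W\gamma u(y) + V T^c u(y)$ via \eqref{3.6}--\eqref{3.7}, giving the classical form
\begin{equation*}
u + \mathcal R u + W\gamma u - V T^c u = \mathcal P Au \quad \text{in } \Omega.
\end{equation*}
To recast this as \eqref{4.G3til}, I establish the distributional jump identity $\check A u = \mathring E_\Omega Au - \gamma^* T^c u$ in $\widetilde H^{s-2}(\Omega)$, which for smooth $u$ follows from integration by parts applied to the zero-extended vector field $\mathring E_\Omega(a\nabla u)$. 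Applying $\mathcal P$ termwise to this decomposition and using $\mathbf P\gamma^* = -V$, which is the content of \eqref{3.6d}, gives $\mathcal P \check A u = \mathcal P Au + V T^c u$, and rearrangement produces \eqref{4.G3til} for $u \in \mathcal D(\overline\Omega)$.

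The extension from $\mathcal D(\overline\Omega)$ to $H^s(\Omega)$ is by density, since each operator in \eqref{4.G3til} is continuous $H^s(\Omega) \to H^s(\Omega)$: this is immediate for the identity; follows from \eqref{T3.1P3has} for $\mathcal R$; from the trace theorem composed with \eqref{WHs1} for $W\gamma^+$; and from continuity of $\check A: H^s(\Omega) \to \widetilde H^{s-2}(\Omega)$ (see~\eqref{Ltil}) composed with \eqref{T3.1P1} for $\mathcal P \check A$. For part~(ii), the hypothesis $r_\Omega A u = r_\Omega \tilde f$ gives $r_\Omega(\tilde f - \check A u) = 0$, so $\tilde f - \check A u \in \widetilde H^{s-2}(\Omega)$ has support contained in $\partial\Omega$. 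In the range $1/2 < s < 3/2$, $H^{s-2}_{\partial\Omega}$ coincides with $\gamma^* H^{s-3/2}(\partial\Omega)$, and testing Definition~\ref{GCDd} against an arbitrary $\gamma^{-1}w$ identifies the density as $T^+(\tilde f; u)$, i.e.\ $\tilde f - \check A u = \gamma^* T^+(\tilde f; u)$. Applying $\mathcal P$ and using $\mathbf P\gamma^* = -V$ once more yields $\mathcal P \check A u = \mathcal P \tilde f + V T^+(\tilde f; u)$, and substitution into \eqref{4.G3til} produces \eqref{4.2Gen}.

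The main obstacle I expect is the rigorous derivation of the jump identity $\check A u = \mathring E_\Omega Au - \gamma^* T^c u$ for smooth $u$ when $a \in C_+^s(\overline\Omega)$ has only limited regularity (in particular for $1/2 < s < 1$), where $Au$ and $\nabla a$ must be interpreted distributionally and the integration by parts carried out carefully against test functions of suitable regularity; the $\epsilon \to 0$ limit and the density step are essentially routine once the mapping properties of Theorems~\ref{T3.1P} and~\ref{T3.1s0} are in hand.
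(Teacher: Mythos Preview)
Your proposal is correct, but it differs from the paper's argument in both parts.

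\textbf{Part (i).} The paper does not go through the classical third Green identity $u+\mathcal R u + W\gamma^+ u - VT^c u = \mathcal P Au$ at all. Instead it applies the \emph{first} Green identity \eqref{Greentilde} on $\Omega_\epsilon$ with $v=P(\cdot,y)$, obtaining
\[
-\langle T^+P(\cdot,y),\gamma^+u\rangle_{\partial B_\epsilon}
-\langle T^+P(\cdot,y),\gamma^+u\rangle_{\partial\Omega}
+\langle R(\cdot,y),u\rangle_{\Omega_\epsilon}
=-\langle a\nabla u,\nabla P(\cdot,y)\rangle_{\Omega_\epsilon},
\]
whose right-hand side is \emph{already} $\mathcal P\check A u(y)$ in the limit. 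This bypasses entirely the need to make sense of $Au$ and $T^c u$ separately, and hence the jump identity $\check A u=\mathring E_\Omega Au-\gamma^*T^c u$ never appears. The low-regularity obstacle you correctly identify---that for $a\in C_+^s$ with $s<1$ the quantities $Au$ and $\nabla a$ are only distributions---simply does not arise in the paper's route, because \eqref{Greentilde} requires only $P(\cdot,y)\in H^{2-s,-1/2}(\Omega_\epsilon;A)$, which holds since $P(\cdot,y)$ is smooth away from $y$. Your route is workable (via the generalised second Green identity \eqref{2.5s} on $\Omega_\epsilon$), but the paper's choice of the first Green identity for $P$ rather than the second is precisely what makes the argument clean.

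\textbf{Part (ii).} Here your approach is actually \emph{shorter} than the paper's. The paper approximates $u$ by $u_k\in\mathcal D(\overline\Omega)$, builds auxiliary extensions $\tilde f_k\to\tilde f$, and passes to the limit in $\mathcal P\check A u_k = VT^+(\tilde f_k;u_k)+\mathcal P\tilde f_k$. Your argument---observe $\tilde f-\check A_\Omega u\in H^{s-2}_{\partial\Omega}=\gamma^*H^{s-3/2}(\partial\Omega)$, identify the density as $T^+(\tilde f;u)$ via Definition~\ref{GCDd}, then apply $\mathcal P\gamma^*=-V$ from \eqref{3.6d}---avoids the approximation altogether. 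In fact the paper uses exactly this support-characterisation argument a few lines later, as equation \eqref{Tgen1} in the proof of Lemma~\ref{IDequivalenceGen}; it just did not exploit it for Theorem~\ref{3rdGA}(ii) itself.
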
 
\begin{proof}
(i) Let first $u\in\mathcal D(\overline \Omega)$. Let $y\in\Omega$,  $B_\epsilon(y)\subset \Omega$ be a ball centred in $y$ with sufficiently small radius $\epsilon$, and $\Omega_\epsilon:=\Omega\setminus\overline{B_\epsilon}(y)$. 
For any fixed $y$, evidently, 
$P(\cdot,y)=\frac{1}{a(y)}P_\Delta(\cdot,y)\in \mathcal D(\overline{\Omega_\epsilon})\subset H^{1,0}(A;\Omega_\epsilon)$ 
and has the coinciding classical and canonical conormal derivatives on $\pO_\epsilon$. 
Then from  the first Green identity \eqref{Greentilde} employed for $\Omega_\epsilon$ with $v=P(\cdot,y)$ we obtain
 \begin{align}\label{4.G3tile}
  - \langle T^{+}_x P(\cdot,y),\gamma^+u(x)\rangle_{\partial B_\epsilon(y)} 
  - \langle T^{+} P(\cdot,y),\gamma^+u\rangle_\pO
  +\langle R(\cdot,y),u\rangle_{\Omega_\epsilon} 
  =-\langle \nabla P(\cdot,y), a \nabla u\rangle_{\Omega_\epsilon}
  \end{align}
Since
$$
 \lim_{\epsilon\to 0}\langle T^{+}_x P(\cdot,y),\gamma^+u(x)\rangle_{\partial B_\epsilon(y)}=
\frac{1}{a(y)} \lim_{\epsilon\to 0}\int_{\partial B_\epsilon(y)} [\partial_{\nu(x)} P_\Delta(x,y)]a(x)\gamma^+u(x) dS(x)=-u(y),
$$ 
by passing to the limits as $\epsilon\to 0$, equation \eqref{4.G3tile} reduces to the third Green identity \eqref{4.G3til} for any $u\in \mathcal D(\overline \Omega)$.
Taking into account density of $\mathcal D(\overline \Omega)$ in $H^{s}(\Omega)$, and the mapping properties of the volume potentials \eqref{T3.1P1}, \eqref{T3.1P3has} in Theorem \ref{T3.1P} and of the double layer potential \eqref{WHs1} in Theorem \ref{T3.1s0}(i), we obtain that \eqref{4.G3til}-\eqref{LPtil} hold true also for any $u\in H^{s}(\Omega)$, $\ha< s< \tha$, in the sense of $H^{s}(\Omega)$, which implies also \eqref{LPtil1} for $s=1$.

(ii) 
Let $\{u_k\}\in\mathcal D(\overline\Omega)$ be a sequence converging to $u$ in $H^{1}(\Omega)$.  
By \eqref{LPtil}, \eqref{LPtil1} and \eqref{Tgen}, we have,
 \begin{multline} \label{LPtilc}
  \P\check{A}u_k(y)
%=\frac{1}{a(y)}\nabla_y\cdot\int_{\Omega} a(x) P_\Delta(x,y) \nabla u_k(x)  \,dx\\
=-\lim_{\epsilon\to 0}\int_{\Omega_\epsilon} a(x) \nabla u_k(x) \cdot\nabla_x P(x,y)\,dx 
=-\lim_{\epsilon\to 0} \check{\mathcal E}_{\Omega_\epsilon}(u_k,P(\cdot, y))\\
=\lim_{\epsilon\to 0}\left[\int_{\Omega_\epsilon} (\tilde A_{\Omega}u_k)(x) P(x,y)\,dx 
 - \int_{\partial B_\epsilon(y)} P(x,y)T^+u_k(x) dS(x) - \int_\pO P(x,y)T^+u_k(x)  dS(x)\right]\\  
={\cal P}\tilde A_{\Omega}u_k(y) + VT^+u_k(y).\quad
 \end{multline} 
Let now $\tilde f_k:= \widetilde E_{\Omega}^{s-2}r_{\Omega}(\tilde A_{\Omega}u_k-\check A_{\Omega}u) 
+ \tilde f,$
where $\widetilde E_{\Omega}^{s-2}:H^{s-2}(\Omega)\to \widetilde H^{s-2}(\Omega)$ is a (non-unique) continuous extension operator, which exists by \cite[Theorem 2.16]{MikJMAA2011}.
Since $r_{\Omega}\check A_{\Omega}u = r_{\Omega}\tilde f$, we obtain 
$r_{\Omega}\tilde f_k=r_{\Omega}\tilde A_{\Omega}u_k=r_{\Omega}\check A_{\Omega}u_k$.
Hence $r_{\Omega}\tilde A_{\Omega}u_k-r_{\Omega}\check A_{\Omega}u
=r_{\Omega}\check A_{\Omega}(u_k-u)\to 0$ in $H^{s-2}(\Omega)$ and $\tilde f_k\to\tilde f$ in $\widetilde H^{s-2}(\Omega)$, as $k\to\infty$.
Then by \eqref{LPtilc}, \eqref{3.6d} and \eqref{Tgentil} we obtain,
\begin{align*}
\P\check{A}u_k={\cal P}\tilde A_{\Omega}u_k+ VT^+u_k
={\cal P}\tilde A_{\Omega}u_k+VT^+(\tilde f_k;u_k)+\mathcal P(\tilde f_k-\tilde A_{\Omega}u_k)
=VT^+(\tilde f_k;u_k)+\mathcal P\tilde f_k.
\end{align*}
Passing to the limits as $k\to\infty$, we obtain 
$
\P\check{A}u(y)={\cal P}\tilde f + VT^+(\tilde f;u),
$
which substitution to \eqref{4.G3til} gives \eqref{4.2Gen}.
\end{proof}

For some functions $\tilde f$, $\Psi$, $\Phi$, let us consider a more
general "indirect" integral relation, associated with
\eqref{4.2Gen},
\begin{eqnarray}
 u+{\cal R}u - V\Psi +W\Phi &=&
 {\cal P}\tilde f\ \mbox{ in }\Omega. \label{4.2nd}
\end{eqnarray}
The following two lemmas extend Lemma 4.1 from \cite{CMN-1}, where
the corresponding assertion was proved for $\tilde f\in L_2(\Omega)$, $s=1$,  $a\in C^\infty(\Omega)$ and the infinitely smooth boundary.

\begin{lemma}\label{IDequivalenceGen}
Let $\ha< s< \tha$ and $a\in C_+^{s}(\overline \Omega)$. 
Let $u\in H^s(\Omega)$, $\Psi\in H^{s-\tha}(\pO)$, $\Phi\in
H^{s-\ha}(\pO)$, and $\tilde f\in \widetilde{H}^{s-2}(\Omega)$ satisfy
(\ref{4.2nd}). Then
\begin{align}
\label{2.6'gen}Au= r_{\Omega} \tilde f& \text{ in }  \Omega,
\\
\label{difference}
 V(\Psi -T^+(\tilde f;u) ) -  W(\Phi - \gamma^+u) = 0& \text{ in }  \Omega.
\end{align}
\end{lemma}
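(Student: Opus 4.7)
The plan is to first extract the PDE \eqref{2.6'gen} from \eqref{4.2nd} by applying $\Delta(a\,\cdot)$ distributionally in $\Omega$, and then to deduce \eqref{difference} by subtracting from \eqref{4.2nd} the third Green identity \eqref{4.2Gen} supplied by Theorem~\ref{3rdGA}(ii).

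For the first step, the idea is to reduce each parametrix-based potential to its harmonic counterpart via the first relations in \eqref{3.d1} and \eqref{VWa}: in $\Omega$ one has $a\mathcal{P}\tilde f=\mathcal{P}_\Delta\tilde f$, $aV\Psi=V_\Delta\Psi$, $aW\Phi=W_\Delta(a\Phi)$, and $a\mathcal{R}u=-\nabla\cdot\mathcal{P}_\Delta(u\nabla a)$. Applying $\Delta$ distributionally and invoking \eqref{DeltaPg} and \eqref{DV,DW=0} yields $\Delta(a\mathcal{P}\tilde f)=\tilde f$, $\Delta(aV\Psi)=0$, $\Delta(aW\Phi)=0$, and $\Delta(a\mathcal{R}u)=-\nabla\cdot(u\nabla a)$ in $\mathcal{D}^*(\Omega)$. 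Multiplying \eqref{4.2nd} by $a$ and applying $\Delta$ therefore gives $\Delta(au)-\nabla\cdot(u\nabla a)=\tilde f$ in $\Omega$. Combining this with the distributional Leibniz identity $\Delta(au)=\nabla\cdot(a\nabla u)+\nabla\cdot(u\nabla a)=Au+\nabla\cdot(u\nabla a)$ (legitimate via Theorem~\ref{GrL+} since $a\in C_+^{s}(\overline\Omega)$ with $s>\ha$), the $\nabla a$-contributions cancel and \eqref{2.6'gen} follows.

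With \eqref{2.6'gen} established, the generalised co-normal derivative $T^+(\tilde f;u)\in H^{s-\tha}(\partial\Omega)$ is well defined via Definition~\ref{GCDd}, so Theorem~\ref{3rdGA}(ii) yields \eqref{4.2Gen}. Subtracting \eqref{4.2Gen} from \eqref{4.2nd} produces \eqref{difference} immediately.

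The main obstacle is justifying the distributional manipulations on $\Delta(a\,\cdot)$ term by term in the low-regularity regime $s\in(\ha,\tha)$ with $\tilde f\in\widetilde H^{s-2}(\Omega)$: one has to use \eqref{P1til} rather than \eqref{P2} to define $\mathcal{P}\tilde f$ when $s-2<-\ha$, and the multiplier bounds of Theorem~\ref{GrL+} must be combined with the mapping properties of Theorems~\ref{T3.1P} and \ref{T3.1s0} to control the products $a\cdot(\cdot)$, $u\nabla a$ and $a\Phi$ in the appropriate Sobolev scales before taking $\Delta$.
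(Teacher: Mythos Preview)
Your argument is correct and follows essentially the same strategy as the paper: multiply by $a$, apply $\Delta$ to extract the PDE \eqref{2.6'gen}, and then subtract the third Green identity to obtain \eqref{difference}. The only difference is the order of operations. The paper first subtracts the \emph{generalised} third Green identity \eqref{4.G3til} from \eqref{4.2nd}, which cancels the $u+\mathcal{R}u$ terms outright and leaves $V\Psi - W(\Phi-\gamma^+u)=\mathcal{P}[\check{A}u-\tilde f]$; applying $\Delta(a\,\cdot)$ to this reduced identity then involves only the potential relations \eqref{DeltaPg}, \eqref{DV,DW=0} and yields $r_\Omega\check{A}u=r_\Omega\tilde f$ immediately, after which \eqref{Tgen} gives $\mathcal{P}[\check{A}u-\tilde f]=VT^+(\tilde f;u)$ and hence \eqref{difference}. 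Your route applies $\Delta(a\,\cdot)$ directly to \eqref{4.2nd}, so you must handle $\Delta(au)$ and $\Delta(a\mathcal{R}u)$ separately via the distributional Leibniz identity and \eqref{4.18a}; this is legitimate under the stated hypotheses but requires the extra multiplier justifications you flag. The paper's ordering sidesteps those computations, while your version has the advantage of invoking Theorem~\ref{3rdGA}(ii) (i.e.\ \eqref{4.2Gen}) as a black box rather than redoing the $\mathcal{P}\check{A}u=\mathcal{P}\tilde f+VT^+(\tilde f;u)$ splitting.
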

\begin{proof} 
Subtracting  \eqref{4.2nd} from identity \eqref{4.G3til}, we obtain
%%%%%%%%%%%%%%%%%%%%%   4.12
\begin{equation}
\label{4.12} V\Psi-W(\Phi-\gamma^+u) ={\cal P}[\check{A}u-\tilde f]
%\int\limits_{\Omega} P(x,y)\,\big[\,A(x,\pa_x)u(x)- f(x)\,\big]\,dx,
\; \text{in }  \Omega.
\end{equation}
Multiplying equality \eqref{4.12} by $a$, applying the Laplace
operator $\Delta$ and taking into account \eqref{DV,DW=0},
\eqref{DeltaPg}, we get
%%%%%%%%%%%%%%%%%%%%%%    4.15
$r_{\Omega}\tilde f=r_{\Omega}(\check{A}u)=Au$ in $\Omega.$
This means $\tilde f$ is an extension of the distribution $Au\in H^{s-2}(\Omega)$ to
$\widetilde{H}^{s-2}(\Omega)$, and $u$ satisfies  \eqref{2.6'gen}. 
Then \eqref{Tgen} implies
 \begin{eqnarray}
 \label{Tgen1}
{\cal P}[\check{A}u-\tilde f](y)&=&\langle \check{A}u-\tilde f,
P(\cdot,y)\rangle_{\Omega}
%\nonumber
=-\langle
 T^+(\tilde f;u)\,,\,P(\cdot,y)
\rangle _{\pO} =VT^+(\tilde f;u)(y), \quad y\in\Omega.\qquad
\end{eqnarray}
Substituting \eqref{Tgen1} into \eqref{4.12} leads to
\eqref{difference}.
\end{proof}

For  $\ha< s< \tha$ and $a\in C_+^s(\overline \Omega)$, and $g\in H^{s-1}(\Omega)$ let us introduce the operator $A^\nabla$ as
\be\label{Aring}
A^\nabla  g:=-\nabla\cdot\mathring E_{\Omega}\,(g\nabla a).
%\in \widetilde H^{s-2}(\Omega),\ \forall\ g\in H^{s-1}(\Omega).
\ee
%For $a\in C_+^{|s-1|+1}(\overline \Omega)$, the operator $A^\nabla: H^{s-1}(\Omega)\to \widetilde H^{s-2}(\Omega)$ is evidently continuous and for any $g\in H^{s-1}(\Omega)$, the distribution $A^\nabla g\in \widetilde H^{s-2}(\Omega)$ is an extension of the distribution $-\nabla\cdot(g\nabla a)\in H^{s-2}(\Omega)$.
%Let us prove that these properties hold also for a rougher coefficient $a$.
\begin{lemma}\label{AringMap}
Let $\ha< s< \tha$. 

(i) If $a\in C_+^{|s-1|+1}(\overline \Omega)$ the following operator is continuous,
\begin{align}\label{AringMap1}
A^\nabla: H^{s-1}(\Omega)\to \widetilde H^{s-2}(\Omega).
\end{align} 

(ii) If $a\in C_+^{s}(\overline \Omega)$, the following operator is continuous and compact, 
\begin{align}\label{AringMap2}
A^\nabla: H^{s}(\Omega)\to \widetilde H^{s-2}(\Omega).
\end{align}
\end{lemma}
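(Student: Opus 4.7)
The operator $A^\nabla g=-\nabla\cdot\mathring E_{\Omega}(g\nabla a)$ is the composition of three steps to be checked independently: (a) multiplication by $\nabla a$, (b) zero extension $\mathring E_{\Omega}$, which by Definition~\ref{E0} is continuous $H^{t}(\Omega)\to\widetilde H^{t}(\Omega)$ whenever $-\ha<t<\ha$, and (c) the divergence, continuous $\widetilde H^{t}(\Omega)\to\widetilde H^{t-1}(\Omega)$ for every $t\in\R$ because $\pa_j:H^{t}(\R^n)\to H^{t-1}(\R^n)$ is continuous and preserves the support in $\overline\Omega$. Since $\ha<s<\tha$ places $s-1\in(-\ha,\ha)$, steps (b) and (c) are available with target space exactly $\widetilde H^{s-2}(\Omega)$; the entire argument reduces to controlling the product $g\nabla a$ in a suitable Sobolev norm.

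For (i), the hypothesis $a\in C_+^{|s-1|+1}(\overline\Omega)$ gives $\nabla a\in C_+^{|s-1|}(\overline\Omega)$, so Theorem~\ref{GrL+} makes $g\mapsto g\nabla a$ continuous from $H^{s-1}(\Omega)$ into itself, and composing with the extension and divergence proves \eqref{AringMap1}. When $1\le s<\tha$, one has $|s-1|+1=s$, so the hypothesis of (ii) implies that of (i); continuity of \eqref{AringMap2} then follows from (i) together with the embedding $H^{s}(\Omega)\hookrightarrow H^{s-1}(\Omega)$, and compactness follows because this embedding is compact by Rellich.

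The delicate case is $\ha<s<1$ in (ii): then $|s-1|+1=2-s>s$, so the pointwise multiplier estimate of Theorem~\ref{GrL+} would require $a\in C_+^{2-s}(\overline\Omega)$, which is stronger than the hypothesis. I would bypass this by mimicking the Besov--Triebel-Lizorkin argument already used in the proof of \eqref{T3.1P3has}: Definition~\ref{C+mu} furnishes $t\in(s,1)$ with $a\in C^{0,t}(\overline\Omega)=F^{t}_{\infty,\infty}(\Omega)$, hence $\nabla a\in F^{t-1}_{\infty,\infty}(\Omega)$, and the multiplication theorem from \cite[Sect.~4.4.3]{Runst-Sickel1996} yields, for any $\sigma\in(1-t,s)$,
\begin{equation*}
\|g\nabla a\|_{F^{t-1}_{2,\infty}(\Omega)}\le C\,\|a\|_{C^{0,t}(\overline\Omega)}\|g\|_{H^{\sigma}(\Omega)}.
\end{equation*}
Since $t-1>s-1$, the embedding $F^{t-1}_{2,\infty}(\Omega)\hookrightarrow F^{s-1}_{2,2}(\Omega)=H^{s-1}(\Omega)$ holds on the bounded domain $\Omega$; combining with $\mathring E_{\Omega}$ (whose index $s-1$ still lies in $(-\ha,\ha)$) and $\nabla\cdot$ gives continuity $A^\nabla:H^{\sigma}(\Omega)\to\widetilde H^{s-2}(\Omega)$ with $\sigma<s$. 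Rellich then supplies the compact embedding $H^{s}(\Omega)\hookrightarrow H^{\sigma}(\Omega)$, and the composition delivers both continuity and compactness of \eqref{AringMap2}. The main obstacle is precisely this sub-case: selecting $t$ and $\sigma$ compatibly so that the Runst--Sickel bound, the embedding $F^{t-1}_{2,\infty}\hookrightarrow H^{s-1}$, and the validity of $\mathring E_{\Omega}$ all fit together.
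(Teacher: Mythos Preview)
Your proof is correct and follows essentially the same approach as the paper: the same decomposition into multiplication, extension, and divergence; the same use of Theorem~\ref{GrL+} for part (i) and the case $1\le s<\tha$ of (ii); and the same Runst--Sickel $F^{t-1}_{\infty,\infty}$ argument with the embedding $F^{t-1}_{2,\infty}\hookrightarrow F^{s-1}_{2,2}=H^{s-1}$ for the sub-case $\ha<s<1$. Your exposition is somewhat more explicit about the three-step structure, but the key lemmas and the handling of the delicate sub-case are identical.
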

\begin{proof}
(i) If $a\in C_+^{|s-1|+1}(\overline \Omega)$, then $\nabla a\in C_+^{|s-1|}(\overline \Omega)$, and by Theorem~\ref{GrL+}, $\nabla a$ is a multiplier in $H^{s-1}(\Omega)$, which implies continuity of operator \eqref{AringMap1}.

(ii) For $1\le s<\tha$, we have $s=|s-1|+1$, which by item (i) implies continuity of operator \eqref{AringMap1} and thus continuity and compactness of operator \eqref{AringMap2}.

For $\ha< s<1$, we need an estimate of the norm $\|g\,\nabla a\|_{H^{s-1}(\Omega)}$.
First, by Definition~\ref{C+mu} the inclusion $ a\in C_+^{s}(\overline\Omega)$ implies that there exists %$s_1\in(s, 1)$ such that for any 
$t\in (s, 1)$ such that,
$a\in C^{0,t}(\overline\Omega)=B^t_{\infty,\infty}(\Omega)=F^t_{\infty,\infty}(\Omega)$, see e.g. Proposition in  \cite[Section 2.1.2]{Runst-Sickel1996}, and hence 
$\nabla a\in  
%B^{t-1}_{\infty,\infty}(\Omega)=
F^{t-1}_{\infty,\infty}(\Omega)$. 
Then, by Theorems 1  from \cite[Section 4.4.3]{Runst-Sickel1996},
\begin{align}\label{3.48aa} 
\|g\,\nabla a\|_{F^{t-1}_{2,\infty}(\Omega)}
\le C\|\nabla a\|_{F^{t-1}_{\infty,\infty}(\Omega)}\, \|g\|_{H^{\sigma}(\Omega)}
\le C\|a\|_{C^{0,t}(\overline\Omega)}\, \|g\|_{H^{\sigma}(\Omega)},\quad 
\forall\ 
%p\in [2, 2+\frac{4\sigma}{\max\{n-2\sigma,0\}}),\ 
\sigma\in(1-t, s).
\end{align}
On the other hand, by \eqref{Rs-1tos}, item (ii) of Proposition from \cite[Section 2.2.1]{Runst-Sickel1996}, and \eqref{3.48aa}, we obtain
$$
\|{A^\nabla}\,g\|_{\widetilde H^{s-2}(\Omega)}\le
c_3\|g\,\nabla a\|_{H^{s-1}(\Omega)}=c_3\|g\,\nabla a\|_{F^{s-1}_{2,2}(\Omega)}
\le C_1\|g\,\nabla a\|_{F^{t-1}_{2,\infty}(\Omega)}
\le C_1 C\|a\|_{C^{0,t}(\overline\Omega)}\, \|g\|_{H^{\sigma}(\Omega)}.
$$
Thus the operator $A^\nabla: H^{\sigma}(\Omega)\to {\widetilde H^{s-2}(\Omega)}$ is continuous, which implies continuity and, by the Rellich compact embedding theorem, also compactness of operator  \eqref{AringMap2} for $\ha< s<1$.
\end{proof}

In accordance with notation \eqref{checkA}, let us also denote
$\check\Delta_{\Omega}g:=\nabla\cdot\mathring E_{\Omega}\, r_{\Omega}\nabla g$.

Let us now discuss the trace and two forms of co-normal derivative associated with equation  \eqref{4.2nd}. 
\begin{lemma}\label{L4.3}
(i) Under the hypotheses of Lemma~\ref{IDequivalenceGen}, 
\begin{align}
\gamma^+u +\gamma^+{\cal R}u - \V \Psi -\ha\Phi +\W \Phi&=
 \gamma^+{\cal P}\tilde f
 \text{on}\partial\Omega, 
\label{4.2u+gen}\\
T^+(\tilde f;u)+T^+_\Delta(A^\nabla u;{a\cal R} u) -\ha\Psi - {\mathcal W}'_\Delta\Psi +\mathcal L_\Delta(a \Phi) &=
T^+_\Delta(\tilde f;\P_\Delta\tilde f)
 \text{on}  \partial\Omega. 
\label{4.2T+genD} 
\end{align}
(ii) If, moreover,  $a\in C_+^{\tha}(\overline\Omega)$ then  
\begin{align}
 T^+(\tilde f;u)+T^+{\cal R}u -\ha\Psi - {\Wp}\Psi +T^+W \Phi &=
  T^+(\tilde f+\mathring E\,{\mathcal R}_*\tilde f;{\cal P}\tilde f)  \text{on}  \partial\Omega,\quad \label{4.2T+gen}
\end{align}
where ${\cal R}_*$ is defined in \eqref{4.9R}, \eqref{3.d1}. 
\end{lemma}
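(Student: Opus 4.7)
The plan for the trace identity \eqref{4.2u+gen} is to apply the trace $\gamma^+$ termwise to both sides of \eqref{4.2nd} and use the boundary formulas \eqref{3.8}, which give $\gamma^+V\Psi=\mathcal V\Psi$ and $\gamma^+W\Phi=-\ha\Phi+\mathcal W\Phi$; the rearrangement is then immediate.

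For \eqref{4.2T+genD} the canonical co-normal derivative cannot be applied to $V\Psi$, $W\Phi$, $\mathcal R u$ directly because only $a\in C_+^s(\overline\Omega)$ is available. Multiplying \eqref{4.2nd} by $a$ and using the decompositions \eqref{VWa}, \eqref{3.d1} converts it into the Laplace-type identity
\begin{equation*}
au + a\mathcal R u - V_\Delta\Psi + W_\Delta(a\Phi) = \mathcal P_\Delta\tilde f \quad\text{in }\Omega.
\end{equation*}
Direct calculation gives $r_\Omega\Delta(au)=Au+\nabla\cdot(u\nabla a)=r_\Omega(\tilde f-A^\nabla u)$ and $r_\Omega\Delta(a\mathcal R u)=-\nabla\cdot(u\nabla a)=r_\Omega A^\nabla u$, while $V_\Delta\Psi$ and $W_\Delta(a\Phi)$ are harmonic and $\Delta\mathcal P_\Delta\tilde f=\tilde f$ in $\Omega$. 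By Lemma~\ref{AringMap}(ii) the four chosen $\widetilde H^{s-2}(\Omega)$ extensions exist and sum consistently to $\tilde f$, so linearity \eqref{GCDL} of the generalised Laplace normal derivative yields
\begin{equation*}
T^+_\Delta(\tilde f-A^\nabla u; au) + T^+_\Delta(A^\nabla u; a\mathcal R u) - T^+_\Delta V_\Delta\Psi + T^+_\Delta W_\Delta(a\Phi) = T^+_\Delta(\tilde f; \mathcal P_\Delta\tilde f),
\end{equation*}
and \eqref{3.11a} with $a\equiv 1$ identifies $T^+_\Delta V_\Delta\Psi=\ha\Psi+\mathcal W'_\Delta\Psi$ and $T^+_\Delta W_\Delta(a\Phi)=\mathcal L_\Delta(a\Phi)$. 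The remaining step is the identification $T^+_\Delta(\tilde f-A^\nabla u; au)=T^+(\tilde f; u)$: testing Definition~\ref{GCDd} against $w\in H^{\tha-s}(\pO)$, expanding $\nabla(au)=a\nabla u+u\nabla a$, and integrating by parts in $\R^n$ using $A^\nabla u=-\nabla\cdot\mathring E_\Omega(u\nabla a)$ shows that the two extra contributions involving $u\nabla a$ cancel exactly.

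For (ii) the stronger hypothesis $a\in C_+^{\tha}$ together with Theorems~\ref{T3.1P} and~\ref{T3.1s0}(ii) places $\mathcal R u, V\Psi, W\Phi$ in $H^{s,-\ha}(\Omega;A)$, so the canonical co-normal derivatives $T^+\mathcal R u$, $T^+V\Psi$, $T^+W\Phi$ are well defined. From $A\mathbf P\tilde f=\tilde f+\mathbf R_*\tilde f$ in $\R^n$ together with \eqref{T3.1P3ha**}, the distribution $\tilde f+\mathring E_\Omega\mathcal R_*\tilde f\in\widetilde H^{s-2}(\Omega)$ is an admissible extension of $r_\Omega A\mathcal P\tilde f$. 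Applying $T^+$ to \eqref{4.2nd} with canonical extensions on the left and using linearity \eqref{GCDL} produces
\begin{equation*}
T^+(\tilde f; u) + T^+\mathcal R u - T^+V\Psi + T^+W\Phi = T^+\bigl(\tilde f + \tilde A_\Omega\mathcal R u - \tilde A_\Omega V\Psi + \tilde A_\Omega W\Phi;\ \mathcal P\tilde f\bigr),
\end{equation*}
and \eqref{3.11a} replaces $T^+V\Psi$ by $\ha\Psi+\mathcal W'\Psi$. The main obstacle, and the heart of the argument, is to show that the aggregated extension on the right actually equals $\tilde f+\mathring E_\Omega\mathcal R_*\tilde f$ in $\widetilde H^{s-2}(\Omega)$, i.e.\ that $\tilde A_\Omega\mathcal R u-\tilde A_\Omega V\Psi+\tilde A_\Omega W\Phi=\mathring E_\Omega\mathcal R_*\tilde f$ there. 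Both sides in fact belong to the smaller space $\widetilde H^{-\ha}(\Omega)$ (the canonical extensions by construction, and $\mathring E_\Omega\mathcal R_*\tilde f$ via \eqref{T3.1P3ha**}) and both restrict to the same element $\mathcal R_*\tilde f$ of $\widetilde H^{-\ha}_\bullet(\Omega)$; Remark~\ref{R2.1}(\ref{iv}) then supplies the required uniqueness of the extension from $\widetilde H^{-\ha}_\bullet(\Omega)$ to $\widetilde H^{-\ha}(\Omega)$, which forces equality. This uniqueness is precisely the ingredient unavailable in $\widetilde H^{s-2}(\Omega)$ itself for $s<\tha$, so routing the consistency check through $\widetilde H^{-\ha}$ is the critical step.
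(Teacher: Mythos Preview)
Your argument is correct and follows essentially the same route as the paper: multiply \eqref{4.2nd} by $a$ to reduce to Laplace potentials, use $\Delta(a\mathcal R u)=r_\Omega A^\nabla u$ and the identity $\check\Delta_\Omega(au)=-A^\nabla u+\check A u$ (your identification $T^+_\Delta(\tilde f-A^\nabla u;au)=T^+(\tilde f;u)$) for part (i), and for part (ii) exploit $A(\mathcal P\tilde f-u)=\mathcal R_*\tilde f\in H^\sigma(\Omega)$, $\sigma>-\ha$, to land in $H^{s,-\ha}(\Omega;A)$. The only organizational difference is that the paper handles (ii) by treating $\mathcal P\tilde f-u=\mathcal R u-V\Psi+W\Phi$ as a single element of $H^{s,-\ha}(\Omega;A)$ and computing $T^+(\mathcal P\tilde f-u)$ directly from the definition, which yields \eqref{4.2T+gen} without your separate extension-consistency check via Remark~\ref{R2.1}(\ref{iv}); the two are equivalent.
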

\begin{proof}
(i) Equation \eqref{4.2u+gen} is implied by \eqref{4.2nd}, \eqref{3.8}.

To prove \eqref{4.2T+genD}, let us first multiply \eqref{4.2nd} by $a$ to obtain
\begin{eqnarray}
 - V_\Delta\Psi +W_\Delta(a\Phi) =
 {\cal P}_\Delta\tilde f - au-a{\cal R}u\ \mbox{ in }\Omega. \label{4.2nd1}
\end{eqnarray}
Since ${\Delta} \{ - V_\Delta\Psi +W_\Delta(a\Phi)\}=0$, for the both sides of \eqref{4.2nd1} the canonical conormal derivative, $T^+_\Delta$, is well defined,
\begin{eqnarray}
 - T^+_\Delta V_\Delta\Psi +T^+_\Delta W_\Delta(a\Phi) &=&
 T^+_\Delta({\cal P}_\Delta\tilde f - au-a{\cal R}u) \label{4.2nd2}
\end{eqnarray}
and
\begin{eqnarray}
 T^+_\Delta({\cal P}_\Delta\tilde f - au-a{\cal R}u)
= -(\gamma^{-1})^*\check\Delta_{\Omega}({\cal P}_\Delta\tilde f - au-a{\cal R}u).\label{4.2nd3}
\end{eqnarray}
Note that by the second equality in \eqref{3.d1},
\begin{align}\label{4.18a}
\Delta(a{\cal R}u)=-\nabla\cdot\left[ \Delta{\mathcal P}_\Delta\,(u\,\nabla a)\right]
=-\nabla\cdot(u\,\nabla a)=r_{\Omega}A^\nabla u\ \mbox{ in }\Omega
\end{align}
which implies that $A^\nabla u\in \widetilde H^{-1}(\Omega)$ is an extension of $\Delta(a{\cal R}u)\in H^{-1}(\Omega)$. 
Further, cf. \eqref{checkA},
\begin{eqnarray*}
 \check\Delta_{\Omega}(au)=\nabla\cdot\mathring E_{\Omega}\,r_{\Omega}\nabla(ua)
 =\nabla\cdot\mathring E_{\Omega}\,r_{\Omega}(u\nabla a)+\nabla\cdot\mathring E_{\Omega}\,r_{\Omega}(a\nabla u)
 =-A^\nabla u+\check{A}u\ \mbox{ in }\R^n. %\label{4.2nd4}
\end{eqnarray*}
Then 
\begin{eqnarray*}
-\check\Delta_{\Omega}({\cal P}_\Delta\tilde f - au-a{\cal R}u)
=\tilde f -\check\Delta_{\Omega}{\cal P}_\Delta\tilde f -(\tilde f -\check{A}u)
-[A^\nabla u-\check\Delta_{\Omega}(a{\cal R}u)]\ \mbox{ in }\R^n
 %\label{4.2nd5}
\end{eqnarray*}
and
\begin{eqnarray*}
T^+_\Delta({\cal P}_\Delta\tilde f - au-a{\cal R}u)
=  T^+_\Delta(\tilde f;{\cal P}_\Delta\tilde f) - T^+(\tilde f;u)
-T^+_\Delta(A^\nabla u;a{\cal R}u). %\quad\label{4.2nd6}
\end{eqnarray*}
Substituting this in \eqref{4.2nd2}, we obtain
\bes %\label{TPf-u3}
T^+(\tilde f;u)+T^+_\Delta(A^\nabla u;{a\cal R} u) - T^+_\Delta V_\Delta\Psi + T^+_\Delta W_\Delta(a\Phi)
=T^+_\Delta(\tilde f;\P_\Delta\tilde f)\ \text{on} \partial\Omega.
\ees
Taking into account jump relation \eqref{WHs1GaTjD} and \eqref{3.13}, we arrive at \eqref{4.2T+genD}.

(ii) To prove \eqref{4.2T+gen}, let us first remark that
\begin{equation}\label{LPf}
 A\P\tilde f=\tilde f+{\cal R}_*\tilde f\  \text{in}\ {\Omega},
\end{equation}
which implies, due to \eqref{2.6'gen},
%\begin{equation}\label{L(Pf-u)}
$
    A(\P\tilde f-u)={\cal R}_*\tilde f\ \text{in}\ {\Omega},
$
%\end{equation}
where ${\cal R}_*$ is defined in \eqref{4.9R}, \eqref{3.d1} and since $a\in C_+^{\tha}(\overline\Omega)$, we obtain by \eqref{T3.1P3ha**} that ${\cal R}_*\tilde f\in H^{\sigma}(\Omega)$ for some $\sigma>-\ha$. 
Then $A(\P\tilde f-u)$ can be canonically extended to 
$\tilde A(\P\tilde f-u)=\mathring E_{\Omega} {\mathcal R}_*\tilde f\in\s{H}^\sigma({\Omega})\subset\s{H}^{s-2}({\Omega})$. 
This implies that there exists a
canonical co-normal derivative of $(\P\tilde f-u)$, for which, due to
\eqref{Tcandef} and \eqref{checkA}, we have
 \begin{align}
 T^+(\P\tilde f-u)=&(\gamma^{-1})^*[ \tilde A(\P\tilde f-u)-\check{A}\P\tilde f +\check{A}u]
 = (\gamma^{-1})^*[\mathring E_{\Omega}\,{\mathcal R}_*\tilde f-\check{A}\P\tilde f +\check{A}u]\nonumber\\
 =&(\gamma^{-1})^*[ \mathring E_{\Omega}\,{\mathcal R}_*\tilde f+\tilde f-\tilde f-\check{A}\P\tilde f +\check{A}u]
 =(\gamma^{-1})^*[ \tilde f+\mathring E_{\Omega}\,{\mathcal R}_*\tilde f-\check{A}\P\tilde f +\check{A}u-\tilde f]\nonumber\\
 =& T^+(\tilde f+\mathring E_{\Omega}\,{\mathcal R}_*\tilde f,\P\tilde f) - T^+(\tilde f,u),
 \label{TPf-u}
 \end{align}
where  $\tilde f+\mathring E_{\Omega}\,{\mathcal R}_*\tilde f\in\s{H}^{s-2}({\Omega})$ is an
extension of $A\P\tilde f$ associated with \eqref{LPf}. 
%That is,
%\be\label{TPf-u}
%T^+(\P\tilde f-u)=T^+(\tilde f+\mathring E_{\Omega}\,{\mathcal R}_*\tilde f,\P\tilde f) - T^+(\tilde f,u)\ \text{on} \partial\Omega.
%\ee
From \eqref{4.2nd} we have ${\cal P}\tilde f-u={\cal R}u - V\Psi +W\Phi$
in ${\Omega}$. Substituting this in the left hand side of
\eqref{TPf-u} and taking into account Theorem~\ref{T3.1s0}(ii) and jump relation \eqref{WHs1GaTj} and \eqref{3.13}, we
arrive at \eqref{4.2T+gen}.
\end{proof} 

Note that unlike \eqref{4.2T+genD}, the co-normal derivative form \eqref{4.2T+gen} of relation \eqref{4.2nd} is written without referring to the corresponding constant-coefficient potentials. 

\begin{remark}
If $\ha<s<\tha$ but $\tilde f\in \widetilde H^{-1/2}(\Omega)\subset \widetilde H^{s-2}(\Omega)$, then 
${\cal P}_\Delta\tilde f\in H^{s,-1/2}(\Omega,\Delta)$ and
\begin{equation}\label{TfPTP}
%T^+(\tilde f+A^\nabla\mathcal P{\tilde f};{\cal P}\tilde f)
%=
T^+_\Delta(\tilde f;\P_\Delta\tilde f)=T^+_\Delta{\cal P}_\Delta\tilde f.
\end{equation}

(i) Furthermore, if the hypotheses of Lemma~\ref{IDequivalenceGen} are satisfied, then \eqref{2.6'gen} implies $u\in H^{s,-1/2}(\Omega,A)$ and $ T^+(\tilde f;u)= T^+(\tilde Au;u)= T^+u$.  
Henceforth, \eqref{4.2T+genD} takes a simpler form, 
\begin{align}\label{4.22a}
 T^+u + T^+_\Delta(A^\nabla u;{a\cal R} u) -\ha\Psi - {\mathcal W}'_\Delta\Psi +\L^+_\Delta(a \Phi) =
 T^+_\Delta\P_\Delta\tilde f \text{ on } \partial\Omega.
\end{align}
If, in addition, $au\in H^{s,-1/2}(\Omega,\Delta)$, then by \eqref{4.18a},
$$
\Delta (a\mathcal R u)=r_{\Omega}A^\nabla u=-\nabla\cdot(u\,\nabla a)
=Au-\Delta(au)\in \widetilde H^{-\ha}_\bullet(\Omega).
$$
Hence the canonical conormal derivative $T^+_\Delta({a\cal R} u)$ is well defined and
\begin{multline}\label{4.23b}
T^+_\Delta(A^\nabla u;{a\cal R} u)
=(\gamma^{-1})^*[A^\nabla u-\check\Delta_{\Omega}(a\mathcal R u)]
=(\gamma^{-1})^*[A^\nabla u-\tilde{\Delta}(a\mathcal R u)] + T^+_\Delta({a\cal R} u)\\
=(\gamma^{-1})^*[-\nabla\cdot\mathring E_{\Omega}\,(u\nabla a)+\mathring E_{\Omega}\nabla\cdot\,(u\nabla a)]+ T^+_\Delta({a\cal R} u)\\
=(\gamma^{-1})^*[\check A u- \tilde Au]+(\gamma^{-1})^*[-\check\Delta(au)+\tilde\Delta(au)]+ T^+_\Delta({a\cal R} u)
= -T^+u + T^+_\Delta(au) + T^+_\Delta({a\cal R} u).
\end{multline}
This reduces \eqref{4.22a} to the relation
\begin{align}\label{4.22b}
 T^+_\Delta(au) + T^+_\Delta(a\mathcal R u) -\ha\Psi - {\mathcal W}'_\Delta\Psi +\L^+_\Delta(a \Phi) =
 T^+_\Delta\P_\Delta\tilde f \text{ on } \partial\Omega
\end{align}
with only canonical normal  derivatives associated with the Laplace operator.

(ii)  If the hypotheses of Lemma~\ref{IDequivalenceGen} are satisfied and moreover,  $a\in C_+^{\tha}(\overline\Omega)$, then by \eqref{T3.1P1ha} and \eqref{T3.1P3ha**}, 
${\cal P}\tilde f\in H^{s,-\ha}(\Omega;A)$ and ${\mathcal R}_*\tilde f\in \widetilde H^{-1/2}(\Omega)$ implying
$T^+(\tilde f+\mathring E_{\Omega}\,{\mathcal R}_*\tilde f;{\cal P}\tilde f)=T^+({\cal P}\tilde f)$.
Henceforth, \eqref{4.2T+gen} reduces to the relation
\begin{align*}
 T^+u + T^+{\cal R}u -\ha\Psi - {\Wp}\Psi +T^+W \Phi &=
  T^+{\cal P}\tilde f  \text{on}  \partial\Omega
  %\quad \label{4.2T+gens}
\end{align*}
with only canonical conormal derivatives associated with the operator $A$.
\end{remark}

\begin{remark}\label{R4.5}
(i) Let the hypotheses of Lemma~\ref{IDequivalenceGen} be satisfied and a sequence $\{\tilde f_j\}\in \widetilde H^{-\ha}(\Omega)$ converge to $\tilde f$ in  $\widetilde H^{s-2}(\Omega)$. By the continuity of operators \eqref{T3.1P1} and \eqref{T3.1P3}, estimate \eqref{estimate} and relation \eqref{TfPTP} for $\tilde f_j$, we obtain that 
$$
T^+_\Delta(\tilde f;\P_\Delta\tilde f)
=\lim_{j\to\infty}T^+_\Delta{\cal P}_\Delta\tilde f_j\quad \mbox{in } H^{s-\tha}(\partial\Omega),
$$
 cf. also Theorem~\ref{Tseq}.

(ii)  If, moreover,  $a\in C_+^{\tha}(\overline\Omega)$, then, similarly, 
$$
T^+(\tilde f+\mathring E_{\Omega}\,{\mathcal R}_*\tilde f,{\cal P}\tilde f)
=\lim_{j\to\infty}T^+(\tilde f_j+\mathring E_{\Omega}\,{\mathcal R}_*\tilde f_j,{\cal P}\tilde f_j)
=\lim_{j\to\infty}T^+{\cal P}\tilde f_j.
$$
\end {remark}

%{\rd *** Podumat', nel'zya li v obshem sluchae predstavit' takzhe obobshnnuyu konormal'nuyu proizvodnuyu $T^+_\Delta(A^\nabla u;{a\cal R} u)$ cherez predel posledovatel'nosti kanonicheskih konormal'nyh proizvodnyh in the R.H.S of \eqref{4.23b}, complementing Remark \ref{R4.5}(i).}

Lemma \ref{L4.3}(ii) and the third Green identity \eqref{4.2Gen}
imply the following assertion.
\begin{corollary}\label{IDequivGenDir}
Let $\Omega$ be a bounded simply-connected Lipschitz domain,  $\ha< s< \tha$, $a\in C_+^{s}(\overline \Omega)$ and
$u\in H^{s}(\Omega)$, $\tilde f\in \widetilde{H}^{s-2}(\Omega)$ be such that $Au=r_{\Omega}\tilde f$ in $\Omega$.

(i) Then
 \begin{align}
\frac{1}{2}&\gamma^+u +\gamma^+{\cal R}u - \V T^+(\tilde f;u)  +\W \gamma^+u=
 \gamma^+{\cal P}\tilde f \mbox{ on }\pO, 
 \label{4.2u+Gen}\\
\frac{1}{2}&T^+(\tilde f,u)+T^+_\Delta(A^\nabla u;{a\cal R} u) 
- {\mathcal W}'_\Delta T^+(\tilde f,u) +\mathcal L_\Delta(a \gamma^+u) =
T^+_\Delta(\tilde f;\P_\Delta\tilde f) \mbox{ on } \pO.
\label{4.2T+GenD}
\end{align}

(ii) If, moreover,  $a\in C_+^{\tha}(\overline\Omega)$, then  
\begin{align}
 \ha T^+(\tilde f,u)+T^+{\cal R}u  - {\Wp}T^+(\tilde f,u) +T^+W \gamma^+u &=
  T^+(\tilde f+\mathring E_{\Omega}\,{\mathcal R}_*\tilde f,{\cal P}\tilde f)  \text{ on }  \partial\Omega,\quad \label{4.2T+Gen}
\end{align}
where ${\cal R}_*$ is defined in \eqref{4.9R}, \eqref{3.d1}. 
\end{corollary}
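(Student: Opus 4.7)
The plan is to recognise that the statement is essentially an immediate consequence of the third Green identity \eqref{4.2Gen} combined with Lemma~\ref{L4.3}. The third Green identity has precisely the form of the indirect integral relation \eqref{4.2nd} with the specific choices $\Psi = T^+(\tilde f;u)$ and $\Phi = \gamma^+u$. So the strategy is to invoke Lemma~\ref{L4.3}(i)--(ii) with these particular $\Psi$ and $\Phi$.

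First I would verify that the hypotheses of Lemma~\ref{IDequivalenceGen} (and hence of Lemma~\ref{L4.3}) are satisfied for this choice. The regularity $\Phi=\gamma^+u\in H^{s-\ha}(\partial\Omega)$ follows from the trace theorem applied to $u\in H^s(\Omega)$. The regularity $\Psi=T^+(\tilde f;u)\in H^{s-\tha}(\partial\Omega)$ is ensured by Definition~\ref{GCDd} together with the estimate \eqref{estimate} in Theorem~\ref{T6.6}, since $Au=r_\Omega\tilde f$ with $\tilde f\in\widetilde H^{s-2}(\Omega)$. The identity \eqref{4.2nd} itself (with these $\Psi$, $\Phi$, $\tilde f$) is exactly the third Green identity \eqref{4.2Gen}, which holds by Theorem~\ref{3rdGA}(ii) under the standing hypothesis $a\in C_+^s(\overline\Omega)$.

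Having checked the hypotheses, I would then substitute $\Psi = T^+(\tilde f;u)$, $\Phi = \gamma^+u$ into equations \eqref{4.2u+gen}, \eqref{4.2T+genD}, and \eqref{4.2T+gen}. In each of these, the term $-\ha\Phi$ or $-\ha\Psi$ on the left combines with the term $\gamma^+u$ or $T^+(\tilde f;u)$ coming from the bulk substitution to produce the factor $\ha$ in front of the corresponding jump term. For part (i), \eqref{4.2u+gen} yields \eqref{4.2u+Gen} after collecting the two $\gamma^+u$ contributions into $\ha\gamma^+u$, and \eqref{4.2T+genD} yields \eqref{4.2T+GenD} by the analogous collection of the $T^+(\tilde f;u)$ terms. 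For part (ii), the additional hypothesis $a\in C_+^{\tha}(\overline\Omega)$ enables the use of \eqref{4.2T+gen}, which reduces to \eqref{4.2T+Gen} by the same mechanism.

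There is no genuine obstacle here: the corollary is essentially a bookkeeping exercise on Lemma~\ref{L4.3} applied to the special case produced by the third Green identity. The only subtlety worth mentioning explicitly is the verification that $T^+(\tilde f;u)$ indeed lies in $H^{s-\tha}(\partial\Omega)$ as required (via Theorem~\ref{T6.6}), which legitimises the use of the surface potentials $V$ and operators $\V$, $\Wp$ on this density, and ensures the right-hand sides $T^+_\Delta(\tilde f;\P_\Delta\tilde f)$ and $T^+(\tilde f+\mathring E_\Omega\mathcal R_*\tilde f;\P\tilde f)$ are well-defined generalised co-normal derivatives in the sense of Definition~\ref{GCDd}.
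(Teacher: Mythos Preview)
Your proposal is correct and follows essentially the same approach as the paper: the paper states the corollary as an immediate consequence of Lemma~\ref{L4.3} together with the third Green identity \eqref{4.2Gen}, which is exactly what you do by specialising \eqref{4.2nd} to $\Psi = T^+(\tilde f;u)$ and $\Phi = \gamma^+u$. Your additional verification of the regularity of $\Psi$ and $\Phi$ via the trace theorem and Theorem~\ref{T6.6} is a welcome elaboration of what the paper leaves implicit.
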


Let us extend to Lipschitz domains and $s\in(\ha,\tha)$ Lemma 4.2(i,ii) proved in \cite{CMN-1} for smooth domains and $s=1$.
\begin{lemma}\label{VW0s}\quad
Let $\Omega$ be a bounded simply-connected Lipschitz domain,  $\ha< s< \tha$, 
%$\ha\le s\le \tha$, 
$a\in C_+^{s}(\overline \Omega)$.
\begin{itemize}
\item[(i)]  If $\Psi^*\in {H}^{s-\tha}(\pO)$ and
%\begin{eqnarray}
$r_{\Omega} V\Psi^*=0, $
%\label{V0}
%\end{eqnarray}
then $\Psi^*=0$.
 \item[(ii)]  If $\Phi^*\in {H}^{s-\ha}(\pO)$ and
% \begin{eqnarray}
$r_ {\Omega}  W\Phi^*=0,$
% \label{W0}
% \end{eqnarray}
then $\Phi^*=0$.
\end{itemize}
\end{lemma}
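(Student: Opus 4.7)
\medskip
\noindent\textbf{Proof plan.} The idea is to reduce both assertions to the analogous ones for the harmonic layer potentials $V_\Delta$ and $W_\Delta$, where the jump relations in Theorem~\ref{T3.1s0D}(iii) together with uniqueness for exterior Laplace problems give the result.

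For part (i), since $a\in C_+^s(\overline\Omega)$ with $a_{\min}>0$, the first relation in \eqref{VWa} gives $r_\Omega V_\Delta\Psi^*=0$ whenever $r_\Omega V\Psi^*=0$. By the first jump relation in \eqref{WHs1gjD}, $\gamma^+V_\Delta\Psi^*=\gamma^-V_\Delta\Psi^*$, so $\gamma^-V_\Delta\Psi^*=0$ as well. Now $V_\Delta\Psi^*$ is harmonic in $\Omega_-$ (the pseudodifferential mapping property \eqref{T3.1P0D} and $\Delta_x P_\Delta(x,y)=\delta(x-y)$ show $\Delta V_\Delta\Psi^*=0$ off $\partial\Omega$), and since $n\ge 3$ and $\Psi^*\in H^{s-3/2}(\partial\Omega)\subset H^{-1}(\partial\Omega)$ has compact support, $V_\Delta\Psi^*(y)=O(|y|^{2-n})$ as $|y|\to\infty$. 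Uniqueness of the exterior Dirichlet problem for the Laplacian in the Lipschitz domain $\Omega_-$ with vanishing data and the standard decay at infinity gives $V_\Delta\Psi^*=0$ in $\Omega_-$. Applying the first jump relation in \eqref{WHs1GaTjD}, $\Psi^*=T^+_\Delta V_\Delta\Psi^*-T^-_\Delta V_\Delta\Psi^*=0$.

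For part (ii), the second identity in \eqref{VWa} gives $r_\Omega W_\Delta(a\Phi^*)=0$, where $a\Phi^*\in H^{s-1/2}(\partial\Omega)$ by Theorem~\ref{GrL+} applied on the boundary (trace multiplier property of $a\in C_+^s(\overline\Omega)$ on $\partial\Omega$). Hence $W_\Delta(a\Phi^*)$ and its canonical conormal derivative from $\Omega$ both vanish, and by the second jump relation in \eqref{WHs1GaTjD}, $T^-_\Delta W_\Delta(a\Phi^*)=T^+_\Delta W_\Delta(a\Phi^*)=0$. Since $W_\Delta(a\Phi^*)$ is harmonic in $\Omega_-$ and, for $n\ge 3$, decays like $O(|y|^{1-n})$ at infinity, uniqueness of the exterior Neumann problem on the Lipschitz domain $\Omega_-$ with vanishing Neumann data and the appropriate decay forces $W_\Delta(a\Phi^*)=0$ in $\Omega_-$. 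The first jump relation in \eqref{WHs1gjD} then yields $a\Phi^*=\gamma^-W_\Delta(a\Phi^*)-\gamma^+W_\Delta(a\Phi^*)=0$, and since $a\ge a_{\min}>0$ we conclude $\Phi^*=0$.

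The main obstacle is the rigorous invocation of uniqueness of the exterior Dirichlet/Neumann problems on Lipschitz domains in the correct (non-energy) Sobolev regularity range $\frac12<s<\frac32$, together with the precise decay statement at infinity for harmonic single/double-layer potentials of distributional densities on the boundary. Both can be handled by the standard Lipschitz-boundary layer-potential theory (e.g.\ the Verchota--Costabel results already cited after Theorem~\ref{T3.1s0D}): in the regularity ranges considered, the exterior problems with vanishing data and the indicated decay admit only the trivial solution, which is what drives the argument.
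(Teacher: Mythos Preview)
Your argument is correct, but it takes a different and longer route than the paper. The paper's proof stays entirely on the boundary: after multiplying by $a$ to reduce to $r_\Omega V_\Delta\Psi^*=0$ (resp.\ $r_\Omega W_\Delta(a\Phi^*)=0$), it simply takes the interior trace to obtain the boundary equation $\mathcal V_\Delta\Psi^*=0$ (resp.\ $(-\tfrac12 I+\mathcal W_\Delta)(a\Phi^*)=0$) and then invokes the invertibility of $\mathcal V_\Delta$ and $-\tfrac12 I+\mathcal W_\Delta$ on $H^{s-\tha}(\pO)$ and $H^{s-\ha}(\pO)$ from Theorem~\ref{T3.6}. Your approach instead passes through the exterior domain: you use the jump relations to transfer vanishing Cauchy data to $\Omega_-$, invoke uniqueness of the exterior Dirichlet (resp.\ Neumann) problem with decay at infinity to kill the potential in $\Omega_-$, and then read off $\Psi^*=0$ (resp.\ $a\Phi^*=0$) from the remaining jump. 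This is the classical mechanism that \emph{underlies} the invertibility statements in Theorem~\ref{T3.6} (via Verchota, Costabel, et al.), so you are in effect reproving a slice of that theorem on the fly. The paper's version is shorter and avoids the regularity/decay bookkeeping you flag as ``the main obstacle''; your version is more self-contained and makes explicit why those boundary operators are injective. Both are valid; given that Theorem~\ref{T3.6} is already available in the paper, the trace-and-invert argument is the more economical choice here.
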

\begin{proof}
%Let first $\ha< s\le \tha$.
To prove (i), let us multiply equation $r_{\Omega} V\Psi^*=0$ by $a$, which by the first relation in \eqref{VWa} reduces it to $r_{\Omega} V_\Delta\Psi^*=0$ in $\Omega$. 
Taking trace of this equation  on $\partial\Omega$ and using  the first relation in \eqref{3.8} (for the case $a=1$), by  Theorem ~\ref{T3.6} we obtain point (i).

Similarly, multiplying equation $r_ {\Omega}  W\Phi^*=0$ by $a$, the second relation in \eqref{VWa} reduces it to  $r_ {\Omega}  W_\Delta(a\Phi^*)=0$ in $\Omega$.
Taking trace of this equation  on $\partial\Omega$ and using  the first jump relation in \eqref{3.8} (for the case $a=1$), we obtain $\dst -\ha\hat{\Phi}^* + {\cal W}_\Delta\hat{\Phi}^*=0$ on $\partial\Omega$, where $\hat{\Phi}^*=a\Phi^*$.
Since this equation for $\hat{\Phi}^*$ is uniquely solvable (see Theorem ~\ref{T3.6}),
by condition \eqref{a>0}
this implies point (ii).
%
%For $s=\ha$ the same proof will work if under the trace we will understand the {\em non-tangential} trace, which exists for the harmonic potentials, $V_\Delta$ and $W_\Delta$, cf., e.g., \cite{Verchota1984, Mitrea_D1997, FMM1998}.
\end{proof}

\begin{theorem}\label{GIequivalenceH-1}
Let $\Omega$ be a bounded simply-connected Lipschitz domain,  $\ha< s< \tha$, 
%$\ha\le s\le \tha$, 
$a\in C_+^{s}(\overline \Omega)$.
Let $\tilde f\in \widetilde{H}^{s-2}( {\Omega})$. A function $u\in H^s( {\Omega})$ is a
solution of PDE $Au=r_ {\Omega}\tilde f$ in $ {\Omega}$ if and only if it is a
solution of boundary-domain integro-differential equation (\ref{4.2Gen}).
\end{theorem}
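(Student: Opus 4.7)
The plan is to establish both implications using results already developed in Sections~\ref{S2}--\ref{G3G}. The forward direction (``only if'') is immediate from Theorem~\ref{3rdGA}(ii): assuming $u \in H^s(\Omega)$ solves $Au = r_{\Omega}\tilde f$ in $\Omega$, the generalised third Green identity \eqref{4.2Gen} holds verbatim, and no further work is needed. The only thing to verify is that every term on both sides of \eqref{4.2Gen} lies in $H^s(\Omega)$, which is guaranteed by the mapping properties \eqref{T3.1P1ha}, \eqref{T3.1P3has}, \eqref{WHs1} of the potentials ${\cal P}$, ${\cal R}$, $W$ and \eqref{VHs1} of $V$ under the hypothesis $a \in C_+^s(\overline\Omega)$.

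For the reverse direction (``if''), I would apply Lemma~\ref{IDequivalenceGen} directly with the particular choices
\[
\Psi := T^+(\tilde f;u) \in H^{s-\tha}(\pO), \qquad \Phi := \gamma^+ u \in H^{s-\ha}(\pO),
\]
so that equation \eqref{4.2Gen} becomes the special case of \eqref{4.2nd} for these data. Note that the very appearance of $T^+(\tilde f;u)$ in the statement of \eqref{4.2Gen} presupposes that this generalised co-normal derivative is well defined in the sense of Definition~\ref{GCDd}, which requires $r_{\Omega}Au = r_{\Omega}\tilde f$; so this condition is implicitly built into the hypothesis that $u$ satisfies \eqref{4.2Gen}. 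With $(\Psi,\Phi)$ as above, conclusion \eqref{2.6'gen} of Lemma~\ref{IDequivalenceGen} yields exactly $Au = r_{\Omega}\tilde f$ in $\Omega$, while the companion conclusion \eqref{difference} reduces to the trivial identity $0=0$ and carries no additional content.

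The hard part is essentially nonexistent here: the theorem is a packaging of Theorem~\ref{3rdGA}(ii) and Lemma~\ref{IDequivalenceGen}, both of which hold under precisely the hypotheses stated ($\ha<s<\tha$, $a\in C_+^s(\overline\Omega)$). The only bookkeeping items to confirm are that the smoothness assumption on $a$ is strong enough to invoke \eqref{DeltaPg}, \eqref{DV,DW=0} (used in the proof of Lemma~\ref{IDequivalenceGen} to pass from \eqref{4.12} to $r_{\Omega}\check A u = r_{\Omega}\tilde f$), and that the trace and co-normal derivative operators appearing in these identities map into the correct Sobolev spaces on $\pO$. Both are settled by the results of Section~\ref{S3}.
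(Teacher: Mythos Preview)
Your proposal is correct and takes essentially the same approach as the paper: the forward direction via Theorem~\ref{3rdGA}(ii) and the reverse via Lemma~\ref{IDequivalenceGen} with $\Psi=T^+(\tilde f;u)$, $\Phi=\gamma^+u$. The paper's proof is terser and does not dwell on the well-definedness subtlety you raise, but the logical structure is identical.
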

\begin{proof} If $u\in H^s( {\Omega})$ solves PDE $Au=r_ {\Omega}\tilde f$ in
$ {\Omega}$, then by Theorem \ref{3rdGA}(ii) it satisfies (\ref{4.2Gen}).  On the other hand, if
$u $ solves boundary-domain integro-differential equation \eqref{4.2Gen}, then using Lemma
\ref{IDequivalenceGen} for $\Psi=T^+(\tilde f;u) $, $\Phi=\gamma^+u$
completes the proof.
\end{proof}

\section{Segregated BDIE systems for the Dirichlet problem}

For $\ha<s<\tha$, let us consider the {\bf Dirichlet Problem:} \\
{\em Find a function $u\in H^s( {\Omega})$ satisfying equations}
%%%%%%%%%%%%%%%%%  2.6-2.8
\begin{eqnarray}
\label{2.6} && A\,u=f  \;\;\; \mbox{\rm in}\;\;\;\;  {\Omega},
\\ 
 \label{2.7} &&  \gamma^+u=\varphi_0  \;\;\; \mbox{\rm on}\;\;\;\; \pO,
\end{eqnarray}
{\em where}   $f\in {H}^{s-2}( {\Omega})$, $ \varphi_0 \in H^{s-\ha}(\pO)$.

Equation \eqref{2.6} is understood in the distributional sense
\eqref{Ldist} and the Dirichlet boundary condition \eqref{2.7} in the trace sense.
The following uniqueness assertion 
is well known for $s=1$   and follows from the first Green identity; hence it also holds true for $1\le s<3/2$.
\begin{theorem}\label{Rem1}
Let $1\le s<\tha$ and $a\in C_+^{|s-1|}(\overline \Omega)$.
The Dirichlet  problem \eqref{2.6}-\eqref{2.7} has at most one solution in ${H}^{s}( {\Omega})$. 
%The solution is $u=(\mathcal A^D)^{-1}(f,\varphi_0)^\top$, where the inverse operator, $(\mathcal A^D)^{-1}: H^{\ha}(\pO) \times H^{-1}( {\Omega})\to H^{1}( {\Omega})$, to the left hand side operator, $\mathcal A^D: H^{1}( {\Omega})\to H^{\ha}(\pO) \times H^{-1}( {\Omega})$, of the Dirichlet problem \eqref{2.6}-\eqref{2.7}, is continuous.
\end{theorem}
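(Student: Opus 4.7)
The plan is the standard uniqueness argument via the first Green identity, applied in the generalised form established in Theorem~\ref{T6.6}. Let $u_1, u_2 \in H^s(\Omega)$ be two solutions of \eqref{2.6}-\eqref{2.7} and set $u := u_1 - u_2 \in H^s(\Omega)$. By linearity, $r_{\Omega}Au = 0$ in $\Omega$ and $\gamma^+ u = 0$ on $\partial\Omega$. In particular, $r_{\Omega}Au = r_{\Omega}\tilde f$ with the trivial choice $\tilde f = 0 \in \widetilde{H}^{s-2}(\Omega)$, so the generalised co-normal derivative $T^+(0;u) \in H^{s-\tha}(\partial\Omega)$ from Definition~\ref{GCDd} is well defined.

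Next, I would apply the first Green identity \eqref{Tgen} with this $\tilde f = 0$ and test function $v = u$. The admissibility requires $v \in H^{2-s}(\Omega)$; since $1 \le s < \tha$ gives $2-s \le 1 \le s$, we have the embedding $H^s(\Omega) \hookrightarrow H^{2-s}(\Omega)$, so $u$ is an admissible test function. The boundary term vanishes because $\gamma^+ u = 0$, and \eqref{Tgen} reduces to $\check{\E}_{\Omega}(u,u) = 0$. Under the standing multiplier hypothesis $a \in C_+^{|s-1|}(\overline\Omega)$ and the bounds \eqref{a>0}, the form $\check{\E}_{\Omega}(u,u) = \langle \mathring{E}_{\Omega}(a\nabla u), \nabla u\rangle_\Omega = \int_\Omega a(x)\,|\nabla u(x)|^2\,dx$ is well defined (here $\nabla u \in H^{s-1}(\Omega) \subset L_2(\Omega)$ since $s \ge 1$, and $a\nabla u \in L_2(\Omega)$ by Theorem~\ref{GrL+}).

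From $\int_\Omega a\,|\nabla u|^2\,dx = 0$ and $a(x) \ge a_{\min} > 0$ a.e., we conclude $\nabla u = 0$ in $\Omega$. Since $\Omega$ is a bounded Lipschitz domain with a simply connected boundary, $\Omega$ is connected, so $u$ is a constant in $\Omega$. The trace condition $\gamma^+ u = 0$ then forces this constant to be zero, giving $u_1 = u_2$.

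The one point that deserves a brief check rather than outright invocation is the admissibility of $v = u$ in \eqref{Tgen}: this is the place where the restriction $s \ge 1$ enters essentially (for $s < 1$ one would have $u \notin H^{2-s}(\Omega)$ in general, and a separate argument would be needed). No other step presents difficulty; everything else is direct application of Theorem~\ref{T6.6} together with the positivity and multiplier properties of $a$.
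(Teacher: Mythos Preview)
Your proof is correct and follows exactly the approach the paper indicates: the paper merely states that uniqueness is well known for $s=1$ from the first Green identity and then notes that the case $1\le s<\tha$ follows immediately (since $H^s(\Omega)\hookrightarrow H^1(\Omega)$). Your argument spells this out in full detail, and the key observation you single out --- that $s\ge 1$ is precisely what makes $u$ an admissible test function in \eqref{Tgen} --- is exactly the content of the paper's remark.
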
 

\subsection{BDIE formulations and equivalence to the Dirichlet problem}

Let $\ha<s<\tha$. 
In this section we reduce the Dirichlet problem \eqref{2.6}-\eqref{2.7}  to three different {\em segregated}  Boundary-Domain Integral Equation (BDIE)
systems.
Two of these formulations, for $s=1$ and infinitely smooth coefficients and infinitely smooth boundary, were analysed in \cite{MikArxiv2015}.

Let $\tilde f\in \widetilde{H}^{s-2}( {\Omega})$ be an extension of $f\in {H}^{s-2}( {\Omega})$ (i.e., $f=r_ {\Omega}\tilde f$), which always exists, see \cite[Lemma 2.15 and Theorem 2.16]{MikJMAA2011}. 
Let us substitute in  \eqref{4.2Gen}, \eqref{4.2u+Gen}, \eqref{4.2T+GenD} and \eqref{4.2T+Gen} the generalised co-normal derivative and the trace of the function $u$ as
$$
T^+(\tilde f;u)=\psi,\qquad \gamma^+u=\varphi_0,
$$ 
where $\varphi_0$ is the known right hand side of the Dirichlet boundary condition \eqref{2.7}, and  $\psi\in{H}^{s-\frac{3}{2}}( \partial\Omega)$ is
a new unknown function that will be regarded as formally {\em segregated} from $u$.
Thus we will look for the unknown couple
$
(u, \psi)
\in H^s( {\Omega})\times {H}^{s-\frac{3}{2}}( \partial\Omega).
$

\paragraph{BDIE system (D1).} 
Let $a\in C_+^{s}(\overline \Omega)$.
To reduce the Dirichlet BVP \eqref{2.6}-\eqref{2.7} to the BDIE system (D1), we will use equation \eqref{4.2Gen} in $ {\Omega}$ and
equation \eqref{4.2u+Gen} on $\pO$.
%, where the known function
%$\varphi_0$ is substituted for $ \gamma^+u$, while the unknown generalised co-normal derivative, $T^+(\tilde f;u)$, is %denoted as $\psi$ and is considered as an auxiliary unknown function. 
Then we
arrive at the following system of the boundary-domain integral equations, (D1), which is similar the corresponding system in \cite{MikArxiv2015},
\begin{align}
\label{4.5GT1}
 u+{\cal R}u - V\psi =&\F^{D1}_1 \text{ in}\  {\Omega},     \\
 \gamma^+{\cal R} u -  {\cal V}\psi =& \F^{D1}_2 
\text{ on}\ \pO, \label{4.5GT2}
\end{align}
where
\begin{equation}\label{FD1}
\F^{D1}=\left[\begin{array}{l}
   \F^{D1}_1\\[1ex]
   \F^{D1}_2
   \end{array}\right]
   =\left[\begin{array}{l}
  F_0^D\\[1ex]
  \gamma^+F_0^D-\varphi_0 
  \end{array}\right]
\text{ and }
%%%%%%%%%%%%%%%%%%%%%%    4.8 -- 4.9
 %$$
 F_0^D:={\cal P}\tilde f -W\varphi_0
\;\;\;\text{in}\  {\Omega}.
% $$
\end{equation}
Note that for $\varphi_0\in H^{s-\ha}(\pO)$ and $\tilde f\in \widetilde{H}^{s-2}( {\Omega})$, we have the inclusion
$F_0^D\in H^{s}( {\Omega})$  due to the mapping properties of
the Newtonian (volume) and layer potentials, cf.  
%(\ref{WHs1Ga}), 
(\ref{T3.1P1}), (\ref{WHs1}).
%, (\ref{T3.1P1ham}).
%

\paragraph{BDIE system (D2$_\Delta$).}
Let
$a\in C_+^{s}(\overline \Omega)$.
To obtain a segregated BDIE system of {\em the second kind}, we
will use equation \eqref{4.2Gen} in $ {\Omega}$ and  equation
\eqref{4.2T+GenD} on $\pO$.
%, where again the known function
%$\varphi_0$ is substituted for $ \gamma^+u$ and an auxiliary unknown
%function $\psi\in H^{-\ha}(\pO)$ for $ T^+ (\tilde f;u)$. 
Then we arrive at the following BDIE system (D2$_\Delta$),
%%%%%%%%%%%%%%%%%%% 2.44  -  2.45
\begin{eqnarray}
\label{2.44D} u+{\cal R}u - V\psi  =\F^{D2\Delta}_1 &\text{in}&
   {\Omega},   \\
\label{2.45D} \frac{1}{2}\,\psi + T^+_\Delta(A^\nabla u;{a\cal R} u)  - {\mathcal W}'_\Delta\psi =
\F^{D2\Delta}_2&\text{on}& \pO,
\end{eqnarray}
where
%$\F^{D2}=({\cal P}\tilde f-W\varphi_0,  T^+(\tilde f+\mathring E_{\Omega}\,r_ {\Omega}{\mathcal R}_*
%\tilde f,{\cal P}\tilde f)-\A^+ \gamma^+u)^\top$.
\begin{equation}\label{FD2D}
\F^{D2\Delta}=\left[\begin{array}{l}
   \F^{D2\Delta}_1\\[1ex]
   \F^{D2\Delta}_2
   \end{array}\right]
   =\left[\begin{array}{l}
  {\cal P}\tilde f-W\varphi_0\\[1ex]
  T^+_\Delta(\tilde f;\P_\Delta\tilde f)-\mathcal L_\Delta(a\varphi_0)
  \end{array}\right].
\end{equation}
Due to the mapping properties of the operators involved in
\eqref{FD2} we have $\F^{D2\Delta}\in H^{s}( {\Omega} )\times H^{s-\tha}(\pO)$.

\paragraph{BDIE system (D2).}
Let the coefficient be smoother than in the first two cases, 
$a\in C_+^{\tha}(\overline \Omega)$.
Now we will use equation \eqref{4.2Gen} in $ {\Omega}$ and  equation
\eqref{4.2T+Gen} on $\pO$.
%, where again the known function
%$\varphi_0$ is substituted for $ \gamma^+u$ and an auxiliary unknown
%function $\psi\in H^{-\ha}(\pO)$ for $ T^+ (\tilde f;u)$. 
Then we arrive at another BDIE system of {\em the second kind}, (D2), which is similar to the corresponding system in \cite{MikArxiv2015},
%%%%%%%%%%%%%%%%%%% 2.44  -  2.45
\begin{eqnarray}
\label{2.44} u+{\cal R}u - V\psi  =\F^{D2}_1 &\text{in}&
   {\Omega},   \\
\label{2.45} \frac{1}{2}\,\psi+T^+{\cal R}u  - {\Wp}\psi =
\F^{D2}_2&\text{on}& \pO,
\end{eqnarray}
where
%$\F^{D2}=({\cal P}\tilde f-W\varphi_0,  T^+(\tilde f+\mathring E_{\Omega}\,r_ {\Omega}{\mathcal R}_*
%\tilde f,{\cal P}\tilde f)-\A^+ \gamma^+u)^\top$.
\begin{equation}\label{FD2}
\F^{D2}=\left[\begin{array}{l}
   \F^{D2}_1\\[1ex]
   \F^{D2}_2
   \end{array}\right]
   =\left[\begin{array}{l}
  {\cal P}\tilde f-W\varphi_0\\[1ex]
  T^+(\tilde f+\mathring E_{\Omega}\,r_ {\Omega}{\mathcal R}_*\tilde f;{\cal P}\tilde f)-T^+ W \varphi_0
  \end{array}\right].
\end{equation}
Due to the mapping properties of the operators involved in
\eqref{FD2} we have $\F^{D2}\in H^{s}( {\Omega} )\times H^{s-\tha}(\pO)$.

Let us prove that BVP \eqref{2.6}--\eqref{2.7} in $ {\Omega}$ is
equivalent to both systems of BDIEs, (D1), (D2$_\Delta$) and (D2).

\begin{theorem}%[Equivalence theorem]
\label{Deqin}
Let $\ha<s<\tha$ and $a\in C_+^{s}(\overline \Omega)$.
Let 
$ \varphi_0 \in H^{s-\frac{1}{2}}( \partial\Omega)$,
$f\in  H^{s-2}({\Omega})$, and   $\tilde f\in  \widetilde H^{s-2}( {\Omega})$ be such that $r_{_ {\Omega}}\tilde f=f$.
%
% \mbox{\rm (i)}
 \begin{enumerate}
 \item[\rm (i)]
 If a function $u\in {H}^{s}( {\Omega})$ solves the Dirichlet BVP \eqref{2.6}--\eqref{2.7}, then the couple $(u, \psi)\in H^s ( {\Omega})\times {H}^{s-\tha}(\pO)$,
    where 
    \begin{equation}
    \label{upsiphiGT}
    %$
    \psi=T^+ (\tilde f;u) \;\;\;\;\text{on}\;\;\;\; \pO, \qquad
    \end{equation}
      solves the BDIE systems (D1), (D2$_\Delta$) and, if $a\in C_+^{\tha}(\overline \Omega)$, also the BDIE system (D2).
 \item[\rm (ii)]
 Vice versa, if a a couple $(u, \psi)\in H^s ( {\Omega})\times {H}^{s-\tha}(\pO)$ solves one of the BDIE systems, (D1), (D2$_\Delta$), or (D2) (if $a\in C_+^{\tha}(\overline \Omega)$), then this solution  solves the other BDIE systems, while $u$ solves the Dirichlet BVP, and $\psi$ satisfies (\ref{upsiphiGT}).
 \end{enumerate}
 \end{theorem}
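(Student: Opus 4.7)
The proof has two directions. For \emph{direction (i)}, the main inputs are Theorem~\ref{3rdGA}(ii) and Corollary~\ref{IDequivGenDir}: if $u$ solves the Dirichlet problem and $\psi:=T^+(\tilde f;u)$, then \eqref{4.2Gen} supplies the first equation of each of (D1), (D2$_\Delta$), (D2), while \eqref{4.2u+Gen}, \eqref{4.2T+GenD}, and (under the additional assumption $a\in C^{\tha}_+(\overline\Omega)$) \eqref{4.2T+Gen} yield the respective second equations upon inserting $\gamma^+u=\varphi_0$. All terms lie in the Sobolev spaces claimed by virtue of the mapping properties in Section~\ref{S3}.

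For \emph{direction (ii)} I begin with a common Stage~1. Each first BDIE equation fits the template \eqref{4.2nd} with $\Psi=\psi$, $\Phi=\varphi_0$, so Lemma~\ref{IDequivalenceGen} yields $Au=f$ in $\Omega$ together with the auxiliary identity
\[
V\psi^* - W\varphi^* = 0 \text{ in } \Omega, \quad \text{where } \psi^*:=\psi-T^+(\tilde f;u),\ \varphi^*:=\varphi_0-\gamma^+u.
\]
Once I show $\psi^*=\varphi^*=0$ on $\partial\Omega$, both relation \eqref{upsiphiGT} and boundary condition \eqref{2.7} follow, so $u$ solves the Dirichlet BVP, and direction (i) then yields solvability of the other two BDIE systems.

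Stage~2 is system-specific. For (D1), I take the trace of \eqref{4.5GT1} via the first jump relation in \eqref{3.8} and subtract \eqref{4.5GT2}; the $\gamma^+\mathcal R u$ and $\V\psi$ terms cancel, leaving $\gamma^+u=\varphi_0$, i.e.\ $\varphi^*=0$. The auxiliary identity then reduces to $V\psi^*=0$ in $\Omega$, and Lemma~\ref{VW0s}(i) gives $\psi^*=0$. For (D2$_\Delta$), since $Au=f$ is now known, Corollary~\ref{IDequivGenDir}(i) supplies \eqref{4.2T+GenD}, and subtracting it from \eqref{2.45D} yields
\[
\tfrac{1}{2}\psi^* - \mathcal W'_\Delta\psi^* + \mathcal L_\Delta(a\varphi^*) = 0 \text{ on } \partial\Omega.
\]
Multiplying the auxiliary identity by $a$ gives $V_\Delta\psi^*-W_\Delta(a\varphi^*)=0$ in $\Omega$, with each summand individually in $H^{s,-\ha}(\Omega;\Delta)$ by Theorem~\ref{T3.1s0}(ii) applied with $a=1$; applying $T^+_\Delta$ termwise and using the jump relations \eqref{WHs1GaTjD} together with the direct-value operators from Corollary~\ref{T3.3} (with $a=1$) produces the companion identity
\[
\tfrac{1}{2}\psi^* + \mathcal W'_\Delta\psi^* - \mathcal L_\Delta(a\varphi^*) = 0.
\]
Adding the two displays yields $\psi^*=0$; the auxiliary identity then becomes $W\varphi^*=0$ in $\Omega$, whence $\varphi^*=0$ by Lemma~\ref{VW0s}(ii). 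The argument for (D2) is strictly parallel: subtract \eqref{4.2T+Gen} from \eqref{2.45} to obtain $\tfrac{1}{2}\psi^*-\mathcal W'\psi^*+T^+W\varphi^*=0$, apply $T^+$ termwise to the auxiliary identity (legitimate by Theorem~\ref{T3.1s0}(ii) for the operator $A$, which is precisely why $a\in C^{\tha}_+(\overline\Omega)$ is needed) and use \eqref{3.11a} to obtain $\tfrac{1}{2}\psi^*+\mathcal W'\psi^*-T^+W\varphi^*=0$; the same addition trick produces $\psi^*=0$ and then $\varphi^*=0$.

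The only substantive point to verify carefully is the termwise application of the canonical conormal derivative to the Stage~1 auxiliary identity. This is guaranteed by Theorem~\ref{T3.1s0}(ii), which places each of $V\psi^*$ and $W\varphi^*$ separately in the appropriate space $H^{s,-\ha}(\Omega;\Delta)$ (for (D2$_\Delta$)) or $H^{s,-\ha}(\Omega;A)$ (for (D2)); all remaining manipulations are routine combinations of identities already established in Sections~\ref{S2}--\ref{G3G}.
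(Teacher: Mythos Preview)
Your proof is correct and follows essentially the same route as the paper's. In particular, your Stage~1/Stage~2 structure matches the paper exactly: use Lemma~\ref{IDequivalenceGen} on the first BDIE equation to get $Au=f$ and the auxiliary identity \eqref{difference}, then (for (D1)) take traces and subtract, while (for (D2$_\Delta$) and (D2)) subtract the appropriate co-normal identity from Corollary~\ref{IDequivGenDir} and combine with the $T^+_\Delta$- (respectively $T^+$-) image of \eqref{difference}. The paper phrases the latter step tersely as ``taking into account \eqref{differenceD}'', whereas you spell out the addition trick explicitly; this is the same argument, just with the details unpacked.
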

 \begin{proof} 
(i) Let 
%$a\in C_+^{s}(\overline \Omega)$, and 
$u\in H^s( {\Omega})$ be a solution to BVP
\eqref{2.6}--\eqref{2.7}. 
%It is unique due to Theorem \ref{Rem1}.
Setting $\psi$  by \eqref{upsiphiGT} evidently implies $\psi\in
{H}^{s-\tha}(\pO)$. 
Then it immediately follows from Theorem~\ref{GIequivalenceH-1} and relations
 \eqref{4.2u+Gen} and \eqref{4.2T+GenD} that the couple $(u,\psi)$ solves
systems (D1), (D2)$_\Delta$ and, if $a\in C_+^{\tha}(\overline \Omega)$, also (D2), with the right
hand sides \eqref{FD1}, \eqref{FD2D} and \eqref{FD2}, respectively,  which completes the proof of
item (i).

(ii) Let now a couple $(u,\psi)\in H^s( {\Omega})\times { H}^{s-\tha}(\pO)$
solve BDIE system \eqref{4.5GT1}-\eqref{4.5GT2}. Taking trace of
equation \eqref{4.5GT1} on $\pO$  and subtracting equation
\eqref{4.5GT2} from it, we obtain,
\begin{equation}\label{DGT}
     \gamma^+u=\varphi_0\quad \mbox{on } \pO,
\end{equation}
i.e. $u$ satisfies the Dirichlet condition \eqref{2.7}.
Equation \eqref{4.5GT1} and Lemma \ref{IDequivalenceGen} with
$\Psi=\psi$, $\Phi=\varphi_0$ imply that $u$ is a solution of PDE \eqref{2.6}, and
 $$
V\Psi^*- W\Phi^* =0\quad \mbox{in } {\Omega},
$$
where $\Psi^*=\psi -T^+ (\tilde f;u) $ and $\Phi^*=\varphi_0 - \gamma^+u$. Due to equation \eqref{DGT}, $\Phi^*=0$. Then Lemma \ref{VW0s}(i)
implies $\Psi^*=0$, which completes the proof of condition
\eqref{upsiphiGT}.
Thus $u$ obtained from solution of BDIE system (D1) solves the Dirichlet problem and hence, by item (i) of the theorem, $(u,\psi)$ solve also BDIE system (D2$_\Delta$) and, if $a\in C_+^{\tha}(\overline \Omega)$,  also (D2).  

%Due to \eqref{FD1}, the BDIE system \eqref{4.5GT1}-\eqref{4.5GT2} with zero right hand side can be considered as obtained for $\tilde f=0$, $\varphi_0=0$, implying that its solution is given by a solution of the homogeneous BVP \eqref{2.6}--\eqref{2.7}, which is zero by Theorem \ref{Rem1}. This implies uniqueness of solution of the inhomogeneous BDIE system \eqref{4.5GT1}-\eqref{4.5GT2}.
      
Let now  a couple $(u,\psi)\in H^{s}( {\Omega})\times H ^{s-\tha}(\pO)$ solve BDIE system \eqref{2.44D}-\eqref{2.45D}.
Lemma \ref{IDequivalenceGen} for equation \eqref{2.44D} implies that
$u$ is a solution of PDE \eqref{2.6}, and  equation
\eqref{difference} holds for $\Psi=\psi$ and $\Phi=\varphi_0$, while Corollary~\ref{IDequivGenDir} gives equation \eqref{4.2T+GenD}.   
Multiplication of \eqref{difference} by $a$ reduces it to
\begin{align}
\label{differenceD}
 V_\Delta(\psi -T^+(\tilde f;u) ) -  W_\Delta(a(\varphi_0 - \gamma^+u)) = 0& \text{ in }  \Omega.
\end{align} 
Subtracting \eqref{4.2T+GenD} from equation \eqref{2.45D} and taking into account \eqref{differenceD} gives
%%%%%%%%%%%%%%%%%%%%%%%%  2.47
\begin{equation}
\psi -T^+ (\tilde f;u) =0 \text{ on } \pO,\label{PsiGenD}
\end{equation}
that is, equation (\ref{upsiphiGT}) is proved.
Equations \eqref{differenceD} and \eqref{PsiGenD} give
  %%%%%%%%%%%%%%%%%%%%  2.49
 $ W_\Delta\Phi^* =0$ in $ {\Omega}$,
 where
$\Phi^*=a(\varphi_0 - \gamma^+u)$. Then Lemma \ref{VW0s}(ii) implies
$\Phi^*=0$ on $\pO$. This means that $u$ satisfies the Dirichlet
condition \eqref{2.7}.
Thus $u$ obtained from solution of BDIE system (D2$_\Delta$) solves the Dirichlet problem and hence, by item (i) of the theorem, the couple $(u,\psi)$ solve also BDIE system (D1) and, if $a\in C_+^{\tha}(\overline \Omega)$,  also (D2).       
      
 Let, finally, $a\in C_+^{\tha}(\overline \Omega)$ and a couple $(u,\psi)\in H^{s}( {\Omega})\times H ^{s-\tha}(\pO)$
solve BDIE system \eqref{2.44}-\eqref{2.45}.
Lemma \ref{IDequivalenceGen} for equation \eqref{2.44} implies that
$u$ is a solution of PDE \eqref{2.6}, and equation
\eqref{difference} holds for $\Psi=\psi$ and
$\Phi=\varphi_0$, while Corollary~\ref{IDequivGenDir} gives equation \eqref{4.2T+Gen}. 
Subtracting \eqref{4.2T+Gen} from equation \eqref{2.45} leads again to \eqref{differenceD},
%%%%%%%%%%%%%%%%%%%%%%%%  2.47
%\begin{equation}
% \Psi^* :=\psi -T^+ (\tilde f;u) =0\;\;\;\;\text{on}\;\;\;\;
% \pO,\label{PsiGen}
%\end{equation}
that is, equation (\ref{upsiphiGT}) is proved.
Equations \eqref{difference} and \eqref{differenceD} imply
  %%%%%%%%%%%%%%%%%%%%  2.49
 $ W\Phi^* =0$ in $ {\Omega}$,
 where
$\Phi^*=\varphi_0 - \gamma^+u$. 
Then by Lemma \ref{VW0s}(ii) we deduce
$\Phi^*=0$ on $\pO$. This means that $u$ satisfies the Dirichlet
condition \eqref{2.7}.
Thus $u$ obtained from solution of BDIE system (D2) solves the Dirichlet problem and hence, by item (i) of the theorem, the couple $(u,\psi)$ solves also BDIE systems (D1) and (D2$_\Delta$).  

%Due to \eqref{FD2}, the BDIE system \eqref{2.44}-\eqref{2.45} with zero right hand side can be considered as obtained for $\tilde f=0$, $\varphi_0=0$, implying that its solution is given by a solution of the homogeneous BVP \eqref{2.6}--\eqref{2.7}, which is zero by Theorem \ref{Rem1}. This implies uniqueness of solution of the inhomogeneous BDIE system \eqref{2.44}-\eqref{2.45}.
\end{proof} 

\subsection{Properties of BDIE system operators for the Dirichlet problem}

BDIE systems (D1), (D2$_\Delta$) and (D2) can be written as
\begin{equation*}
  \mathfrak{D}^1\U^D=\F^{D1},\quad \mathfrak{D}^{2\Delta}\U^D=\F^{D2\Delta},  \text{ and }\  \mathfrak{D}^2\U^D=\F^{D2},
\end{equation*}
respectively. Here $\U^D:=(u, \psi)^\top\in H^s( {\Omega})\times { H}^{s-\tha}(\pO)$,
%%%%%%%%%%%%%%%%%%%%%%%%   4.19
\begin{equation}\label{D1-D2}
\hspace{-0.8ex}\mathfrak{D}^1:= \left[
\begin{array}{cc}
I-{\cal R} & -V  \\
\gamma^+{\cal R}  & -{\cal V}
\end{array}
\right],\quad 
   \mathfrak{D}^{2\Delta}:= \left[
 \begin{array}{ccc}
 I+{\cal R} & -V  \\[1ex]
   T^+_\Delta(A^\nabla;a{\cal R}) & \displaystyle \,\frac{1}{2}\,I- \mathcal W'_\Delta
 \end{array}
 \right],\quad 
    \mathfrak{D}^2:= \left[
  \begin{array}{ccc}
  I+{\cal R} & -V  \\[1ex]
    T^+{\cal R} & \displaystyle \,\frac{1}{2}\,I- {\Wp}
  \end{array}
  \right],
\end{equation}
while $\F^{D1}$, $\F^{D2\Delta}$ and $\F^{D2}$ are given by \eqref{FD1}, \eqref{FD2D} and \eqref{FD2}, respectively.
Note that 
\begin{align}\label{TDAr}
T^+_\Delta(A^\nabla;a\mathcal R)u:=(\gamma^{-1})^*(A^\nabla u-\check{\Delta}_{\Omega} (a\mathcal Ru)).
\end{align}

Let $\ha<s<\tha$. The operators
  \begin{eqnarray}
\mathfrak{D}^1 
&:& H^s ( {\Omega})\times { H}^{s-\tha}(\pO)\to H^s ( {\Omega})\times H^{s-\ha}(\pO)\quad\mbox{if }
a\in C_+^{s}(\overline \Omega), \label{AGcGcont1}\\
\mathfrak{D}^{2\Delta}
&:& H^s ( {\Omega})\times { H}^{s-\ha}(\pO)\to H^s ( {\Omega})\times H^{s-\tha}(\pO)\quad\mbox{if } 
a\in C_+^{s}(\overline \Omega),\label{AGcTcont2D}\\
\mathfrak{D}^2
&:& H^s ( {\Omega})\times {H}^{s-\ha}(\pO)\to H^s ( {\Omega})\times H^{s-\tha}(\pO)\quad\mbox{if } 
a\in C_+^{\tha}(\overline \Omega),\label{AGcTcont2}
  \end{eqnarray}
are continuous due to the mapping properties of the operators constituting them, see Section~\ref{S3},  
while for the right hand sides of the BDIE systems we have the following inclusions
$$\F^{D1}\in H^s({\Omega})\times H^{s-\ha}(\pO),\quad 
\F^{D2\Delta}\in H^{s}({\Omega} )\times H^{s-\tha}(\pO),\quad
\F^{D2}\in H^s({\Omega})\times H^{s-\tha}(\pO).$$

%Due to Theorem \ref{Deqin}(ii), operator \eqref{AGcGcont} and \eqref{AGcTcont} are injective.

%%%%%%%%%%%%%%%%%%%%%  THEOREM  4.2
\begin{theorem}\label{AGT-Fred}
Let $\Omega$ be a bounded simply-connected Lipschitz domain and $\ha< s< \tha$.
Operators \eqref{AGcGcont1}-\eqref{AGcTcont2} are Fredholm operators with zero index.
\end{theorem}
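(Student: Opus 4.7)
The approach I would take is to exhibit each of the three operators $\mathfrak{D}^1$, $\mathfrak{D}^{2\Delta}$, $\mathfrak{D}^2$ as a compact perturbation of an invertible ``principal'' upper-triangular operator, and then conclude via stability of the Fredholm index under compact perturbations. Concretely, I would split
\[
\mathfrak{D}^1 = \mathfrak{D}^1_0 + \mathfrak{C}^1,\qquad
\mathfrak{D}^1_0 := \begin{pmatrix} I & -V\\ 0 & -\mathcal{V}\end{pmatrix},\qquad
\mathfrak{C}^1 := \begin{pmatrix} -\mathcal{R} & 0\\ \gamma^+\mathcal{R} & 0\end{pmatrix},
\]
and proceed analogously for $\mathfrak{D}^{2\Delta}$ and $\mathfrak{D}^2$, retaining only $I$ and $\tfrac12 I-\mathcal{W}'_\Delta$ (respectively $\tfrac12 I-\mathcal{W}'$) as the diagonal entries of the principal part and collecting everything else into $\mathfrak{C}^k$.

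Since $\mathfrak{D}^k_0$ is upper triangular, its invertibility reduces to that of the diagonal blocks. The identity needs no comment. For $\mathcal{V}:H^{s-\tha}(\pO)\to H^{s-\ha}(\pO)$ I would use the factorisation $\mathcal{V}=\tfrac{1}{a}\mathcal{V}_\Delta$ from \eqref{VWab1}, reducing the question to the classical invertibility of the harmonic single-layer operator on Lipschitz boundaries in the range $\ha<s<\tha$ (Verchota, Costabel), combined with the multiplier property of $1/a$ granted by Theorem~\ref{GrL+} and \eqref{a>0}. For $\tfrac12 I-\mathcal{W}'_\Delta$ on $H^{s-\tha}(\pO)$ I invoke the classical invertibility of the corresponding exterior-Neumann boundary operator of the Laplacian on Lipschitz domains. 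Finally, \eqref{VWab4} gives $\mathcal{W}'=\mathcal{W}'_\Delta-\tfrac{\partial_\nu a}{a}\mathcal{V}_\Delta$; the second summand factors through the compact embedding $H^{s-\ha}(\pO)\hookrightarrow H^{s-\tha}(\pO)$, so $\tfrac12 I-\mathcal{W}'$ differs from the invertible $\tfrac12 I-\mathcal{W}'_\Delta$ by a compact operator. In the $\mathfrak{D}^2$ case I would therefore absorb this compact term into $\mathfrak{C}^2$ and retain $\tfrac12 I-\mathcal{W}'_\Delta$ in the diagonal of the principal part.

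Compactness of the entries of $\mathfrak{C}^k$ follows directly from the mapping properties already established in Theorem~\ref{T3.1P}: $\mathcal{R}:H^s(\Omega)\to H^s(\Omega)$ is compact by \eqref{T3.1P3has}, $\gamma^+\mathcal{R}:H^s(\Omega)\to H^{s-\ha}(\pO)$ is compact by \eqref{T3.1P3+}, and $T^+\mathcal{R}:H^s(\Omega)\to H^{s-\tha}(\pO)$ is compact by \eqref{T3.1P4T+}. For the more delicate entry $u\mapsto T^+_\Delta(A^\nabla u;\,a\mathcal{R}u)$ appearing in $\mathfrak{D}^{2\Delta}$, I would use the joint linearity \eqref{GCDL} of the generalised normal derivative in the pair $(\tilde f,v)$ together with the continuous dependence estimate \eqref{estimate}: the map is the composition of $u\mapsto (A^\nabla u,\,a\mathcal{R}u)\in\widetilde H^{s-2}(\Omega)\times H^s(\Omega)$ with a jointly continuous linear operator to $H^{s-\tha}(\pO)$. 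Each coordinate of the first map is compact --- $A^\nabla$ by Lemma~\ref{AringMap}(ii) and $u\mapsto a\mathcal{R}u$ by \eqref{T3.1P3has} together with Theorem~\ref{GrL+} --- so the composition is compact.

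Writing $\mathfrak{D}^k=\mathfrak{D}^k_0\bigl(I+(\mathfrak{D}^k_0)^{-1}\mathfrak{C}^k\bigr)$, the factor $I+(\mathfrak{D}^k_0)^{-1}\mathfrak{C}^k$ is a compact perturbation of the identity and is therefore Fredholm of index zero; composing with the invertible $\mathfrak{D}^k_0$ yields Fredholmness and zero index for $\mathfrak{D}^k$. The hardest step will be the handling of the entry $T^+_\Delta(A^\nabla u;\,a\mathcal{R}u)$ in system $(D2_\Delta)$, where the generalised normal derivative is formally nonlinear in $u$: one must view it as the composition of a linear, jointly continuous operator in $(\tilde f,v)$ with the compact linear map $u\mapsto(A^\nabla u,\,a\mathcal{R}u)$, the crucial compactness of $A^\nabla$ being specifically the low-regularity content of Lemma~\ref{AringMap}.
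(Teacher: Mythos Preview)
Your proposal is correct and follows essentially the same approach as the paper: the paper also writes each $\mathfrak{D}^k$ as a compact perturbation of the upper-triangular principal operator $\mathfrak{D}^k_0$ with invertible diagonal blocks ($I$ together with $\mathcal{V}$ for $k=1$, and $I$ together with $\tfrac12 I-\mathcal{W}'_\Delta$ for both $k=2\Delta$ and $k=2$), invoking the same compactness results \eqref{T3.1P3has}, \eqref{T3.1P3+}, \eqref{T3.1P4T+}, Lemma~\ref{AringMap}(ii), and the relation \eqref{VWab4} for $\mathcal{W}'-\mathcal{W}'_\Delta$. Your treatment of the entry $T^+_\Delta(A^\nabla u;\,a\mathcal{R}u)$ via joint linearity in the pair is exactly the mechanism behind \eqref{TDAr} in the paper.
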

\begin{proof} 
The operators continuity has been proved above already. 

%$\mathfrak{D}^1$: 
To prove the Fredholm property of operator \eqref{AGcGcont1}, let us consider the  operator
%%%%%%%%%%%%%%%%%%%%%%%%   4.32
\begin{equation*}
\mathfrak{D}^1_0:= \left[
\begin{array}{cc}
I &        -V            \\
 0 & \,-{\cal V}
\end{array}
\right]\ .
\end{equation*}
As a result of compactness properties of the operators ${\cal R}$
and $\gamma^+{\cal R}$ given by \eqref{T3.1P3has} and \eqref{T3.1P3+} in Theorem \ref{T3.1P}), the operator
$\mathfrak{D}^1_0$ is a compact perturbation of operator \eqref{AGcGcont1}.
The operator $\mathfrak{D}^1_0$ is an upper triangular matrix operator
with the following scalar diagonal invertible operators
\begin{eqnarray*}
I&:& H^s ( {\Omega})\to H^s ( {\Omega}),\\
{\cal V} & :&   H^{s-\tha}(\pO)\to H^{s-\ha}(\pO),
\end{eqnarray*}
where the invertibility of the operator $\V$ is implied by invertibility of operator $\V_\Delta$ in \eqref{3.15} and by the first relation in \eqref{VWa}. 
This implies that
$$
\mathfrak{D}^1_0 \;:\; H^s ( {\Omega})\times { H}^{s-\tha}(\pO)\to H^s ( {\Omega})\times H^{s-\ha}(\pO)
$$
is an invertible  operator. Thus   \eqref{AGcGcont1} is a Fredholm operator with zero index. 

%$\mathfrak{D}^{2\Delta}$: 
The operator
\begin{align}\label{D20}
  \mathfrak{D}^{2}_0:= \left[
\begin{array}{ccc}
I  & -V  \\
0  & \displaystyle \frac{1}{2}\,I- \mathcal W'_\Delta
\end{array}
\right].
\end{align}
is a compact perturbation of  operator \eqref{AGcTcont2D}.
Indeed the operators $\mathcal R:H^s({\Omega})\to H^s ( {\Omega})$ is compact due to Theorem \ref{T3.1P}.      
Compactness of the operator 
$
T^+_\Delta(A^\nabla;a\mathcal R) : H^s({\Omega})\to H^{s-\tha}(\pO),
$
defined by \eqref{TDAr}, follows from compactness of the operator $A^\nabla: H^{s}(\Omega)\to \widetilde H^{s-2}(\Omega)$ given by Lemma~\ref{AringMap}(ii) and of the operator $\mathcal R: H^{s}(\Omega)\to H^{s}(\Omega)$,
i.e., operator \eqref{T3.1P3has} in Theorem~\ref{T3.1P}.
Consider the diagonal operators of the upper triangular matrix operator $\mathfrak{D}^2_0$.
The operator $I: H^s ( {\Omega})\to H^s ( {\Omega})$ is evidently invertible, 
while invertibility of the operator $\frac{1}{2}\,I- \mathcal W'_\Delta : H^{s-\tha}(\pO)\to H^{s-\tha}(\pO)$ is stated by Theorem \ref{T3.6}.
This implies that the operator 
\begin{align}\label{D20map}
\mathfrak{D}^{2}_0 : H^s ( {\Omega})\times { H}^{s-\tha}(\pO)\to H^s ( {\Omega})\times H^{s-\tha}(\pO)
\end{align}
is invertible and hence operator \eqref{AGcTcont2D} is Fredholm with zero index.

%%%%%%%%%%%%%%%%%%%%%%%%   T H E O R E M   2.4
%$\mathfrak{D}^2$: 
The operator $\mathfrak{D}^2_0$, defined by \eqref{D20}, is also a compact perturbation of  operator \eqref{AGcTcont2}.
Indeed the operators $\mathcal R:H^s({\Omega})\to H^s ( {\Omega})$ and  
$T^+{\cal R}: H^s({\Omega})\to H^{s-\tha}(\pO)$ are compact due to Theorem \ref{T3.1P}. 
From the first representation in \eqref{VWab4}, for $a\in C_+^{\tha}(\overline \Omega)$, the operator  
$\mathcal W^\prime_\Delta - \mathcal W^{\prime}= \frac{\partial_\nu a}{a}\,\mathcal V_{_\Delta}: H^{s-\tha}(\pO)\to H^{\sigma}(\pO)$,
 where $\sigma=\min\{\ha,s-\ha\}$, is continuous, which implies that the operator 
$\mathcal W^\prime_\Delta - \mathcal W^{\prime}: H^{s-\tha}(\pO)\to H^{s-\tha}(\pO)$ is compact. 
Since operator \eqref{D20map}
is invertible, this implies that operator \eqref{AGcTcont2} is Fredholm with zero index.
\end{proof} 

\begin{theorem}\label{AGTinvs}
Let $\Omega$ be a bounded simply-connected Lipschitz domain,  $\ha< s< \tha$, and  $\sigma=\max\{1, s\}$.
The following operators  are continuously invertible.
\begin{eqnarray}
\mathfrak{D}^1 
&:& H^s ( {\Omega})\times { H}^{s-\tha}(\pO)\to H^s ( {\Omega})\times H^{s-\ha}(\pO)\quad\mbox{if }
a\in C_+^{\sigma}(\overline \Omega), \label{AGcGcont1a}\\
\mathfrak{D}^{2\Delta}
&:& H^s ( {\Omega})\times { H}^{s-\ha}(\pO)\to H^s ( {\Omega})\times H^{s-\tha}(\pO)\quad\mbox{if } 
a\in C_+^{\sigma}(\overline \Omega),\label{AGcTcont2Da}\\
\mathfrak{D}^2
&:& H^s ( {\Omega})\times {H}^{s-\ha}(\pO)\to H^s ( {\Omega})\times H^{s-\tha}(\pO)\quad\mbox{if } 
a\in C_+^{\tha}(\overline \Omega),\label{AGcTcont2a}
  \end{eqnarray}
\end{theorem}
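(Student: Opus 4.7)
My plan is to combine the Fredholm zero-index property of Theorem~\ref{AGT-Fred} with the equivalence Theorem~\ref{Deqin} and the uniqueness Theorem~\ref{Rem1}. Since the present hypothesis $a\in C_+^\sigma(\overline\Omega)$ with $\sigma=\max\{1,s\}$ is at least as strong as the smoothness required by Theorem~\ref{AGT-Fred}, each of the operators in \eqref{AGcGcont1a}--\eqref{AGcTcont2a} remains Fredholm of index zero between the indicated spaces. Consequently, continuous invertibility is equivalent to triviality of the kernel, and I focus on injectivity.

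Suppose $(u,\psi)\in H^s(\Omega)\times H^{s-\tha}(\partial\Omega)$ lies in the kernel of one of the three operators. By Theorem~\ref{Deqin} applied with $\tilde f=0$ and $\varphi_0=0$, the function $u\in H^s(\Omega)$ solves the homogeneous Dirichlet problem
\begin{equation*}
Au=0\ \text{in}\ \Omega,\qquad \gamma^+u=0\ \text{on}\ \partial\Omega,
\end{equation*}
and $\psi=T^+(0;u)$. For $1\le s<\tha$ we have $\sigma=s$, so $a\in C_+^s(\overline\Omega)\subset C_+^{|s-1|}(\overline\Omega)$, and Theorem~\ref{Rem1} forces $u=0$; then $\psi=T^+(0;0)=0$, proving injectivity in all three cases.

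For $\ha<s<1$ we have $\sigma=1$ and $a\in C_+^1(\overline\Omega)$. My plan here is to upgrade the regularity of the kernel element into $H^1(\Omega)\times H^{-\ha}(\partial\Omega)$ and then apply Theorem~\ref{Rem1} at $s=1$. Since $\gamma^+u=0$ and $s>\ha$, Remark~\ref{R2.1} places $u$ in $\mathring H^s(\Omega)$. Starting from the first BDIE equation $u=V\psi-\mathcal Ru$ of the homogeneous (D1) system together with the boundary equation $\mathcal V\psi=\gamma^+\mathcal Ru$, and using that $\mathcal V:H^{s-\tha}(\partial\Omega)\to H^{s-\ha}(\partial\Omega)$ is an isomorphism (as already exploited in Theorem~\ref{AGT-Fred}), I would bootstrap $(u,\psi)$ through the mapping properties of $V$ and $\mathcal R$ (Theorems~\ref{T3.1P}, \ref{T3.1s0}) and the representation \eqref{VWab1} relating them to their harmonic counterparts.

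The main obstacle is precisely this regularity upgrade in the range $\ha<s<1$: with only $a\in C_+^1(\overline\Omega)$, neither $\mathcal R$ nor $V$ is inherently smoothing across the threshold $s=1$, so a naive bootstrap fails. The gain must come either from the sharper Lipschitz-domain mapping properties of the harmonic layer potentials $V_\Delta$, $W_\Delta$ via the representation \eqref{VWab1}, or from invoking directly the known $H^s$-uniqueness for the Dirichlet BVP on Lipschitz domains with H\"older--Lipschitz coefficients, after which $u=0$ and hence $\psi=0$ as before.
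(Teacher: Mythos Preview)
Your argument for $1\le s<\tha$ is correct and coincides with the paper's: injectivity via Theorem~\ref{Deqin} and Theorem~\ref{Rem1}, then invertibility from the zero-index Fredholm property.

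For $\ha<s<1$, however, your proposal has a genuine gap that you yourself flag but do not close. The bootstrap route through $\mathcal V^{-1}$, $V$, and $\mathcal R$ cannot push a kernel element from $H^s$ to $H^1$ under the hypothesis $a\in C_+^1(\overline\Omega)$, because the relevant mapping properties in Theorems~\ref{T3.1P} and~\ref{T3.1s0} give no gain across $s=1$ at that smoothness level. Your fallback---invoking ``known $H^s$-uniqueness for the Dirichlet BVP''---is not available here either: in this paper, $H^s$-uniqueness for $\ha<s<1$ is Corollary~\ref{Cor5.5}, which is \emph{derived from} Theorem~\ref{AGTinvs}, so using it would be circular.

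The paper avoids the bootstrap entirely by an abstract kernel-stability argument. Since $a\in C_+^1(\overline\Omega)$ (or $C_+^{\tha}$ for $\mathfrak D^2$) is fixed, each operator acts continuously and is Fredholm of index zero simultaneously on the scale $H^{s'}(\Omega)\times H^{s'-\tha}(\pO)$ for every $s'\in(\ha,1]$; this is Theorem~\ref{AGT-Fred}. The embeddings $H^1\hookrightarrow H^s$ and $H^{1-\tha}\hookrightarrow H^{s-\tha}$ (and on the target side) are continuous with dense range. Lemma~\ref{Lem2-Fredholm} then says the kernel in the $s$-spaces coincides with the kernel in the $s'=1$ spaces, which you have already shown to be trivial. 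This closes the case $\ha<s<1$ without any regularity bootstrap.
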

\begin{proof} 
Let $1\le s<\tha$ first. 
Then $\sigma=s$ and injectivity of operators \eqref{AGcGcont1a}-\eqref{AGcTcont2a} is implied by the equivalence Theorem~\ref{Deqin}(ii) and the BVP uniqueness Theorem~\ref{Rem1}. % (employed for $\tilde f=0$ and $\varphi_0=0$). 
Indeed, consider, for example, injectivity of operator \eqref{AGcGcont1a}. 
For the homogeneous equation $\mathfrak{D}^1\U^D=0$, its zero right hand side $\F^{D1}=0$  can be represented as in \eqref{FD1} in terms of $\tilde f=0$ and $\varphi_0=0$. 
Then by Theorem~\ref{Deqin}(ii), $\U^D=(u, T^+(0;u))^\top$, where $u$ is a solution of the Dirichlet problem \eqref{2.6}-\eqref{2.7} with the right hand sides $f=0$ and $\varphi_0=0$, which has only the trivial solution, $u=0$,  due to Theorem~\ref{Rem1}. 
The arguments for injectivity of operators \eqref{AGcTcont2Da} and \eqref{AGcGcont1a} are similar.

Since, by Theorem~\ref{AGT-Fred}, operators \eqref{AGcGcont1a}-\eqref{AGcTcont2a} are Fredholm with zero index, this implies their invertibility  for $1\le s<\tha$.

Let now $\ha< s\le 1$.
Then $\sigma=1$, i.e., $a\in C_+^{1}(\overline \Omega)$ for operators \eqref{AGcGcont1a}-\eqref{AGcTcont2Da} and $a\in C_+^{\tha}(\overline \Omega)$ for operator \eqref{AGcTcont2a}. Hence, for a fixed function $a$, satisfying the corresponding conditions in \eqref{AGcGcont1a}-\eqref{AGcTcont2Da}, all these operators are continuous for $\ha< s\le 1$. 
By Theorem~\ref{AGT-Fred} they are also Fredholm with zero index. Since, as already proved, at $s=1$ these operators are also invertible,    Lemma~\ref{Lem2-Fredholm} implies that their kernels (null-spaces) consist of only the zero element for any $s\in(\ha,1]$, which implies that the operators are invertible for all $s$ from this interval.
\end{proof}

Theorems \ref{AGTinvs} and \ref{Deqin} imply the following assertion.
\begin{corollary}\label{Cor5.5} Let $\Omega$ be a bounded simply-connected Lipschitz domain,  $\ha< s< \tha$, $f\in{H}^{s-2}({\Omega})$, $ \varphi_0 \in H^{s-\ha}(\pO)$, and 
$a\in C_+^{\sigma}(\overline \Omega)$, $\sigma=\max\{1, s\}$.
Then the Dirichlet  problem \eqref{2.6}-\eqref{2.7} is uniquely solvable in ${H}^{s}( {\Omega})$. 
The solution is $u=(\mathcal A^D)^{-1}(f,\varphi_0)^\top$, where the inverse operator, 
$(\mathcal A^D)^{-1}: H^{s-2}( {\Omega})\times H^{s-\ha}(\pO) \to H^{s}( {\Omega})$, to the left hand side operator, $\mathcal A^D: H^{s}( {\Omega})\to H^{s-2}( {\Omega})\times H^{s-\ha}(\pO)$, of the Dirichlet problem \eqref{2.6}-\eqref{2.7}, is continuous.
\end{corollary}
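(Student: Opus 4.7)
The plan is to derive the corollary directly from the invertibility of the BDIE operator $\mathfrak D^1$ (Theorem~\ref{AGTinvs}) together with the equivalence theorem (Theorem~\ref{Deqin}). Working with system (D1) is the natural choice since its coefficient hypothesis $a\in C_+^\sigma(\overline\Omega)$ with $\sigma=\max\{1,s\}$ matches that of the corollary exactly.

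\emph{Existence.} Fix a continuous extension operator $\widetilde E:H^{s-2}(\Omega)\to\widetilde H^{s-2}(\Omega)$, which exists by the result cited from \cite{MikJMAA2011}, and set $\tilde f:=\widetilde E f$. Form $\F^{D1}$ via \eqref{FD1}; the mapping properties \eqref{T3.1P1} and \eqref{WHs1} ensure that $(f,\varphi_0)\mapsto\F^{D1}$ is a bounded linear map into $H^s(\Omega)\times H^{s-\ha}(\partial\Omega)$. Applying $(\mathfrak D^1)^{-1}$ from \eqref{AGcGcont1a} produces a couple $(u,\psi)\in H^s(\Omega)\times H^{s-\tha}(\partial\Omega)$ solving (D1), and Theorem~\ref{Deqin}(ii) then shows that this $u$ solves the Dirichlet BVP \eqref{2.6}-\eqref{2.7}.

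\emph{Uniqueness.} Let $u_1,u_2\in H^s(\Omega)$ both solve \eqref{2.6}-\eqref{2.7} with the given data. Fix a single extension $\tilde f$ of $f$ and define $\psi_i:=T^+(\tilde f;u_i)$ for $i=1,2$. By Theorem~\ref{Deqin}(i), each couple $(u_i,\psi_i)$ solves system (D1) with the \emph{same} right-hand side $\F^{D1}$; because the same $\tilde f$ is used for both solutions, the non-linear dependence of $T^+(\tilde f;\cdot)$ on its second argument poses no issue. Injectivity of $\mathfrak D^1$ from Theorem~\ref{AGTinvs} then forces $(u_1,\psi_1)=(u_2,\psi_2)$, hence $u_1=u_2$. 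I expect this step to be the main subtlety: it bypasses the range restriction $s\ge 1$ of the classical energy-argument uniqueness (Theorem~\ref{Rem1}) by exploiting the BDIE invertibility instead, and it requires the careful observation that fixing one extension of $f$ makes the otherwise non-linear co-normal derivative behave as a well-defined map $u_i\mapsto\psi_i$.

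\emph{Continuity of $(\mathcal A^D)^{-1}$.} The operator $\mathcal A^D: H^s(\Omega)\to H^{s-2}(\Omega)\times H^{s-\ha}(\partial\Omega)$ is bounded by \eqref{LH1} and the trace theorem; the existence and uniqueness steps show it is a bijection between Hilbert spaces, so the Banach open mapping theorem furnishes a continuous inverse. An explicit norm estimate is equally available by composing the continuous maps $(f,\varphi_0)\mapsto\tilde f\mapsto\F^{D1}\mapsto (u,\psi)\mapsto u$, with uniqueness ensuring that $u$ does not depend on the auxiliary choice of extension.
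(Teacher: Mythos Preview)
Your proposal is correct and follows precisely the route the paper itself indicates: the corollary is stated as a direct consequence of Theorems~\ref{AGTinvs} and~\ref{Deqin}, and you have simply spelled out how the two combine---extend $f$, form $\F^{D1}$, invert $\mathfrak D^1$, and invoke equivalence for existence; use the same extension $\tilde f$ and injectivity of $\mathfrak D^1$ for uniqueness. Your remark that this bypasses the $s\ge 1$ restriction of Theorem~\ref{Rem1} is apt, since the needed uniqueness for $\ha<s<1$ is already built into Theorem~\ref{AGTinvs} via the Fredholm argument of Lemma~\ref{Lem2-Fredholm}.
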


\begin{remark}
For a given function $f\in  H^{s-2}( {\Omega})$, its extension  $\tilde f\in  \widetilde H^{s-2}( {\Omega})$ is not unique. Nevertheless, since solution of the Dirichlet BVP \eqref{2.6}--\eqref{2.7} does not depend on this extension, equivalence Theorem~\ref{Deqin}(ii) implies that $u$ in the solution of BDIE systems (D1) and (D2) does not depend on the particular  choice of extension $\tilde f$, however, $\psi$ obviously does, see \eqref{upsiphiGT}.
\end{remark}

\section{Segregated BDIE systems for the Neumann Problem}
Let us consider the 
{\bf Neumann Problem:} {\em Find a function $u\in H^s( {\Omega})$ satisfying equations}
%%%%%%%%%%%%%%%%%  2.6-2.8
\begin{align}
\label{2.6N} & A\,u=r_ {\Omega}\tilde f  \;\;\; \mbox{\rm in}\;\;\;\;  {\Omega},
\\ 
 \label{2.8} &  T^+(\tilde f;u)=\psi_0  \;\;\; \mbox{\rm on}\;\;\;\; \pO,
\end{align}
{\em where} $ \psi_0 \in H^{s-\tha}(\pO)$,  $\tilde f\in \widetilde{H}^{s-2}( {\Omega})$. 

Equation \eqref{2.6N} is understood in the distribution sense
\eqref{Ldist}, and the Neumann boundary condition \eqref{2.8} in the weak sense \eqref{Tgen}.
%*** Give the variational formulation of the problem.
The following assertion is well-known and can be proved, e.g., using the variational settings and the Lax-Milgram lemma.
\begin{theorem}\label{Rem1N} 
Let $s=1$ and $a\in L_\infty(\Omega)$.

 (i) The homogeneous Neumann problem, \eqref{2.6N}-\eqref{2.8}, admits only one linearly independent solution $u^0=1$ in ${H}^{1}( {\Omega})$.
 
 (ii) The non-homogeneous Neumann problem \eqref{2.6N}-\eqref{2.8} is solvable if and only if the following condition is satisfied
\be\label{3.suf}
\langle \tilde f,u^0\rangle_ {\Omega} -\langle\psi_0,\gamma^+u^0\rangle_\pO=0.
\ee
\end{theorem}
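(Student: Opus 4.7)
The plan is to base the proof on the first Green identity \eqref{Tgen}, which for $s=1$ reads
$\langle T^+(\tilde f;u),\gamma^+v\rangle_{\pO}=\langle\tilde f,v\rangle_\Omega+\check\E_\Omega(u,v)$
for all $v\in H^1(\Omega)$, together with the bilinear form
$\check\E_\Omega(u,v)=\int_\Omega a\nabla u\cdot\nabla v\,dx$ and the Lax--Milgram lemma applied on a suitable quotient of $H^1(\Omega)$. The domain $\Omega$ is connected (simply connected, by assumption), which makes the kernel of $\nabla$ in $H^1(\Omega)$ exactly $\R$.

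For item (i), assume $u\in H^1(\Omega)$ solves the homogeneous Neumann problem, i.e.\ $Au=0$ in $\Omega$ (so we may take $\tilde f=0$) and $T^+(0;u)=0$ on $\pO$. Taking $v=u$ in the first Green identity gives
$0=\langle 0,u\rangle_\Omega+\check\E_\Omega(u,u)=\int_\Omega a|\nabla u|^2\,dx.$
By the ellipticity condition \eqref{a>0}, $a\ge a_{\min}>0$ a.e., so $\nabla u=0$ a.e.\ in $\Omega$, and connectedness of $\Omega$ forces $u$ to be a constant. Thus $u^0=1$ spans the one-dimensional kernel.

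For necessity in item (ii), take $v=u^0=1\in H^1(\Omega)$ in the first Green identity applied to any solution $u$ of the non-homogeneous problem. Since $\nabla 1=0$, we have $\check\E_\Omega(u,1)=0$, and since $T^+(\tilde f;u)=\psi_0$ and $\gamma^+1=1$, we obtain exactly $\langle\tilde f,1\rangle_\Omega-\langle\psi_0,\gamma^+1\rangle_{\pO}=0$, which is \eqref{3.suf}.

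For sufficiency, assume \eqref{3.suf} holds and consider the quotient space $\dot H^1(\Omega):=H^1(\Omega)/\R$, which is a Hilbert space with norm equivalent to $\|\nabla u\|_{L_2(\Omega)}$ by the Poincar\'e--Wirtinger inequality (valid on bounded Lipschitz domains). The bilinear form $\check\E_\Omega$ descends to $\dot H^1(\Omega)$ since it annihilates constants in either argument, and is coercive there by \eqref{a>0} and the Poincar\'e--Wirtinger inequality, as well as bounded since $a\in L_\infty(\Omega)$. The linear functional
$L(v):=\langle\psi_0,\gamma^+v\rangle_{\pO}-\langle\tilde f,v\rangle_\Omega,\qquad v\in H^1(\Omega),$
is continuous on $H^1(\Omega)$ by the trace theorem and the duality $\widetilde H^{-1}(\Omega)=[H^1(\Omega)]^*$, and by \eqref{3.suf} it vanishes on $\R$, hence descends to a continuous functional on $\dot H^1(\Omega)$. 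Lax--Milgram then yields a unique $\dot u\in\dot H^1(\Omega)$ with $\check\E_\Omega(\dot u,\dot v)=L(\dot v)$ for all $\dot v\in\dot H^1(\Omega)$; choosing any representative $u$ and testing with $v\in\mathcal D(\Omega)$ recovers $Au=r_\Omega\tilde f$ in the distributional sense, and then the generalised co-normal derivative $T^+(\tilde f;u)$ is well defined by Definition~\ref{GCDd} and equals $\psi_0$ by comparing the first Green identity \eqref{Tgen} with the Lax--Milgram equation for general $v\in H^1(\Omega)$.

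The main (minor) obstacle is being precise about moving from the Lax--Milgram identity on the quotient space back to a statement at the level of the generalised co-normal derivative, since that derivative is defined through a specific extension $\tilde f$ rather than through a bilinear form; the step uses exactly the weak definition \eqref{Tgend} and the density of $\mathcal D(\Omega)$ in $\widetilde H^1(\Omega)$, and is routine once the Lax--Milgram solution is in hand.
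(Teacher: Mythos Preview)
Your proof is correct and follows precisely the route the paper indicates: the paper does not give a detailed proof of this statement but only remarks that it ``is well-known and can be proved, e.g., using the variational settings and the Lax--Milgram lemma,'' which is exactly what you do. Your handling of the co-normal derivative via the first Green identity \eqref{Tgen}, and of sufficiency via Lax--Milgram on $H^1(\Omega)/\R$ with the Poincar\'e--Wirtinger inequality, is the standard argument, and your final step recovering $T^+(\tilde f;u)=\psi_0$ from the variational identity combined with \eqref{Tgen} and the surjectivity of $\gamma^+:H^1(\Omega)\to H^{1/2}(\pO)$ is clean.
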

\begin{remark}\label{Rem1Ns}
Item (i) in Theorem~\ref{Rem1N} evidently implies that for $1\le s<\tha$ and $a\in C_+^{|s-1|}(\overline \Omega)$, the homogeneous Neumann problem, associated with \eqref{2.6N}-\eqref{2.8}, also admits only one linearly independent solution $u^0=1$ in ${H}^{s}( {\Omega})$. 
\end{remark}
\subsection{BDIE formulations and equivalence to the Neumann problem}

We will explore different possibilities of
reducing the Neumann problem \eqref{2.6N}-\eqref{2.8} to a BDIE system. 
Let $\ha<s<\tha$.
Let us represent in  \eqref{4.2Gen}, \eqref{4.2u+Gen}, \eqref{4.2T+GenD} and \eqref{4.2T+Gen} the generalised co-normal derivative and the trace of the function $u$ as
$$
T^+(\tilde f;u)=\psi_0,\qquad \gamma^+u=\varphi,
$$
where $\psi_0$ is the known right hand side of the Neumann boundary condition \eqref{2.8}, and  $\varphi\in{H}^{s-\frac{1}{2}}( \partial\Omega)$ is
a new unknown function that will be regarded as formally {\em segregated} from $u$.
Thus we will look for the unknown couple
$
(u, \varphi)
\in H^s( {\Omega})\times {H}^{s-\frac{1}{2}}(\partial\Omega).
$

\paragraph{BDIE system (N1$_\Delta$).}
Let $a\in C_+^{s}(\overline \Omega)$. 
Using equation \eqref{4.2Gen} in $ {\Omega}$ and
equation \eqref{4.2T+GenD} on $ \partial\Omega$,  we arrive
at the following BDIE system (N1$_\Delta$) of two equations for the couple of unknowns,
$(u, \varphi)$,
\begin{eqnarray}
\label{Basic1D}
 u +{\mathcal R}u  +W\varphi &=& \F^{N1\Delta}_1\quad \mbox{in}\quad    {\Omega},
\\[1ex]
 \label{Basic2D}
T^+_\Delta(A^\nabla u;{a\cal R} u) +\mathcal L_\Delta(a \varphi)
  &=&
  \F^{N1\Delta}_2 \quad  \mbox{on}\quad   \partial\Omega,
\end{eqnarray}
where      
\begin{equation}\label{FN1D}
\F^{N1\Delta}=\left[\begin{array}{l}
  \F^{N1\Delta}_1\\[1ex]
  \F^{N1\Delta}_2
  \end{array}\right]
=\left[\begin{array}{l}
  {\cal P}\tilde f + V\psi_0\\[1ex]
  T^+_\Delta(\tilde f;\P_\Delta\tilde f)-\dfrac{1}{2}\psi_0+{\W}'_\Delta\psi_0
  \end{array}\right].
\end{equation}
Due to the mapping properties of the operators involved in
\eqref{FN1} we have $\F^{N1\Delta}\in H^{s}( {\Omega} )\times H^{s-\tha}(\pO)$.

\paragraph{BDIE system (N1).}
Let the coefficient be smoother than in the previous case, 
$a\in C_+^{\tha}(\overline \Omega)$.
Now, using equation \eqref{4.2Gen} in $ {\Omega}$ and
equation \eqref{4.2T+Gen} on $ \partial\Omega$,  we arrive
at the following BDIE system (N1) of two equations for the couple of unknowns,
$(u, \varphi)$, which is similar to the corresponding system in \cite{MikArxiv2015},
\begin{eqnarray}
\label{Basic1}
 u +{\mathcal R}u  +W\varphi &=& \F^{N1}_1\quad \mbox{in}\quad    {\Omega},
\\[1ex]
 \label{Basic2}
  T^+{\mathcal R} u +T^+ W\varphi
  &=&
  \F^{N1}_2 \quad  \mbox{on}\quad   \partial\Omega,
\end{eqnarray}
where
\begin{equation}\label{FN1}
\F^{N1}=\left[\begin{array}{l}
  \F^{N1}_1\\[1ex]
  \F^{N1}_2
  \end{array}\right]
=\left[\begin{array}{l}
  {\cal P}\tilde f + V\psi_0\\[1ex]
T^+(\tilde f+\mathring E_{\Omega}\,r_ {\Omega}{\mathcal R}_*\tilde f;{\cal P}\tilde f) -\dfrac{1}{2}\psi_0+{\Wp}\psi_0
  \end{array}\right].
\end{equation}
Due to the mapping properties of the operators involved in
\eqref{FN1} we have $\F^{N1}\in H^{s}( {\Omega} )\times H^{s-\tha}(\pO)$.
\paragraph{BDIE system (N2).}
Let again $a\in C_+^{s}(\overline \Omega)$.
If we use equation \eqref{4.2Gen} in $ {\Omega}$ and equation \eqref{4.2u+Gen} on $ \partial\Omega$, we arrive for the couple $(u,  \varphi)$ at the following  BDIE system (N2) of two equations {\em of the second kind}, which is also similar to the corresponding system in \cite{MikArxiv2015},
\begin{eqnarray}
\label{Basic1-2}
 u +{\mathcal R}u     +W  \varphi
 &=& \mathcal F^{N2}_1  \qquad \mbox{in}\quad    {\Omega},
\\[1ex]
 \label{Basic2-2}
 \frac{1}{2}\varphi+\gamma^+{\mathcal R} u   +{\mathcal W} \varphi
 &=&    \mathcal F^{N2}_2,  \quad   \mbox{on}\quad   \partial\Omega.
\end{eqnarray}
where 
\begin{align}
 \label{FN2}
\mathcal F^{N2}=\left[\begin{array}{l}
   \F^{N2}_1\\[1ex]
   \F^{N2}_2
   \end{array}\right]
   =\left[\begin{array}{l}
      F_0^N\\[1ex]
      \gamma^+F_0^N
      \end{array}\right],\quad
F_0^N:={\mathcal P} \tilde f + V\psi_0  \quad   \mbox{in}\quad    {\Omega}.
 \end{align}
Due to the mapping properties of the operators involved in
\eqref{FN2}, we have $\F^{N2}\in H^{s}( {\Omega} )\times H^{s-\ha}(\pO)$.

\begin{theorem}\label{equivalenceN}
Let $\ha<s<\tha$, $a\in C_+^{s}(\overline \Omega)$,
$ \psi_0 \in H^{s-\frac{3}{2}}( \partial\Omega)$  and   $\tilde f\in  \widetilde H^{s-2}( {\Omega})$.

(i) If a function $u\in H^s( {\Omega} )$ solves the Neumann  problem \eqref{2.6N}-\eqref{2.8} then the couple $(u,\varphi)$ with $\varphi=\gamma^+u\in H^{s-\frac{1}{2}}({ \partial\Omega})$ solves 
BDIE systems {\rm(N1$_\Delta$)}, {\rm(N2)} and, if $a\in C_+^{\tha}(\overline \Omega)$, also {\rm(N1)}.

(ii) Vice versa, if a couple 
$(u,\varphi)\in H^s( {\Omega} )\times H^{s-\frac{1}{2}}({ \partial\Omega})$ solves one of the
BDIE systems, {\rm(N1$_\Delta$)}, {\rm(N2)}, or {\rm(N1)} (if $a\in C_+^{\tha}(\overline \Omega)$), then the couple solves the other two BDE systems, while $u$ solves the Neumann
problem  \eqref{2.6N}-\eqref{2.8} and $\gamma^+u=\varphi$.
\end{theorem}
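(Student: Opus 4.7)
The plan mirrors the structure used for the Dirichlet case in Theorem~\ref{Deqin}.

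For part~(i), if $u\in H^s(\Omega)$ solves the Neumann BVP, set $\varphi:=\gamma^+u\in H^{s-\ha}(\partial\Omega)$. The third Green identity \eqref{4.2Gen} of Theorem~\ref{3rdGA}(ii) immediately yields the common domain equation of the three systems, since \eqref{Basic1-2}, \eqref{Basic1D} and \eqref{Basic1} coincide and hold with $T^+(\tilde f;u)=\psi_0$. The boundary equations follow from Corollary~\ref{IDequivGenDir} by substituting $\varphi=\gamma^+u$ and $\psi_0=T^+(\tilde f;u)$: \eqref{4.2u+Gen} gives \eqref{Basic2-2}, \eqref{4.2T+GenD} gives \eqref{Basic2D}, and, under the stronger assumption $a\in C_+^{\tha}(\overline\Omega)$, \eqref{4.2T+Gen} gives \eqref{Basic2}.

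For part~(ii), my strategy is to show that each BDIE system individually forces $u$ to be a solution of the Neumann BVP with $\gamma^+u=\varphi$; once this is established for one system, the fact that the couple also solves the other systems follows by applying part~(i). In all three cases the domain equation is of the form \eqref{4.2nd} with $\Psi=\psi_0$, $\Phi=\varphi$, so Lemma~\ref{IDequivalenceGen} simultaneously yields the PDE $Au=r_\Omega\tilde f$ and the auxiliary identity
\begin{equation}\label{N-aux-pf}
V\Psi^*-W\Phi^*=0 \text{ in } \Omega,\qquad \Psi^*:=\psi_0-T^+(\tilde f;u),\ \Phi^*:=\varphi-\gamma^+u.
\end{equation}
The remaining work, different in each case, is to extract $\Psi^*=\Phi^*=0$ from the boundary equation.

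For (N2), I would take the interior trace of \eqref{Basic1-2}, apply the jump relation $\gamma^+W\varphi=-\ha\varphi+\mathcal W\varphi$ from \eqref{3.8}, and subtract \eqref{Basic2-2}; the $\gamma^+\mathcal R u$ and $\gamma^+F_0^N$ terms cancel, leaving $\gamma^+u=\varphi$, i.e.\ $\Phi^*=0$. Substituting into \eqref{N-aux-pf} reduces it to $V\Psi^*=0$ in $\Omega$, whence Lemma~\ref{VW0s}(i) gives $\Psi^*=0$. For (N1$_\Delta$), I would first invoke Corollary~\ref{IDequivGenDir}(i) (valid because the PDE is already known to hold) to write \eqref{4.2T+GenD}, then subtract it from \eqref{Basic2D}; the $T^+_\Delta(A^\nabla u;a\mathcal R u)$ and $T^+_\Delta(\tilde f;\mathcal P_\Delta\tilde f)$ terms cancel, producing
\begin{equation}\label{N-sub-pf}
\mathcal L_\Delta(a\Phi^*)=\bigl(\mathcal W'_\Delta-\tfrac12 I\bigr)\Psi^*.
\end{equation}
On the other hand, multiplying \eqref{N-aux-pf} by $a$ (which turns the parametrix-based potentials into Laplace potentials via \eqref{VWa}) and applying $T^+_\Delta$, then using the jumps \eqref{WHs1GaTjD}, gives $(\tfrac12 I+\mathcal W'_\Delta)\Psi^*=\mathcal L_\Delta(a\Phi^*)$. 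Equating the two expressions for $\mathcal L_\Delta(a\Phi^*)$ forces $\Psi^*=0$, hence the Neumann BC holds; feeding this back into \eqref{N-aux-pf} leaves $W\Phi^*=0$ in $\Omega$, so Lemma~\ref{VW0s}(ii) gives $\Phi^*=0$. The treatment of (N1) is entirely analogous, using \eqref{4.2T+Gen} instead of \eqref{4.2T+GenD}, applying $T^+$ rather than $T^+_\Delta$ to \eqref{N-aux-pf} directly (without multiplying by $a$), and invoking the parametrix jumps \eqref{3.11a}.

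The main technical obstacle I anticipate is bookkeeping: the generalised co-normal derivative $T^+(\tilde f;\cdot)$ is nonlinear in $u$ but linear in the pair $(\tilde f,u)$ by \eqref{GCDL}, so one must verify carefully that, when subtracting \eqref{4.2T+GenD} (or \eqref{4.2T+Gen}) from the boundary equation of the BDIE system, all the $u$-dependent terms cancel out and only a linear combination of $\Psi^*$ and $\Phi^*$ remains. Once this reduction to the pair of identities on $\partial\Omega$ is achieved cleanly, the conclusions $\Psi^*=\Phi^*=0$ are routine consequences of the jump relations and Lemma~\ref{VW0s}.
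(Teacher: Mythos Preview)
Your proposal is correct and follows essentially the same strategy as the paper: for each system, apply Lemma~\ref{IDequivalenceGen} to the domain equation to get the PDE and the auxiliary relation $V\Psi^*-W\Phi^*=0$, then use the boundary equation to force $\Psi^*=0$ (for (N1$_\Delta$), (N1)) or $\Phi^*=0$ (for (N2)), and finish with Lemma~\ref{VW0s}.

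One minor streamlining worth noting: for (N1$_\Delta$) and (N1) you invoke Corollary~\ref{IDequivGenDir}, which produces \eqref{4.2T+GenD} (resp.\ \eqref{4.2T+Gen}) with $\gamma^+u$ and $T^+(\tilde f;u)$ in place of $\varphi$ and $\psi_0$; subtracting then leaves a relation between $\Psi^*$ and $\Phi^*$ that you must combine with the co-normal derivative of the auxiliary identity to isolate $\Psi^*=0$. The paper instead (implicitly) uses Lemma~\ref{L4.3}(i),(ii), which yields \eqref{4.2T+genD} (resp.\ \eqref{4.2T+gen}) directly with $\Psi=\psi_0$ and $\Phi=\varphi$ already substituted; subtracting this from \eqref{Basic2D} (resp.\ \eqref{Basic2}) gives $T^+(\tilde f;u)=\psi_0$ in a single step, without the detour through \eqref{N-sub-pf}. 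Both routes are valid; yours is slightly longer but makes the cancellation mechanism more transparent.
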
 
\begin{proof} 
(i) Let $u\in H^s( {\Omega} )$ be a solution of the Neumann  problem \eqref{2.6N}-\eqref{2.8}. 
Then from Theorem~\ref{GIequivalenceH-1} and relations \eqref{4.2u+Gen}-\eqref{4.2T+Gen} we see
that the couple $(u,\varphi)$ with $\varphi= \gamma^+u$ solves BDIE systems (N1$_\Delta$), (N2) and (N1) with the right hand sides \eqref{FN1D}, \eqref{FN2} and \eqref{FN1}, respectively,   which proves item (i).

(ii) Let a couple $(u,\varphi)\in H^s( {\Omega} )\times
H^{s-\frac{1}{2}}({ \partial\Omega})$ solve BDIE system (N1$_\Delta$). 
Lemma \ref{IDequivalenceGen} for equation \eqref{Basic1D} implies that
$u$ is a solution of PDE \eqref{2.6N}, and equation
\eqref{difference} holds for $\Psi=\psi_0$ and
$\Phi=\varphi$, while Corollary~\ref{IDequivGenDir} gives equation \eqref{4.2T+Gen}. 
Multiplication of \eqref{difference} by $a$ reduces it to
\begin{align}
\label{differenceDN}
 V_\Delta(\psi_0 -T^+(\tilde f;u) ) -  W_\Delta(a(\varphi - \gamma^+u)) = 0& \text{ in }  \Omega.
\end{align} 
Subtracting \eqref{4.2T+Gen} from equation \eqref{Basic2D}, we get $T^+ (\tilde f;u)=\psi_0$ on $ \partial\Omega$, i.e., $u$ satisfies the Neumann condition \ref{2.8}. 
Further, from  \eqref{differenceDN} we derive
$ W_\Delta(a(\varphi-\gamma^+u))=0$  in $ \Omega,$
whence  $\gamma^+u = \varphi$  on $\partial  {\Omega}$ by Lemma
 \ref{VW0s}, completing item (ii) for BDIE system (N1$_\Delta$).

Let a couple $(u,\varphi)\in H^1( {\Omega} )\times
H^{\frac{1}{2}}({ \partial\Omega})$ solve BDIE system (N1). 
Lemma \ref{IDequivalenceGen} for equation \eqref{Basic1} implies that
$u$ is a solution of PDE \eqref{2.6N}, and equation
\eqref{difference} holds for $\Psi=\psi_0$ and
$\Phi=\varphi$, while Corollary~\ref{IDequivGenDir} gives equation \eqref{4.2T+Gen}. 
Subtracting \eqref{4.2T+Gen} from equation \eqref{Basic2} gives $T^+ (\tilde f;u)=\psi_0$ on $ \partial\Omega$, i.e., $u$ satisfies the Neumann condition \ref{2.8}. 
Further, from  \eqref{difference} we derive
$ W(\gamma^+u - \varphi)=0$  in $\Omega,$
whence  $\gamma^+u = \varphi$  on $\partial  {\Omega}$ by Lemma
 \ref{VW0s}, completing item (ii) for BDIE system (N1).

Let now a couple $(u,\varphi)\in H^1( {\Omega} )\times
H^{\frac{1}{2}}({ \partial\Omega})$ solve BDIE system
(N2). 
Further, taking the trace of \eqref{Basic1-2} on ${ \partial\Omega}$ and
comparing the result with \eqref{Basic2-2}, we easily derive that
$\gamma^+u=\varphi$ on ${ \partial\Omega}$. 
Lemma \ref{IDequivalenceGen} for equation \eqref{Basic1-2} implies that
$u$ is a solution of PDE \eqref{2.6N}, and equations
\eqref{difference} holds for $\Psi=\psi_0$ and
$\Phi=\varphi$. 
%Taking the difference of equations \eqref{Basic2-2} and \eqref{4.2u+gen} we obtain $\gamma^+u=\varphi$ on $\pO$.
Further, from  \eqref{difference} and relation $\gamma^+u=\varphi$ we derive
$$
 V(\psi_0 -T^+(\tilde f;u) )=0 \quad   \mbox{\rm in}\;\;\;\;
   \Omega,
$$
whence $(\tilde f;u)=\psi_0$ on ${ \partial\Omega}$ by Lemma
 \ref{VW0s}, i.e., $u$ solves the
Neumann problem
 \eqref{2.6N}-\eqref{2.8}, which completes the proof of item (ii) for BDIE system (N2).
\end{proof} 
\subsection{Properties of BDIE system operators for the Neumann problem}
BDIE systems (N1$_\Delta$), (N1) and (N2) can be written, respectively, as
\begin{equation*} %\label{Mshort}
%$
{\mathfrak N}^{1\Delta}\mathcal U^N=\mathcal F^{N1\Delta},\quad
{\mathfrak N}^1\mathcal U^N=\mathcal F^{N1},\quad
{\mathfrak N}^2\mathcal U^N=\mathcal F^{N2},
%$
\end{equation*}
where 
$
{\mathcal U}^N=(u, \varphi)^\top
\in H^s( {\Omega})\times {H}^{s-\frac{1}{2}}(\partial{\Omega}),
$
\begin{equation*}
%\label{4.19D}
{\mathfrak N}^{1\Delta}:= \left[
\begin{array}{cc}
I+{\mathcal R} &\ W  \\[1ex]
T^+_\Delta(A^\nabla;{a\cal R})  &\ \mathcal L_0
\end{array}\right],\quad
%\label{4.19}
{\mathfrak N}^1:= \left[
\begin{array}{cc}
I+{\mathcal R} &\ W  \\[1ex]
 T^+{\mathcal R}  &\ T^+ W
\end{array}\right],\quad
%\label{2.59}
{\mathfrak N}^2:= \left[
\begin{array}{cc}
I+{\mathcal R} & W  \\[1ex]
\,\gamma^+ {\mathcal R}& \quad \displaystyle \frac{1}{2}I + {\mathcal W}
\end{array}
\right],
\end{equation*}
and we denoted $\L_0g:={\L}_{\Delta}(ag)$.
Let $\ha<s<\tha$. 
Due to the mapping properties of the potentials, see Section~\ref{S3}, the operators
\begin{eqnarray}
\mathfrak N^{1\Delta}&:& H^s( {\Omega})\times H^{s-\frac{1}{2}}(\partial\Omega) 
 \to H^s( {\Omega} )\times H^{s-\frac{3}{2}}( \partial\Omega)\quad\mbox{if }
 a\in C_+^{s}(\overline \Omega),\label {mfN1D}\\
\mathfrak N^1&:& H^s( {\Omega})\times H^{s-\frac{1}{2}}(\partial\Omega) 
 \to H^s({\Omega} )\times H^{s-\frac{3}{2}}( \partial\Omega)\quad\mbox{if }
 a\in C_+^{\tha}(\overline \Omega),\label {mfN1}\\
\label{mfN2}
\mathfrak N^2&:&H^s( {\Omega})\times H^{s-\frac{1}{2}}({\partial\Omega})
\to H^s( {\Omega} )\times H^{s-\frac{1}{2}}({ \partial\Omega})\quad\mbox{if }
a\in C_+^{s}(\overline \Omega).
  \end{eqnarray}
are continuous, while for the right hand sides of the BDIE systems we have the following inclusions 
$\mathcal F^{N1\Delta}\in  H^s ({\Omega})\times H^{s-\tha}(\pO),$
$\mathcal F^{N1}\in  H^s ({\Omega})\times H^{s-\tha}(\pO),$
$\mathcal F^{N2}\in  H^s ({\Omega})\times H^{s-\ha}(\pO).$

\begin{theorem}\label{FredholmN}
Let $\Omega$ be a bounded simply-connected Lipschitz domain and $\ha< s< \tha$. 
The operators \eqref{mfN1D}-\eqref{mfN2} are Fredholm operators with zero index. 
%They have one--dimensional null--spaces, $\ker \mathfrak N^{1\Delta}=\ker \mathfrak N^1=\ker \mathfrak N^2$, in 
%$H^s( {\Omega} )\times H^{s-\frac{1}{2}}({ \partial\Omega})$, spanned over the element $(u^0,\varphi^0)=(1,1)$.
\end{theorem}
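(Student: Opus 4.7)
The plan is to mirror Theorem~\ref{AGT-Fred}: write each of $\mathfrak N^{1\Delta}$, $\mathfrak N^1$, $\mathfrak N^2$ as the sum of an upper-triangular reference operator plus a compact perturbation, and then observe that the reference operator is Fredholm of index zero because its diagonal entries are.

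First I would isolate the compact perturbations. In all three matrix operators the top-left block is $I+\mathcal R$, and $\mathcal R: H^s(\Omega)\to H^s(\Omega)$ is compact by \eqref{T3.1P3has}, so it can be absorbed into the compact remainder. The bottom-left entries $\gamma^+\mathcal R$ (for $\mathfrak N^2$), $T^+\mathcal R$ (for $\mathfrak N^1$), and $T^+_\Delta(A^\nabla;a\mathcal R)$ (for $\mathfrak N^{1\Delta}$) are compact by \eqref{T3.1P3+}, \eqref{T3.1P4T+}, and by combining Lemma~\ref{AringMap}(ii) with \eqref{T3.1P3has}, respectively, so they too can be absorbed. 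What remains is in each case an upper-triangular reference operator
\begin{equation*}
\mathfrak N^{1\Delta}_0 = \left[\begin{array}{cc} I & W \\[1ex] 0 & \mathcal L_0 \end{array}\right],\qquad
\mathfrak N^{1}_0 = \left[\begin{array}{cc} I & W \\[1ex] 0 & T^+W \end{array}\right],\qquad
\mathfrak N^{2}_0 = \left[\begin{array}{cc} I & W \\[1ex] 0 & \tfrac12 I + \mathcal W \end{array}\right],
\end{equation*}
each of which is Fredholm of index zero iff the scalar bottom-right entry is, since the top-left $I$ is invertible and the top-right $W$ is continuous between the appropriate spaces by Theorem~\ref{T3.1s0}.

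Next I would reduce the three boundary-operator diagonals to their Laplacian counterparts modulo compact operators. For $\mathfrak N^2$, relation \eqref{VWab1} gives $(\tfrac12 I+\mathcal W)\varphi = \tfrac{1}{a}(\tfrac12 I+\mathcal W_\Delta)(a\varphi)$, so conjugation by the isomorphisms of multiplication by $a$ and by $1/a$ on $H^{s-\ha}(\partial\Omega)$ (valid since $a\in C_+^s(\overline\Omega)$ and $0<a_{\min}\le a\le a_{\max}$, by Theorem~\ref{GrL+}) reduces the question to that of $\tfrac12 I+\mathcal W_\Delta$, which is Fredholm of index zero on Lipschitz domains classically (Theorem~\ref{T3.6}). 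For $\mathfrak N^{1\Delta}$, by definition $\mathcal L_0 g = \mathcal L_\Delta(ag)$, and the same isomorphism argument reduces to $\mathcal L_\Delta: H^{s-\ha}(\partial\Omega)\to H^{s-\tha}(\partial\Omega)$, likewise classically Fredholm of index zero. For $\mathfrak N^1$, the jump relation \eqref{3.11a} combined with \eqref{VWab4} yields
\begin{equation*}
T^+W\varphi \;=\; \tfrac12 (\partial_\nu a)\,\varphi \;+\; \mathcal L_\Delta(a\varphi) \;-\; \tfrac{\partial_\nu a}{a}\,\mathcal W_\Delta(a\varphi),
\end{equation*}
where, since $a\in C_+^{\tha}(\overline\Omega)$ implies $\partial_\nu a \in L_\infty(\partial\Omega)$ (with additional smoothness), both the first and the third terms map $H^{s-\ha}(\partial\Omega)$ continuously into itself and hence compactly into $H^{s-\tha}(\partial\Omega)$; thus $T^+W$ differs from $\mathcal L_\Delta$ composed with multiplication by $a$ by a compact operator, and the previous step applies.

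The main obstacle I anticipate is the bookkeeping around the hybrid entry $T^+_\Delta(A^\nabla;a\mathcal R)$ in $\mathfrak N^{1\Delta}$, whose definition \eqref{TDAr} mixes a canonical Laplacian conormal derivative, a parametrix-based remainder, and a zero-extension operator: its compactness as an operator $H^s(\Omega)\to H^{s-\tha}(\partial\Omega)$ rests on the sharper Triebel--Lizorkin multiplier estimate behind Lemma~\ref{AringMap}(ii) when $\ha<s<1$, and on the continuity of $(\gamma^{-1})^*$ inherited from the generalised conormal derivative construction. Once this compactness and the analogous facts for $\gamma^+\mathcal R$ and $T^+\mathcal R$ are in hand, everything else is routine: the diagonals are identified with classical Laplace boundary operators modulo compacts, and the classical index-zero Fredholm theory of layer potentials on Lipschitz domains (as summarised in Theorem~\ref{T3.6}) closes the argument.
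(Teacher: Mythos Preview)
Your approach is essentially the same as the paper's: upper-triangular reference operators with $I$ in the top-left, compact remainders from $\mathcal R$, $\gamma^+\mathcal R$, $T^+\mathcal R$, $T^+_\Delta(A^\nabla;a\mathcal R)$, and reduction of the bottom-right boundary operators to the Laplacian case via \eqref{VWab1}, \eqref{VWab4}, \eqref{3.11a}, with Theorem~\ref{T3.6} supplying the index-zero Fredholm input. The only cosmetic difference is that the paper uses the \emph{same} reference operator $\mathfrak N^1_0$ (with $\mathcal L_0$ in the bottom-right) for both $\mathfrak N^{1\Delta}$ and $\mathfrak N^1$, absorbing $T^+W-\mathcal L_0$ directly into the compact perturbation, whereas you first isolate $T^+W$ and then compare it to $\mathcal L_0$; the content is identical.

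One small imprecision to fix: in your treatment of $\mathfrak N^1$, you claim the terms $\tfrac12(\partial_\nu a)\varphi$ and $\tfrac{\partial_\nu a}{a}\mathcal W_\Delta(a\varphi)$ map $H^{s-\ha}(\partial\Omega)$ continuously \emph{into itself}. On a Lipschitz boundary $\nu$ is only $L_\infty$, so even with $a\in C_+^{\tha}(\overline\Omega)$ one has merely $\partial_\nu a\in L_\infty(\partial\Omega)$, which is not a multiplier on $H^{s-\ha}(\partial\Omega)$ for $s$ near $\tha$. The correct (and sufficient) statement is that multiplication by $\partial_\nu a$ maps $H^{s-\ha}(\partial\Omega)\hookrightarrow L_2(\partial\Omega)$ to $L_2(\partial\Omega)\hookrightarrow H^{s-\tha}(\partial\Omega)$, and the first of these embeddings is compact; this is exactly what the paper invokes (``due to relations \eqref{VWab4}, \eqref{3.11a}, and Theorem~\ref{T3.3}'') without spelling it out.
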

\begin{proof} 
The operators continuity is already proved above. 

Let us consider operator \eqref{mfN1D}. 
Due to estimate \eqref{a>0} and Theorem~\ref{T3.6}, the operator $\L_0: H^{s-\frac{1}{2}}( \partial\Omega) \to
H^{s-\frac{3}{2}}( \partial\Omega)$ is a Fredholm operator with zero index.
Therefore the operator
\begin{equation}
\label{d27}
\mathfrak N^{1}_0:=
 \left[
 \begin{array}{cc}
I     &   W \\
 0      & \L_0
  \end{array}
 \right]  \,:\, H^s( {\Omega} )\times H^{s-\frac{1}{2}}( \partial\Omega) 
\to H^s( {\Omega} )\times H^{s-\frac{3}{2}}( \partial\Omega).
  \end{equation}
is also Fredholm with zero index.
Operator \eqref{mfN1D} is a compact perturbation of $\mathfrak N^{1}_0$ since the operators 
\begin{align}
{\cal R}\, &:\, H^{s}( {\Omega})\to H^{s}( {\Omega}),\label{Rcomp}\\
T^+_\Delta(A^\nabla;{a\cal R})  \, &:\, H^s( {\Omega} )\to H^{s-\frac{3}{2}}( \partial\Omega)
\end{align}
are compact, due to Theorem \ref{T3.1P}, as has been shown in the compactness proof for operator \eqref{D20}.
Thus operator  \eqref{mfN1D} is Fredholm with zero index as well. 

The operator $\mathfrak{N}^1_0$, defined by \eqref{d27}, is also a compact perturbation of  operator  \eqref{mfN1}. 
Indeed, the operators \eqref{Rcomp},
\begin{align*}
%{\cal R}\, :\, H^{s}( {\Omega})\to H^{s}( {\Omega})\\
T^+ W-\L_0\, &:\, H^{s-\frac{1}{2}}( \partial\Omega) \to
H^{s-\frac{3}{2}}( \partial\Omega),\\
 T^+{\cal R} \, &:\, H^s( {\Omega} )\to H^{s-\frac{3}{2}}( \partial\Omega)
\end{align*}
are compact, due to relations \eqref{VWab4}, \eqref{3.11a}, and Theorem \ref{T3.3}.
Thus operator  \eqref{mfN1} is Fredholm with zero index as well. 

To analyse operator \eqref{mfN2}, let us consider the auxiliary operator
\begin{equation}
\label{N20}
{\mathfrak N}^2_0:= \left[
\begin{array}{cc}
I& W  \\[1ex]
0& \quad  \frac{1}{2}I + {\mathcal W}
\end{array}
\right]:H^s( {\Omega})\times H^{s-\frac{1}{2}}({\partial\Omega})
\to H^s( {\Omega} )\times H^{s-\frac{1}{2}}({ \partial\Omega}).
\end{equation}
For any function $g$, we can represent 
$(\frac{1}{2}I + {\mathcal W})g= \frac{1}{a}(\frac{1}{2}I + {\mathcal W}_\Delta)(ag)$ which, by Theorem~\ref{T3.6}, implies that the operator $\frac{1}{2}I + {\mathcal W}: H^{s-\frac{1}{2}}({ \partial\Omega})\to H^{s-\frac{1}{2}}({ \partial\Omega})$, and hence operator \eqref{N20} are Fredholm with zero index.
Due to compactness of operator \eqref{Rcomp}, operator \eqref{mfN2} is a compact perturbation of operator \eqref{N20} and thus is Fredholm with zero index as well.
\end{proof} 

\begin{theorem}\label{KernelN}
Let $\Omega$ be a bounded simply-connected Lipschitz domain and $\ha< s< \tha$, and  $\sigma=\max\{1, s\}$. 
The following operators have one--dimensional null--spaces,
$\ker \mathfrak N^{1\Delta}=\ker \mathfrak N^1=\ker \mathfrak N^2$, in $H^s( {\Omega} )\times
H^{s-\frac{1}{2}}({ \partial\Omega})$, spanned over the element $(u^0,\varphi^0)=(1,1)$.
\begin{eqnarray}
\mathfrak N^{1\Delta}&:& H^s( {\Omega})\times H^{s-\frac{1}{2}}(\partial\Omega) 
 \to H^s( {\Omega} )\times H^{s-\frac{3}{2}}( \partial\Omega)\quad\mbox{if }
 a\in C_+^{\sigma}(\overline \Omega),\label {mfN1Da}\\
\mathfrak N^1&:& H^s( {\Omega})\times H^{s-\frac{1}{2}}(\partial\Omega) 
 \to H^s({\Omega} )\times H^{s-\frac{3}{2}}( \partial\Omega)\quad\mbox{if }
 a\in C_+^{\tha}(\overline \Omega),\label {mfN1a}\\
\mathfrak N^2&:&H^s( {\Omega})\times H^{s-\frac{1}{2}}({\partial\Omega})
\to H^s( {\Omega} )\times H^{s-\frac{1}{2}}({ \partial\Omega})\quad\mbox{if }
a\in C_+^{\sigma}(\overline \Omega).
\label{mfN2a}
  \end{eqnarray}
\end{theorem}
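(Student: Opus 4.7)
The strategy mirrors the proof of Theorem~\ref{AGTinvs}: combine the BDIE/BVP equivalence (Theorem~\ref{equivalenceN}) with the uniqueness (modulo constants) of the homogeneous Neumann problem (Theorem~\ref{Rem1N} and Remark~\ref{Rem1Ns}), and, to reach low regularity, invoke the Fredholm-stability Lemma~\ref{Lem2-Fredholm} together with Theorem~\ref{FredholmN}.

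\textbf{Step 1: $(1,1)$ lies in each kernel.} Taking $u^0\equiv 1$, one has $Au^0=0$ in $\Omega$ and $\gamma^+u^0=1$; moreover, since $u^0\in\mathcal D(\overline\Omega)$, $T^+(0;u^0)=T^cu^0=0$. Hence $u^0$ solves the homogeneous Neumann problem \eqref{2.6N}--\eqref{2.8} with $\tilde f=0$, $\psi_0=0$. Theorem~\ref{equivalenceN}(i) applied to these trivial data directly gives that $(u^0,\gamma^+u^0)=(1,1)$ satisfies each of the homogeneous BDIE systems; that is, $(1,1)\in\ker\mathfrak N^{1\Delta}\cap\ker\mathfrak N^{1}\cap\ker\mathfrak N^{2}$.

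\textbf{Step 2: One-dimensionality for $1\le s<\tha$.} Suppose $(u,\varphi)\in H^s(\Omega)\times H^{s-\ha}(\pO)$ lies in the kernel of one of the three operators, so that the corresponding BDIE system holds with zero right-hand side, which by \eqref{FN1D}, \eqref{FN1}, \eqref{FN2} corresponds to the data $\tilde f=0$, $\psi_0=0$. Theorem~\ref{equivalenceN}(ii) then implies that $u$ solves the homogeneous Neumann problem in $H^s(\Omega)$ and $\varphi=\gamma^+u$. By Remark~\ref{Rem1Ns}, the only such $u$ are proportional to $u^0=1$, so $u\equiv c$ and $\varphi=c$, giving $(u,\varphi)=c(1,1)$. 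Hence each kernel is exactly $\mathrm{span}\{(1,1)\}$ for $1\le s<\tha$, and in particular the three kernels coincide.

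\textbf{Step 3: Propagation to $\ha<s<1$.} Fix a coefficient $a$ satisfying the hypothesis of the operator under consideration. Since $\sigma=\max\{1,s\}=1$ for $s\le 1$ (respectively $a\in C_+^{\tha}$ for $\mathfrak N^{1}$), the same $a$ is admissible for all $s\in(\ha,1]$, and Theorem~\ref{FredholmN} provides Fredholmness with index zero throughout this interval. At $s=1$, Step~2 identifies the kernel as the one-dimensional span of $(1,1)$. Applying Lemma~\ref{Lem2-Fredholm} (exactly as in the proof of Theorem~\ref{AGTinvs}), the dimension of the kernel is preserved as $s$ decreases from $1$ to any value in $(\ha,1)$; since $(1,1)$ remains in the kernel by Step~1, the kernel is again $\mathrm{span}\{(1,1)\}$. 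The three kernels thus coincide on the whole range $\ha<s<\tha$.

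\textbf{Main obstacle.} The delicate point is the range $\ha<s<1$, where direct energy/variational uniqueness of the homogeneous Neumann problem in $H^s(\Omega)$ is not available because the first Green identity does not immediately yield a coercivity-type bound at this regularity. This is exactly the issue sidestepped in the Dirichlet case by the Fredholm-interpolation argument; the present theorem requires the same device, and verifying that the smoothness conditions on $a$ in the hypotheses of \eqref{mfN1Da}--\eqref{mfN2a} are simultaneously satisfied throughout $s\in(\ha,1]$ is what makes Lemma~\ref{Lem2-Fredholm} applicable.
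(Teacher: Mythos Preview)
Your proof is correct and follows essentially the same approach as the paper: establish the kernel at $s=1$ via the equivalence Theorem~\ref{equivalenceN} and the Neumann uniqueness Theorem~\ref{Rem1N}, then propagate to the full range using Theorem~\ref{FredholmN} together with Lemma~\ref{Lem2-Fredholm}. The only cosmetic difference is that you treat the range $1\le s<\tha$ directly via Remark~\ref{Rem1Ns} before invoking Lemma~\ref{Lem2-Fredholm} for $s<1$, whereas the paper proves only $s=1$ directly and applies Lemma~\ref{Lem2-Fredholm} for the whole interval $\ha<s<\tha$.
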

\begin{proof} 
The conditions on the coefficient $a$ imply that for $s=1$ operators \eqref{mfN1Da}-\eqref{mfN2a} are continuous. 
Then the equivalence Theorem \ref{equivalenceN} and Theorem~\ref{Rem1N}(i) imply that 
the homogeneous BDIE systems (N1$_\Delta$), (N1), and (N2) have only one linear independent solution 
$\mathcal U^0=(u^0,\varphi^0)^\top=(1,1)^\top$ in 
 $H^1( {\Omega} )\times H^{\frac{1}{2}}({ \partial\Omega})$. 
Indeed, consider, for example, 
the homogeneous equation $\mathfrak{N}^{1\Delta}\U^N=0$. 
Its zero right hand side $\F^{N1\Delta}=0$  can be represented as in \eqref{FN1D} in terms of $\tilde f=0$ and $\psi_0=0$. 
Then by Theorem~\ref{equivalenceN}(ii), $\U^N=(u, \gamma^+u)^\top$, where $u$ is a solution of the Neumann problem \eqref{2.6N}-\eqref{2.8} with the right hand sides $f=0$ and $\psi_0=0$, which has only the one linearly independent  solution, $u=1$,  due to Theorem~\ref{Rem1N}.   
This proves the theorem for $s=1$, and then Lemma~\ref{Lem2-Fredholm} and Theorem~\ref{FredholmN} complete the proof for $\ha< s< \tha$.
\end{proof}

\begin{lemma}\label{L4D}
Let $\Omega$ be a bounded simply-connected Lipschitz domain, $\ha<s<\tha$, and $a\in C_+^{\sigma}(\overline\Omega)$, $\sigma=\max\{1, s\}$. 
For any couple $(\F_1,\F_2)\in H^{s}( {\Omega})\times {H}^{s-\tha}(\pO)$, 
there exists a unique couple $(\tilde f_{*}, \Phi_*)\in \s{H}^{s-2} ({\Omega})\times 
{H}^{s-\ha}(\pO)$
 such that
\begin{eqnarray}
\label{4.8Psi1Delta} \F_1 &=&{\cal P}\tilde f_*-W\Phi_* \quad
\text{in } {\Omega},\\
\label{4.8Psi2Delta}
\F_2&=&T^+_\Delta(\tilde f_*;{\cal P}\tilde f_*)
-\mathcal L_\Delta(a\Phi_*) 
  \quad
\text{on } \partial\Omega.
\end{eqnarray}
Moreover, $(\tilde f_*, \Phi_*)=\mathcal C_*(\F_1,\F_2)$ and
$\mathcal C_*: H^{s}( {\Omega})\times {H}^{s-\tha}(\pO)\to \s{H}^{s-2} ( {\Omega})\times {H}^{s-\ha}(\pO)$ 
is a linear continuous operator given by 
\begin{align}\label{fDelta1}
\tilde f_*=&\check\Delta_{\Omega}(a\F_1)+\gamma^*\F_2,\\
\label{Phi*FDelta0}
 \Phi_* 
%=&\frac{1}{a}\left(-\ha I + {\cal W}_\Delta\right)^{-1}\gamma^+\F_\Delta\nonumber\\
=&\frac{1}{a}\left(-\ha I + {\cal W}_\Delta\right)^{-1}\gamma^+
\{ -a\F_1+{\cal P}_\Delta[\check\Delta_{\Omega}(a\F_1)+\gamma^*\F_2]\}.
\end{align}
\end{lemma}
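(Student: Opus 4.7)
The strategy is to reduce the system to the Laplace-equation case and solve it explicitly, then verify. Using $\P g = a^{-1}\P_\Delta g$ and $Wg = a^{-1}W_\Delta(ag)$ from \eqref{3.d1} and \eqref{VWa}, multiplying \eqref{4.8Psi1Delta} by $a$ gives the equivalent equation
\[
a\F_1 = \P_\Delta\tilde f_* - W_\Delta(a\Phi_*)\quad\text{in}\quad\Omega.
\]
Applying $\Delta$ (using $\Delta W_\Delta=0$ and $\Delta \P_\Delta\tilde f_*=\tilde f_*$ globally) yields $r_\Omega\tilde f_*=\Delta(a\F_1)$ in $\Omega$, which determines $\tilde f_*|_\Omega$. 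The canonical extension to $\widetilde H^{s-2}(\Omega)$ is $\check\Delta_\Omega(a\F_1)$, so any admissible $\tilde f_*$ has the form $\check\Delta_\Omega(a\F_1)+h$ with $h\in H^{s-2}_{\pO}$.

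To identify $h$, I apply the generalised normal derivative $T^+_\Delta(\tilde f_*;\cdot)$ to both sides of the reduced equation. Using its linearity \eqref{GCDL} and the identity $T^+_\Delta W_\Delta(a\Phi_*)=\mathcal L_\Delta(a\Phi_*)$ (which follows from \eqref{3.11a} with $a=1$, since $W_\Delta(a\Phi_*)$ is harmonic in $\Omega$), I obtain
\[
T^+_\Delta(\tilde f_*;a\F_1)= T^+_\Delta(\tilde f_*;\P_\Delta\tilde f_*)-\mathcal L_\Delta(a\Phi_*)=\F_2
\]
by \eqref{4.8Psi2Delta}. On the other hand, Definition \ref{GCDd} gives $T^+_\Delta(\tilde f_*;a\F_1)=(\gamma^{-1})^*(\tilde f_*-\check\Delta_\Omega(a\F_1))=(\gamma^{-1})^* h$. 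In the regime $s-2\in(-\tha,-\ha)$, the space $H^{s-2}_{\pO}$ of distributions in $H^{s-2}(\R^n)$ supported on the Lipschitz hypersurface $\pO$ coincides with the single-layer distributions $\gamma^* H^{s-\tha}(\pO)$, and $(\gamma^{-1})^*\gamma^*=I$; hence $h=\gamma^*\F_2$, giving \eqref{fDelta1}. For $\Phi_*$, I take $\gamma^+$ of the reduced equation and apply the jump relation $\gamma^+ W_\Delta\varphi=-\ha\varphi+\mathcal W_\Delta\varphi$ from \eqref{WHs1gjD} to obtain $(-\ha I+\mathcal W_\Delta)(a\Phi_*)=\gamma^+\P_\Delta\tilde f_*-\gamma^+(a\F_1)$. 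Invertibility of $-\ha I+\mathcal W_\Delta:H^{s-\ha}(\pO)\to H^{s-\ha}(\pO)$ (Theorem \ref{T3.6}) then yields \eqref{Phi*FDelta0}, which simultaneously establishes uniqueness.

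For existence, I take $(\tilde f_*,\Phi_*)$ defined by \eqref{fDelta1} and \eqref{Phi*FDelta0} and verify the system. The function $v:=a\F_1-\P_\Delta\tilde f_*+W_\Delta(a\Phi_*)\in H^s(\Omega)$ satisfies $\Delta v=0$ in $\Omega$ by construction of $\tilde f_*$ and $\gamma^+v=0$ on $\pO$ by construction of $\Phi_*$; therefore $v=0$ by uniqueness of the interior Dirichlet problem for the Laplace equation in $H^s(\Omega)$, and dividing by $a$ recovers \eqref{4.8Psi1Delta}. Equation \eqref{4.8Psi2Delta} is then obtained by reversing the calculation of the previous paragraph. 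Continuity of $\mathcal C_*$ is immediate from the continuity of $\check\Delta_\Omega$, $\gamma^*$, $\P_\Delta$, $\gamma^+$, multiplication by $a$ and $1/a$ (Theorem \ref{GrL+}), and the inverse $(-\ha I+\mathcal W_\Delta)^{-1}$. The main technical point to confirm is the characterisation $H^{s-2}_{\pO}=\gamma^* H^{s-\tha}(\pO)$ in the range $s-2\in(-\tha,-\ha)$; this is what makes $(\gamma^{-1})^*$ injective on the ambiguity space $H^{s-2}_{\pO}$ and pins down $\tilde f_*$ uniquely.
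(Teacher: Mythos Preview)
Your proof is correct and follows essentially the same approach as the paper: reduce to the Laplace potentials by multiplying by $a$, apply $\Delta$ to determine $r_\Omega\tilde f_*$, use the generalised normal derivative together with the representation $H^{s-2}_{\pO}=\gamma^* H^{s-\tha}(\pO)$ and the identity $(\gamma^{-1})^*\gamma^*=I$ to fix the boundary-supported remainder, and recover $\Phi_*$ from the trace via invertibility of $-\tfrac12 I+\mathcal W_\Delta$. The only cosmetic differences are that you fold uniqueness into the derivation (the paper treats it as a separate homogeneous-system argument) and phrase the existence check via the auxiliary harmonic function $v$ with zero trace, whereas the paper compares the two harmonic functions $W_\Delta(a\Phi_*)$ and $\F_\Delta$ directly; both invoke the same Dirichlet uniqueness for the Laplacian in $H^s(\Omega)$.
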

\begin{proof}
Let us first assume that there exist $(\tilde f_*, \Phi_*)\in \s{H}^{s-2} ( {\Omega})\times {H}^{s-\ha}(\pO)$
satisfying equations \eqref{4.8Psi1Delta}, \eqref{4.8Psi2Delta} and find their expressions in terms of $\F_1$ and $\F_2$.
Multiplying \eqref{4.8Psi1Delta}  by $a$, we get, 
\begin{align}
\label{4.8Psi1aDelta} a\F_1-{\cal P}_\Delta\tilde f_* = - W_\Delta(a\Phi_*) 
\text{ in } {\Omega}.
\end{align}
Applying the Laplace operator to \eqref{4.8Psi1aDelta}, we obtain, 
\begin{align}
\label{2.6'genPhi=Delta}
& \Delta(a\F_1-\P_\Delta \tilde f_*)=\Delta(a\F_1)- \tilde f_*=-\Delta W_\Delta(a\Phi_*)=0 
\text{ in }   {\Omega},
\end{align}
which means
\begin{align}
\label{2.6'genPhiDelta}
& \Delta(a\F_1)=r_ {\Omega} \tilde f_* \text{ in }   {\Omega}
\end{align}
and $a\F_1-\P_\Delta \tilde f_*\in H^{s,0}( {\Omega};\Delta)$. 
Applying the canonical conormal derivative operator $T^+_\Delta$ to the both sides of 
equation \eqref{4.8Psi1aDelta} and taking into account that 
$\tilde\Delta W_\Delta(a\Phi_*)=\tilde\Delta (a\F_1-{\cal P}_\Delta\tilde f_*)=0$ because $W_\Delta(a\Phi_*)$ is a harmonic function in $\Omega$, we obtain,
\begin{multline}\label{6.32Delta}
-\mathcal L_\Delta(a\Phi_*) = -T^+_\Delta W_\Delta (a\Phi_*)=T^+_\Delta(a\F_1-\P_\Delta \tilde f_*)
=(\gamma^{-1})^*[\tilde\Delta_{\Omega}(a\F_1-\P_\Delta \tilde f_*)
-\check\Delta_{\Omega}(a\F_1-\P_\Delta \tilde f_*)]\\
=-(\gamma^{-1})^*\check\Delta_{\Omega} (a\F_1-{\cal P}_\Delta\tilde f_*)
=T^+_\Delta(0;a\F_1-\P_\Delta \tilde f_*),
\end{multline}
where \eqref{2.6'genPhiDelta} and the third relation in \eqref{3.d1} were taken into account.
Substituting this to \eqref{4.8Psi2Delta}, we obtain
\begin{eqnarray}
\label{4.8Psi2TDelta}
\F_2&=&T^+_\Delta(\tilde f_*,a\F_1)
  \quad
\text{on } \partial\Omega.
\end{eqnarray}

Due to \eqref{2.6'genPhiDelta}, we can represent
\begin{align}\label{fDelta}
\tilde f_*=\check\Delta_{\Omega}(a\F_1)+\tilde f_{1*}=\nabla\cdot\mathring E_{\Omega} \nabla (a\F_1)-\gamma^*\Psi_*,
\end{align}
where $\tilde f_{1*}\in H^{s-2}_{ \partial\Omega}$ and hence, due to e.g. \cite[Theorem 2.10]{MikJMAA2011}, can be 
in turn represented as $\tilde f_{1*}=-\gamma^*\Psi_*$, with some $ \Psi_*\in H^{s-\tha}( \partial\Omega)$.
Then \eqref{2.6'genPhiDelta} is satisfied and 
%$T_\Delta^+(\tilde f_*,a\F_1))=0$ 
\begin{align}
\label{4.8Psi2TTDelta}
\F_2=T^+_\Delta(\tilde f_*,a\F_1)
=(\gamma^{-1})^*[\tilde f_*-\check \Delta(a\F_1)]=(\gamma^{-1})^*\tilde f_{1*}=-(\gamma^{-1})^*\gamma^*\Psi_*=-\Psi_*,
\end{align}
because
%\begin{align*}
%\label{4.8Psi2T2}
$
\left\langle(\gamma^{-1})^*\gamma^*\Psi_*, w\right\rangle_{ \partial\Omega}
=\left\langle\gamma^*\Psi_*,\gamma^{-1}w\right\rangle_{ {\Omega}}
=\left\langle\Psi_{*},w\right\rangle_{ \partial\Omega}
$
%\end{align*}
for any $w\in H^{\tha-s}(\pO)$.
Hence \eqref{fDelta} reduces to \eqref{fDelta1}.

Now \eqref{4.8Psi1aDelta} can be written in the form
\begin{eqnarray}
W_\Delta (a\Phi_*)  = \F_\Delta \quad
\text{in } {\Omega},
\label{WFDelta}
\end{eqnarray}  
where 
\begin{eqnarray}
\label{4.8Psi1cDelta} \F_\Delta:=-a\F_1+{\cal P}_\Delta\tilde f_*
= -a\F_1+{\cal P}_\Delta[\check\Delta_{\Omega}(a\F_1)+\gamma^*\F_2], 
%\text{in } {\Omega}.
\end{eqnarray}  
is a harmonic function in $ {\Omega}$ due to \eqref{2.6'genPhi=Delta}.
The trace of equation \eqref{WFDelta} gives 
\begin{align}\label{Phi*eqDelta}
 -\ha a\Phi_* + {\cal W}_\Delta(a\Phi_*)=\gamma^+\F_\Delta\  \mbox{ on }  \partial\Omega.
\end{align}Since the operator 
$ -\ha I + {\cal W}_\Delta : H^{s-\frac{1}{2}}( \partial\Omega) \to H^{s-\frac{1}{2}}( \partial\Omega)$  is an 
isomorphism (see
Theorem~\ref{T3.6}), this implies 
\begin{align*} %\label{Phi*FDelta}
 \Phi_* =&\frac{1}{a}\left(-\ha I + {\cal W}_\Delta\right)^{-1}\gamma^+\F_\Delta %\nonumber\\
=\frac{1}{a}\left(-\ha I + {\cal W}_\Delta\right)^{-1}\gamma^+
\{ -a\F_1+{\cal P}_\Delta[\check\Delta_{\Omega}(a\F_1)+\gamma^*\F_2]\},
\end{align*}
which coincides with \eqref{Phi*FDelta0}.

Relations \eqref{fDelta1},  \eqref{Phi*FDelta0} can be written as 
$(\tilde f_*,\Phi_{*})=\mathcal C_*(\F_1,\F_2)$,
%\in \s{H}^{-1} ( {\Omega})\times {H}^{\ha}(\pO)\times {H}^{-\ha}(\pO),$
where 
$\mathcal C_*: H^{1}( {\Omega})\times {H}^{-\ha}(\pO)\to \s{H}^{-1} ( {\Omega})\times {H}^{\ha}(\pO)$
is a linear continuous operator, as claimed.
We still have to check that the functions $\tilde f_*$ and $\Phi_{*}$, given by \eqref{fDelta1} and  \eqref{Phi*FDelta0}, 
satisfy equations \eqref{4.8Psi1Delta} and \eqref{4.8Psi2Delta}.
Indeed, $\Phi_{*}$ given by \eqref{Phi*FDelta0} satisfies equation \eqref{Phi*eqDelta} with $\F_\Delta$ given by \eqref{4.8Psi1cDelta}, and thus 
$\gamma^+W_\Delta (a\Phi_{*})  = \gamma^+\F_\Delta$.
Since both $W_\Delta (a\Phi_{*}) $ and  $\F_\Delta$ are harmonic functions belonging to the space $H^s(\Omega)$, this implies \eqref{WFDelta} 
and by \eqref{fDelta1} also \eqref{4.8Psi1Delta}.
Finally, \eqref{fDelta1} implies by \eqref{4.8Psi2TTDelta} that \eqref{4.8Psi2TDelta} is satisfied, and adding \eqref{6.32Delta} to it 
leads to \eqref{4.8Psi2Delta}.

Let us now prove that the operator $\mathcal C_*$ is unique.
Indeed, let a couple
$(\tilde f_*,\Phi_*)\in \s{H}^{s-2} ( {\Omega})\times {H}^{s-\ha}(\pO)$ 
be a solution of linear system \eqref{4.8Psi1Delta}-\eqref{4.8Psi2Delta} with $\F_1=0$ and $\F_2=0$. 
Then \eqref{2.6'genPhiDelta} implies that $r_ {\Omega} \tilde f_*=0$ in $ {\Omega}$, i.e., 
$\tilde f_*\in{H}^{s-2}_{\partial  {\Omega}}\subset \s{H}^{s-2} ( {\Omega})$.
Hence, \eqref{4.8Psi2TDelta} reduces to
$
0=T^+_\Delta(\tilde f_*,0)%=(\gamma^{-1})^*\tilde f_*-\Psi_*
$
on $\partial\Omega.$
By the first Green identity \eqref{Tgen}, this gives,
\begin{equation*}
%\label{Tgen*} 
0=\left\langle T^+_\Delta(\tilde f_*,0), \gamma^{+}v \right\rangle _{\pO}
=\langle \tilde f_*,v \rangle_ {\Omega} \quad  \forall\ v\in H^{2-s} ( {\Omega}),
\end{equation*}
which implies $\tilde f_*=0$ in $\R^n$. 
Finally, \eqref{Phi*FDelta0} gives $\Phi_*=0$. 
Hence, any solution of non-homogeneous linear system \eqref{4.8Psi1Delta}-\eqref{4.8Psi2Delta} has only one solution, which implies uniqueness of the operator $\mathcal C_*$. 
\end{proof}

\begin{theorem}\label{T3.521HFD}
Let $\Omega$ be a bounded simply-connected Lipschitz domain, $\ha<s<\tha$, 
and $a\in C_+^{\sigma}(\overline\Omega)$, $\sigma=\max\{1, s\}$.
The cokernel of operator \eqref{mfN1D} is spanned over the functional
\begin{align}\label{SSH10CD}
g^{*1\Delta}:=(0,1)^\top 
\end{align}
in  $[H^{s}(\Omega )\times H^{s-\tha}(\pO)]^*=\widetilde H^{-s}( {\Omega} )\times H^{\tha-s}(\pO)$, i.e.,
$
g^{*1\Delta}(\mathcal F_1,\mathcal F_2)
=\langle \F_2,\gamma^+ u^0\rangle_{ \partial\Omega},
$
where $u^0=1$.
\end{theorem}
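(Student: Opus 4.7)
By Theorems \ref{FredholmN} and \ref{KernelN}, operator \eqref{mfN1Da} is Fredholm of zero index with one--dimensional kernel (spanned by $(1,1)^\top$), so its cokernel---identified with the annihilator of $\mathrm{Range}(\mathfrak N^{1\Delta})$ inside $\widetilde H^{-s}(\Omega)\times H^{\tha-s}(\partial\Omega)$---is one--dimensional. Since $1\in H^{\tha-s}(\partial\Omega)$, the functional $g^{*1\Delta}=(0,1)^\top$ is a well--defined and manifestly nonzero element of this dual space. It therefore suffices to show that
\begin{equation*}
I(u,\varphi):=\left\langle T^+_\Delta(A^\nabla u;a\mathcal Ru)+\mathcal L_\Delta(a\varphi),\,1\right\rangle_{\partial\Omega}=0
\end{equation*}
for every $(u,\varphi)\in H^s(\Omega)\times H^{s-\ha}(\partial\Omega)$.

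The central idea is to evaluate the first Green identities \eqref{Tgen} and \eqref{Tcan} for the Laplace operator against the constant test function $w=1$, using as right--inverse $\gamma^{-1}(1)=\chi$ any cutoff $\chi\in\mathcal D(\R^n)$ with $\chi\equiv 1$ on a neighbourhood of $\overline\Omega$; then $\gamma^+\chi=1$ and $\nabla\chi\equiv 0$ on $\overline\Omega$. For the first summand, since $A^\nabla u\in\widetilde H^{s-2}(\Omega)$ is an extension of $\Delta(a\mathcal Ru)$ by \eqref{4.18a}, \eqref{Tgen} for the Laplace operator applied to $(A^\nabla u,\,a\mathcal Ru)$ and $v=\chi$ yields
\begin{equation*}
\left\langle T^+_\Delta(A^\nabla u;a\mathcal Ru),1\right\rangle_{\partial\Omega}=\langle A^\nabla u,\chi\rangle_\Omega+\langle\mathring E_\Omega\nabla(a\mathcal Ru),\nabla\chi\rangle_{\R^n}=\langle A^\nabla u,\chi\rangle_\Omega,
\end{equation*}
since $\mathring E_\Omega\nabla(a\mathcal Ru)$ is supported in $\overline\Omega$ while $\nabla\chi$ vanishes there. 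Using definition \eqref{Aring} and integration by parts,
\begin{equation*}
\langle A^\nabla u,\chi\rangle_\Omega=-\langle\nabla\cdot\mathring E_\Omega(u\nabla a),\chi\rangle_{\R^n}=\langle\mathring E_\Omega(u\nabla a),\nabla\chi\rangle_{\R^n}=0
\end{equation*}
by the same support argument. For the second summand, the jump relation \eqref{WHs1GaTjD} gives $\mathcal L_\Delta(a\varphi)=T^+_\Delta W_\Delta(a\varphi)$; since $W_\Delta(a\varphi)$ is harmonic in $\Omega$ (hence $\tilde\Delta W_\Delta(a\varphi)=0$) and belongs to $H^{s,-\ha}(\Omega;\Delta)$, the canonical first Green identity \eqref{Tcan} for the Laplace operator applied to $W_\Delta(a\varphi)$ and $\chi$ gives $\langle T^+_\Delta W_\Delta(a\varphi),1\rangle_{\partial\Omega}=0$ by the same two cancellations.

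Combining these computations yields $I(u,\varphi)=0$ for all admissible $(u,\varphi)$, so $g^{*1\Delta}$ is a nonzero element of the one--dimensional annihilator of $\mathrm{Range}(\mathfrak N^{1\Delta})$ and therefore spans the cokernel. I anticipate no serious obstacle; the only technical care required is the verification that a smooth cutoff $\chi$ is an admissible choice of $\gamma^{-1}(1)$ and that a distribution in $\widetilde H^{s-2}(\Omega)$ pairs to zero with a test function whose gradient vanishes on a neighbourhood of $\overline\Omega$, both of which are immediate from the definitions.
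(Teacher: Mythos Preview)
Your proof is correct and takes a genuinely different, more direct route than the paper.

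The paper proceeds by \emph{deriving} the cokernel element: it invokes Lemma~\ref{L4D} to represent an arbitrary right-hand side $(\F_1,\F_2)$ in the form \eqref{4.8Psi1Delta}--\eqref{4.8Psi2Delta} via a unique pair $(\tilde f_*,\Phi_*)$, then uses the equivalence Theorem~\ref{equivalenceN} to recast the solvability of the BDIE system as solvability of a Neumann problem with source $\tilde f_*$ and $\psi_0=0$. The Neumann compatibility condition $\langle\tilde f_*,u^0\rangle_\Omega=0$ at $s=1$ is then explicitly simplified (via \eqref{fDelta1} and an integration by parts) to $\langle\F_2,1\rangle_{\partial\Omega}=0$, and the result is transported to all $s\in(\ha,\tha)$ by Lemma~\ref{Lem2-Fredholm}.

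You instead \emph{verify} the candidate directly: knowing from Theorems~\ref{FredholmN} and~\ref{KernelN} that the cokernel is one-dimensional, you simply check that $(0,1)^\top$ annihilates the range by pairing the second component of $\mathfrak N^{1\Delta}(u,\varphi)$ against the constant $1$, choosing $v=\chi$ with $\chi\equiv 1$ near $\overline\Omega$ in the first Green identities so that all $\check{\mathcal E}^\Delta_\Omega$ terms and the pairing with $A^\nabla u$ vanish by support considerations. This avoids Lemma~\ref{L4D}, the equivalence machinery, and the separate treatment of $s=1$ altogether; the price is that the argument confirms rather than discovers the cokernel basis. Both approaches are valid; yours is the more economical once the answer is known.
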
 
\begin{proof} 
Let us consider the equation $\mathfrak N^{1\Delta}\mathcal U=(\mathcal F_1,\mathcal F_2)^\top$, i.e., the BDIE system (N1$_\Delta$) for $(u,\varphi)\in H^{1}( {\Omega} )\times H^{\ha}(\pO)$,
\begin{align}
\label{2.44HFD}
 u +{\mathcal R}u  +W \varphi &= \mathcal F_1  \quad \mbox{in}\quad {\Omega},
\\[1ex]
 \label{2.45HFD}
T^+_\Delta(A^\nabla u;{a\cal R} u) +\mathcal L_\Delta(a \varphi)
  &=   \mathcal F_2  \quad   \mbox{on}\quad   \partial\Omega,
\end{align}
with arbitrary $(\mathcal F_1,\mathcal F_2)\in H^{s}( {\Omega} )\times H^{s-\tha}(\pO)$. 
By Lemma~\ref{L4D},  the right hand side of the system can be presented in form \eqref{4.8Psi1Delta}-\eqref{4.8Psi2Delta}, i.e., 
system \eqref{2.44HFD}-\eqref{2.45HFD} reduces to
\begin{align}
\label{2.44HFrD}
 u +{\mathcal R}u  +W (\varphi+\Phi_{*}) &= {\cal P}\tilde f_{*}  \quad \mbox{in}\quad    {\Omega},
\\[1ex]
 \label{2.45HFrD}
T^+_\Delta(A^\nabla u;{a\cal R} u) +\mathcal L_\Delta(a \varphi+a\Phi_{*})
  &=  T^+_\Delta(\tilde f_*;{\cal P}\tilde f_*)  \quad   \mbox{on}\quad   \partial\Omega,
\end{align}
where the couple $(\tilde f_{*}, \Phi_{*})\in \s{H}^{s-2} ( {\Omega})\times {H}^{s-\ha}(\pO)$ is given by \eqref{fDelta1}, \eqref{Phi*FDelta0}.
Up to the notations, system \eqref{2.44HFrD}-\eqref{2.45HFrD} is the same as \eqref{Basic1D}-\eqref{Basic2D} with the right hand side given by \eqref{FN1D}, where $\psi_0=0$. 

Let $s=1$ first. 
Then Theorems \ref{Rem1N} and \ref{equivalenceN} 
%and \ref{teoremP1inv} 
imply that BDIE system \eqref{2.44HFrD}-\eqref{2.45HFrD} and hence \eqref{2.44HFD}-\eqref{2.45HFD} is solvable if and only if
\begin{multline} \label{3.suf*}
\langle \tilde f_{*},u^0\rangle_ {\Omega} 
=
\langle \check\Delta_{\Omega}(a\F_1)+\gamma^*\F_2,u^0\rangle_ {\Omega}
=
\langle \nabla\cdot\mathring E_{\Omega} \nabla (a\F_1) +\gamma^*\F_2,u^0\rangle_{\R^n}\\
=
-\langle \mathring E_{\Omega} \nabla (a\F_1) ,\nabla u^0\rangle_{\R^n}
+\langle \F_2,\gamma^+ u^0\rangle_{ \partial\Omega} 
=\langle \F_2,\gamma^+ u^0\rangle_{ \partial\Omega}
=0,
\end{multline}
where we took into account that $u^0=1$ in $\R^n$.
Thus the functional $g^{*1\Delta}$ defined by \eqref{SSH10CD} generates the necessary and sufficient solvability condition of equation $\mathfrak N^{1\Delta}\mathcal U=(\mathcal F_1,\mathcal F_2)^\top$. 
Hence $g^{*1\Delta}$ is a basis of the cokernel of  $\mathfrak N^{1\Delta}$ (and thus the kernel of the operator $\mathfrak N^{1\Delta*}$ adjoint to $\mathfrak N^{1\Delta}$), for $s=1$.

Let us now choose any $s\in(\ha,\tha)$. 
By Theorem~\ref{FredholmN}, operator \eqref{mfN1D} and thus its adjoint are Fredholm with zero index. We already proved that at $s=1$ the kernel of the adjoint operator is spanned over $g^{*1\Delta}$.
For any fixed coefficient $a\in C_+^{\sigma}(\overline \Omega)$, the operator 
\begin{align}\label{mfN1a'}
\mathfrak N^{1\Delta}: H^{s'}( {\Omega})\times H^{s'-\frac{1}{2}}(\partial\Omega) 
\to H^{s'}({\Omega} )\times H^{s'-\frac{3}{2}}( \partial\Omega)
\end{align}
 is continuous for any $s'\in(\ha, \sigma]$ and particularly  for $s'=s$ and $s'=1$. 
Then Lemma~\ref{Lem2-Fredholm} implies that the  co-kernel of operator \eqref{mfN1a'} will be the same for $s'=s$ and $s'=1$  and is spanned over $g^{*1\Delta}$.
\end{proof}
\begin{lemma}\label{L4}
Let $\Omega$ be a bounded simply-connected Lipschitz domain, $\ha<s<\tha$, and $a\in C_+^{\tha}(\overline\Omega)$. 
For any couple $(\F_1,\F_2)\in H^{s}( {\Omega})\times {H}^{s-\tha}(\pO)$, 
there exists a unique couple $(\tilde f_{**}, \Phi_{**})\in \s{H}^{s-2} ({\Omega})\times 
{H}^{s-\ha}(\pO)$
 such that
\begin{eqnarray}
\label{4.8Psi1**} \F_1 &=&{\cal P}\tilde f_{**}-W\Phi_{**} \quad
\text{in } {\Omega},\\
\label{4.8Psi2**}
\F_2&=&T^+(\tilde f_{**}+\mathring E_{\Omega}\,{\mathcal R}_*\tilde f_{**};{\cal P}\tilde f_{**})-T^+ W\Phi_{**} 
  \quad
\text{on } \partial\Omega.
\end{eqnarray}
Moreover, $(\tilde f_{**}, \Phi_{**})={\cal C}_{**}(\F_1,\F_2)$ and
${\cal C}_{**}: H^{s}( {\Omega})\times {H}^{s-\tha}(\pO)\to \s{H}^{s-2} ( {\Omega})\times {H}^{s-\ha}(\pO)$ 
is a linear continuous operator given by 
\begin{align}\label{f**1}
\tilde f_{**}=&\check\Delta_{\Omega}(a\F_1)+\gamma^*(\F_2+(\gamma^+\F_1)\partial_n a),\\
%=\nabla\cdot\mathring E_{\Omega} \nabla (a\F_1) +\gamma^*(\F_2+(\gamma^+\F_1)\partial_n a),\\
\label{Phi*F**0}
 \Phi_{**} 
%=&\frac{1}{a}\left(-\ha I + {\cal W}_\Delta\right)^{-1}\gamma^+\F_\Delta\nonumber\\
=&\frac{1}{a}\left(-\ha I + {\cal W}_\Delta\right)^{-1}\gamma^+
\{ -a\F_1+{\cal P}_\Delta[\check\Delta_{\Omega}(a\F_1)+\gamma^*(\F_2+(\gamma^+\F_1)\partial_n a)]\}.
\end{align}
%where $\check\Delta_{\Omega}(a\F_1)=\nabla\cdot\mathring E_{\Omega} \nabla (a\F_1)$.
\end{lemma}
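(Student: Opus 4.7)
The plan is to reduce Lemma~\ref{L4} to the already-established Lemma~\ref{L4D}. I claim that a pair $(\tilde f_{**}, \Phi_{**}) \in \widetilde H^{s-2}(\Omega) \times H^{s-\ha}(\partial\Omega)$ satisfies the parametrix-based system \eqref{4.8Psi1**}--\eqref{4.8Psi2**} for data $(\F_1, \F_2)$ if and only if it satisfies the Laplace-based system \eqref{4.8Psi1Delta}--\eqref{4.8Psi2Delta} for the shifted data $(\F_1, \F_2 + (\gamma^+\F_1)\partial_n a)$. Since \eqref{4.8Psi1**} is literally the same equation as \eqref{4.8Psi1Delta}, only the second equations need to be related; once this equivalence is proved, setting $\mathcal C_{**}(\F_1,\F_2) := \mathcal C_*(\F_1, \F_2 + (\gamma^+\F_1)\partial_n a)$ and substituting into \eqref{fDelta1}, \eqref{Phi*FDelta0} yields exactly \eqref{f**1}, \eqref{Phi*F**0}. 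Continuity of $\mathcal C_{**}$ and uniqueness follow from the corresponding properties of $\mathcal C_*$ together with the fact that $(\F_1, \F_2) \mapsto (\F_1, \F_2 + (\gamma^+\F_1)\partial_n a)$ is a continuous isomorphism on $H^s(\Omega) \times H^{s-\tha}(\partial\Omega)$, which uses $a \in C_+^{\tha}(\overline\Omega)$, the trace theorem, and Theorem~\ref{GrL+}.

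The heart of the argument is therefore establishing, under \eqref{4.8Psi1**}, the identity
\begin{equation*}
T^+_\Delta(\tilde f_{**}; \mathcal P\tilde f_{**}) - \mathcal L_\Delta(a\Phi_{**}) - T^+(\tilde f_{**} + \mathring E_\Omega \mathcal R_*\tilde f_{**}; \mathcal P\tilde f_{**}) + T^+W\Phi_{**} = (\gamma^+\F_1)\,\partial_n a.
\end{equation*}
I would split this into two pieces. For the $\Phi_{**}$-part, combining \eqref{VWab4} with \eqref{3.11a} and the Laplace-case jump identity $\mathcal W_\Delta(a\varphi) = \ha a\varphi + a\gamma^+W\varphi$ (following from \eqref{3.8} applied to $W_\Delta(a\varphi) = a W\varphi$ by \eqref{VWa}) yields the auxiliary formula $T^+W\varphi = \mathcal L_\Delta(a\varphi) - (\partial_\nu a)\gamma^+W\varphi$, contributing $-(\partial_n a)\gamma^+W\Phi_{**}$ to the left-hand side. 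For the $\tilde f_{**}$-part, I would subtract Green's first identity \eqref{Tgen} applied to $\mathcal P\tilde f_{**}$ with respect to $A$ and with extension $\tilde f_{**} + \mathring E_\Omega \mathcal R_*\tilde f_{**}$ from the same identity applied to $\mathcal P_\Delta\tilde f_{**} = a\mathcal P\tilde f_{**}$ with respect to $\Delta$ and with extension $\tilde f_{**}$. Using $a\nabla(\mathcal P\tilde f_{**}) = \nabla(\mathcal P_\Delta\tilde f_{**}) - (\mathcal P\tilde f_{**})\nabla a$, the bulk discrepancy of the two Green identities equals $\langle \mathring E_\Omega((\mathcal P\tilde f_{**})\nabla a), \nabla v\rangle_\Omega$, and the distributional integration-by-parts identity $\nabla\cdot\mathring E_\Omega w = \mathring E_\Omega(\nabla\cdot w) - \gamma^*((\gamma^+ w)\cdot\nu)$ applied with $w = (\mathcal P\tilde f_{**})\nabla a$, combined with $\nabla\cdot((\mathcal P\tilde f_{**})\nabla a) = -\mathcal R_*\tilde f_{**}$ in $\Omega$ (from the third relation in \eqref{3.d1}), cancels the $\mathring E_\Omega \mathcal R_*\tilde f_{**}$ term and leaves exactly the boundary contribution $(\gamma^+\mathcal P\tilde f_{**})\partial_n a$. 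Summing the two pieces and invoking \eqref{4.8Psi1**} to replace $\gamma^+\mathcal P\tilde f_{**} - \gamma^+W\Phi_{**}$ by $\gamma^+\F_1$ closes the identity.

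The principal technical obstacle is the distributional bookkeeping required in the second step: when $s - 2 < -\ha$ the extension $\tilde f_{**} \in \widetilde H^{s-2}(\Omega)$ is non-canonical and $\mathcal P\tilde f_{**}$ need not lie in $H^{s,-\ha}(\Omega;A)$, so the conormal derivatives above are only generalised and the integration-by-parts formula for $\nabla\cdot\mathring E_\Omega w$ must be justified at the natural regularity $w = (\mathcal P\tilde f_{**})\nabla a$. I would circumvent this by first approximating $\tilde f_{**}$ by a sequence $\{\tilde f_j\} \subset \widetilde H^{-\ha}(\Omega)$ converging to $\tilde f_{**}$ in $\widetilde H^{s-2}(\Omega)$, performing all manipulations for the smoother potentials $\mathcal P\tilde f_j \in H^{s,-\ha}(\Omega;A)$ (for which Theorem~\ref{T6.6c} provides canonical conormal derivatives and all integrations by parts are standard), and then passing to the limit using continuity of the potential operators from Theorem~\ref{T3.1P}, the estimates \eqref{estimate} and \eqref{TPf0}, and Remark~\ref{R4.5}(ii).
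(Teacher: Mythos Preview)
Your reduction to Lemma~\ref{L4D} is correct and gives a cleaner route than the paper's own proof. The paper does not invoke Lemma~\ref{L4D} at all; instead it reruns the entire derivation from scratch for the parametrix-based system: it rewrites \eqref{4.8Psi1**} as $a\F_1-\mathcal P_\Delta\tilde f_{**}=-W_\Delta(a\Phi_{**})$, applies $\Delta$ to get $r_\Omega\tilde f_{**}=\Delta(a\F_1)$, then takes the canonical conormal derivative $T^+$ of \eqref{4.8Psi1**} and, after a somewhat involved chain of distributional identities (the analogue of your integration-by-parts step appears there as the computation leading to \eqref{4.8Psi2TT**}), arrives at \eqref{f**1}; the formula for $\Phi_{**}$ then follows exactly as in Lemma~\ref{L4D}. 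Your approach isolates the single identity that distinguishes the two lemmas and thereby avoids duplicating the harmonic-function and invertibility arguments. What the paper's direct route buys is self-containment and an explicit check that the constructed pair actually satisfies \eqref{4.8Psi1**}--\eqref{4.8Psi2**}; in your approach that verification is inherited from Lemma~\ref{L4D}, but you should state explicitly that the key identity holds for \emph{all} pairs $(\tilde f_{**},\Phi_{**})$ satisfying \eqref{4.8Psi1**} (not just solutions of the full system), so that the equivalence runs in both directions.

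Two small points. First, in your displayed identity the term $T^+_\Delta(\tilde f_{**};\mathcal P\tilde f_{**})$ should read $T^+_\Delta(\tilde f_{**};\mathcal P_\Delta\tilde f_{**})$, since $\Delta\mathcal P\tilde f_{**}\ne r_\Omega\tilde f_{**}$ in general and the generalised normal derivative would otherwise be ill-defined; your subsequent sketch already works with $\mathcal P_\Delta\tilde f_{**}=a\mathcal P\tilde f_{**}$, so this is only a typographical slip (one that the statement of Lemma~\ref{L4D} in the paper shares). Second, the approximation by $\{\tilde f_j\}\subset\widetilde H^{-1/2}(\Omega)$ is a safe way to justify the integration-by-parts step, but in fact it is not strictly necessary here: since $a\in C_+^{3/2}(\overline\Omega)$ one has $(\mathcal P\tilde f_{**})\nabla a\in H^{\sigma}(\Omega)$ for some $\sigma>1/2$, so its trace and the identity $\nabla\cdot\mathring E_\Omega w=\mathring E_\Omega(\nabla\cdot w)-\gamma^*((\gamma^+w)\cdot\nu)$ make sense directly, which is essentially how the paper handles the analogous step.
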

\begin{proof}
Let us first assume that there exist $(\tilde f_{**}, \Phi_{**})\in \s{H}^{s-2} ( {\Omega})\times {H}^{s-\ha}(\pO)$
satisfying equations \eqref{4.8Psi1**}, \eqref{4.8Psi2**} and find their expressions in terms of $\F_1$ and $\F_2$.
Let us re-write \eqref{4.8Psi1**} as 
\begin{align}
\label{4.8Psi1a**} \F_1-{\cal P}\tilde f_{**} = - W\Phi_{**} \quad
\text{in } {\Omega},
\end{align}
Multiplying \eqref{4.8Psi1a**} by $a$ and applying the Laplace operator to it, we obtain, 
\begin{align}
\label{2.6'genPhi=**}
& \Delta(a\F_1-\P_\Delta \tilde f_{**})=\Delta(a\F_1)- \tilde f_{**}=-\Delta W_\Delta(a\Phi_{**})=0 
\text{in}   {\Omega},
\end{align}
which means
\begin{align}
\label{2.6'genPhi**}
& \Delta(a\F_1)=r_ {\Omega} \tilde f_{**} \text{in}   {\Omega}
\end{align}
and $a\F_1-\P_\Delta \tilde f_{**}\in H^{s,0}( {\Omega};\Delta)$. 
By equality \eqref{4.8Psi1a**} and continuity of operator \eqref{WHs1Ga} in Theorem~\ref{T3.1s0}, we also have 
$\F_1-\P \tilde f_{**}\in H^{1,0} ( {\Omega};A)$, which implies that the canonical conormal derivative 
$T^+(\F_1-\P \tilde f_{**})$ is well defined. 
Applying the canonical conormal derivative operator $T^+$ to the both sides of 
equation \eqref{4.8Psi1a**}, we obtain,
\begin{multline}\label{6.32**}
 -T^+ W \Phi_{**}=T^+(\F_1-\P \tilde f_{**})
=T^+(\tilde A(\F_1-\P\tilde f_{**});\F_1-\P \tilde f_{**})
=T^+(\mathring E_{\Omega} A(\F_1-\P\tilde f_{**});\F_1-\P \tilde f_{**})\\
=T^+(\mathring E_{\Omega} \nabla\cdot (a\nabla(\F_1-\P\tilde f_{**}));\F_1-\P \tilde f_{**})\\
=T^+(\mathring E_{\Omega} \Delta(a\F_1-\P_\Delta\tilde f_{**})
-\mathring E_{\Omega} \nabla\cdot ((\F_1-\P\tilde f_{**})\nabla a);\F_1-\P \tilde f_{**})\\
=T^+(-\mathring E_{\Omega}\nabla\cdot(\F_1\nabla a)
-\mathring E_{\Omega} \mathcal R_* \tilde f_{**};\F_1-\P \tilde f_{**}),
\end{multline}
where \eqref{2.6'genPhi**} and the third relation in \eqref{3.d1} were taken into account.
Substituting this into \eqref{4.8Psi2**}, we obtain
\begin{eqnarray}
\label{4.8Psi2T**}
\F_2&=&T^+(\tilde f_{**}-\mathring E_{\Omega}\nabla\cdot(\F_1\nabla a),\F_1)
  \quad
\text{on } \partial\Omega.
\end{eqnarray}

Due to \eqref{2.6'genPhi**}, we can represent
\begin{align}\label{f**}
\tilde f_{**}=\check\Delta_{\Omega}(a\F_1)+\tilde f_{1*}=\nabla\cdot\mathring E_{\Omega} \nabla (a\F_1)-\gamma^*\Psi_{**},
\end{align}
where $\tilde f_{1*}\in H^{s-2}_{ \partial\Omega}$ and hence, due to e.g. \cite[Theorem 2.10]{MikJMAA2011}, can be 
in turn represented as $\tilde f_{1*}=-\gamma^*\Psi_{**}$, with some $ \Psi_{**}\in H^{s-\tha}( \partial\Omega)$.
Then \eqref{2.6'genPhi**} is satisfied and 
%$T_\Delta^+(\tilde f_{**},a\F_1))=0$ 
\begin{multline}
\label{4.8Psi2TT**}
\F_2=T^+(\tilde f_{**}-\mathring E_{\Omega}\nabla\cdot(\F_1\nabla a),\F_1)
=(\gamma^{-1})^*[\tilde f_{**}-\mathring E_{\Omega}\nabla\cdot(\F_1\nabla a)-\check 
A\F_1]\\
=(\gamma^{-1})^*[\nabla\cdot\mathring E_{\Omega} \nabla (a\F_1) -\gamma^*\Psi_{**}
-\mathring E_{\Omega}\nabla\cdot(\F_1\nabla a)-\nabla\cdot\mathring E_{\Omega} (a\nabla \F_1)]\\
=(\gamma^{-1})^*[\nabla\cdot\mathring E_{\Omega} (\F_1\nabla a)-\gamma^*\Psi_{**}-\mathring E_{\Omega}\nabla\cdot(\F_1\nabla a)]
=-\Psi_{**}-(\gamma^+\F_1)\partial_na,
\end{multline}
because for any $w\in H^{\tha-s}(\pO)$,
\begin{multline*}
%\label{4.8Psi2T2}
\left\langle(\gamma^{-1})^*[\nabla\cdot\mathring E_{\Omega} (\F_1\nabla a)-\gamma^*\Psi_{**}
-\mathring E_{\Omega}\nabla\cdot(\F_1\nabla a)], w\right\rangle_{ \partial\Omega}\\
=\left\langle\nabla\cdot\mathring E_{\Omega} (\F_1\nabla a)-\gamma^*\Psi_{**}
-\mathring E_{\Omega}\nabla\cdot(\F_1\nabla a),\gamma^{-1}w\right\rangle_{ {\Omega}}\\
=\left\langle\nabla\cdot\mathring E_{\Omega} (\F_1\nabla a),\gamma^{-1}w\right\rangle_{\R^n}-\Psi_{**}
-\left\langle\mathring E_{\Omega}\nabla\cdot(\F_1\nabla a),\gamma^{-1}w\right\rangle_{ {\Omega}}\\
=-\left\langle\mathring E_{\Omega} (\F_1\nabla a),\nabla\gamma^{-1}w\right\rangle_{\R^n}-\Psi_{**}
+\left\langle\F_1\nabla a,\nabla\gamma^{-1}w\right\rangle_{ {\Omega}}
-\left\langle n\cdot\gamma^+(\F_1\nabla a),\gamma^+\gamma^{-1}w\right\rangle_{ {\Omega}}\\
=-\left\langle(\gamma^+\F_1)\partial_na,w\right\rangle_{ \partial\Omega}
-\left\langle\Psi_{**},w\right\rangle_{ \partial\Omega}.
\end{multline*}
Hence \eqref{4.8Psi2T**} reduces to
%\begin{eqnarray}
%\label{4.8Psi2T3}
$
\Psi_{**}=-\F_2-(\gamma^+\F_1)\partial_n a,
$
%\end{eqnarray}
and \eqref{f**} to \eqref{f**1}.

Now \eqref{4.8Psi1a**} can be written in the form
\begin{eqnarray}
W_\Delta (a\Phi_{**})  = \F_\Delta \quad
\text{in } {\Omega},
\label{WF**}
\end{eqnarray}  
where 
\begin{eqnarray}
\label{4.8Psi1c**} \F_\Delta:=-a\F_1+{\cal P}_\Delta\tilde f_{**}
= -a\F_1+{\cal P}_\Delta[\check\Delta_{\Omega}(a\F_1)+\gamma^*(\F_2+(\gamma^+\F_1)\partial_n a)], \quad
%\text{in } {\Omega}.
\end{eqnarray}  
is a harmonic function in $ {\Omega}$ due to \eqref{2.6'genPhi=**}.
The trace of equation \eqref{WF**} gives 
\begin{align}\label{Phi*eq**}
 -\ha a\Phi_{**} + {\cal W}_\Delta(a\Phi_{**})=\gamma^+\F_\Delta\  \mbox{ on }  \partial\Omega.
\end{align}Since the operator 
$ -\ha I + {\cal W}_\Delta : H^{s-\frac{1}{2}}( \partial\Omega) \to H^{s-\frac{1}{2}}( \partial\Omega)$  is an 
isomorphism (see
Theorem~\ref{T3.6}), this implies 
\begin{align*} %\label{Phi*F**}
 \Phi_{**} =&\frac{1}{a}\left(-\ha I + {\cal W}_\Delta\right)^{-1}\gamma^+\F_\Delta %\nonumber\\
=\frac{1}{a}\left(-\ha I + {\cal W}_\Delta\right)^{-1}\gamma^+
\{ -a\F_1+{\cal P}_\Delta[\check\Delta_{\Omega}(a\F_1)+\gamma^*(\F_2+(\gamma^+\F_1)\partial_n a)]\},
\end{align*}
which coincides with \eqref{Phi*F**0}.

Relations \eqref{f**1},  \eqref{Phi*F**0} can be written as 
$(\tilde f_*,\Phi_{*})={\cal C}_{**}(\F_1,\F_2)$,
%\in \s{H}^{-1} ( {\Omega})\times {H}^{\ha}(\pO)\times {H}^{-\ha}(\pO),$
where 
${\cal C}_{**}: H^{1}( {\Omega})\times {H}^{-\ha}(\pO)\to \s{H}^{-1} ( {\Omega})\times {H}^{\ha}(\pO)$
is a linear continuous operator, as claimed.
We still have to check that the functions $\tilde f_{**}$ and $\Phi_{*}$, given by \eqref{f**1} and  \eqref{Phi*F**0}, 
satisfy equations \eqref{4.8Psi1**} and \eqref{4.8Psi2**}.
Indeed, $\Phi_{*}$ given by \eqref{Phi*F**0} satisfies equation \eqref{Phi*eq**} and thus 
$\gamma^+W_\Delta (a\Phi_{*})  = \gamma^+\F_\Delta$.
Since both $W_\Delta (a\Phi_{*}) $ and  $\F_\Delta$ are harmonic functions belonging to the space $H^s(\Omega)$, this implies \eqref{WF**}-\eqref{4.8Psi1c**} 
and by \eqref{f**1} also \eqref{4.8Psi1**}.
Finally, \eqref{f**1} implies by \eqref{4.8Psi2TT**} that \eqref{4.8Psi2T**} is satisfied, and adding \eqref{6.32**} to it 
leads to \eqref{4.8Psi2**}.

Let us now prove that the operator ${\cal C}_{**}$ is unique.
Indeed, let a couple
$(\tilde f_{**},\Phi_{**})\in \s{H}^{s-2} ( {\Omega})\times {H}^{s-\ha}(\pO)$ 
be a solution of linear system \eqref{4.8Psi1**}-\eqref{4.8Psi2**} with $\F_1=0$ and $\F_2=0$. 
Then \eqref{2.6'genPhi**} implies that $r_ {\Omega} \tilde f_{**}=0$ in $ {\Omega}$, i.e., 
$\tilde f_{**}\in{H}^{s-2}_{\partial  {\Omega}}\subset \s{H}^{s-2} ( {\Omega})$.
Hence, \eqref{4.8Psi2T**} reduces to
$
0=T^+(\tilde f_{**},0)%=(\gamma^{-1})^*\tilde f_{**}-\Psi_*
$
on $\partial\Omega.$
By the first Green identity \eqref{Tgen}, this gives,
\begin{equation*}
%\label{Tgen**} 
0=\left\langle T^+(\tilde f_{**},0), \gamma^{+}v \right\rangle _{\pO}
=\langle \tilde f_{**},v \rangle_ {\Omega} \quad  \forall\ v\in H^{2-s} ( {\Omega}),
\end{equation*}
which implies $\tilde f_{**}=0$ in $\R^n$. 
Finally, \eqref{Phi*F**0} gives $\Phi_{**}=0$. 
Hence, any solution of non-homogeneous linear system \eqref{4.8Psi1**}-\eqref{4.8Psi2**} has only one solution, which implies uniqueness of the operator ${\cal C}_{**}$. 
\end{proof}
\begin{theorem}\label{T3.521HF}
Let $\Omega$ be a bounded simply-connected Lipschitz domain, $\ha<s<\tha$, 
and $a\in C_+^{\tha}(\overline\Omega)$.
The cokernel of operator \eqref{mfN1} is spanned over the functional
\begin{align}\label{SSH10C}
g^{*1}:=((\gamma^{+})^*\partial_n a,1)^\top 
\end{align}
in  $[H^{s}(\Omega )\times H^{s-\tha}(\pO)]^*=\widetilde H^{-s}( {\Omega} )\times H^{\tha-s}(\pO)$, i.e.,
%\begin{align}\label{SSH10C}
%g^{*1}(\mathcal F_1,\mathcal F_2)
%:=\left\langle(\mathbf P)^{-1}\F_1,u^0\right\rangle_{ {\Omega}}
%=
%\\
%-\left\langle\gamma^+(a\F_1),\mathcal{V}_{\Delta}^{-1}\gamma^+u^0\right\rangle_{ \partial\Omega}
%=\left\langle-a\gamma^{+*}\mathcal{V}_{\Delta}^{-1}\gamma^+u^0,\F_1\right\rangle_{ {\Omega}},
%\end{align}
$
g^{*1}(\mathcal F_1,\mathcal F_2)
=\langle (\gamma^+\F_1)\partial_n a +\F_2,\gamma^+ u^0\rangle_{ \partial\Omega},
$
where $u^0=1$.
\end{theorem}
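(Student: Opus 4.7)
The plan is to follow the template of Theorem~\ref{T3.521HFD} step by step, using Lemma~\ref{L4} in place of Lemma~\ref{L4D}. First, I would consider the equation $\mathfrak N^1\mathcal U=(\F_1,\F_2)^\top$ for arbitrary $(\F_1,\F_2)\in H^s(\Omega)\times H^{s-\tha}(\partial\Omega)$ and, by Lemma~\ref{L4}, uniquely represent the right-hand side in the form \eqref{4.8Psi1**}-\eqref{4.8Psi2**} with $(\tilde f_{**},\Phi_{**})=\mathcal C_{**}(\F_1,\F_2)$. This turns BDIE system (N1) into the system \eqref{Basic1}-\eqref{Basic2} with data \eqref{FN1} corresponding to the extension $\tilde f_{**}$ and Neumann datum $\psi_0=0$, with the unknown $(u,\varphi+\Phi_{**})$.

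Next, I would take $s=1$ and apply Theorems~\ref{equivalenceN} and \ref{Rem1N}(ii): the reduced system is solvable iff the compatibility condition $\langle \tilde f_{**},u^0\rangle_\Omega-\langle 0,\gamma^+u^0\rangle_{\partial\Omega}=0$ holds with $u^0=1$. Substituting the explicit formula \eqref{f**1} for $\tilde f_{**}$, I would compute
\begin{align*}
\langle \tilde f_{**},u^0\rangle_\Omega
&=\langle \nabla\cdot\mathring E_\Omega\nabla(a\F_1),u^0\rangle_{\R^n}
+\langle \F_2+(\gamma^+\F_1)\partial_n a,\gamma^+u^0\rangle_{\partial\Omega}\\
&=-\langle \mathring E_\Omega\nabla(a\F_1),\nabla u^0\rangle_{\R^n}
+\langle \F_2+(\gamma^+\F_1)\partial_n a,\gamma^+u^0\rangle_{\partial\Omega}
=\langle \F_2+(\gamma^+\F_1)\partial_n a,\gamma^+u^0\rangle_{\partial\Omega},
\end{align*}
because $\nabla u^0=0$. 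Thus the solvability condition becomes $g^{*1}(\F_1,\F_2)=0$ with $g^{*1}$ as in \eqref{SSH10C}, showing that $g^{*1}$ spans the cokernel of $\mathfrak N^1$ at $s=1$.

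Finally, to extend from $s=1$ to arbitrary $s\in(\ha,\tha)$, I would fix $a\in C_+^{\tha}(\overline\Omega)$ and note that operator \eqref{mfN1} is continuous for all such $s$ (so in particular at $s=1$ and at the chosen $s$) and Fredholm of index zero by Theorem~\ref{FredholmN}. Since the functional $g^{*1}$ belongs to $\widetilde H^{-s}(\Omega)\times H^{\tha-s}(\partial\Omega)$ for every $s\in(\ha,\tha)$, Lemma~\ref{Lem2-Fredholm} guarantees that the cokernel of $\mathfrak N^1$ is independent of $s$ within $(\ha,\tha)$ and thus coincides with the one-dimensional space spanned by $g^{*1}$.

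The main potential obstacle is the integration-by-parts identity used in computing $\langle \tilde f_{**},u^0\rangle_\Omega$: one has to justify pushing the divergence off $\nabla\cdot\mathring E_\Omega\nabla(a\F_1)$ onto the constant test function $u^0=1$ in $\R^n$, which is exactly the duality pairing used in Definition~\ref{E0} and works precisely because $u^0$ is smooth and constant, so no boundary contribution appears from this term and the $\partial_n a$ contribution is captured entirely through the $\gamma^*$-component in \eqref{f**1}. Everything else is a direct transcription of the proof of Theorem~\ref{T3.521HFD}.
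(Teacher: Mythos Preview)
Your proposal is correct and follows essentially the same approach as the paper's own proof: use Lemma~\ref{L4} to rewrite the right-hand side, reduce to the BDIE system (N1) with $\psi_0=0$, apply Theorems~\ref{Rem1N} and~\ref{equivalenceN} at $s=1$ to compute $\langle\tilde f_{**},u^0\rangle_\Omega$ via \eqref{f**1}, and then extend to general $s\in(\ha,\tha)$ by Theorem~\ref{FredholmN} and Lemma~\ref{Lem2-Fredholm}. Your observation that $g^{*1}$ lies in the dual space for every $s$ in the range is a useful explicit remark that the paper leaves implicit.
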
 
\begin{proof} 
Let us consider the equation $\mathfrak N^{1}\mathcal U=(\mathcal F_1,\mathcal F_2)^\top$, i.e., the BDIE system (N1) for $(u,\varphi)\in H^{s}( {\Omega} )\times H^{s-\ha}(\pO)$,
\begin{align}
\label{2.44HF}
 u +{\mathcal R}u  +W \varphi &= \mathcal F_1  \quad \mbox{in}\quad {\Omega},
\\[1ex]
 \label{2.45HF}
 T^+{\mathcal R} u +T^+ W^+\varphi
  &=   \mathcal F_2  \quad   \mbox{on}\quad   \partial\Omega,
\end{align}
with arbitrary $(\mathcal F_1,\mathcal F_2)\in H^{s}( {\Omega} )\times H^{s-\tha}(\pO)$. 
By Lemma~\ref{L4},  the right hand side of the system has form \eqref{4.8Psi1**}-\eqref{4.8Psi2**}, i.e., 
system \eqref{2.44HF}-\eqref{2.45HF} reduces to
\begin{eqnarray}
\label{2.44HFr}
 u +{\mathcal R}u  +W (\varphi+\Phi_{**}) &=& {\cal P}\tilde f_{**}  \quad \mbox{in}\quad    {\Omega},
\\[1ex]
 \label{2.45HFr}
 T^+{\mathcal R} u +T^+ W(\varphi+\Phi_{**}) 
  &=&  T^+(\tilde f_{**}+\mathring E_{\Omega}\,{\mathcal R}_*\tilde f_{**},{\cal P}\tilde f_{**})  \quad   \mbox{on}\quad   \partial\Omega,
\end{eqnarray}
where the couple $(\tilde f_{**}, \Phi_{**})\in \s{H}^{s-2} ( {\Omega})\times {H}^{s-\ha}(\pO)$ is given by \eqref{f**1}, \eqref{Phi*F**0}.
Up to the notations, system \eqref{2.44HFr}-\eqref{2.45HFr} is the same as \eqref{Basic1}-\eqref{Basic2} with the right hand side given by \eqref{FN1}, where $\psi_0=0$. 

Let $s=1$ first. 
Then Theorems \ref{Rem1N} and \ref{equivalenceN} 
%and \ref{teoremP1inv} 
imply that BDIE system \eqref{2.44HFr}-\eqref{2.45HFr} and hence \eqref{2.44HF}-\eqref{2.45HF} is solvable if and only if
\begin{multline} \label{3.suf*2}
\langle \tilde f_{**},u^0\rangle_ {\Omega} 
=
\langle \check\Delta_{\Omega}(a\F_1)+\gamma^*(\F_2+(\gamma^+\F_1)\partial_n a),u^0\rangle_ {\Omega}
=
\langle \nabla\cdot\mathring E_{\Omega} \nabla (a\F_1) +\gamma^*(\F_2+(\gamma^+\F_1)\partial_n a),u^0\rangle_{\R^n}\\
=
-\langle \mathring E_{\Omega} \nabla (a\F_1) ,\nabla u^0\rangle_{\R^n}
+\langle \F_2+(\gamma^+\F_1)\partial_n a,\gamma^+ u^0\rangle_{ \partial\Omega} 
=\langle (\gamma^+\F_1)\partial_n a +\F_2,\gamma^+ u^0\rangle_{ \partial\Omega}
=0,
\end{multline}
where we took into account that $u^0=1$ in $\R^n$.
Thus the functional $g^{*1}$ defined by \eqref{SSH10C} generates the necessary and sufficient solvability condition of equation $\mathfrak N^{1}\mathcal U=(\mathcal F_1,\mathcal F_2)^\top$. 
Hence $g^{*1}$ is a basis of the cokernel of  $\mathfrak N^{1}$ (and thus the kernel of the operator adjoint to $\mathfrak N^{1}$), for $s=1$.

Let now $s\in(\ha,\tha)$. By Theorem~\ref{FredholmN}, operator \eqref{mfN1} and thus its adjoint are Fredholm with zero index. We already proved that at $s=1$ the kernel of the adjoint operator is spanned over $g^{*1}$.
Then Lemma~\ref{Lem2-Fredholm} implies that the  kernel will be the same for any $s\in(\ha,\tha)$.
\end{proof}

To find the cokernel of operator \eqref{mfN2},  we will 
%employ Lemma~\ref{lemma3P1}, Theorem~\ref{teoremP1inv} and Lemma~\ref{L4} from Appendix 
need some auxiliary assertions. 
Lemma~\ref{lemma3P1} and Theorem~\ref{teoremP1inv} were proved in \cite[Lemma 6.4 and Theorem 6.5]{MikArxiv2015} for the infinitely smooth coefficient $a$ and boundary $\partial\Omega$. Below we only slightly modified them for the non-smooth coefficients and Lipschitz boundary.

\begin{lemma}\label{lemma3P1}
Let $\Omega$ be a bounded simply-connected Lipschitz domain, $s>\ha$,  $a\in C_+^{s}(\overline\Omega)$ and $\tilde f\in \widetilde H^{s-2}( {\Omega})$.  If
\begin{equation}
 \label{B.190}
r_ {\Omega}\mathbf P \,\tilde f=0\quad \text{in } {\Omega},
\end{equation}
then $\tilde f=0$ in $\R^n$.
\end{lemma}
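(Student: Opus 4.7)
The plan is to reduce the problem to a uniqueness statement for the classical Newtonian potential and then to the exterior Dirichlet problem for the Laplacian. First, by the first relation in \eqref{3.d1Rn} one has $\mathbf{P}\tilde f = \frac{1}{a}\mathbf{P}_\Delta\tilde f$, and since $a(y)\ge a_{\min}>0$ almost everywhere in $\Omega$, the assumption \eqref{B.190} is equivalent to $r_\Omega\mathbf{P}_\Delta\tilde f = 0$ in $\Omega$. So I may set $u:=\mathbf{P}_\Delta\tilde f$ and work with the constant-coefficient potential alone.

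Next I would collect the global properties of $u$. Because $\tilde f\in\widetilde H^{s-2}(\Omega)\subset H^{s-2}(\mathbb R^n)$, continuity of operator \eqref{T3.1P0D} (applied locally via a cut-off $\mu\in\mathcal D(\mathbb R^n)$) gives $u\in H^s_{\rm loc}(\mathbb R^n)$, with the distributional identity $\Delta u=\tilde f$ in $\mathbb R^n$. Restricting this to $\Omega$ and using $u\equiv 0$ there yields $r_\Omega\tilde f=0$, so the support of $\tilde f$ is contained in $\partial\Omega$. In particular $\Delta u=0$ in $\Omega_-=\mathbb R^n\setminus\overline\Omega$. Since $s>\ha$ and $u\in H^s_{\rm loc}(\mathbb R^n)$, the trace operator is well defined from both sides and $\gamma^+ u=\gamma^- u$; but $u=0$ in $\Omega$ forces $\gamma^+ u=0$, hence $\gamma^- u=0$ on $\partial\Omega$.

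It remains to verify the decay of $u$ at infinity so as to invoke uniqueness of the exterior Dirichlet problem for $\Delta$. For $|y|$ large, the function $x\mapsto P_\Delta(x,y)$ is smooth on any neighbourhood $U$ of $\overline\Omega$ with $y\notin\overline U$, and one has the duality representation
\begin{equation*}
u(y)=\langle \tilde f, P_\Delta(\cdot,y)\rangle_{\mathbb R^n}
\end{equation*}
together with the pointwise bound $\|P_\Delta(\cdot,y)\|_{H^{2-s}(U)}=O(|y|^{-(n-2)})$ as $|y|\to\infty$, which is valid for $n\ge 3$. Hence $|u(y)|\le \|\tilde f\|_{\widetilde H^{s-2}(\Omega)}\,\|P_\Delta(\cdot,y)\|_{H^{2-s}(U)}\to 0$ as $|y|\to\infty$. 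Thus $u$ is harmonic in $\Omega_-$, has vanishing Dirichlet trace on $\partial\Omega$, and vanishes at infinity, so by uniqueness of the exterior Dirichlet problem for the Laplacian on Lipschitz domains (which follows from the standard variational setting in weighted spaces, or directly from the maximum principle combined with boundary regularity) we conclude $u=0$ in $\Omega_-$. Combining this with $u=0$ in $\Omega$ gives $u=0$ in $\mathbb R^n$, so $\tilde f=\Delta u=0$ in $\mathbb R^n$, which is the required conclusion.

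The main technical point is the decay estimate for $u$ at infinity: the argument I sketched uses the duality pairing of the distribution $\tilde f$ against the smooth (away from $y$) test function $P_\Delta(\cdot,y)$, together with the explicit decay of the fundamental solution in $|y|$. Once that is established the harmonic-extension uniqueness step is standard on Lipschitz domains, while the rest of the proof is essentially bookkeeping of mapping properties already proved in Section~\ref{S3}.
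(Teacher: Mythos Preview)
Your proof is correct but follows a genuinely different route from the paper after the common first step. Both arguments begin by using $\mathbf P\tilde f=\frac{1}{a}\mathbf P_\Delta\tilde f$ and applying $\Delta$ to deduce $r_\Omega\tilde f=0$, i.e., $\tilde f\in H^{s-2}_{\partial\Omega}$. From there the paper argues structurally: for $s\ge\tha$ the space $H^{s-2}_{\partial\Omega}$ is trivial, while for $\ha<s<\tha$ any such $\tilde f$ has the form $\gamma^*v$ with $v\in H^{s-\tha}(\partial\Omega)$, so that $\mathbf P\tilde f=-Vv$; the hypothesis then reads $r_\Omega Vv=0$, and the already-proved Lemma~\ref{VW0s}(i) (ultimately the invertibility of $\mathcal V_\Delta$ in Theorem~\ref{T3.6}) gives $v=0$. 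Your approach instead keeps $u=\mathbf P_\Delta\tilde f$ as a global object, shows it is harmonic in $\Omega_-$ with $\gamma^-u=0$ and $u(y)\to 0$ at infinity, and concludes $u\equiv 0$ via exterior Dirichlet uniqueness.

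Each has advantages. Your argument is uniform in $s>\ha$ and avoids the representation $\tilde f=\gamma^*v$ and the single-layer lemma; it is conceptually cleaner. The paper's argument, however, stays entirely inside machinery already established in the text, whereas your exterior uniqueness step, while standard, is not proved in the paper and for $\ha<s<1$ is not a simple energy/variational argument on a Lipschitz domain: it relies on the Jerison--Kenig/Verchota well-posedness theory (or an equivalent regularity bootstrap) rather than the maximum principle you mention, since $H^s$ does not embed in $L_\infty$ for small $s$. If you want your version fully self-contained within the paper's framework, it would suffice to cite the exterior analogue of the well-posedness used for Corollary~\ref{Cor5.5}, or simply note that $u\in H^s_{\rm loc}(\mathbb R^n)$ harmonic in $\Omega_-$ with zero trace lies in $H^1_{\rm loc}(\overline{\Omega_-})$ by \cite[Theorem~3]{Costabel1988}, after which the variational uniqueness argument goes through.
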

\begin{proof} 
Multiplying \eqref{B.190} by $a$, taking into account \eqref{3.d1}  and applying the Laplace operator, we obtain $r_ {\Omega}\tilde f=0$, which means $\tilde f\in H^{s-2}_{ \partial\Omega}$. If $s\ge\tha$, then $\tilde f=0$ by Theorem 2.10 from \cite{MikJMAA2011}. If $\ha<s<\tha$, then by the same theorem  there exists $v\in H^{s-\tha}( \partial\Omega)$ such that $\tilde f=\gamma^*v$. This gives
%\begin{align*}
$\mathbf P \,\tilde f=\mathbf P \,\gamma^*v=-Vv$ in $\R^n$, cf. \eqref{3.6d}.
%\end{align*}
Then \eqref{B.190} reduces to
$Vv=0$ in $ {\Omega}$, which implies  $v=0$ on $ \partial\Omega$
by Lemma \ref{VW0s}(i), and thus $\tilde f=0$ in $\R^n$.
\end{proof} 

\begin{theorem}\label{teoremP1inv}
Let $\Omega$ be a bounded simply-connected Lipschitz domain, $\ha<s<\tha$, and $a\in C_+^{s}(\overline\Omega)$. 
The operator
\begin{equation}
 \label{B.19wP1}
r_ {\Omega}\mathbf P:\widetilde H^{s-2}( {\Omega} )\to H^{s}( {\Omega} )
\end{equation}
and its inverse
\begin{equation}
 \label{B.19wP1in}
(r_ {\Omega}\mathbf P)^{-1}: H^{s}( {\Omega} )\to \widetilde H^{s-2}( {\Omega} )
\end{equation}
are continuous and
\be\label{B.194a}
(r_ {\Omega}\mathbf P)^{-1} g
=[\Delta \mathring E_{\Omega} (I- r_ {\Omega} V_\Delta \mathcal{V}_{\Delta}^{-1}\gamma^+)
-\gamma^*\mathcal{V}_{\Delta}^{-1}\gamma^+](ag)\ \mbox{ in } \R^n,\quad \forall g\in  H^{s}( {\Omega} ).
\ee
\end{theorem}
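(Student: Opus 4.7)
The strategy is to establish the three claims of the theorem in order: (a) continuity of \eqref{B.19wP1}, (b) injectivity, and (c) surjectivity together with the explicit formula \eqref{B.194a} (which then also gives continuity of the inverse). The key idea for (c) is to build a preimage $\tilde f\in\widetilde H^{s-2}(\Omega)$ of a given $g\in H^s(\Omega)$ by prescribing $h:=\mathbf P_\Delta\tilde f\in H^s(\R^n)$ to equal $ag$ on $\Omega$ and to be harmonic in $\Omega_-$ with matching trace, so that $\tilde f=\Delta h$ is automatically supported in $\overline\Omega$. The harmonic extension in $\Omega_-$ is realized as a harmonic single-layer potential $V_\Delta\mu$ with density $\mu=\mathcal V_\Delta^{-1}\gamma^+(ag)$.

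Concretely, for (a) I would use the first identity in \eqref{3.d1Rn}, $\mathbf Pg=a^{-1}\mathbf P_\Delta g$, continuity of $\mathbf P_\Delta:\widetilde H^{s-2}(\Omega)\to H^s(\R^n)$ (operator \eqref{T3.1P1D} of Lemma~\ref{T3.1PD} with $s\mapsto s-2$), followed by restriction to $\Omega$ and Theorem~\ref{GrL+} applied to the multiplier $1/a\in C_+^s(\overline\Omega)$ (available since $a\in C_+^s(\overline\Omega)$ and \eqref{a>0}). Injectivity of \eqref{B.19wP1} is exactly Lemma~\ref{lemma3P1}. For (c), given $g\in H^s(\Omega)$, set $\varphi:=\gamma^+(ag)\in H^{s-\ha}(\partial\Omega)$, $\mu:=\mathcal V_\Delta^{-1}\varphi\in H^{s-\tha}(\partial\Omega)$ (using invertibility of $\mathcal V_\Delta$ from Theorem~\ref{T3.6}), and
\begin{equation*}
u:=(I-r_\Omega V_\Delta\mathcal V_\Delta^{-1}\gamma^+)(ag)\in H^s(\Omega).
\end{equation*}
By the $a=1$ trace relation in \eqref{3.8}, $\gamma^+V_\Delta\mu=\mathcal V_\Delta\mu=\varphi$, so $\gamma^+u=0$; hence $u\in\mathring H^s(\Omega)=\widetilde H^s_\bullet(\Omega)$ by Remark~\ref{R2.1}(\ref{i}), and through the identification of Remark~\ref{R2.1}(\ref{iv}) the extension by zero $\mathring E_\Omega u$ lies in $\widetilde H^s(\Omega)\subset H^s(\R^n)$. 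I then define
\begin{equation*}
\tilde f:=\Delta\mathring E_\Omega u-\gamma^*\mu,
\end{equation*}
which is precisely \eqref{B.194a} applied to $g$; each summand is supported in $\overline\Omega$ and lies in $H^{s-2}(\R^n)$ (the first by the continuity of $\Delta:H^s(\R^n)\to H^{s-2}(\R^n)$, the second by \eqref{gT*}), so $\tilde f\in\widetilde H^{s-2}(\Omega)$.

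It remains to verify $r_\Omega\mathbf P\tilde f=g$. Applying $\mathbf P_\Delta$, invoking \eqref{3.6d} to identify $\mathbf P_\Delta\gamma^*\mu=-V_\Delta\mu$, and using the inversion identity $\mathbf P_\Delta\Delta w=w$ valid for compactly supported $w\in H^s(\R^n)$, I obtain $\mathbf P_\Delta\tilde f=\mathring E_\Omega u+V_\Delta\mu$ in $\R^n$; restricting to $\Omega$ yields $r_\Omega\mathbf P_\Delta\tilde f=u+r_\Omega V_\Delta\mu=ag$, hence $r_\Omega\mathbf P\tilde f=g$. This proves surjectivity, and continuity of $(r_\Omega\mathbf P)^{-1}:H^s(\Omega)\to\widetilde H^{s-2}(\Omega)$ is then immediate from formula \eqref{B.194a}, since each constituent operator ($a\cdot$, $\gamma^+$, $\mathcal V_\Delta^{-1}$, $V_\Delta$, $\mathring E_\Omega$, $\Delta$, $\gamma^*$) is continuous between the appropriate Sobolev spaces by Theorem~\ref{GrL+}, the trace theorem, Theorem~\ref{T3.6}, Theorem~\ref{T3.1s0D}, and Definition~\ref{E0}.

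The one non-routine step is the identity $\mathbf P_\Delta\Delta w=w$ for compactly supported $w\in H^s(\R^n)$. I would justify it by first observing that $\Delta(\mathbf P_\Delta\Delta w-w)=0$ in $\R^n$ (using \eqref{DeltaPg} with $a=1$), so $\mathbf P_\Delta\Delta w-w$ is harmonic on $\R^n$; for $w\in\mathcal D(\R^n)$ the classical decay $\mathbf P_\Delta\Delta w(y)=O(|y|^{2-n})$ combined with the vanishing total mass $\int_{\R^n}\Delta w=0$ (divergence theorem) forces this harmonic function to decay at infinity, and Liouville's theorem gives identical vanishing; a density argument based on continuity of $\mathbf P_\Delta\Delta:H^s(\R^n)\to H^s_{\mathrm{loc}}(\R^n)$ extends the identity to compactly supported $w\in H^s(\R^n)$. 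All remaining verifications are bookkeeping.
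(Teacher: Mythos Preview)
Your proposal is correct and follows essentially the same approach as the paper's proof: establish continuity via the identity $\mathbf P=\frac{1}{a}\mathbf P_\Delta$, invoke Lemma~\ref{lemma3P1} for injectivity, and for surjectivity subtract off the single-layer $V_\Delta\mathcal V_\Delta^{-1}\gamma^+(ag)$ to obtain a function with zero trace, extend it to $\widetilde H^s(\Omega)$, and define $\tilde f$ by the formula \eqref{B.194a}, verifying $r_\Omega\mathbf P\tilde f=g$ via $\mathbf P_\Delta\Delta w=w$ and $\mathbf P_\Delta\gamma^*\mu=-V_\Delta\mu$. The only cosmetic difference is that the paper first proves invertibility of $r_\Omega\mathbf P_\Delta$ for general right-hand side and then converts to $\mathbf P$ at the end, whereas you work with $ag$ throughout; and the paper simply asserts $\mathbf P_\Delta\Delta\tilde h=\tilde h$ for $\tilde h\in\widetilde H^s(\Omega)$ without the Liouville-type argument you supply (this identity is immediate on the Fourier side, since $\mathbf P_\Delta$ is the order $-2$ pseudodifferential operator with symbol $-|\xi|^{-2}$).
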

\begin{proof} 
The continuity of \eqref{B.19wP1} is given by Theorem~\ref{T3.1P}. By Lemma~\ref{lemma3P1}, operator \eqref{B.19wP1} is injective. Let us prove its surjectivity. To this end, for arbitrary $g\in H^{s}( {\Omega} )$ let us consider the following equation with respect to $\tilde f\in \widetilde H^{s-2}( {\Omega} )$,
\begin{equation}
 \label{B.191}
r_ {\Omega}\mathbf P_\Delta  \,\tilde f=g \text{ in } {\Omega} .
\end{equation}

Let $g_1\in H^{s}( {\Omega} )$ be the (unique) solution of the following Dirichlet problem: $\Delta g_1=0$ in $ {\Omega}$, $\gamma^+g_1=\gamma^+g$, which can be particularly presented as $g_1=V_\Delta \mathcal{V}_{\Delta}^{-1}\gamma^+g,$ see e.g \cite{Costabel1988} or proof of Lemma 2.6 in \cite{MikJMAA2011}.
Let $g_0:=g-r_ {\Omega} g_1$. 
Then $g_0\in H^s( {\Omega})$ and $\gamma^+g_0=0$ and thus $g_0$ can be uniquely extended  to $\mathring E_{\Omega} g_0\in \widetilde H^s( {\Omega})$. 
Thus by  \eqref{3.6d}, equation \eqref{B.191} takes form
\begin{equation}
 \label{B.192}
r_ {\Omega}\mathbf P_\Delta  [\tilde f+\gamma^*\mathcal{V}_{\Delta}^{-1}\gamma^+g]=g_0 \text{ in } {\Omega}.
\end{equation}
Any solution $\tilde f\in \widetilde H^{s-2}( {\Omega} )$ of the corresponding equation in $\R^n$,
\begin{equation}
 \label{B.193}
\mathbf P_\Delta  [\tilde f+\gamma^*\mathcal{V}_{\Delta}^{-1}\gamma^+g]= \mathring E_{\Omega} g_0\ \mbox{ in } \R^n,
\end{equation}
will evidently solve \eqref{B.192}. If $\tilde f$ solves \eqref{B.193} then applying the Laplace operator to \eqref{B.193}, we obtain
\be\label{B.194}
\tilde f=\tilde Q g:= \Delta \mathring E_{\Omega} g_0 -\gamma^*\mathcal{V}_{\Delta}^{-1}\gamma^+g
=\Delta \mathring E_{\Omega} (g- r_ {\Omega} V_\Delta \mathcal{V}_{\Delta}^{-1}\gamma^+g)
-\gamma^*\mathcal{V}_{\Delta}^{-1}\gamma^+g\ \mbox{ in } \R^n.
\ee
%Equation \eqref{B.194} implies that the operator $\tilde Q : H^{s}( {\Omega} )\to \widetilde H^{s-2}( {\Omega} )$ is continuous for $1/2<s<3/2$.
On the other hand, substituting $\tilde f$ given by \eqref{B.194} to \eqref{B.193} and taking into account that $\mathbf P_\Delta  \Delta\tilde h=\tilde h$ for any $\tilde h\in \widetilde H^{s}( {\Omega} )$, $s\in\R$, we obtain that $\tilde Q g$ is indeed a solution of equation \eqref{B.193} and thus \eqref{B.192}. 
By Lemma~\ref{lemma3P1} the solution of \eqref{B.192} is unique, which means that the operator $\tilde Q$ is inverse to operator \eqref{B.19wP1}, i.e., $\tilde Q=(r_ {\Omega}\mathbf P)^{-1}$.
Since $\Delta$ is a continuous operator from $\widetilde H^{s}( {\Omega} )$ to $\widetilde H^{s-2}( {\Omega} )$, equation \eqref{B.194} implies that the operator $(r_ {\Omega}\mathbf P)^{-1}=\tilde Q:H^{s}( {\Omega} )\to \widetilde H^{s-2}( {\Omega} )$ is continuous.
The relations $\mathbf P =\frac{1}{a}\mathbf P_\Delta $ and $a(x)>c>0$ then imply invertibility of operator \eqref{B.19wP1} and ansatz \eqref{B.194a}.
\end{proof}

\begin{theorem}\label{T3.521HF2}
Let $\Omega$ be a bounded simply-connected Lipschitz domain, $\ha< s< \tha$, and $a\in C_+^{\sigma}(\overline\Omega)$, $\sigma=\max\{1, s\}$. 
The cokernel of operator \eqref{mfN2} is spanned over the functional
\begin{align}\label{SSH10C2}
g^{*2}:=\left(\begin{array}{c}
-a\gamma^{+*}\left(\frac{1}{2}+\W'_\Delta\right)\mathcal{V}_{\Delta}^{-1}\gamma^+u^0
\\[1ex] 
-a\left(\frac{1}{2}-\W'_\Delta\right)\mathcal{V}_{\Delta}^{-1}\gamma^+u^0
\end{array}\right)
\end{align}
in  $[H^{s}(\Omega )\times H^{s-\ha}(\pO)]^*=\widetilde H^{-s}( {\Omega} )\times H^{\ha-s}(\pO)$, i.e.,
$$
g^{*2}(\mathcal F_1,\mathcal F_2)
=\left\langle -a\gamma^{+*}\left(\frac{1}{2}+\W'_\Delta\right)\mathcal{V}_{\Delta}^{-1}\gamma^+u^0,\F_1\right\rangle_{ {\Omega}}
+\left\langle -a\left(\frac{1}{2}-\W'_\Delta\right)\mathcal{V}_{\Delta}^{-1}\gamma^+u^0,\F_2 \right\rangle_{ \partial\Omega},
$$
where $u^0(x)=1$.
\end{theorem}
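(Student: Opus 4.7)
The plan is to mirror the proof of Theorem~\ref{T3.521HF}: represent an arbitrary $(\F_1,\F_2)\in H^s(\Omega)\times H^{s-\ha}(\pO)$ in the form the right-hand side of BDIE system (N2) takes for a Neumann BVP with $\psi_0=0$, then invoke the Neumann solvability condition from Theorem~\ref{Rem1N}(ii). Concretely, I will set
\begin{equation*}
\Phi_{**}:=\gamma^+\F_1-\F_2\in H^{s-\ha}(\pO),\qquad \tilde f_{**}:=(r_\Omega\mathbf P)^{-1}[\F_1+W\Phi_{**}]\in\widetilde H^{s-2}(\Omega),
\end{equation*}
which is well defined since $\F_1+W\Phi_{**}\in H^s(\Omega)$ by Theorem~\ref{T3.1s0}(i) and $r_\Omega\mathbf P$ is invertible by Theorem~\ref{teoremP1inv}. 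A short check using the jump $\gamma^+W\Phi_{**}=-\ha\Phi_{**}+\mathcal W\Phi_{**}$ then verifies
\begin{equation*}
\F_1=\mathcal P\tilde f_{**}-W\Phi_{**},\qquad \F_2=\gamma^+\mathcal P\tilde f_{**}-(\ha I+\mathcal W)\Phi_{**}.
\end{equation*}

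Substituting this representation into $\mathfrak N^2(u,\varphi)=(\F_1,\F_2)^\top$ and setting $\varphi':=\varphi+\Phi_{**}$, the system reduces to (N2) with right-hand side $(\mathcal P\tilde f_{**},\gamma^+\mathcal P\tilde f_{**})^\top$; this is exactly of the form \eqref{FN2} corresponding to Neumann data $(\tilde f,\psi_0)=(\tilde f_{**},0)$. By the equivalence Theorem~\ref{equivalenceN}, its solvability is equivalent to solvability of the Neumann BVP \eqref{2.6N}-\eqref{2.8} with $f=r_\Omega\tilde f_{**}$ and $\psi_0=0$, which for $s=1$ reduces by Theorem~\ref{Rem1N}(ii) to the single scalar condition $\langle\tilde f_{**},u^0\rangle_\Omega=0$.

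The main obstacle will be to identify this scalar condition with the pairing $g^{*2}(\F_1,\F_2)=0$. For that I will expand $(r_\Omega\mathbf P)^{-1}$ via \eqref{B.194a}; the $\Delta\mathring E_\Omega$ contribution annihilates $u^0=1$ by integration by parts, leaving $\langle\tilde f_{**},u^0\rangle_\Omega=-\langle\mathcal V_\Delta^{-1}\gamma^+[a(\F_1+W\Phi_{**})],1\rangle_{\pO}$. I then plan to rewrite the boundary datum using $aW=W_\Delta(a\,\cdot)$ from \eqref{VWa} and the Laplace jump $\gamma^+W_\Delta=-\ha I+\mathcal W_\Delta$, obtaining
\begin{equation*}
\gamma^+[a(\F_1+W\Phi_{**})]=(\ha I+\mathcal W_\Delta)(a\gamma^+\F_1)+(\ha I-\mathcal W_\Delta)(a\F_2);
\end{equation*}
self-adjointness of $\mathcal V_\Delta^{-1}$ together with the dual pairing $\mathcal W'_\Delta=(\mathcal W_\Delta)^*$ will finally convert the expression into the stated form of $g^{*2}$. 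With the case $s=1$ settled, Theorem~\ref{FredholmN} (zero Fredholm index) together with Lemma~\ref{Lem2-Fredholm} will propagate the one-dimensional co-kernel to all $s\in(\ha,\tha)$.
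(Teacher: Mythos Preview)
Your proposal is correct and follows essentially the same approach as the paper's own proof. Your substitution $\Phi_{**}=\gamma^+\F_1-\F_2$, $\varphi'=\varphi+\Phi_{**}$ is exactly the variable change $\varphi'=\varphi-(\F_2-\gamma^+\F_1)$ used in the paper, and your $\tilde f_{**}=(r_\Omega\mathbf P)^{-1}[\F_1+W\Phi_{**}]$ coincides with the paper's $\tilde f_*=(r_\Omega\mathbf P)^{-1}\F_1'$; the subsequent expansion via \eqref{B.194a}, the use of $aW=W_\Delta(a\,\cdot)$ and of the dualities $\mathcal V_\Delta^{-1}=(\mathcal V_\Delta^{-1})^*$, $\mathcal W'_\Delta=\mathcal W_\Delta^*$, and the final passage to all $s\in(\ha,\tha)$ via Theorem~\ref{FredholmN} and Lemma~\ref{Lem2-Fredholm} all match the paper.
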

\begin{proof} 
Let us consider the equation $\mathfrak N^{2}\mathcal U=(\mathcal F_1,\mathcal F_2)^\top$, i.e., the BDIE system (N2),
\begin{eqnarray}
\label{2.44HF2}
 u +{\mathcal R}u  +W \varphi &=& \mathcal F_1  \quad \mbox{in}\quad    {\Omega},
\\[1ex]
 \label{2.45HF2}
 \frac{1}{2}\varphi+\gamma^+{\mathcal R} u   +{\mathcal W} \varphi
  &=&   \mathcal F_2  \quad   \mbox{on}\quad   \partial\Omega,
\end{eqnarray}
with arbitrary $(\mathcal F_1,\mathcal F_2)\in H^{s}( {\Omega} )\times H^{s-\ha}(\pO)$ for $(u,\varphi)\in H^{s}( {\Omega} )\times H^{s-\ha}(\pO)$. 

Introducing the new variable, $\varphi'=\varphi-(\F_2-\gamma^+\F_1)$, BDIE system \eqref{2.44HF2}-\eqref{2.45HF2} takes form
\begin{eqnarray}
\label{2.44HF2'}
 u +{\mathcal R}u  +W \varphi' &=& \mathcal F'_1  \quad \mbox{in}\quad    {\Omega},
\\[1ex]
 \label{2.45HF2'}
 \frac{1}{2}\varphi'+\gamma^+{\mathcal R} u   +{\mathcal W} \varphi'
  &=&   \gamma^+\mathcal F_1'  \quad   \mbox{on}\quad   \partial\Omega,
\end{eqnarray}
where 
$$
\mathcal F_1'=\mathcal F_1-W(\F_2-\gamma^+\F_1)\in H^{s}( {\Omega} ).
$$
On the other hand, by Theorem~\ref{teoremP1inv}, we can always represent $\mathcal F_1'=\mathcal P \tilde f_*$, with 
$$ \tilde f_*=[\Delta \mathring E_{\Omega} (I- r_ {\Omega} V_\Delta \mathcal{V}_{\Delta}^{-1}\gamma^+)
-\gamma^{+*}\mathcal{V}_{\Delta}^{-1}\gamma^+](a\mathcal F_1')\in \widetilde H^{s-2}( {\Omega} ).
$$
For $\mathcal F_1'=\mathcal P \tilde f_*$, the right hand side of BDIE system \eqref{2.44HF2}-\eqref{2.45HF2} is the same as in \eqref{FN2} with $\tilde f=\tilde f_*$ and $\psi_0=0$. 

Let $s=1$ first. Then Theorems \ref{Rem1N} and \ref{equivalenceN}  imply that BDIE system \eqref{2.44HF2'}-\eqref{2.45HF2'} is solvable if and only if
\begin{multline}\label{3.suf*'}
\langle \tilde f_*,u^0\rangle_ {\Omega} 
=
\langle [\Delta \mathring E_{\Omega} (I- r_ {\Omega} V_\Delta \mathcal{V}_{\Delta}^{-1}\gamma^+)
-\gamma^{+*}\mathcal{V}_{\Delta}^{-1}\gamma^+](a\F'_1),u^0\rangle_{\R^n}\\
=
\langle \mathring E_{\Omega} (I- r_ {\Omega} V_\Delta \mathcal{V}_{\Delta}^{-1}\gamma^+)(a\F'_1),\Delta u^0\rangle_{\R^n}
-\langle \mathcal{V}_{\Delta}^{-1}\gamma^+(a\F'_1),\gamma^{+} u^0\rangle_{ \partial\Omega}
=-\left\langle\gamma^+(a\F'_1),\mathcal{V}_{\Delta}^{-1}\gamma^+u^0\right\rangle_{ \partial\Omega}\\
=-\left\langle\frac{1}{2}[\gamma^+(a\F_1)+(a\F_2)]-\W_\Delta[a(\F_2-\gamma^+\F_1)] ,\mathcal{V}_{\Delta}^{-1}\gamma^+u^0\right\rangle_{ \partial\Omega}
%\\
%=-\left\langle\frac{1}{2}\gamma^+(a\F_1)+\W_\Delta\gamma^+(a\F_1) %,\mathcal{V}_{\Delta}^{-1}\gamma^+u^0\right\rangle_{ \partial\Omega}
%-\left\langle\frac{1}{2}(a\F_2)-\W_\Delta(a\F_2) ,\mathcal{V}_{\Delta}^{-1}\gamma^+u^0\right\rangle_{ \partial\Omega}
\\
=-\left\langle \F_1, 
a\gamma^{+*}\left(\frac{1}{2}+\W'_\Delta\right)\mathcal{V}_{\Delta}^{-1}\gamma^+u^0\right\rangle_{ {\Omega}}-\left\langle \F_2, a\left(\frac{1}{2}-\W'_\Delta\right)\mathcal{V}_{\Delta}^{-1}\gamma^+u^0 \right\rangle_{ \partial\Omega}
=0.
\end{multline}
Thus the functional $g^{*2}$ defined by \eqref{SSH10C2} generates the necessary and sufficient solvability condition of equation $\mathfrak N^{2}\mathcal U=(\mathcal F_1,\mathcal F_2)^\top$. Hence $g^{*2}$ is a basis of the cokernel of  operator \eqref{mfN2} for s=1.

Let us now choose any $s\in(\ha,\tha)$. 
By Theorem~\ref{FredholmN}, operator \eqref{mfN2} and thus its adjoint are Fredholm with zero index. We already proved that at $s=1$ the kernel of the adjoint operator is spanned over $g^{*2}$.
For any fixed coefficient $a\in C_+^{\sigma}(\overline \Omega)$, the operator 
\begin{align}\label{mfN2a'}
\mathfrak N^{2}: H^{s'}( {\Omega})\times H^{s'-\frac{1}{2}}(\partial\Omega) 
\to H^{s'}({\Omega} )\times H^{s'-\frac{1}{2}}( \partial\Omega)
\end{align}
 is continuous for any $s'\in(\ha, \sigma]$ and particularly  for $s'=s$ and $s'=1$. 
Then Lemma~\ref{Lem2-Fredholm} implies that the  co-kernel of operator \eqref{mfN2a'} will be the same for $s'=s$ and $s'=1$  and is spanned over $g^{*2}$.
\end{proof}

%Employing Theorems \ref{FredholmN} and  \ref{equivalenceN} and  relation \eqref{3.suf*} in the proof of Theorem~\ref{T3.521HF} (or relation \eqref{3.suf*'} in the proof of Theorem~\ref{T3.521HF2}) imply the following assertion.
Theorems \ref{equivalenceN}, \ref{KernelN} and \ref{T3.521HFD} (or \ref{T3.521HF}) imply the following extension of Theorem \ref{Rem1N} to the range $\ha< s< \tha$.
\begin{corollary} Let $\Omega$ be a bounded simply-connected Lipschitz domain,  $\ha< s< \tha$, $\tilde f\in\widetilde{H}^{s-2}({\Omega})$, $ \psi_0 \in H^{s-\tha}(\pO)$, and 
$a\in C_+^{\sigma}(\overline \Omega)$, $\sigma=\max\{1, s\}$. 

The homogeneous Neumann problem, \eqref{2.6N}-\eqref{2.8}, admits only one linearly independent solution $u^0=1$ in ${H}^{s}( {\Omega})$.
The non-homogeneous Neumann problem \eqref{2.6N}-\eqref{2.8} is solvable in ${H}^{s}( {\Omega})$ if and only if condition \eqref{3.suf} is satisfied.
\end{corollary}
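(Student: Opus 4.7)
The plan is to derive both claims of the corollary directly from the equivalence between the Neumann BVP and the BDIE system (N1$_\Delta$), which, unlike (N1), is available throughout the full range $a\in C_+^\sigma(\overline\Omega)$ with $\sigma=\max\{1,s\}$. Specifically, uniqueness in $H^s(\Omega)$ will follow from the one–dimensional kernel of $\mathfrak{N}^{1\Delta}$, and the solvability condition \eqref{3.suf} will follow from the explicit description of its cokernel; the remaining work reduces to a short computation with the first Green identity.

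For uniqueness, I would apply Theorem~\ref{equivalenceN}(i) to an arbitrary solution $u\in H^s(\Omega)$ of the homogeneous Neumann problem: the couple $(u,\gamma^+u)$ then solves the corresponding homogeneous system (N1$_\Delta$). Theorem~\ref{KernelN} shows that $\ker\mathfrak{N}^{1\Delta}$ is spanned by $(u^0,\varphi^0)=(1,1)$, so $u$ must be a scalar multiple of $u^0\equiv 1$, which gives the claim.

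For the solvability part, Theorem~\ref{equivalenceN}(ii) reduces the problem to solvability of $\mathfrak{N}^{1\Delta}\mathcal{U}=\mathcal{F}^{N1\Delta}$ with $\mathcal{F}^{N1\Delta}$ given by \eqref{FN1D}. Since by Theorem~\ref{FredholmN} this operator is Fredholm with zero index and, by Theorem~\ref{T3.521HFD}, its cokernel is spanned by the single functional $g^{*1\Delta}=(0,1)^\top$, the BDIE system is solvable if and only if
\begin{equation*}
\bigl\langle \mathcal{F}_2^{N1\Delta},\,\gamma^+u^0\bigr\rangle_{\partial\Omega}=0,
\end{equation*}
so it remains to identify this quantity with the left-hand side of \eqref{3.suf}.

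The main (though routine) computation is the evaluation of $\langle \mathcal{F}_2^{N1\Delta},1\rangle_{\partial\Omega}$. Substituting $\mathcal{F}_2^{N1\Delta}=T^+_\Delta(\tilde f;\P_\Delta\tilde f)-\tfrac12\psi_0+\W'_\Delta\psi_0$ and using the jump relation $T^+_\Delta V_\Delta\psi_0-T^-_\Delta V_\Delta\psi_0=\psi_0$ from \eqref{WHs1GaTjD} together with definition \eqref{3.13} for $a\equiv1$, I can rewrite $\W'_\Delta\psi_0-\tfrac12\psi_0=T^+_\Delta V_\Delta\psi_0-\psi_0$. Since $V_\Delta\psi_0$ is harmonic in $\Omega$ and the test function $v=u^0\equiv 1$ has vanishing gradient, the first Green identity for the Laplacian (Theorem~\ref{T6.6}) applied to $V_\Delta\psi_0$ gives $\langle T^+_\Delta V_\Delta\psi_0,1\rangle_{\partial\Omega}=0$, while applied to $\P_\Delta\tilde f$ (for which $r_\Omega\Delta\P_\Delta\tilde f=r_\Omega\tilde f$) it gives $\langle T^+_\Delta(\tilde f;\P_\Delta\tilde f),1\rangle_{\partial\Omega}=\langle \tilde f,1\rangle_\Omega$. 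Collecting these terms yields
\begin{equation*}
\bigl\langle \mathcal{F}_2^{N1\Delta},\gamma^+u^0\bigr\rangle_{\partial\Omega}
=\langle \tilde f,u^0\rangle_\Omega-\langle \psi_0,\gamma^+u^0\rangle_{\partial\Omega},
\end{equation*}
which is precisely condition \eqref{3.suf}. I do not expect any serious obstacle: all the heavy lifting is already contained in Theorems~\ref{equivalenceN}, \ref{KernelN} and \ref{T3.521HFD}, and what remains is bookkeeping plus two applications of the first Green identity with the constant test function $v=u^0$.
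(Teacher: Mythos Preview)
Your proposal is correct and follows essentially the same route as the paper: both arguments use the equivalence Theorem~\ref{equivalenceN} together with the kernel description in Theorem~\ref{KernelN} for uniqueness, and the cokernel description in Theorem~\ref{T3.521HFD} for the solvability characterisation, reducing the latter to the same computation of $\langle \mathcal F_2^{N1\Delta},\gamma^+u^0\rangle_{\partial\Omega}$ via the first Green identities \eqref{Tgen}, \eqref{Tcan} applied with the constant test function $u^0=1$. The only cosmetic difference is that the paper separates the sufficiency and necessity directions explicitly, whereas you establish the identity $\langle \mathcal F_2^{N1\Delta},1\rangle_{\partial\Omega}=\langle\tilde f,u^0\rangle_\Omega-\langle\psi_0,\gamma^+u^0\rangle_{\partial\Omega}$ once and let the Fredholm alternative do the rest.
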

\begin{proof}
Assuming that a function $u$ is a solution of the homogeneous Neumann problem, by Theorem \ref{equivalenceN} the couple $(u,\varphi)=(u,\gamma_+\varphi)$ solves the homogeneous BDIE system {\rm(N1$_\Delta$)}, and then Theorem \ref{T3.521HFD} implies that $u$ is spanned over $u^0=1$.

Assume that solvability condition \eqref{3.suf} is satisfied. Then the right-hand side \eqref{FN1} of the BDIE system {\rm(N1$_\Delta$)} satisfies its solvability condition 
$g^{*1\Delta}(\mathcal F_1,\mathcal F_2)=\langle \F_2,\gamma^+ u^0\rangle_{ \partial\Omega}=0$ 
given by Theorem \ref{T3.521HFD}. 
Indeed, due to the first Green identities \eqref{Tgen} and \eqref{Tcan} applied for the operator $\Delta$ and taking into account that ${\W}_\Delta\psi_0$ is a harmonic function in $\Omega$ and $u^0=1$, we obtain,
\begin{multline}\label{6.82}
\langle \F_2,\gamma^+ u^0\rangle_{ \partial\Omega}
=\langle T^+_\Delta(\tilde f;\P_\Delta\tilde f)-\dfrac{1}{2}\psi_0
+{\W}'_\Delta\psi_0,\gamma^+ u^0\rangle_{ \partial\Omega}\\
=\langle T^+_\Delta(\tilde f;\P_\Delta\tilde f)-\psi_0
+T^+_\Delta{\W}_\Delta\psi_0,\gamma^+ u^0\rangle_{ \partial\Omega}
=\langle \tilde f,u^0\rangle_{\Omega}
-\langle\psi_0,\gamma^+ u^0\rangle_{ \partial\Omega}.
\end{multline}
Hence the BDIE system {\rm(N1$_\Delta$)} is solvable, implying solvability of the Neumann BVP due to Theorem \ref{equivalenceN}(ii). 
This proves that condition \eqref{3.suf} is sufficient.

Let us now assume that there exists a solution of the Neumann BVP. 
Hence Theorem \ref{equivalenceN}(i) implies that the BDIE system {\rm(N1$_\Delta$)} with the right-hand side \eqref{FN1} is solvable, implying that its solvability condition $\langle \F_2,\gamma^+ u^0\rangle_{ \partial\Omega}=0$ is satisfied.
Then \eqref{6.82} implies condition \eqref{3.suf}, proving that it is necessary.
\end{proof}

\subsection{Perturbed (stabilised) segregated BDIE systems for the Neumann problem}\label{PSBDIE}

Theorem \ref{KernelN} implies, that even when the solvability condition \eqref{3.suf} is satisfied, the solutions of BDIE systems (N1$_\Delta$), (N1) and (N2) are not unique, and moreover, the BDIE left hand side operators, $\mathfrak N^{1\Delta}$, $\mathfrak N^1$ and $\mathfrak N^2$, have non-zero kernels and thus are not invertible. 
To find a solution $(u,\varphi)$ from uniquely solvable BDIE systems with continuously invertible left hand side operators,
let us consider, following \cite{MikPert1999}, some stabilised BDIE systems obtained from  (N1$_\Delta$), (N1) and (N2) by finite-dimensional operator perturbations. Note that other choices of the perturbing operators are also possible.

Below we use the notations $\mathcal U^N=(u,\varphi)^\top$, $\mathcal U^0=(1,1)^\top$, and $|\pO|:=\int_\pO dS$.

%\subsubsection{Perturbation of BDIE system (N1)}

Let us introduce the perturbed counterparts of the BDIE systems (N1$_\Delta$), (N1) and (N2)
\begin{align}\label{N1-p}
\hat{\mathfrak N}^{1\Delta}\mathcal U^N=\mathcal F^{N1\Delta},\quad 
\hat{\mathfrak N}^{1}\mathcal U^N=\mathcal F^{N1},\quad 
\hat{\mathfrak N}^{2}\mathcal U^N=\mathcal F^{N2},
\end{align}
where $\hat{\mathfrak N}^{1\Delta}:={\mathfrak N}^{1\Delta}+\mathring{\mathfrak N}^{1\Delta},$
$\hat{\mathfrak N}^{1}:={\mathfrak N}^{1}+\mathring{\mathfrak N}^{1},$
$\hat{\mathfrak N}^{2}:={\mathfrak N}^{2}+\mathring{\mathfrak N}^{2}$
and
\begin{align}\label{6.83a}
\mathring{\mathfrak N}^{1\Delta}=\mathring{\mathfrak N}^1\mathcal U^N(y):=
g^0(\mathcal U^N)\,{\mathcal G}^1(y)=\frac{1}{|\pO|}\int_\pO \varphi(x)dS\ 
\left(\begin{array}{c}0\\ 1\end{array}\right) ,
\end{align}
that is, 
\be \label{Gg*A}
g^0(\mathcal U^N):=\frac{1}{|\pO|}\int_\pO \varphi(x)dS,
\quad
{\mathcal G}^1(y):=\left(\begin{array}{c}0\\ 1\end{array}\right),
\ee
while
$$
%\hat{\mathfrak N}^{2}:={\mathfrak N}^{2}+\mathring{\mathfrak N}^{2}
%\ \text{and}\
\mathring{\mathfrak N}^2\mathcal U^N:=
g^0(\mathcal U^N)\,{\mathcal G}^2=\frac{1}{|\pO|}\int_\pO \varphi(x)dS 
\left(\begin{array}{c}a^{-1}(y)\\ \gamma^+a^{-1}(y)\end{array}\right)  ,
$$
that is, $g^0(\mathcal U^N)$ as in \eqref{Gg*A} and
%\bes %\label{Gg*A}
%g^0(\mathcal U^N):=\frac{1}{|\pO|}\int_\pO \varphi(x)dS,
%\quad
$
{\mathcal G}^2:=\left(\begin{array}{c}a^{-1}(y)\\ \gamma^+a^{-1}(y)\end{array}\right).
$
%\ees

 \begin{theorem}
 Let $\Omega$ be a bounded simply-connected Lipschitz domain and $\ha< s< \tha$.
 %, and $a\in C_+^{\tha}(\overline\Omega)$. 
 
(i) The following operators 
%$
% \hat{\mathfrak N}^{1}: H^s( {\Omega} )\times
%H^{s-\frac{1}{2}}( \partial\Omega) \to H^s( {\Omega} )\times H^{s-\frac{3}{2}}( \partial\Omega)
%$
are continuous and continuously invertible.
\begin{eqnarray}
\hat{\mathfrak N}^{1\Delta}&:& H^s( {\Omega})\times H^{s-\frac{1}{2}}(\partial\Omega) 
 \to H^s( {\Omega} )\times H^{s-\frac{3}{2}}( \partial\Omega)\quad\mbox{if }
 a\in C_+^{\sigma}(\overline \Omega),\label {mfN1Da^}\\
\hat{\mathfrak N}^1&:& H^s( {\Omega})\times H^{s-\frac{1}{2}}(\partial\Omega) 
 \to H^s({\Omega} )\times H^{s-\frac{3}{2}}( \partial\Omega)\quad\mbox{if }
 a\in C_+^{\tha}(\overline \Omega),\label {mfN1a^}\\
\hat{\mathfrak N}^2&:&H^s( {\Omega})\times H^{s-\frac{1}{2}}({\partial\Omega})
\to H^s( {\Omega} )\times H^{s-\frac{1}{2}}({ \partial\Omega})\quad\mbox{if }
a\in C_+^{\sigma}(\overline \Omega).
\label{mfN2a^}
  \end{eqnarray}
(ii)
%The perturbed BDIE system \eqref{N1-p}
%\begin{align}\label{N1-p}
%\hat{\mathfrak N}^{1}\mathcal U^N=\mathcal F^{N1}
%\end{align} 
%is uniquely solvable. 
If condition $g^{*1\Delta}(\mathcal F^{N1\Delta})=0$, $g^{*1}(\mathcal F^{N1})=0$ or $g^{*2}(\mathcal F^{N2})=0$ 
%(or condition \eqref{3.suf} for $\F^{N1}$ in form \eqref{FN1}) 
are satisfied, then the unique solutions of the corresponding perturbed BDIE systems in \eqref{N1-p} give the same solution of original BDIE systems (N1$_\Delta$), (N1) and (N2) such that
\bes %\label{5.5dA}
g^0(\mathcal U^N)=\frac{1}{|\pO|}\int_\pO \varphi\, dS=\frac{1}{|\pO|}\int_\pO \gamma^+ u\, dS=0.
\ees
\end{theorem}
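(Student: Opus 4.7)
The plan is to exploit the finite-rank nature of the perturbations together with the cokernel characterisations from Theorems \ref{T3.521HFD}, \ref{T3.521HF}, and \ref{T3.521HF2}. Each perturbation operator $\mathring{\mathfrak N}^{1\Delta}$, $\mathring{\mathfrak N}^1$, $\mathring{\mathfrak N}^2$ is continuous of rank one in the respective pairs of spaces, so the perturbed operators \eqref{mfN1Da^}--\eqref{mfN2a^} are continuous and are Fredholm with index zero as compact perturbations of the corresponding operators of Theorem \ref{FredholmN}. Thus invertibility will reduce to injectivity.

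For injectivity of $\hat{\mathfrak N}^{1\Delta}$, I would take $\mathcal U^N=(u,\varphi)^\top$ with $\hat{\mathfrak N}^{1\Delta}\mathcal U^N=0$, i.e., $\mathfrak N^{1\Delta}\mathcal U^N=-g^0(\mathcal U^N)\mathcal G^1$, and apply the cokernel functional $g^{*1\Delta}$ to both sides. The left-hand side vanishes by Theorem \ref{T3.521HFD}, while a direct computation from \eqref{SSH10CD} and \eqref{Gg*A} gives $g^{*1\Delta}(\mathcal G^1)=\langle 1,\gamma^+u^0\rangle_{\partial\Omega}=|\partial\Omega|\ne 0$. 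Hence $g^0(\mathcal U^N)=0$ and $\mathfrak N^{1\Delta}\mathcal U^N=0$, so by Theorem \ref{KernelN}, $\mathcal U^N=c(1,1)^\top$; then $g^0(\mathcal U^N)=c=0$ and $\mathcal U^N=0$. The argument for $\hat{\mathfrak N}^1$ is identical once one notes $g^{*1}(\mathcal G^1)=|\partial\Omega|\ne 0$, which is immediate from \eqref{SSH10C}. The analogous argument for $\hat{\mathfrak N}^2$ proceeds by the same scheme provided $g^{*2}(\mathcal G^2)\ne 0$.

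The non-vanishing of $g^{*2}(\mathcal G^2)$ is the main obstacle. Setting $\chi:=\mathcal V_\Delta^{-1}\gamma^+u^0=\mathcal V_\Delta^{-1}1\in H^{s-\tha}(\partial\Omega)$ and using the identity $a\gamma^{+*}\phi=\gamma^{+*}((\gamma^+a)\phi)$ to convert the $\Omega$-pairing in \eqref{SSH10C2} into a boundary duality, then invoking $(\gamma^+a)(\gamma^+a^{-1})=1$ and $(\tfrac{1}{2}+\W'_\Delta)\chi+(\tfrac{1}{2}-\W'_\Delta)\chi=\chi$, the quantity collapses to
\begin{equation*}
g^{*2}(\mathcal G^2)=-\int_{\partial\Omega}\chi\,dS.
\end{equation*}
By unique solvability of the interior Dirichlet problem $V_\Delta\chi\equiv 1$ in $\Omega$, whence $T^+_\Delta V_\Delta\chi=0$; combining with the jump relation $T^+_\Delta V_\Delta\chi-T^-_\Delta V_\Delta\chi=\chi$ (see Theorem \ref{T3.1sJ} for $a=1$) and Green's identity with test function $1$ applied in $\Omega_-\cap B_R(0)$ as $R\to\infty$ to the decaying harmonic function $V_\Delta\chi$ in $\Omega_-$, one obtains $\int_{\partial\Omega}\chi\,dS=-(n-2)\omega_n\,c_\Omega\ne 0$, where $c_\Omega>0$ is the harmonic capacity constant of $\overline\Omega$ (for $n\ge 3$).

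For part (ii), apply the cokernel functional to the perturbed equation $\hat{\mathfrak N}^{1\Delta}\mathcal U^N=\mathcal F^{N1\Delta}$. By Theorem \ref{T3.521HFD} we have $g^{*1\Delta}(\mathfrak N^{1\Delta}\mathcal U^N)=0$, so the hypothesis $g^{*1\Delta}(\mathcal F^{N1\Delta})=0$ combined with $g^{*1\Delta}(\mathcal G^1)\ne 0$ forces $g^0(\mathcal U^N)=0$. Consequently $\mathring{\mathfrak N}^{1\Delta}\mathcal U^N=0$ and $\mathcal U^N$ solves the original system $\mathfrak N^{1\Delta}\mathcal U^N=\mathcal F^{N1\Delta}$; the equivalence Theorem \ref{equivalenceN}(ii) then gives $\varphi=\gamma^+u$, whence $\int_{\partial\Omega}\gamma^+u\,dS=\int_{\partial\Omega}\varphi\,dS=|\partial\Omega|\,g^0(\mathcal U^N)=0$. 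The cases $\hat{\mathfrak N}^1$ and $\hat{\mathfrak N}^2$ follow verbatim with their respective cokernel functionals, and the fact that the three resulting solutions coincide is a consequence of Theorem \ref{equivalenceN}.
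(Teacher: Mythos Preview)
Your proof is correct and follows the same overall strategy as the paper's: verify the two non-degeneracy conditions $g^0(\mathcal U^0)\ne 0$ and $g^{*k}(\mathcal G^k)\ne 0$ that make the rank-one perturbation repair the Fredholm operator with one-dimensional kernel and cokernel. The differences are in packaging and in one computation.

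First, you reprove by hand the special case of the abstract perturbation result that the paper invokes as Theorem~\ref{L1}: you show directly that the perturbed operator is Fredholm of index zero (as a compact perturbation), then injective by pairing with the cokernel generator, and argue part~(ii) by the same pairing trick. The paper instead checks the determinant conditions \eqref{2.3} (here $g^0(\mathcal U^0)=1$ and $g^{*k}(\mathcal G^k)\ne 0$) and cites Theorem~\ref{L1} for both conclusions. Your unfolding is equivalent in content and perhaps more transparent in this rank-one setting.

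Second, for $g^{*2}(\mathcal G^2)\ne 0$ you and the paper both reduce, via the same algebraic collapse using $(\gamma^+a)(\gamma^+a^{-1})=1$, to
\[
g^{*2}(\mathcal G^2)=-\langle\mathcal V_\Delta^{-1}\gamma^+u^0,\gamma^+u^0\rangle_{\partial\Omega}
=-\int_{\partial\Omega}\chi\,dS,\qquad \chi:=\mathcal V_\Delta^{-1}1.
\]
The paper then simply invokes positive definiteness of $\mathcal V_\Delta^{-1}:H^{1/2}(\partial\Omega)\to H^{-1/2}(\partial\Omega)$ to conclude $\langle\mathcal V_\Delta^{-1}1,1\rangle_{\partial\Omega}\ge C\|1\|_{H^{1/2}}^2>0$ in one line. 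Your detour through the equilibrium potential $V_\Delta\chi$ and capacity reaches the same conclusion but is longer and requires care with the exterior Green identity; moreover your stated formula $\int_{\partial\Omega}\chi\,dS=-(n-2)\omega_n c_\Omega$ carries a sign slip (the integral equals the Newtonian capacity of $\overline\Omega$ and is therefore positive), though this does not affect the nonvanishing conclusion you need.
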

\begin{proof}
For the functional $g^{*1\Delta}$ given by \eqref{SSH10CD} in Theorem~\ref{T3.521HFD}, 
$g^{*1\Delta}({\mathcal G}^1)=|\pO|$.
Similarly, for the functional $g^{*1}$ given by \eqref{SSH10C} in Theorem~\ref{T3.521HF}, 
$g^{*1}({\mathcal G}^1)=|\pO|$.

For the functional $g^{*2}$ given by \eqref{SSH10C2} in Theorem~\ref{T3.521HF2}, since the operator $\mathcal{V}_{\Delta}^{-1}:H^\ha(\pO)\to H^{-\ha}(\pO)$ is positive definite and $u^0(x)=1$, there exists a positive constant $C$ such that
\begin{multline}
g^{*2}({\mathcal G}^2)=
\left\langle -a\gamma^{+*}\left(\frac{1}{2}+\W'_\Delta\right)\mathcal{V}_{\Delta}^{-1}\gamma^+u^0,a^{-1}u^0\right\rangle_{ {\Omega}}
+\left\langle -a\left(\frac{1}{2}-\W'_\Delta\right)\mathcal{V}_{\Delta}^{-1}\gamma^+u^0,\gamma^+(a^{-1}u^0) \right\rangle_{ \partial\Omega}\\
=-\left\langle \left(\frac{1}{2}+\W'_\Delta\right)\mathcal{V}_{\Delta}^{-1}\gamma^+u^0
+ \left(\frac{1}{2}-\W'_\Delta\right)\mathcal{V}_{\Delta}^{-1}\gamma^+u^0,\gamma^+u^0 \right\rangle_{ \partial\Omega}
=-\left\langle\mathcal{V}_{\Delta}^{-1}\gamma^+u^0,\gamma^+u^0\right\rangle_{ \partial\Omega}\\
\le -C\|\gamma^+u^0\|_{H^\ha(\pO)}^2
\le -C\|\gamma^+u^0\|_{L_2(\pO)}^2=-C|\pO|^2<0.\label{g0U0A2}
\end{multline}

On the other hand $g^0(\mathcal U^0)=1.$
Hence Theorem~\ref{L1} 
%in Appendix, 
extracted from \cite{MikPert1999}, implies the theorem claims. 
\end{proof}

%\appendix
%\section{APPENDIX}
\section{Auxiliary assertions}\label{Appendix}
%\setcounter{section}{1}
%\section{Approximation of generalised conormal derivatives by classical ones}
We provide below some auxiliary results used in the main text.

\begin{theorem}\label{Tseq}
Let $\ha<s<\tha$, $u\in {H}^{s}( {\Omega})$,  $a\in C_+^{\sigma}(\overline \Omega)$, $\sigma=\max\{1, s\}$, $Au=r_ {\Omega}\tilde f$ in an interior or exterior Lipschitz domain $ {\Omega}$ for some $\tilde f\in
\widetilde{H}^{s-2}( {\Omega})$. Let  $\{f_k\}\in \widetilde H^{-\ha}_\bullet( {\Omega})$ be a sequence such that 
$\|\tilde f-\mathring E_{\Omega}f_k\|_{\widetilde H^{s-2}( {\Omega})}\to 0$ as $k\to\infty$. 

Then there exists a sequence  $\{u_k\}\in H^{s,0}( {\Omega};A)$ such that 
$Au_k=f_k$ in $ {\Omega}$ and $\|u-u_k\|_{H^{s}( {\Omega})}\to 0$  as $k\to \infty$. Moreover, $\|T^+(u_k)-T^+(\tilde{f};u)\|_{H^{s-\tha}( \partial\Omega)}\to 0$   as $k\to \infty$.
\end{theorem}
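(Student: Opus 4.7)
The plan is to construct each $u_k$ as the unique solution of a Dirichlet boundary value problem with the given right-hand side $f_k$ and prescribed trace equal to $\gamma^+u$. Concretely, I would solve, for each $k$,
\begin{equation*}
A u_k = f_k \ \text{ in } \Omega, \qquad \gamma^+ u_k = \gamma^+ u \ \text{ on } \partial\Omega.
\end{equation*}
Since $\gamma^+ u \in H^{s-\ha}(\partial\Omega)$ by the trace theorem and $f_k \in \widetilde H^{-\ha}_\bullet(\Omega) \subset H^{s-2}(\Omega)$ (as $s-2<-\ha$ means the embedding is automatic in the relevant range, otherwise apply $r_\Omega \mathring E_\Omega$), Corollary~\ref{Cor5.5} provides a unique $u_k \in H^s(\Omega)$ together with continuous dependence on the data. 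Since moreover $A u_k = f_k$ satisfies the $\widetilde H^0_\bullet$-regularity required by Definition~\ref{Hst} (noting that the construction produces $f_k$ in $\widetilde H^0_\bullet(\Omega)$ as a denser subset of $\widetilde H^{-\ha}_\bullet(\Omega)$), we obtain $u_k \in H^{s,0}(\Omega;A)$, and by Remark~\ref{R2.1}(\ref{v}) the canonical extension is $\tilde A_\Omega u_k = \mathring E_\Omega f_k$.

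For the $H^s$-convergence, apply Corollary~\ref{Cor5.5} to the difference $v_k := u - u_k$, which solves the homogeneous Dirichlet problem
\begin{equation*}
A v_k = r_\Omega(\tilde f - \mathring E_\Omega f_k) \ \text{ in } \Omega,\qquad \gamma^+ v_k = 0 \ \text{ on } \partial\Omega.
\end{equation*}
Continuity of the Dirichlet inverse operator then yields
\begin{equation*}
\|u - u_k\|_{H^s(\Omega)} \le C\, \|r_\Omega(\tilde f - \mathring E_\Omega f_k)\|_{H^{s-2}(\Omega)} \le C\,\|\tilde f - \mathring E_\Omega f_k\|_{\widetilde H^{s-2}(\Omega)} \to 0.
\end{equation*}

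For the co-normal derivative convergence, I would combine the identification $T^+ u_k = T^+(\tilde A_\Omega u_k; u_k) = T^+(\mathring E_\Omega f_k; u_k)$ with the linearity property \eqref{GCDL} of the generalised co-normal derivative in the pair $(\tilde f, u)$, giving
\begin{equation*}
T^+(u_k) - T^+(\tilde f; u) = T^+\bigl(\mathring E_\Omega f_k - \tilde f;\, u_k - u\bigr).
\end{equation*}
Then the a priori bound \eqref{estimate} of Theorem~\ref{T6.6} furnishes
\begin{equation*}
\|T^+(u_k) - T^+(\tilde f; u)\|_{H^{s-\tha}(\partial\Omega)} \le C_1\|u_k - u\|_{H^s(\Omega)} + C_2\|\mathring E_\Omega f_k - \tilde f\|_{\widetilde H^{s-2}(\Omega)},
\end{equation*}
both terms of which vanish as $k\to\infty$.

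The main obstacle is handling the exterior case $\Omega = \Omega_-$: Corollary~\ref{Cor5.5} is formulated only for the interior domain $\Omega_+$, whereas the statement allows $\Omega$ to be exterior. For $n \ge 3$ the standard exterior Dirichlet problem in $H^s$ (or $H^s_{\rm loc}$ with appropriate decay at infinity) is solvable by analogous BDIE arguments, and the difference estimate can be localised near $\partial\Omega$ using a cut-off since only the behaviour near the boundary is needed for the trace and co-normal estimates. A secondary subtle point is the consistency of the regularity claim $u_k \in H^{s,0}(\Omega;A)$ with the assumed regularity $f_k \in \widetilde H^{-\ha}_\bullet(\Omega)$, which I would resolve by remarking that $L_2(\Omega)$ is dense in $\widetilde H^{s-2}(\Omega)$, so without loss of generality the approximating sequence may be taken in $L_2(\Omega) = \widetilde H^0_\bullet(\Omega)$, a subspace of $\widetilde H^{-\ha}_\bullet(\Omega)$.
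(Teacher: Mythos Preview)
Your proposal is correct and follows essentially the same route as the paper: construct $u_k$ as the unique $H^s$-solution of the Dirichlet problem $Au_k=f_k$, $\gamma^+u_k=\gamma^+u$ via Corollary~\ref{Cor5.5}, deduce $H^s$-convergence from the continuity of $(\mathcal A^D)^{-1}$, and obtain the co-normal convergence from the difference formula for the generalised co-normal derivative together with estimate~\eqref{estimate}. The paper writes the last step by subtracting the explicit representations $T^+(\tilde f;u)=(\gamma^{-1})^*(\tilde f-\check A_\Omega u)$ and $T^+u_k=(\gamma^{-1})^*(\mathring E_\Omega f_k-\check A_\Omega u_k)$ directly, whereas you invoke the linearity relation~\eqref{GCDL}; these are equivalent.

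Your two ``obstacles'' are well spotted. The paper's own proof appeals only to Corollary~\ref{Cor5.5} (interior case) and does not separately treat the exterior domain either, so this is a gap shared with the original. Likewise, the paper's proof actually establishes only $u_k\in H^{s,-\ha}(\Omega;A)$ (since $f_k\in\widetilde H^{-\ha}_\bullet(\Omega)$), not $H^{s,0}(\Omega;A)$ as stated; your remark that one may without loss take $f_k\in L_2(\Omega)$ is the natural fix and is consistent with the density observation made immediately after the theorem in the paper.
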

\begin{proof}
Let us consider the Dirichlet problem
\begin{eqnarray}
\label{2.6t} && A\,u_k= f_k  \;\;\; \mbox{\rm in}\;\;\;\;  {\Omega},
\\ 
 \label{2.7t} &&  \gamma^+u_k=\gamma^+u  \;\;\; \mbox{\rm on}\;\;\;\; \pO,
\end{eqnarray} 
By Corollary~\ref{Cor5.5},  the unique solution of problem \eqref{2.6t}-\eqref{2.7t} in $H^{s}( {\Omega})$ is  
$u_k=(\mathcal A^D)^{-1}(f_k,\varphi_k)^\top$, where 
$(\mathcal A^D)^{-1}:H^{s-2}( {\Omega})\times H^{s-\ha}(\pO)\to H^{s}( {\Omega})$ is a continuous operator. 
Hence the functions $u_k$ converge to $u$ in $H^{s}( {\Omega})$ as $k\to\infty$. 
Since $\tilde A u_k=\mathring E_{\Omega} f_k\in\widetilde H^{-\ha}( {\Omega})$, we obtain that $u_k\in H^{s,-\ha}( {\Omega};A)$ and the canonical conormal derivative, $T^+u_k$, is  well defined. 
Then subtracting \eqref{Tcandef} for $u_k$ from \eqref{Tgend}, we obtain,
%$$
%\left\langle
% T^+(\tilde f;u)-T^+u_k\,;\, w\right\rangle _{\pO}=
% \langle \tilde f-\mathring E_{\Omega}f_k,\gamma^{-1}w \rangle_ {\Omega} + \E(u-u_k,\gamma^{-1}w)\quad 
% \forall\ w\in H^{\tha-s} ( \partial\Omega).
%$$
$$
T^+(\tilde f;u)-T^+u_k=
 (\gamma^{-1})^*[\tilde f-\mathring E_{\Omega}f_k + \check A_{\Omega}(u-u_k)].
$$
Hence 
\be \label{T-Tk}
\|T^+(\tilde f;u)-T^+u_k\|_{H^{s-\tha}( \partial\Omega)}\le 
 C\left(\|\tilde f-\mathring E_{\Omega}f_k\|_{\widetilde H^{s-2}( {\Omega})} 
 + C_1\|u-u_k\|_{H^{s}( {\Omega})}\right)
\ee
for some positive $C$ and $C_1$.
Since the right hand side  of \eqref{T-Tk} tends to zero as $k\to\infty$, so does also the left hand side.
\end{proof}

Note that since $\mathcal D( {\Omega})\subset \widetilde H^{-\ha}_\bullet( {\Omega})$ is dense in $\widetilde{H}^{s-2}( {\Omega})$, the sequence $\{f_k\}\in \widetilde H^{-\ha}_\bullet( {\Omega})$ from Theorem~\ref{Tseq} hypotheses  does always exist.

{
The following multiplication theorem is well know, see e.g., \cite[Theorems
1.4.1.1, 1.4.1.2]{Grisvard1985}, \cite[Theorem 2(b)]{Zolesio1977}, \cite[Theorems 1.9.1, 1.9.2, 1.9.5]{Agranovich2015}, \cite[Theorem 3.2]{MikJMAA2013}. 
\begin{theorem}\label{GrL}
Let $\Omega_0$ be an open set.

(i) If $g\in L_\infty({\Omega_0})$, then $gv\in L_2(\Omega_0)$
for every $v\in L_2(\Omega_0)$  and
  $
 \|gv\|_{L_2( {\Omega})}\le c \|g\|_{L_\infty({\Omega_0})} \|v\|_{L_2(\Omega_0)}.
$ 

(ii) If $\sigma$ is a non-zero integer and  $g\in {C}^{|\sigma|-1,1}(\overline{\Omega_0})$, then $gv\in H^\sigma(\Omega_0)$
for every $v\in H^\sigma(\Omega_0)$ and
  $
 \|gv\|_{H^\sigma( {\Omega})}\le c \|g\|_{{C}^{|\sigma|-1,1}(\overline{\Omega_0})} \|v\|_{H^\sigma(\Omega_0)}.
  $

(iii)  If $\sigma$ is a non-integer, $|\sigma|=m+\theta$, where $m$ is a non-negative integer and $0<\theta<1$, while  $g\in {C}^{m,\eta}(\overline{\Omega_0})$ with $\theta<\eta<1$,
 then $gv\in H^\sigma(\Omega_0)$
for every $v\in H^\sigma(\Omega_0)$ and
  $
 \|gv\|_{H^\sigma( {\Omega})}\le c \|g\|_{{C}^{m,\eta}(\overline{\Omega_0})} \|v\|_{H^\sigma(\Omega_0)}.
  $

In all cases $c$ is a positive constant independent of $g$, $v$ or $\Omega_0$.
\end{theorem}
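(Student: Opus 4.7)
The plan is to dispatch the three parts separately, reducing (ii) and (iii) to their positive-order subcases via the Leibniz rule, then recovering negative orders by duality in the $L_2$-pairing. Part (i) is immediate from the pointwise bound $|g(x)v(x)|^2\le \|g\|_{L_\infty(\Omega_0)}^2|v(x)|^2$ followed by integration.

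For part (ii) with positive integer $\sigma$, I would start with $v\in\mathcal D(\overline{\Omega_0})$ and apply the Leibniz rule
\[
\partial^\alpha(gv)=\sum_{\beta\le\alpha}\binom{\alpha}{\beta}(\partial^\beta g)(\partial^{\alpha-\beta}v),\qquad |\alpha|\le\sigma.
\]
The hypothesis $g\in C^{\sigma-1,1}(\overline{\Omega_0})$ ensures that every $\partial^\beta g$ with $|\beta|\le\sigma$ lies in $L_\infty(\Omega_0)$ (Rademacher's theorem handling the top-order derivatives), with $L_\infty$-norm bounded by $\|g\|_{C^{\sigma-1,1}(\overline{\Omega_0})}$. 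Applying part (i) term by term and summing gives the estimate on $\mathcal D(\overline{\Omega_0})$, and density extends it to $v\in H^\sigma(\Omega_0)$. For negative integer $\sigma$, the just-established case yields a continuous multiplier $g:H^{-\sigma}(\Omega_0)\to H^{-\sigma}(\Omega_0)$; since multiplication by a real function is symmetric in the $L_2$-duality, taking the adjoint produces the required continuous map on $H^\sigma(\Omega_0)$ with the same norm.

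For part (iii) with positive non-integer $|\sigma|=m+\theta$, I would work with the Slobodetskii--Gagliardo seminorm
\[
|w|_{H^{|\sigma|}(\Omega_0)}^2=\sum_{|\alpha|=m}\iint_{\Omega_0\times\Omega_0}\frac{|\partial^\alpha w(x)-\partial^\alpha w(y)|^2}{|x-y|^{n+2\theta}}\,dx\,dy,
\]
again reducing via Leibniz to controlling each product $(\partial^\beta g)(\partial^\gamma v)$ with $|\beta|+|\gamma|=m$. The key step is to split
\[
\partial^\beta g(x)\,\partial^\gamma v(x)-\partial^\beta g(y)\,\partial^\gamma v(y)=\bigl(\partial^\beta g(x)-\partial^\beta g(y)\bigr)\partial^\gamma v(x)+\partial^\beta g(y)\bigl(\partial^\gamma v(x)-\partial^\gamma v(y)\bigr).
\]
The second piece is majorised by $\|g\|_{C^{m,\eta}(\overline{\Omega_0})}$ times the Slobodetskii seminorm of $v$. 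The first piece is bounded using the Hölder condition $|\partial^\beta g(x)-\partial^\beta g(y)|\le\|g\|_{C^{m,\eta}(\overline{\Omega_0})}|x-y|^\eta$, producing the kernel $|x-y|^{-(n-2(\eta-\theta))}$; integrating this in $y$ over any bounded neighbourhood gives a finite contribution controlled by $\|v\|_{L_2}$, with a cut-off/truncation handling unbounded $\Omega_0$. Combined with the lower-order contributions from $|\alpha|<m$ (which reduce to part (ii) after re-labelling), this produces the estimate. The negative non-integer case follows by the same duality argument as in (ii).

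The principal obstacle is the singular double integral in part (iii): the strict inequality $\theta<\eta$ imposed in the hypothesis is precisely what renders $|x-y|^{-(n-2(\eta-\theta))}$ locally integrable, so the argument collapses if $\eta=\theta$. A secondary subtlety is the claimed independence of the constant $c$ on $\Omega_0$: this is preserved throughout because every inequality used is either pointwise, or involves a kernel integrated over $\Omega_0\subset\R^n$ whose bound depends only on the support of $v$ (or on a cut-off that can be taken universal), so no extension operator depending on the geometry of $\Omega_0$ needs to be invoked.
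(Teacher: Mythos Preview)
The paper does not supply its own proof of this theorem: it is quoted as a well-known multiplication result, with pointers to Grisvard, Zolesio, Agranovich, and \cite{MikJMAA2013}. Your sketch is essentially the standard intrinsic proof found in those sources (notably Grisvard, Theorems~1.4.1.1--1.4.1.2), so in spirit it matches the literature the paper defers to rather than offering an alternative route.

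One technical point you should tighten is the duality step for negative $\sigma$. With the paper's convention that $H^{\sigma}(\Omega_0)$ is the space of \emph{restrictions} from $H^{\sigma}(\R^n)$, the predual under the $L_2$-pairing is $\widetilde{H}^{-\sigma}(\Omega_0)$, not $H^{-\sigma}(\Omega_0)$. Hence to obtain $g:H^{\sigma}(\Omega_0)\to H^{\sigma}(\Omega_0)$ for $\sigma<0$ by transposition you must first verify boundedness of multiplication by $g$ on $\widetilde{H}^{|\sigma|}(\Omega_0)$, not merely on $H^{|\sigma|}(\Omega_0)$. This is a routine fix --- take $\phi\in\mathcal D(\Omega_0)$, extend $g$ to $G\in C^{|\sigma|-1,1}(\R^n)$ (respectively $C^{m,\eta}(\R^n)$) by Whitney with a constant depending only on $n$ and the smoothness indices, and apply the $\R^n$ case where intrinsic and restriction norms coincide --- but it should be said explicitly. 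On a general open set the intrinsic norm used in your Leibniz and Gagliardo computations controls, but is not a priori controlled by, the restriction norm, so the Whitney extension of $g$ is also what secures the claimed domain-independence of $c$ in the positive-order cases.
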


%%%%%%%%%%%%%%%%%%%   THEOREM 3.2

%\section{Invertibility and Fredholm properties of some boundary integral operators}\label{SP}
%%%%%%%%%%%%%%%%%%%   THEOREM 3.6
\
\begin{theorem}\label{T3.6}
Let $\Omega$ be a bounded simply-connected Lipschitz domain and  $0\le \sigma\le 1$.
The operators
%%%%%%%%%%%%%%%%%%%% 3.15
\begin{align}
\label{3.15}
{\cal V}_\Delta  &: H^{\sigma-1}(\pO)\to H^{\sigma}(\pO), \\
\label{3.15W}
-\ha I + {\cal W}_\Delta &: H^{\sigma}(\pO)\to H^{\sigma}(\pO),\\
\label{3.15W'}
 -\ha I + {\cal W}'_\Delta &: H^{-\sigma}(\pO)\to H^{-\sigma}(\pO)
\end{align}
 are isomorphisms, while the operators
\begin{align}
\label{3.15W+}
\ha I + {\cal W}_\Delta &: H^{\sigma}(\pO)\to H^{\sigma}(\pO),\\
\label{3.15W'+}
 \ha I + {\cal W}'_\Delta &: H^{-\sigma}(\pO)\to H^{-\sigma}(\pO),\\
 \label{LFredh}
\mathcal L_\Delta&: H^{\sigma}(\pO)\to H^{\sigma-1}(\pO)
\end{align}
are Fredholm with zero index. 
%{\rd *** Find an appropriate reference about operator \eqref{LFredh}. }
\begin{proof}
Invertibility of the boundary integral operators \eqref{3.15}-\eqref{3.15W'} related with the harmonic layer potential is well known, cf. e.g., \cite{Verchota1984}, \cite[Theorem 4.1]{Mitrea_D1997}, \cite[Theorem 8.1]{FMM1998}.
The Fredholm property of operator \eqref{LFredh} for $\sigma=\ha$ is also well known, see, e.g. \cite[Theorem 7.8]{McLean2000}. Then the corresponding result for $0\le \sigma\le 1$ can be proved as in \cite[Theorem 7.17]{McLean2000} but using a sharper regularity result from \cite[Theorem 3]{Costabel1988}.
\end{proof}

\end{theorem}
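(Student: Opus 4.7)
The plan is to split the claims into the invertibility part (operators \eqref{3.15}--\eqref{3.15W'}) and the Fredholm-of-index-zero part (operators \eqref{3.15W+}--\eqref{LFredh}), and to first establish everything at the ``energy'' endpoint $\sigma=\ha$ via variational methods, then use Verchota's Rellich-identity results at the endpoints $\sigma=0$ and $\sigma=1$, and finally interpolate.

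At $\sigma=\ha$ the key fact is coercivity of $\mathcal V_\Delta:H^{-\ha}(\pO)\to H^{\ha}(\pO)$. Applying the first Green identity to $V_\Delta\psi$ in $\Omega_+$ and $\Omega_-$ (using decay at infinity since $n\ge 3$) and the jump relation $T_\Delta^+V_\Delta\psi-T_\Delta^-V_\Delta\psi=\psi$ from \eqref{WHs1GaTjD} gives $\langle \mathcal V_\Delta\psi,\psi\rangle_\pO=\|\nabla V_\Delta\psi\|^2_{L_2(\R^n)}$, which bounds $\|\psi\|^2_{H^{-\ha}(\pO)}$ from above up to a constant, yielding invertibility of \eqref{3.15}. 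Similarly, for $\mathcal L_\Delta$ at $\sigma=\ha$, the identity $\langle\mathcal L_\Delta\varphi,\varphi\rangle_\pO=\|\nabla W_\Delta\varphi\|^2_{L_2(\R^n\setminus\pO)}$ combined with a Poincar\'e-type estimate gives a G\aa rding inequality $\langle\mathcal L_\Delta\varphi,\varphi\rangle_\pO\ge c\|\varphi\|^2_{H^{\ha}(\pO)}-C\|\varphi\|^2_{L_2(\pO)}$, which together with compactness of $H^{\ha}\hookrightarrow L_2$ forces the Fredholm-of-index-zero property at $\sigma=\ha$. For $\pm\ha I+\mathcal W_\Delta$ and $\pm\ha I+\mathcal W'_\Delta$ at $\sigma=\ha$, invertibility of $\mathcal V_\Delta$ yields the Calder\'on-type symmetrisation $\mathcal V_\Delta(\pm\ha I+\mathcal W'_\Delta)=(\pm\ha I+\mathcal W_\Delta)\mathcal V_\Delta$, reducing one to the other; injectivity of $-\ha I+\mathcal W_\Delta$ follows from interior-Dirichlet uniqueness together with exterior-Dirichlet decay, and the complementary $+\ha I$ operators are Fredholm of index zero (possibly with one-dimensional kernel corresponding to constants).

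The extension to $\sigma\in[0,1]$ then proceeds by invoking Verchota's Rellich-identity bounds on Lipschitz domains, which give invertibility of $\mathcal V_\Delta:L_2(\pO)\to H^1(\pO)$ (i.e.\ $\sigma=1$) and, by duality, $\mathcal V_\Delta:H^{-1}(\pO)\to L_2(\pO)$ (i.e.\ $\sigma=0$); likewise $-\ha I+\mathcal W_\Delta$ on $L_2$ and $H^1$, and $-\ha I+\mathcal W'_\Delta$ on $L_2$ and $H^{-1}$, are invertible \cite{Verchota1984}. Complex interpolation between the endpoints $\sigma=0,1$ (or between the energy case and an endpoint) gives the claimed isomorphism property throughout $0\le\sigma\le 1$. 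The same endpoint results make $\pm\ha I+\mathcal W_\Delta$ and $\pm\ha I+\mathcal W'_\Delta$ Fredholm of index zero for all $\sigma$, the index being preserved by interpolation because kernels and cokernels are finite-dimensional and stable. For $\mathcal L_\Delta$, one combines the G\aa rding estimate at $\sigma=\ha$ with endpoint mapping properties of the double layer potential on Lipschitz domains to extend the Fredholm zero-index property to the full range, in the spirit of \cite[Theorem 7.17]{McLean2000}.

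The main obstacle I expect is the careful justification of index preservation under interpolation, since Fredholmness does not interpolate automatically; it must be ensured via uniformity of the endpoint regularity results on Lipschitz boundaries, which is where Verchota's non-tangential maximal function estimates are essential. A secondary technical point is that coercivity of $\mathcal L_\Delta$ requires carefully accounting for the constant null-space that vanishes on mean-zero test functions but would otherwise obstruct direct invertibility.
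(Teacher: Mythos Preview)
Your proposal is essentially the same strategy that the paper's proof (which is really a string of citations) relies on: the cited works of Verchota, Mitrea, Fabes--Mendez--Mitrea prove the endpoint invertibility of \eqref{3.15}--\eqref{3.15W'} via Rellich identities exactly as you describe, and McLean's Theorem 7.8 gives the $\sigma=\tfrac12$ Fredholm property of $\mathcal L_\Delta$ via the G\aa rding inequality you wrote down (up to a sign: with the paper's conventions one actually gets $-\langle\mathcal L_\Delta\varphi,\varphi\rangle_{\pO}=\|\nabla W_\Delta\varphi\|^2_{L_2(\R^n\setminus\pO)}$, which of course changes nothing for Fredholmness).

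The one place where your sketch diverges from the paper's route is the extension of the Fredholm/index-zero claim to all $\sigma\in[0,1]$. You propose to interpolate and then worry, rightly, that Fredholmness and index do not interpolate automatically. The paper sidesteps this entirely: rather than interpolating, it invokes the regularity-bootstrap argument of \cite[Theorem 7.17]{McLean2000}, upgraded with Costabel's sharper Lipschitz regularity \cite[Theorem 3]{Costabel1988}. Concretely, one shows that any element of the kernel (or cokernel) at low regularity automatically lies in the highest-regularity space in the scale, so the kernel and cokernel are independent of $\sigma$; combined with the known $\sigma=\tfrac12$ result this gives the full range without ever interpolating a Fredholm property. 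Your concern is therefore well-founded but has a clean resolution that you should substitute for the interpolation step.
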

%

%\section{Some general properties of Fredholm operators}

Theorem~\ref{L1} below is implied by \cite[Lemma 2]{MikPert1999}  (see also  \cite[\S 21]{Vain-Tren1974}, \cite[Section 21.4]{Trenogin1980}, where the particular
case, 
$ h^* _i(\stackrel{\circ }{x}_j)=\mathring x^*_{i}( h _j)= \delta _{ij}$, 
has been considered). Another approach, although with hypotheses similar to the ones in Theorem~\ref{L1}, is presented in \cite[Lemma 4.8.24]{Hackbush1995}.

\begin{theorem}\label{L1} 
Let $B_1$ and $B_2$ be two Banach spaces.
Let
$\underline{A}:B_1\rightarrow B_2$ be a linear Fredholm operator with zero index,
$\underline{A}^*:B_2^*\rightarrow B_1^*$ be the operator adjoined to it,
and
$\dim \ker \underline{A}=\dim \ker\underline{A}^*=n<\infty$,  
where
$\ker\underline{A}=\mathrm{span}\{\mathring{x}_i\}_{i=1}^n$ $\subset B_1$,
$\ker\underline{A}^*=\mathrm{span}\{\mathring x^*_{i}\}_{i=1}^n\subset B_2^*$. 
Let 
$$
\underline{A}_1x:=\sum_{i=1}^k h _i h^* _i(x),
$$
where
$ h^* _i$, $ h _i \, (i=1,..., n)$
are  elements from $B^*_1$ and $B_2$, respectively, such that
\begin{equation}\label{2.3}
\det [ h^* _i
(\mathring{x}_j)]\not= 0,
\qquad
\det [\mathring x^*_{i}(h _j)]\not= 0 \qquad i,j=1,..., n.
\end{equation}
Then:

(i) the operator $\underline{A}-\underline{A}_1:B_1\rightarrow B_2$ is continuous and continuously invertible;

(ii) if $y\in  B_2$  satisfies the solvability  conditions, 
\begin{equation}
\label{1.4}
\mathring x^*_{i}(y)=0, \qquad i=1,..., n,
\end{equation}
of equation 
\begin{equation}
\label{1.1C}
\underline{A}x=y,
\end{equation} 
then the unique solution $x$ of equation 
\begin{equation}\label{2.1C}
(\underline{A}-\underline{A}_1)x=y, 
\end{equation}  is a
solution of equation (\ref{1.1C}) such that\begin{equation}\label{2.4} h^*
_i(x) = 0\qquad (i=1,..., k).\end{equation}

(iii) Vice versa, if $x$ is a solution of equation (\ref{2.1C}) satisfying
conditions (\ref{2.4}), then conditions (\ref{1.4})
are satisfied for the right-hand side $y$ of equation (\ref{2.1C})
and $x$ is a solution of equation (\ref{1.1C}) with the same right-hand
side $y$.
\end{theorem}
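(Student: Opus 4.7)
The plan is to reduce everything to finite-dimensional linear algebra on $\ker\underline{A}$ and $\ker\underline{A}^*$. Since $\underline{A}_1$ has rank at most $n$, it is compact, so $\underline{A}-\underline{A}_1$ inherits from $\underline{A}$ the Fredholm property of index zero, and invertibility in (i) will follow once injectivity is established. Continuity of $\underline{A}-\underline{A}_1$ is immediate from continuity of $\underline{A}$ and of the finitely many functionals $h^*_i$ together with boundedness of the $h_i$.

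To prove injectivity, suppose $(\underline{A}-\underline{A}_1)x=0$. Applying $\mathring{x}^*_j$ and using $\mathring{x}^*_j\in\ker\underline{A}^*$ kills the $\underline{A}x$ contribution and leaves the $n\times n$ linear system
\begin{equation*}
\sum_{i=1}^{n}\mathring{x}^*_j(h_i)\, h^*_i(x)=0,\qquad j=1,\ldots,n,
\end{equation*}
in the unknowns $c_i:=h^*_i(x)$. The second determinant condition in \eqref{2.3} forces $c_i=0$ for all $i$, so $\underline{A}_1 x=0$ and hence $\underline{A}x=0$. Then $x=\sum_{j}\alpha_j\mathring{x}_j$, and the just-established relations $h^*_i(x)=0$ become $\sum_{j}\alpha_j\,h^*_i(\mathring{x}_j)=0$, which by the first determinant condition in \eqref{2.3} yields $\alpha_j=0$. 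Thus $x=0$, proving (i).

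For (ii), uniqueness of $x$ solving $(\underline{A}-\underline{A}_1)x=y$ is given by (i). Applying $\mathring{x}^*_j$ to this equation, the $\underline{A}x$-term vanishes and the solvability hypothesis $\mathring{x}^*_j(y)=0$ kills the right-hand side, leaving exactly the same homogeneous system in the quantities $h^*_i(x)$; the second condition in \eqref{2.3} again forces $h^*_i(x)=0$, so $\underline{A}_1 x=0$ and consequently $\underline{A}x=y$. Part (iii) is a direct computation in the reverse direction: $h^*_i(x)=0$ gives $\underline{A}_1 x=0$, hence $\underline{A}x=y$, and applying $\mathring{x}^*_j$ to this recovers $\mathring{x}^*_j(y)=0$.

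The main (and essentially only) substantive obstacle is ensuring that the two non-degeneracy conditions in \eqref{2.3} are invoked at the correct places and for distinct purposes: the condition on $[\mathring{x}^*_i(h_j)]$ is what allows one to conclude $h^*_i(x)=0$ and thereby eliminate the perturbation $\underline{A}_1 x$, while the condition on $[h^*_i(\mathring{x}_j)]$ is used afterwards to rule out nontrivial elements of $\ker\underline{A}$ satisfying the constraints $h^*_i(x)=0$. No information from the Sobolev-space setting of the paper enters the argument beyond the Fredholm hypothesis already assumed on $\underline{A}$.
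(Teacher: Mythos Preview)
Your proof is correct. Note, however, that the paper does not actually prove this theorem: it is stated in the auxiliary section with the remark that it ``is implied by \cite[Lemma 2]{MikPert1999}'' (with further references to Vainberg--Trenogin and Trenogin for the special case $h^*_i(\mathring{x}_j)=\mathring{x}^*_i(h_j)=\delta_{ij}$). So there is no proof in the paper to compare against; you have supplied a self-contained argument where the paper simply cites one.

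Your argument follows the standard route for results of this type: the finite-rank perturbation $\underline{A}_1$ is compact, so $\underline{A}-\underline{A}_1$ is Fredholm of index zero, and invertibility reduces to injectivity; injectivity is then established by the two-step linear-algebra reduction you describe, using the two determinant hypotheses in the order and for the purposes you identify. Parts (ii) and (iii) are handled correctly by the same mechanism. One small point worth flagging: the upper summation limit in the statement's definition of $\underline{A}_1$ is written as $k$ but should evidently be $n$ (as you have tacitly assumed); this is a typo in the paper's statement rather than a gap in your reasoning.
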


Note that more results about finite-dimensional operator perturbations are available in \cite{MikPert1999}.

The following known result (cf., e.g., \cite[Lemma 11.9.21]{Mitrea-Wright2012}) is useful for us.
\begin{lemma}
\label{Lem2-Fredholm}
Let $ X_{1}, X_{2} $ and $ Y_{1}, Y_{2} $, be Banach spaces such that the embeddings $ X_{1} \hookrightarrow X_{2} $ and $ Y_{1} \hookrightarrow Y_{2} $ are continuous, and the embedding $ Y_{1} \hookrightarrow Y_{2} $ has dense range. Assume that $T:X_{1}\rightarrow Y_{1}$  and  $T : X_{2} \rightarrow Y_{2} $ are Fredholm operators with the same index, ${\rm{ind}}\left(T : X_{1}\rightarrow Y_{1}\right)= {\rm{ind}}\left(T : X_{2} \rightarrow Y_{2}\right)$. Then
${\rm{Ker}}\{T : X_{1}\rightarrow Y_{1}\}={\rm{Ker}}\{T : X_{2} \rightarrow Y_{2}\}.$
\end{lemma}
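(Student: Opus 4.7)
\textbf{Proof plan for Lemma~\ref{Lem2-Fredholm}.}

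The plan is to compare kernels and cokernels of $T$ on the two pairs of spaces and use the equality of indices to pin down the kernels. Set $K_i:=\ker\{T:X_i\to Y_i\}$ and $R_i:=\mathrm{Range}\{T:X_i\to Y_i\}$ for $i=1,2$. Since the embedding $X_1\hookrightarrow X_2$ is continuous, any $x\in K_1$ lies in $X_2$ and satisfies $Tx=0$ in $Y_2$, giving the trivial inclusion $K_1\subseteq K_2$. Because both kernels are finite-dimensional, it therefore suffices to show $\dim K_1\ge\dim K_2$, after which $K_1=K_2$ follows automatically.

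The key step will be to compare the cokernels and show $\dim Y_1/R_1\ge\dim Y_2/R_2$. Let $\pi:Y_2\to Y_2/R_2$ be the canonical projection onto the (finite-dimensional) cokernel and consider its restriction to $Y_1$, i.e.\ $\pi|_{Y_1}:Y_1\to Y_2/R_2$ via the embedding. Since $R_1\subseteq R_2$ (as subsets of $Y_2$), we have $R_1\subseteq\ker(\pi|_{Y_1})$, so $\pi|_{Y_1}$ induces a linear map $\bar\pi:Y_1/R_1\to Y_2/R_2$. The image of $\pi|_{Y_1}$ equals $\pi(Y_1)$, which is dense in $Y_2/R_2$ because the embedding $Y_1\hookrightarrow Y_2$ has dense range and $\pi$ is continuous and surjective; but $Y_2/R_2$ is finite-dimensional, so density forces surjectivity. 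Hence $\bar\pi$ is surjective and $\dim Y_1/R_1\ge\dim Y_2/R_2$, as claimed.

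Finally, combine this cokernel inequality with the equality of Fredholm indices. By definition,
\begin{equation*}
\dim K_1-\dim Y_1/R_1=\mathrm{ind}\{T:X_1\to Y_1\}=\mathrm{ind}\{T:X_2\to Y_2\}=\dim K_2-\dim Y_2/R_2,
\end{equation*}
so $\dim K_1-\dim K_2=\dim Y_1/R_1-\dim Y_2/R_2\ge 0$. Together with $K_1\subseteq K_2$ and finite-dimensionality, this yields $K_1=K_2$.

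\textbf{Main obstacle.} The only nontrivial step is the cokernel comparison, and specifically the surjectivity of $\bar\pi:Y_1/R_1\to Y_2/R_2$. The argument rests on the fact that density of $Y_1$ in $Y_2$ transfers (via a continuous map onto a finite-dimensional space) to density, and hence equality, of images in $Y_2/R_2$. No compactness or structure beyond the density hypothesis and the Fredholm properties is required; the rest is a routine index-counting bookkeeping.
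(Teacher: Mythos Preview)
Your argument is correct. The paper does not actually prove this lemma: it is stated as a known result and attributed to \cite[Lemma~11.9.21]{Mitrea-Wright2012}, so there is no ``paper's own proof'' to compare against. Your proof is the standard one: the trivial inclusion $K_1\subseteq K_2$, the cokernel inequality $\dim Y_1/R_1\ge\dim Y_2/R_2$ obtained from density of $Y_1$ in $Y_2$ and finite-dimensionality of $Y_2/R_2$ (using that $R_2$ is closed by the Fredholm property, so the quotient map is continuous), and then the index identity to force equality of kernel dimensions. One minor point worth making explicit is that $R_2$ is closed in $Y_2$ because Fredholm operators have closed range; you use this when invoking continuity of the quotient map $\pi$, and it deserves a word since otherwise $Y_2/R_2$ need not carry a Hausdorff topology in which the density argument works.
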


\section*{Concluding remarks}

The Dirichlet and Neumann problems on a bounded Lipschitz domain for a variable--coefficient second order PDE with  general
right-hand side functions from ${H}^{s-2}( {\Omega})$ and $\widetilde{H}^{s-2}( {\Omega})$, $\ha<s<\tha$, respectively, were equivalently reduced to three direct segregated
boundary-domain integral equation systems, for each of the BVPs. 
This involved systematic use of the generalised co-normal derivatives.
The operators associated with
the left-hand sides of all the BDIE systems were analysed  in corresponding
Sobolev spaces. It was shown that the operators of the BDIE systems for the Dirichlet problem are continuous and continuously invertible. For the Neumann problem the BDIE system operators are continuous but only Fredholm with zero index,  their kernels and co-kernels were analysed, and appropriate finite-dimensional perturbations were constructed to make the perturbed (stable) operators invertible and provide a solution of the original BDIE systems and the Neumann problem. 
%A further analysis of spectral properties of the two second kind equations obtained in the paper is needed to decide whether the resolvent theory and the Neumann series method (cf. \cite{MikMatSb1983, SW2001} and references therein) are efficient for solving the equations.

The same approach can be used to extend, to the general PDE right hand sides, the BDIE systems for the mixed problems, 
unbounded domains, BDIEs of more general scalar PDEs and the systems of PDEs, as well as to the united and localised BDIEs, for which the analysis is now available for the right hand sides only from $L^2( {\Omega})$, with smooth coefficient and domain boundary, see \cite{CMN-1}%
\nocite{CMN-LocJIEA2009, CMN-NMPDE-crack, CMN-MDEMP2011, CMN-Ext-AA2013, CMN-IEOT2013}%
--\cite{CMN-SysDir2017}, \cite{MikMMAS2006}, \cite{AyeleMik-EMJ2011}, \cite{DufMikIMSE2015}, \cite{MikPorIMSE2015}, \cite{MikPorUKBIM2015}.
\\

\noindent
{\bf Acknowledgement}\\
This research was supported by the  grants
EP/H020497/1: "Mathematical Analysis of Localized Boundary-Domain Integral Equations
 for Variable-Coefficient Boundary Value Problems" 
and
EP/M013545/1: "Mathematical Analysis of Boundary-Domain Integral Equations for Nonlinear PDEs"  
 from the EPSRC, UK.

%%%%%%%%%%%%%%%%%%%%%%%%%%%%   R E F E R E N C E S

%\bibliographystyle{abbrv}
%\bibliography{Mikhailov,BEM,Math}

\end{document}